\documentclass[10pt]{amsart}

\usepackage[margin=1.2in]{geometry}

\usepackage{amsmath,amssymb,amsfonts,amsthm}
\usepackage{bbm}
\usepackage{bbold}
\usepackage{mathtools}
\usepackage{xcolor}
\usepackage{tikz-cd}
\usepackage{hyperref}
\usepackage{enumitem}
\usepackage[capitalise]{cleveref} 
\usepackage{autonum}
\usepackage{subfiles}

\usepackage{macros/utilities}
\usepackage{macros/environments}
\usepackage{macros/diagram-macros}
\usepackage{macros/macros}

\author{Viktoriya Ozornova, Martina Rovelli, Tashi Walde}
\title{Cores and localizations of \((\infty,\infty)\)-categories}

\begin{document}
\maketitle

\begin{abstract}
  We consider $(\infty,d)$-categories in the limit $d\to \infty$
  via the core or localization functors
  that forget or invert higher non-invertible arrows, respectively.
  We compare the two resulting $(\infty,1)$-categories
  of $(\infty,\infty)$-categories
  and exhibit the localization-limit
  as a reflective localization of the core-limit.
  On the side, we study intermediate localizations
  that arise from notions of invertibility
  that only emerge at $d=\infty$ such as the one defined by coinduction.
\end{abstract}

\tableofcontents

\newpage

\section{Introduction}

\subsection{What is an $(\infty,\infty)$-category?}

It is well understood that a (weak\footnote{
  Strict categories make no appearance in this article,
  so we keep the adjective ``weak'' implicit.
}) $(\infty,\infty)$-category $C$
is supposed to be a structure with objects $x,y,\dots$
$1$-arrows $f,g,\dots\colon x\to y$ between them,
$2$-arrows $f\to g$ between (parallel) $1$-arrows
and so on with $d$-arrows in all dimensions $d=0,1,\dots$ to infinity.
These arrows are supposed to be composable in a way that is
well-defined, is associative and 
satisfies various interchange laws,
but only up to coherent systems of higher isomorphisms.
This idea seemingly leads to a self-referential conundrum:
An isomorphism is supposed to be an arrow
$f\colon x\to y$ which admits an inverse $f'\colon y\to x$
in the sense that $\id_x=f'f$ and $ff'=\id_y$.
But since composition is only well-defined up to higher isomorphism,
these equalities should themselves be replaced with isomorphisms of the next higher level,
a notion that is yet to be defined;
and because the arrow dimension goes \emph{up} rather than down each time,
there is no easy inductive way to resolve this issue\footnote{
  Compare to the case of $(d,d)$-categories for finite $d$,
  where one can start with only equalities above dimension $d$
  and then explicitly bootstrap by \emph{downward} induction.
}.

Fortunately, there is an established way for how to treat infinite
coherent systems of higher arrows, as long as these are all \emph{invertible}:
this is just the notion of an $\infty$-groupoid
(or anima\footnote{
  To emphasize their fundamental status
  (as opposed to viewing them as constructed from more primitive objects such a sets)
  we will join the recent trend of calling them \emph{animae}
  rather than the more classical ``spaces''
  which carries a distinctly topological connotation that we wish to avoid.
})
that is very well understood both via models in classical mathematics
(e.g., combinatorially via Kan complexes or topologically via CW-complexes)
or to some extent even intrinsically
(e.g., via the language of homotopy type theory).
Especially from the latter perspective it is fruitful to think
of isomorphisms in anima not as (a priori non-invertible) arrows
that just happen to be invertible,
but rather as the inbuilt notion of ``identification'' or ``equality''
in the logical sense:
no well-formed predicate can distinguish between identified objects,
and feeding identified objects into any well-formed construction yields an identification
between the outputs.

Following this idea it is relatively straightforward to inductively describe
--- at least heuristically ---
what an $(\infty,d)$-category is for any finite $d$,
where the number $d$ indicates the dimension above which all arrows are invertible
(whatever that means):
An $(\infty,0)$-category is just an anima,
with the arrows declared to be the identifications,
all of which are then automatically invertible
(in the only sensible way, namely up to higher identifications).
An $(\infty,d+1)$-category consists of an anima $C^\simeq$ of objects
and for each pair of points $x,y\in C^\simeq$,
an $(\infty,d)$-category $C(x,y)$;
together with composition maps
$C(x,y)\times C(y,z)\to C(x,z)$
that are associative and unital up to coherent identifications.
Since the notion of identification is already inbuilt into the theory
from the ground up, this description is no longer circular.
One can then describe isomorphisms inductively:
an isomorphism $f\colon x\to y$ is an arrow
for which there exists a candidate inverse $f'\colon y\to x$
and isomorphisms $\id_x\xrightarrow{\sim}f'f$ and $ff'\xrightarrow{\sim} \id_y$
in the $(\infty,d)$-categories $C(x,x)$ and $C(y,y)$, respectively --- %
a notion that is already defined by induction.
A priori, isomorphism is then a new notion of sameness between objects that is distinct
from the one inbuilt in the anima $C^\simeq$;
to address this one usually imposes the \emph{univalence} condition
that forces the two\footnote{
  For $d\geq 2$, in the definition of isomorphism
  one could talk about an isomorphism
  $\id_x\xrightarrow{\sim}f'f$, an identification $\id_x\simeq f'f$,
  or any of the other inductively defined $d-2$ notions of sameness in between;
  in total this yields $d$ new notions of sameness
  that without univalence are all distinct from the one inbuilt in $C^\simeq$.
} notions to agree.

While $(\infty,d)$-categories can be described in many different ways
--- including
globular models \cite{rezkTheta,AraMS},
enriched models \cite{br1,br2},
simplicial models \cite{VerityComplicialI,EmilyNotes,or},
multisimplicial models \cite{BarwickThesis},
and cubical models \cite{CKM}, \cite{DKM}
--- it is well understood that
these approaches are equivalent, in the sense that they give rise to equivalent
$(\infty,1)$-categories; various comparisons are provided in
\cite{BSP,br1,br2,DKM,Loubaton4}.

\subsection{$(\infty,\infty)$-categories via cores or localizations}

In recent years, the case $d=\infty$ has gained some attention,
for example in Masuda's work on categorical spectra~\cite{Masuda-thesis}
or in the theory of Gestalten of Scholze and Stefanich
as presented in \cite{ScholzeGestalten}.

There is a very natural way for how one would hope to describe the data of an
$(\infty,\infty)$-category: it consists of
\begin{itemize}
\item
  an $(\infty,0)$-category $C_0$, i.e., an anima, and
\item
  for each $d\geq 0$,
  the data that upgrades $C_{d}$ to an $(\infty,d+1)$-category $C_{d+1}$.
\end{itemize}

Formally, this amounts to saying that the $(\infty,1)$-category $\Cat_{(\infty,\infty)}$
of $(\infty,\infty)$-categories is the limit of a sequence
\begin{equation}
  \dots \to \Cat_{(\infty,d+1)}
  \xrightarrow{T_d} \Cat_{(\infty,d)}\ \dots \to \Cat_{(\infty,1)}\xrightarrow{T_0} \Cat_{(\infty,0)}=\An.
\end{equation}
So an $(\infty,\infty)$-category ``is'' a sequence
\begin{equation}
  D_0=C_0\in \An,
  \quad
  \dots
  \quad
  D_{d+1} \in \fib_{C_d}(T_d),
  \quad
  \dots
\end{equation}
where we think of objects $D_{d+1}$ of the fibers $\fib_{C_d}(T_d)$
of the functor $T_d$ as the possible ``upgrades''
of a given $(\infty,d)$-category $C_d$ to an $(\infty,d+1)$-category
$C_{d+1}=(C_d,D_{d+1})$.

It turns out that there are not only one but two universal choices for the functors $T_d$,
namely the left and right adjoint
of the fully faithful inclusion $I_d\colon \Cat_{(\infty,d)}\hookrightarrow\Cat_{(\infty,d+1)}$:
\begin{itemize}
\item
  The left adjoint $L_d$ is the $d$-localization functor
  that acts on an $(\infty,d+1)$-category by formally inverting
  all arrows of dimension $(d+1)$.
\item
  The right adjoint $R_d$ is the $d$-core functor
  that acts on an $(\infty,d+1)$-category
  by removing all non-invertible arrows of dimension $(d+1)$.
\end{itemize}

Choosing $T_\bullet\coloneqq L_\bullet$ or $T_\bullet\coloneqq R_\bullet$ yields two\footnote{
  There are also more esoteric limit categories obtained by mixing
  and matching $T_d\in\{L_d,R_d\}$ separately for each $d\in \naturals$,
  but even our appetite for abstract nonsense has its limits.
}
possible limiting $(\infty,1)$-categories
\begin{equation}
  \Cat_{(\infty,\infty)}^L
  \quad\text{and}\quad
  \Cat_{(\infty,\infty)}^R;
\end{equation}
the main goal of this article is to clarify the relationship between them.
For completely formal reasons
(only due to the fact that $R_d$ and $L_d$ are the
two adjoints of a common full embedding $I_d$;
see \Cref{lem:biinductive-adjunction}),
there is an adjunction between them;
we prove that the right adjoint is fully faithful, yielding our main theorem:

\begin{theorem-intro} [\Cref{thm:main}]
  There is a reflexive localization
  \begin{equation}
    \begin{tikzcd}
      L\colon \Cat_{(\infty,\infty)}^R\ar[r,bend left=20]
      \ar[r,hookleftarrow]
      \ar[r,phantom,bend left =10, "\bot"]
      & \Cat_{(\infty,\infty)}^L : R
    \end{tikzcd}
  \end{equation}
  that inverts precisely the weakly $\infty$-surjective maps.
\end{theorem-intro}

As has long been folklore knowledge, the left adjoint $L$
is not fully faithful:
for example the $(\infty,\infty)$-category of spans (of animae, say)
becomes trivial under $L$ (see \Cref{ex:spans})
and the $(\infty,\infty)$-category of cobordisms
is collapsed to its $(\infty,0)$-localization
(just by virtue of being fully dualizable; see \Cref{cor:full-dualizable-coind-collapse}). We note that in our description via (weak) $\infty$-surjectivity
the first fact becomes particularly tautological.
In a companion paper \cite{ORW},
we will also provide an explicit proof in the complicial model
that the nerve of any walking coinductive equivalence\footnote{
  Which we would call walking coinductive \emph{isomorphism}
  following the conventions in this paper;
  see \Cref{def:walking-coind}.
} endowed
with the Roberts-Street marking is non-trivial in $\Cat_{(\infty,\infty)}^R$,
even though it becomes trivial in $\Cat_{(\infty,\infty)}^L$ after applying $L$.

Since the right adjoint is fully faithful but the left adjoint is not,
it is natural to ask what the exact place is,
where the symmetry between the two choices $L_\bullet$ and $R_\bullet$ breaks.
We answer this question by proving our main theorem 
in an abstract axiomatic setting meant to capture only the essential features
of the core and localization functors that make the proof work.
The key ingredient is that of \emph{$n$-surjective maps}\footnote{
  This is not a novel concept and has been studied before,
  for example in \cite{Loubaton-effectivity} and \cite {LMRSW}.
} (see \Cref{sec:surjectivity}),
which generalizes the notion of $n$-connected maps from anima to $(\infty,d)$-categories
(where $n\in \naturals$ is a number independent from the categorical dimension $d$).
More precisely, we study exactly how $n$-surjectivity interacts
with cores and localizations,
as well as some other properties such as countable compositions
and cancellation laws.
We axiomatize these properties resulting in what we call a \emph{bias}\footnote{%
  The name is meant to evoke a breaking of symmetry between the left and the right adjoints.
}
on an abstract system of full embeddings with both left and right adjoints;
see \Cref{def:bias}.
We abstractly define the notion of \emph{weakly $\infty$-surjective maps}
(see \Cref{def:S} and \Cref{def:infty-surjective})
and prove that these ingredient alone suffice to exhibit
the desired reflexive localization;
see \Cref{thm:bias-localization}.
While it seems unlikely that our exact axiomatic setup
can be reused elsewhere in the very same form,
we believe that it adds significant clarity to the overall argument
which might then be adapted to other related settings.

\begin{remark}
  Our main localization theorem
  was also obtained simultaneously by Gepner and Heine~\cite[Theorem~1.9.4]{GH-hyper},
  who even consider a Gray-enriched enhancement.
  To ease the comparison with their work,
  we provide a short dictionary of terminology in
  \Cref{rem:GH-hyper} below.
\end{remark}

\subsection{External localizations via internal invertibility}

While our main theorem is ultimately a statement about
$(\infty,d)$-categories (possibly with $d\to \infty$),
it mostly treats each $\Cat_{(\infty,d)}$ as an abstract $(\infty,1)$-category
with very little regard to the fact that each object
$C\in \Cat_{(\infty,d)}$
(or in the limit $\Cat_{(\infty,\infty)}$, be it left or right)
has itself a rich internal categorical structure.
While the notions of $n$-surjectivity, $d$-core $R_d$ and $d$-localization $L_d$
are ultimately defined in terms of this internal structure,
only their abstract properties and interactions are actually used.

In \Cref{sec:strong} we instead study localizations/subcategories of the category
$\Cat_{(\infty,\infty)}^R$
that arise from \emph{internal} notions of invertibility,
i.e., notions defined in terms of objects and arrows (of various dimensions)
\emph{within} individual $(\infty,\infty)$-categories $C$.
We then compare them to the prior external characterizations
that only speak of cores, localization and surjectivity abstractly.

For example, one defines an arrow $f\colon x\to y$ in $C$
(where $C\in \Cat_{(\infty,\infty)}^R$)
to be a \emph{coinductive isomorphism}
if there exists a candidate inverse\footnote{
  This is a slight oversimplification:
  to avoid having to specify an infinite amount of ``triangle-arrows''
  (like triangle identities, but not invertible),
  it is actually more convenient to ask for separate left and right
  inverses at each stage;
  see \Cref{constr:standard-E} and \Cref{prop:coinductive-equiv-char} for details.
} $f^1\colon y\to x$
that is inverse to $f$ up higher arrows
$f^1_+\colon \id_x\to f'f$ and $f^1_-\colon ff'\to \id_y$
that are themselves coinductively invertible,
in the sense that they in turn have candidate inverses
$f^2_+\colon f'f\to \id_x$ and $f^2_-\colon \id_y\to ff'$, respectively,
that are inverses up to higher arrows $f^2_{++}, f^2_{+-}$ and $f^2_{-+},f^2_{--}$,
respectively,
and that these in turn are coinductively invertible,
etc.\ all the way to infinity --- %
yielding in summary an infinite tower $(f^n_\sigma)_{n\in \naturals,\sigma}$
of ``inverses'' none of which is necessarily an actual identification;
see also \cite{ORsurvey,HLOR}
for the same notion in the context of \emph{strict} $(\infty,\infty)$-categories.
Similarly, an \emph{$\omega$-isomorphism} is one that for each
fixed finite height $n$ admits a tower of ``inverses''
$(f^i_\sigma)_{i\leq n,\sigma}$ of height $n$,
but not necessarily a single consistent tower that goes all the way to infinity;
see \Cref{def:alpha-invertible}.

Following a suggestion of Loubaton we then show
that the abstract localization of $\Cat_{(\infty,\infty)}^R$
at the $\infty$-surjective maps
yields the full subcategory of coinductively complete
$(\infty,\infty)$-categories,
i.e., those whose only coinductive isomorphisms are the actual isomorphisms
(see \Cref{cor:strong});
this settles \cite[Conjecture~1.2.14]{Loubaton-effectivity}.
Inspired by a counterexample of Henry--Loubaton we furthermore show that
the reflective subcategory
$\Cat_{(\infty,\infty)}^L\subseteq \Cat_{(\infty,\infty)}^R$
is contained in the proper full subcategory
$\Cat^{\omega}_{(\infty,\infty)}\subsetneq \Cat_{(\infty,\infty)}^\coind$
of $\omega$-complete $(\infty,\infty)$-categories
(see \Cref{prop:coind->weak-coind}),
which are those that have no non-invertible $\omega$-isomorphisms --- %
a condition that is genuinely stronger than having no non-invertible
coinductive isomorphisms.
It remains an open question whether the inclusion
$\Cat^L_{(\infty,\infty)}\subseteq\Cat_{(\infty,\infty)}^{\omega}$
is proper or not. We make progress\footnote{
  To see how exactly this constitutes progress,
  see \Cref{rem:Q-omega-implies-omega-L}.
} towards this question
by giving a different characterizion of the maps inverted by the localization onto
$\Cat_{(\infty,\infty)}^L$
as those that for each fixed finite $n\in\naturals$
are $\infty$-surjective up $n$-isomorphisms\footnote{
  The $n$-isomorphisms are the arrows that only admit a tower $(f^i_\sigma)_{i\leq n,\sigma}$
  of height $n$ but not necessarily any taller one;
  see \Cref{def:alpha-invertible}.
}.

The following table summarizes some of our findings:
\begin{center}
  \begin{tabular}{c||c|c|c}
    full subcats of $\Cat_{(\infty,\infty)}^R$
    &
      $\supsetneq$\footnotemark$\Cat_{(\infty,\infty)}^\coind$
    &
      $\supsetneq$\footnotemark$\Cat_{(\infty,\infty)}^\omega$
    &
      $\supseteq\Cat_{(\infty,\infty)}^L$
    \\
    \hline
    \hline
    internal completeness
    &
      coinductive isos
    &
      $\omega$-isos
    &
    \\
    with respect to
    &
      (\Cref{def-coind-compl})
    &
      (\Cref{def:alpha-invertible})
    &
      ?
    \\
    \hline
    equivalences are exactly
    &
      coinductive isos
    &
    &
      $n$-isos $\forall n\in\naturals$
    \\
    the $\infty$-surjections up to
    &
      (\Cref{prop:L-coind-equivalences})
    &
      ?
    &
      (\Cref{lem:L-iso-coind-inv})
    \\
    \hline
    abstract localization at
    &
      $\infty$-surjections
    &
    &
      weak $\infty$-surjections
    \\
    &
      (\Cref{cor:strong})
    &
      ?
    &
      (\Cref{thm:main})      
  \end{tabular}
\end{center}
\footnotetext{
  for ``$\neq$'',
  see \Cref{ex:spans}, \Cref{cor:E-not-coind} or \Cref{ex:cobordisms}
}
\footnotetext{
  for ``$\neq$'', see \Cref{ex:HL}
}

\subsection{Acknowledgements}

We thank F\'elix Loubaton for suggesting the connection between
$\infty$-surjectivity and coinductive isomorphisms 
and for his detailed explanation of the fundamental discrepancy
with $\omega$-isomorphisms.
Further thanks go to
Thomas Blom%
, %
Denis-Charles Cisinski%
{} and
Thomas Nikolaus%
, who provided very useful feedback in an early stage of this project.
T.W.\ thanks Lyne Moser for many conversations on cell-attachments,
pushouts and cores,
even though most of the resulting insights ended up
not making it into the final version of the paper. M.R. thanks Clark Barwick for very valuable conversations on the subject the Isaac Newton Institute for Mathematical Sciences, Cambridge, during the programme \emph{Equivariant homotopy theory in context} (EPSRC grant EP/Z000580/1).
T.W.\ joined this collaboration with V.O.\ and M.R.\
after many exciting conversations at the 2025 summer school and workshop
``Higher Structures: Recent Developments and Applications''
in Hamburg;
we thank the organizers of that event for bringing us together.
M.R. is grateful for support from the National Science Foundation under Grant No.~DMS-2203915.

\section{Preliminaries}

\subsection{Animae and categories}

In this paper we work fully within the realm of \((\infty,1)\)-categories,
which we just call \emph{categories}.
Everything we do is independent from the specific implementation of this theory.
Similarly,
we just write \emph{$d$-categories} rather than ``$(\infty,d)$-categories'';
see \Cref{sec:d-cats} for more extensive preliminaries on $d$-categories.
In the limit, \emph{$\infty$-category} for us means ``$(\infty,\infty)$-category'',
(and \emph{not} ``$(\infty,1)$-category '',
as would be the more common convention after Lurie);
see \Cref{def:infty-cat}.

\begin{itemize}
\item
  We denote by \(\Cat\) the category of categories
  and by \(\An\subset \Cat\) the full subcategory of \emph{animae}
  (a.k.a.\ \emph{spaces}, \emph{homotopy types} or \emph{groupoids}).
\item
  The inclusion \(\An\hookrightarrow \Cat\)
  has both a right and a left adjoint,
  namely the \emph{core} functor \(C\mapsto C^\simeq\)
  and the \emph{localization} (a.k.a.\ \emph{classifying space})
  functor \(C\mapsto |C|\), respectively.
  We also call \(C^\simeq\) the \emph{anima of objects} of \(C\).
\item
  Every anima \(A\) has \emph{objects}
  (a.k.a.\ \emph{points}, \emph{elements}, \emph{inhabitants} or \emph{terms}) \(a\in A\).\footnote{
    We use the symbol ``$\in$'' as a typing judgment,
    not as set-theoretic membership.
  }
  Between any two objects \(a,b\in A\) we speak of \emph{identifications}
  (a.k.a.\ \emph{paths} or \emph{equalities})
  \(f\colon a\simeq b\).
  For every object \(a\in A\) we have an identification with itself
  \(\id_a\colon a\simeq a\),
  called \emph{identity} (a.k.a.\ \emph{reflexivity}).
\item
  The \(0\)-truncated (a.k.a.\ \emph{discrete} or \emph{static})
  animae are called \emph{sets}\footnote{
    This means that between two objects $a,b\in A$ in a set,
    if there exists an identification $a\simeq b$,
    then this identification is unique.
  };
  the full inclusion \(\Set\hookrightarrow \An\)
  has a left adjoint \(\pi_0\colon \An\to \Set\)
  (but no right adjoint\footnote{
    An anima does \emph{not}
    have a well defined ``underlying set of objects''.
  }).
  The $(-1)$-truncated animae are called \emph{propositions}
  (a.k.a.\ \emph{subterminal});
  the left adjoint to the full inclusion $\Prop\hookrightarrow \An$
  is the \emph{propositional truncation}\footnote{
    Assuming the law of excluded middle,
    every proposition is either $\emptyset$ (false) or $*$ (true);
    then we have $||A||_{-1}\simeq \emptyset$ for $A\simeq \emptyset$,
    and $||A||_{-1}\simeq *$ otherwise.
  } 
  \(||{-}||_{-1}\).
\item
  Every existence and uniqueness statement
  is to be interpreted homotopically and constructively;
  we use language from homotopy type theory to reflect this:
  \begin{itemize}
  \item
    To ``have'' or ``construct'' a certain thing
    (e.g., an identification \(a\simeq b\)),
    means to provide an object \(t\in T\),
    where \(T\) is the anima of those things
    (e.g., the anima of identifications \(a\simeq b\)).
    In particular it needs to be clear from context which anima $T$ is meant.
  \item
    Every construction of a thing $s(t)\in S(t)$
    that depends on an object $t\in T$
    amounts to a section of the associated fibration $S\to T$
    and automatically preserves identifications, i.e., induces an assignment 
    $(f\colon t\simeq t') \mapsto (s(f)\colon s(t)\simeq s(t'))$%
    \footnote{
      One could call this behavior ``functoriality''.
      But since between animae \emph{every} well-defined assignment
      on objects is automatically ``functorial'' in this sense,
      we reserve that word for the setting of categories.
    }.
  \item
    To say that something ``is ...'' 
    (e.g., a square commutative, an assignment functorial, or a category monoidal),
    always means that a certain data is understood that exhibits it as such,
    even if it is not explicitly spelled out or recorded in the notation.
  \item
    When we say that a certain thing ``exists''%
    \footnote{
      Unlike the case of actual ``constructions'',
      if there merely exists an object in each $S(t)$
      then this does not imply that there merely exists a section of $S\to T$.
      The axiom of choice says that this is true when
      $T$ is a set,
      but even in the presence of choice this fails for general $T$.
    } (or ``merely exists'', for emphasis),
    we mean that we have an object \([t]\in ||T||_{-1}\),
    but not necessarily an object \(t \in T\).
  \item
    If a certain thing merely exists (i.e., we have $[t]\in ||T||_{-1}$)
    then we are allowed to ``choose'' a witness
    (i.e., assume that we actually have $t\in T$),
    but only for the purpose of proving a proposition $P$
    (i.e., to produce an object $p\in P$).
    Note that this is just the universal property of the propositional truncation
    and is a fully constructive operation;
    despite the word, it does not use (any version of) the axiom of choice.
  \item
    A certain thing ``exists uniquely''
    if the anima \(T\) is trivial (a.k.a.\ contractible).
    In this case we always have a \(t \in T\)
    and for any two \(t,t'\in T\)
    there exists a unique identification \(t\simeq t'\)
    (again in the same sense).
  \item
    The statement in a ``Theorem''-environment
    (or ``Lemma'', ``Proposition'', ``Corollary'', etc.)
    always amounts to the specification of an anima $T$,
    even though usually $T$ will be described in natural language rather than formulas.
    Establishing the theorem then amounts to producing an object $t\in T$;
    later references to the theorem always refer to that specific inhabitant
    (and not just the mere existence statement $[t]\in||T||_{-1}$).
    In many cases $T$ is actually a proposition,
    so that one can forget about the specific object $t\in T$
    (since any two objects are uniquely identified);
    we reserve the use of the ``Proof''-environment for this case.
    When $T$ is not (or not known a priori to be) a proposition,
    we use the ``Construction''-environment instead.\footnote{
      In classical mathematics, proofs and constructions
      are fundamentally different operations that live in different layers
      of the logical system.
      For us, the former is just a special case of the latter.
    }
  \end{itemize}
\item
  By definition,
  the \emph{objects} of a category \(C\) are just those of its core \(C^\simeq\);
  we write \(x\in C\) and \(x\in C^\simeq\) interchangeably.
\item
  For each pair of objects \(x,y\in C\), we denote by \(C(x,y)\)
  or \(\Map(x,y)\) the associated \emph{hom-anima};
  its objects are interchangeably called \emph{arrows}, \emph{maps} or \emph{morphisms}
  and written \(f \colon x\to y\).
  We denote the identity arrows by \(\id_x\colon x\to x\)
  and composition of arrows by juxtaposition.
\item
  If we want to emphasize that an assignment
  $(x\in C)\mapsto F(x)$
  on objects is actually \emph{functorial}\footnote{
  Given an assignment on objects (i.e., a functor between cores),
  functoriality is of course not a property,
  but additional structure whose presence must be justified,
  usually via some universal construction.},
  we say that we have $F(x)$ ``functorially for each $x:C$''
  (rather than ``for each $x\in C$'').
  Since this terminology can potentially be confusing in natural language,
  we almost exclusively use it in formulas such as
  ``$\lim_{x:C}F(x) \in D$''\footnote{
    Note that $\lim_{x\in C}F(x)$ would have a different meaning,
    namely the limit of the restriction $C^\simeq \to C\xrightarrow{F}D$.
  },
  and only if the implicit functor $F\colon C\to D$
  is clear from the context.
  When $C$ is an anima,
  the expressions ``$x\in C$'' and ``$x:C$'' are synonymous
  and the adverb ``functorially'' is vacuous.
\item
  An arrow \(f \colon x\to y\) in \(C\) is called \emph{an isomorphism}
  (a.k.a.\ \emph{invertible})
  if it has both a left and a right inverse,
  i.e.\ arrows \(g_+,g_-\colon y\to x\) and identifications
  \(\id_x\simeq g_+f\) and \(fg_-\simeq \id_y\).
  If such data exists, then it exists uniquely;
  thus we are allowed to omit it from the notation and just write
  \(f\colon x\xrightarrow{\sim} y\).
  One then also has a unique coherent two-sided inverse,
  which we denote by \(f^{-1}\colon y\xrightarrow{\sim} x\).
\item
  Every identification \(f\colon x\simeq y\) of objects 
  gives rise to an isomorphism \(f\colon x\xrightarrow{\sim} y\),
  which we denote with the same name.
\item
  All categories are univalent, which means
  that conversely every isomorphism \(x\xrightarrow{\sim} y\)
  arises from a unique identification \(x\simeq y\).
\item
  The animae are precisely those categories in which all arrows are invertible.
  Thus for any anima \(A\) we may use the notation \(A(x,y)\) interchangeably
  for the anima of identifications \(x\simeq y\),
  arrows \(x\to y\), or isomorphisms \(x\xrightarrow{\sim}y\).
\item
  A map \(f\colon X\to Y\) of animae is called \emph{surjective}
  (a.k.a.\ \emph{effective epimorphism}\footnote{
    Warning: Despite the name, surjective maps need not be epimorphisms at all --- %
    the latter is a much, much stronger condition.
  })
  if for every \(y\in Y\) there merely exists an \(x\in X\)
  and an identification \(fx\simeq y\),
  or equivalently, if \(\pi_0(f)\colon \pi_0(X)\to \pi_0(Y)\)
  is a surjective map of sets.
\item
  Invertible maps $F\colon C\to D$
  between animae or (higher) categories
  are interchangeably called \emph{isomorphisms}
  or \emph{equivalences}.
  We use the former when we think of $\An$ or $\Cat$
  (or later $\Cat_d$ for $d\in \naturals$, or (any version of) $\Cat_\infty$)
  as abstract categories
  and $F\colon C\to D$ as an abstract arrow within it;
  the latter is used when we think of $C$ and $D$
  as animae or (higher) categories in their own right\footnote{
    From the perspective of type theory
    this corresponds to viewing an anima $X$
    either as a term $X : U$ of the universe
    or as a type $X$ which itself can have terms $x:X$.
    The correspondence between the external notion of equivalence
    and the internal notion of isomorphism
    is (a version of) Voevodsky's axiom of univalence.
  } (with all the objects and arrows that this entails)
  and $F$ a functor between them.
\item
  A functor \(f\colon C\to D\) of categories (or animae)
  is an equivalence if and only if it is
  \begin{itemize}
  \item
    \emph{surjective on objects} (a.k.a.\ \emph{essentially surjective}),
    i.e., if \(f^\simeq\colon C^\simeq\to D^\simeq\) is surjective,
  \item
    and \emph{fully faithful},
    i.e.,
    for all \(x,y \in C\),
    the induced map \(C(x,y)\to D(fx,fy)\) is an equivalence of animae.
  \end{itemize}
\item
  There is a unique non-trivial involution \((-)^\op\colon \Cat\to \Cat\),
  which is uniquely trivial on animae;
  hence we have \((C^\op)^\simeq\simeq C^\simeq\).
  Under this identification (which we always leave unnamed),
  we have \(C^\op(x,y)\simeq C(y,x)\) with reversed composition.
\item
  The map \(C(-,-)\colon C^\simeq\times C^\simeq \to \An\)
  extends to the \emph{hom-functor}
  \begin{equation}
    C(-,-)\colon C^\op\times C\to \An.
  \end{equation}
\item
  We denote the terminal proposition/anima/category
  (a.k.a.\ \emph{singleton} or \emph{true})
  by \(*\)
  and the initial one (a.k.a.\ \emph{empty} or \emph{false})
  by $\emptyset$.
\item
  We denote by \(\omega=\{0\to 1\to \cdots\}\)
  the well-ordered set of finite ordinals,
  viewed as a (locally discrete) category.
  Its core is the set \(\naturals\) of natural numbers.
\item
  The simplex category $\Delta$ is the (locally discrete) category
  of finite non-empty totally ordered sets,
  viewed as a full subcategory $\Delta\subset\Cat$.
  Its objects are the categories
  $[n]\coloneqq \{0\to \dots \to n\}$
  for each natural number $n\in \naturals$.
\item
  The restricted Yoneda embedding
  $\Cat\hookrightarrow \Fun(\Delta^\op,\An)$,
  is fully faithful\footnote{
    For example, see \cite{Hebestreit-Steinebrunner}
    for a short model-independent proof.}.
  and allows us to identify categories with complete Segal animae.
\item
  We also write
  $|{-}|\colon\Fun(\Delta^\op,\An)\to \An$ for the colimit functor;
  when restricted to $\Cat$,
  this is compatible with the previous notation
  since both functors are right adjoint to the constant-diagram embedding.
\item
  For any simplicial object $X\colon\Delta^\op\to D$
  with values in a category $D$ with limits,
  we also denote by the same name
  its right Kan extension $X\colon \Fun(\Delta^\op,\An)^\op\to D$
  along (the opposite of) the Yoneda embedding of $\Delta$;
  in particular we can evaluate $X$ on all simplicial sets
  by the usual limit formula $X(K)\coloneqq \lim_{\Delta^i\to K}X_i$.
  For example, when $D\coloneqq\An$ and $C$ is a category viewed as a simplicial anima,
  we may write
  $C(\Delta^0)\simeq C^\simeq$ and $C(\Delta^1)\simeq \Mor C$
  for the animae of objects and arrows of $C$,
  respectively.
\end{itemize}

\begin{remark}
  This paper is written in a style that directly slots into any
  (sufficiently developed) synthetic theory of $(\infty,1)$-categories.
  Thus it yields interpretations not just in classical mathematics,
  but also in parameterized or internal category theory
  in the sense of Martini--Wolf~\cite{Martini-Wolf}.

  To obtain a well-behaved theory of $n$-surjectivity for $d$-categories
  and prove the main theorem,
  we require the ambient theory of animae to satisfy the following:
  \begin{itemize}
  \item
    Hypercompleteness:
    every $\infty$-connected map of animae is an equivalence.
  \item
    Axiom of dependent choice:
    surjective maps of animae are closed under countable composition.
  \end{itemize}

  In \Cref{sec:strong} we further need:
  \begin{itemize}
  \item
    Axiom of choice:
    every surjective map from an anima onto a set has a section.
  \end{itemize}
  We make use of choice in two ways:
  first to establish good lifting properties of categorical cell complexes
  and later in the key computation of the coinductive completion
  (\Cref{prop:localization-coind-approx}).
  The first use of unrestricted choice could easily be avoided
  (and replaced with dependent choice)
  by just considering countable cell complexes,
  as these are the only ones that show up in article;
  the second one is more substantial and we do not know of an easy workaround to avoid it.

  While we have attempted to make this paper as self-contained as possible
  (especially regarding the main theorem),
  some preliminaries were nonetheless necessary.
  For transparence, we list here the main non-trivial inputs that
  go beyond some general background theory of categories:
  \begin{itemize}
  \item
    When working with $d$-categories,
    we use the operadic theory of enriched categories~\cite{GH} and/or
    the related theory of absolute distributors
    and complete Segal objects therein~\cite{Haugseng-rectification}.
    While the original papers are written in classical models,
    it seems likely that the whole theory can be ported to a synthetic setting
    without changing its essence.
  \item
    In the section about surjectivity and localizations,
    we make use of a basic lemma
    about connectedness and simplicial anima (\Cref{lem:connected-realization}).
    The only reference we could find for this lemma is \cite{ERW}
    where it is proved in the topological model.
    Again, it seems likely that a purely synthetic proof
    of this lemma should be possible,
    but we are not aware of one in the literature.
  \item
    Since \Cref{sec:strong} is not the main focus of the paper,
    we allow ourselves to be a little looser regarding the use
    of background assumptions.
    Most prominently
    we now use some more advanced results from
    \cite{Loubaton-effectivity} or \cite{LMRSW},
    such as the ($n$-surjective, $(n+1)$-fully faithful) factorization system.
  \end{itemize}
\end{remark}

\begin{remark}
  \label{rem:GH-hyper}
  To ease the comparison with the parallel work of Gepner and Heine~\cite{GH-hyper},
  we give a short translation between our terminology and theirs:
  \begin{itemize}
  \item
    They call ``univalent $\infty$-category'' what we call ``right $\infty$-category''
    (see \Cref{def:infty-cat})
    or just ``$\infty$-category'' in \Cref{sec:strong}.
  \item
    What they call ``hypercomplete'' is equivalent to ``coinductively complete''
    in our sense (see \Cref{def-coind-compl})
  \item
    They call ``$n$-connected'' (or ``$(n+1)$-connective'')
    what we call ``$(n+1)$-surjective''
    (see \Cref{def:surj}).
  \item
    Their ``Postnikov-complete'' (univalent) $\infty$-categories
    are our ``left'' $\infty$-categories, i.e.,
    those in the image of the right adjoint
    $R\colon \Cat^L_{\infty}\to \Cat^R_\infty$
    of \Cref{thm:main}.
  \end{itemize}
\end{remark}

\subsection{Enriched categories}

We recall some basic features of enriched category theory,
following Gepner--Haugseng~\cite{GH}.

They construct a functor\footnote{
  Gepner--Haugseng denote this functor by
  \(L_{\mathrm{gen}}\mathbb{\Delta}_{-}^\op\), reflecting its construction.
  But since we use it more or less as a black box,
  we use a simpler name instead.
} \(O(-)\colon \An\to \Opd\)
which to each anima \(A\) associates a (colored, non-symmetric)
operad \(O(A)\) given as follows:
\begin{itemize}
\item
  The colors/objects of $O(A)$ are the pairs $(x,y)\in A\times A$.
\item
  Given input colors $(x_0,y_1), \dots, (x_{n-1},y_n)$ and an output color $(y_0,x_n)$,
  the anima of $n$-ary operations between them is
  \begin{equation}
    O(A) ((x_0,y_1), \dots, (x_{n-1},y_{n}) ; (y_0,x_n))
    \coloneqq
    \prod_{i=0}^n A(y_i,x_i);
  \end{equation}
  the operadic composition law is induced
  by composing identifications of $A$.
\end{itemize}
In particular, for each \(x,y,z\in A\),
it has distinguished binary operations
\begin{equation}
  \mu_{x,y,z}\coloneqq(\id_x,\id_y,\id_z)\in O(A)((x,y) , (y,z) ; (x,z) )
  =A(x,x)\times A(y,y)\times A(z,z)
\end{equation}
and zero-ary operations
\begin{equation}
  \epsilon_x\coloneqq (\id_x)\in O(A)(\,\,; (x,x))=A(x,x)
\end{equation}

Let \((V,\otimes)\) be a monoidal category.
Using these operads \(O(A)\) one defines the following notions:

\begin{definition}[\cite{GH}, Definition~4.3.1]
  \begin{itemize}
  \item[]
  \item
    An \emph{$A$-flagged $V$-category}
    (called a \emph{categorical $A$-algebra} by Gepner--Haugseng)
    is an algebra in
    \((V,\otimes)\) for the operad \(O(A)\).
    We denote by \(\flCat[A]{V}\coloneqq \Alg(O(A); (V,\otimes))\)
    the category of $A$-flagged $V$-categories.
  \item
    The functor
    \begin{equation}
      \An\xrightarrow{O(-)}\Opd\xrightarrow{\Alg(-; (V,\otimes))} \Cat^\op
      \quad A\mapsto \flCat[A]{V}
    \end{equation}
    that sends each anima $A$ to the category of $A$-flagged $V$-categories
    has a cartesian unstraightening that we denote by
    \begin{equation}
      (-)^\simeq \colon \flCat{V}\to \An;
    \end{equation}
    We call its total category \(\flCat{V}\)
    the category of \emph{flagged $V$-categories}.
  \end{itemize}
\end{definition}

\begin{remark}
  \begin{itemize}
  \item[]
  \item
    Concretely an $A$-flagged $V$-category assigns to each pair \((x,y)\in A\times A\)
    an object \(D(x,y)\in V\)
    equipped with composition maps
    \begin{equation}
      D(\mu_{x,y,z})\colon D(x,y)\otimes D(y,z)\to D(x,z);
    \end{equation}
    the algebra structure then encodes coherent associativity and unitality with respect to
    \(D(\epsilon_x)\)
  \item
    A flagged $V$-category is a pair \((A,D)\)
    where \(A\) is an anima and \(D\) is an $A$-flagged $V$-category.
  \item
    A morphism \((A,D)\to (B,E)\) consists of a map \(f\colon A\to B\) of animae
    and a map of $O(A)$-algebras \(D\to f^*(E)\);
    the latter ``just'' amounts to a map
    $D(x,y)\to E(fx,fy)$ in \(V\)
    for each \((x,y) \in A\times A\),
    coherently compatible with the composition and identities.
  \end{itemize}
\end{remark}

\begin{definition}
  A map \(f\colon (A,D)\to (B,E)\) of flagged $V$-categories is called
  \begin{itemize}
  \item
    \emph{fully faithful} if for all \(x,y \in A\)
    the induced map \(D(x,y)\to E(fx,fy)\) of animae is an equivalence.
  \item
    \emph{surjective on objects} (a.k.a.\ \emph{essentially surjective})
    if \(A\to B\) is surjective.
  \item
    a \emph{complete equivalence}
    if it is fully faithful and surjective on objects.
  \end{itemize}
\end{definition}

Let us now consider the case \(V=\An\)
(always equipped with its cartesian monoidal structure).
\begin{definition}
  Let \((A,D)\) be a flagged \(\An\)-category.
  \begin{itemize}
  \item
    For each \(x,y\in A\) one defines the subanima
    \(D^\inv(x,y)\subseteq D(x,y)\) of the isomorphisms
    (those which admit a left and a right inverse
    with respect to the composition maps).
  \item
    One says that \((A,D)\) is \emph{complete} (or \emph{univalent})
    if for each \(x,y\in A\), the map
    \begin{equation}
      A(x,y)\to D^\inv(x,y),
      \quad
      f\mapsto D(\id^A_x,f)(\id^D_x)
    \end{equation}
    is an equivalence of animae.
    Here $\id^A_x\in A(x,x)$ is the identity of $x\in A$ 
    and \(\id^D_x\in D(x,x)\) is the object picked out by the map
    \(D(\epsilon_x)\colon *\to D(x,x)\);
    it is not hard to show that each \(D(\id_x,f)(\id^D_x)\in D(x,y)\)
    actually lives in \(D^\inv(x,y)\).
  \end{itemize}
\end{definition}

\begin{theorem}
  \label{thm:Cat-An}
  \begin{enumerate}
  \item[]
  \item
    Let \(C\) be a category.
    Composition in \(C\) defines an extension of the map
    \(C(-,-)\colon C^\simeq\times C^\simeq\to \An\)
    to an \(O(C^\simeq)\)-algebra in \((\An,\times)\).
  \item
    This construction assembles into
    a fully faithful functor
    \begin{equation}
      \begin{tikzcd}
        \Cat\ar[rd,"{(-)^\simeq}"']\ar[r,hookrightarrow,dashed]& \flCat{\An}\ar[d]
        \\
        &\An
      \end{tikzcd}
      \quad C\mapsto (C^\simeq,C(-,-))
    \end{equation}
    over \(\An\),
    whose image are precisely the complete flagged \(\An\)-categories.
  \end{enumerate}
\end{theorem}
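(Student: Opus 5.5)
The strategy is to reduce \Cref{thm:Cat-An} to the comparison between complete Segal animae and enriched categories established by Gepner--Haugseng~\cite{GH} (see also \cite{Haugseng-rectification}). We regard $\Cat$ as the category of complete Segal animae via the restricted Yoneda embedding, which is fully faithful. Gepner--Haugseng prove two things. First, flagged $\An$-categories correspond to Segal animae $X\colon\Delta^\op\to\An$; under this correspondence $X_0$ is the anima of objects $A$, and the fibers of $X_1\to X_0\times X_0$ are the hom-animae $D(x,y)$. Second, $\flCat{\An}$ has a localization at the complete equivalences, and this localization is equivalent to $\Cat$. We will assemble the functor and identify its image from these two inputs.

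\textbf{Step 1 (construction, part (1)).} For a category $C$, the Segal anima $C(\Delta^\bullet)$ has $C(\Delta^0)\simeq C^\simeq$. The fiber of $C(\Delta^1)\to C^\simeq\times C^\simeq$ over $(x,y)$ is $C(x,y)$. Applying the Segal equivalence $C(\Delta^n)\simeq C(\Delta^1)\times_{C^\simeq}\cdots\times_{C^\simeq}C(\Delta^1)$ together with the face maps yields the coherent composition and unit maps. Under Gepner--Haugseng's dictionary, a Segal anima with $0$-th term $A$ is precisely an $O(A)$-algebra in $(\An,\times)$. This correspondence is natural in $A$, compatible with the cartesian unstraightening over $\An$, and sends $C$ to $(C^\simeq,C(-,-))$. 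Composing $\Cat\hookrightarrow\mathrm{Seg}(\An)\simeq\flCat{\An}$ therefore gives the dashed functor over $\An$.

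\textbf{Step 2 (full faithfulness).} The equivalence $\mathrm{Seg}(\An)\simeq\flCat{\An}$ is an equivalence of categories, and the inclusion of complete Segal animae into Segal animae is fully faithful. Hence their composite is fully faithful.

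\textbf{Step 3 (image).} It remains to check that a Segal anima $X$ is complete in the Rezk sense exactly when the associated flagged category is complete, i.e.\ when $A(x,y)\to D^\inv(x,y)$ is an equivalence for all $x,y$. Rezk completeness says that the degeneracy $X_0\to X_1^\inv$ is an equivalence, where $X_1^\inv\subseteq X_1$ is the subanima of invertible arrows. Both sides lie over $X_0\times X_0$: the source via the diagonal and the target via $(d_1,d_0)$. An equivalence of such animae is the same as a fiberwise equivalence. Over $(x,y)$ the fiber of the diagonal is $A(x,y)$ and the fiber of $X_1^\inv$ is $D^\inv(x,y)$. The induced map on fibers is $f\mapsto D(\id_x,f)(\id^D_x)$, i.e.\ transport of the identity along $f$, which is exactly the map in the definition of completeness.

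\textbf{Main obstacle.} I expect the main difficulty to be Step 1, specifically matching $O(A)$-algebras with Segal animae with $0$-th term $A$ naturally in $A$, compatibly with the fibration over $\An$. This is the technical heart of \cite{GH}, in particular their comparison of algebras over $L_{\mathrm{gen}}\mathbb{\Delta}^\op_A$ with Segal objects, so I would cite it rather than reprove it. The other point needing care is that $D^\inv$, defined via the operadic composition, matches the Segal-theoretic $X_1^\inv$. This follows because both are defined by the existence of one-sided inverses, computed in the homotopy category $h X$, and the composition maps agree under the dictionary.
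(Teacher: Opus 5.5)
Your proposal is correct and takes essentially the same route as the paper. The paper also reduces to Gepner--Haugseng's comparison of Segal animae with flagged $\An$-categories (\cite[Theorem~4.4.7]{GH}) restricted to complete objects (\cite[Theorem~5.4.6]{GH}), and notes by unravelling that the algebra structure recovers the hom-animae. The only difference is that you check by hand, fibrewise over $X_0\times X_0$, that Rezk completeness matches the condition $A(x,y)\xrightarrow{\sim}D^\inv(x,y)$, which the paper outsources to \cite[Theorem~5.4.6]{GH}; this ties the argument directly to the paper's own definition of completeness.
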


\begin{proof}
  The equivalence between categories and complete flagged $\An$-categories
  is \cite[Theorem~5.4.6]{GH}
  which in turn comes from \cite[Theorem~4.4.7]{GH}.
  One can explicitly unravel the construction of the latter
  to see that for any $\An$-category $(C^\simeq,C)$,
  the restriction $C\colon C^\simeq \times C^\simeq\to \An$
  of the algebra structure does
  indeed agree with the family of hom-animae of the corresponding category.
\end{proof}

Going forward, we always view \(\Cat\)
as a full subcategory of $\flCat{\An}$ via this construction;
for any category \(C\),
we interchangeably write \(C\) or \((C^\simeq, C)\),
depending on whether we want to emphasize its (tautological) flagging or not.

\begin{definition}
  \begin{itemize}
  \item[]
  \item 
    For any lax monoidal functor \(F\colon (V,\otimes)\to (W,\otimes)\),
    we denote by
    \begin{equation}
      \begin{tikzcd}
        \flCat{V}\ar[r,"F_*",dashed]
        \ar[dr]
        &
        \flCat{W}
        \ar[d]
        \\
        &
        \An
      \end{tikzcd}
      \quad
      (A, D) \mapsto (A, F_*D)
    \end{equation}
    the induced functor given by postcomposition of algebras,
    i.e., the unstraightening of the transformation
    \begin{equation}
      \Alg(O(-),(V,\otimes))\xrightarrow{F\circ -}\Alg(O(-),(W,\otimes))\colon
      \An^\op\to \Cat.
    \end{equation}
    The functor \(F_*\) is called \emph{change of enrichment along $F$}.
  \item
    Change of enrichment along the lax monoidal functor
    \(V(\monunit,-)\colon V\to \An\)
    is said to take any flagged $V$-category to its
    \emph{underlying} flagged $\An$-category.
  \item
    A flagged $V$-category is called \emph{complete}
    (or \emph{univalent}),
    if the same is true for its underlying flagged $\An$-category.
  \item
    A complete flagged $V$-category is simply called
    a \emph{$V$-category};
    these span a full subcategory that we denote
    \(\Cat_V\subset \flCat{V}\).
  \end{itemize}
\end{definition}

\begin{remark}
  With this notation and the identification of \Cref{thm:Cat-An}
  we have \(\Cat\simeq \Cat_\An\subseteq \flCat{\An}\).
\end{remark}

\begin{theorem}[\cite{GH}, Theorem~5.6.6]
  \label{thm:complete-localization}
  \begin{enumerate}
  \item[]
  \item
    For every flagged $V$-category \(D\)
    there exists a unique complete equivalence
    \(D\to \widehat{D}\) to a complete flagged $V$-category.
  \item
    These maps assemble as the counit of a reflector
    \begin{equation}
      \widehat{(-)}\colon \flCat{V}\rightleftarrows \Cat_{V}
    \end{equation}
    onto the full subcategory of $V$-categories,
    which exhibits \(\Cat_V\) as a localization of \(\flCat{V}\)
    at the complete equivalences.
  \end{enumerate} 
\end{theorem}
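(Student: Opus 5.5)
The statement is \cite[Theorem~5.6.6]{GH}, so in the paper it is cited rather than reproved. Still, here is how I would establish it from the machinery recalled above.

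\emph{Step 1: reduce completeness to a local condition.} A flagged $V$-category $(A,D)$ is complete exactly when the map $A(x,y)\to D^\inv(x,y)$ is an equivalence for its underlying flagged $\An$-category. This condition depends only on the underlying flagged $\An$-category. The change-of-enrichment functor $V(\monunit,-)_*$ preserves fully faithful maps and is the identity on object animae. It therefore preserves complete equivalences and reflects completeness. So it is enough to construct, for each $(A,D)$, a new object anima $\widehat A$ with a surjection $A\to\widehat A$, and to pull the enrichment back along it.

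\emph{Step 2: construct $\widehat{D}$.} Let $\widehat{A}$ be the core of the (complete) category $\widehat{U}$ associated to the underlying flagged $\An$-category $U=V(\monunit,-)_*D$. Concretely, $\widehat{A}$ is the anima of objects of the Segal anima determined by $U$, taken up to $D$-isomorphism. One realisation is as the image of $A$ in the anima of representable presheaves. Next I would show that $O(-)$-algebras descend along surjections $A\to\widehat A$ whose Čech nerve consists of isomorphisms in $D$. The enrichment should be invariant under these isomorphisms because hom-objects are functorial in isomorphisms via the composition maps. This gives $\widehat D$ on $\widehat A$ with $D\simeq f^*\widehat D$.

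\emph{Step 3: universal property.} Let $(B,E)$ be complete. I need to show that precomposition
\[
  \Map(\widehat D,E)\to\Map(D,E)
\]
is an equivalence. A map $D\to E$ sends $D$-isomorphisms to $E$-isomorphisms, and by completeness these correspond to identifications in $B$. Hence $A\to B$ factors uniquely through the quotient $A\to\widehat A$, and the enrichment data factor because $D\simeq f^*\widehat D$. Uniqueness of the complete equivalence follows: two complete equivalences out of $D$ both enjoy this universal property.

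\emph{Step 4: localization.} Since complete equivalences between complete objects are equivalences, the reflector inverts exactly the complete equivalences. This is the standard criterion: a reflective subcategory whose unit maps lie in a class $W$, with $W$ satisfying 2-out-of-3 and local objects detecting $W$, is the localization at $W$.

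The main obstacle is Step 2, specifically the descent of $O(A)$-algebras along the quotient $A\to\widehat A$. This is where Gepner--Haugseng use their presentation through $\mathbb{\Delta}^\op$ and Segal-type localizations. I would first try to express algebras as Segal presheaves on a category fibered over $\Delta^\op_{/A}$ and check the descent there.
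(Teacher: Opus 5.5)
Since the paper only cites Gepner--Haugseng for this statement, your sketch has to stand on its own. It does not yet do so: there is a genuine gap exactly where you flag the ``main obstacle''.

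\textbf{The descent step (Steps 2 and 3).} Defining $\widehat A$ as the core of the Rezk completion of the underlying Segal anima is fine. That is the classical $V=\An$ case, so the argument is not circular. It is also plausible that $\Alg(O(\widehat A);V)\simeq\lim_{[k]\in\Delta}\Alg(O(A_k);V)$ for the \v{C}ech nerve $A_\bullet$ of $A\to\widehat A$, since $\widehat A^{n+1}\simeq|A_\bullet^{n+1}|$.

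But to obtain $\widehat D$ you must lift $D$ to that limit, i.e.\ supply a \emph{coherent} descent datum. For every $k$ you need an $O(A_k)$-algebra, where $A_k$ is the anima of $(k+1)$-tuples of objects joined by a chain of $D$-isomorphisms. You also need identifications for all simplicial operators, starting with an equivalence $d_0^*D\simeq d_1^*D$ of $O(A_1)$-algebras.

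Your justification, ``hom-objects are functorial in isomorphisms via the composition maps'', only yields the pointwise conjugation equivalences $D(x,y)\simeq D(x',y')$. It does not make them a map of $O(A_1)$-algebras. It does not give their compatibility with composition of isomorphisms over $A_2$, nor the higher coherences. Nor does it show that the construction is natural in $D$, or that the descended object is complete.

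Step~3 silently needs all of this. ``The enrichment data factor because $D\simeq f^*\widehat D$'' is not a valid reason, because over a fixed map of object animae $f^*$ is not fully faithful. For example, take $V=\An$ and $f\colon\{0,1\}\to *$. Maps $f^*(*)\to f^*C'$ of $O(\{0,1\})$-algebras are automorphisms of the object of a one-object category $C'$, whereas maps $*\to C'$ of $O(*)$-algebras form a contractible anima.

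The equivalence $\Map(\widehat D,E)\simeq\Map(D,E)$ holds only because such extra data are absorbed into paths in $B$. Making that precise is exactly the compatibility of arbitrary functors $D\to E$ with descent data constructed functorially in $D$. As written, the argument reduces the theorem to an unproved claim that carries essentially all of its content.

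\textbf{Step 4.} Separately, Step~4 is false as stated for the notion used here, namely fully faithful and surjective on the object anima. This class $W$ does not satisfy 2-out-of-3, and $\widehat{(-)}$ inverts strictly more than $W$.

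Let $J_0$ be the flagged $V$-category with one object and endomorphism object $\monunit$. Let $J_1$ be the one with object set $\{0,1\}$ and all hom-objects $\monunit$. Then $J_1\to J_0$ and the composite $J_0\xrightarrow{0}J_1\to J_0$ both lie in $W$. Hence $\widehat{(-)}$ inverts $J_0\xrightarrow{0}J_1$, which is not surjective on objects.

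The theorem only asserts $\flCat{V}[W^{-1}]\simeq\Cat_V$. For that it suffices that the units lie in $W$ and that $\widehat{(-)}$ sends $W$ to equivalences. The latter follows from two facts:
\begin{itemize}
\item the one cancellation that does hold: $p\in W$ and $qp\in W$ imply $q\in W$, using surjectivity of $p$ to test full faithfulness of $q$ on all pairs of objects;
\item maps in $W$ between complete objects are equivalences.
\end{itemize}

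The same cancellation, applied to the comparison map $\widehat D\to D'$ given by the universal property, is what yields uniqueness in part~(1). Your remark that any two complete equivalences ``both enjoy this universal property'' presupposes something you only checked for your particular $\widehat D$.
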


\begin{remark}
  \label{rem:isos-in-cat-V}
  It follows directly from \Cref{thm:complete-localization}
  that the isomorphisms in $\Cat_V$ are the complete equivalences.
  Extending our convention from $\Cat$,
  we just call these isomorphisms \emph{equivalences} (of $V$-categories).
\end{remark}

For the convenience of the reader, we summarize some of the key basic facts
of enriched category theory that can be extracted from \cite{GH}.

\begin{theorem}
  \label{prop:flCat-inherits-monoidal}
  Let \((V,\otimes)\) be a symmetric monoidal category.
  \begin{enumerate}
  \item
    For each anima \(A\),
    the category \(\flCat[A]{V}\) inherits
    a  symmetric monoidal structure
    which is computed pointwise, i.e., 
    \((D\otimes E)(x,y)\simeq D(x,y)\otimes E(x,y)\).
  \item
    These symmetric monoidal structures assemble
    to a symmetric monoidal structure on \(\flCat{V}\),
    which pairwise is of the form
    \begin{equation}
      (A,D)\otimes (B,E) \simeq (A\times B, D\boxtimes E),
    \end{equation}
    with
    \begin{equation}
      (D\boxtimes E) \simeq \pr_A^*(D)\otimes \pr_B^*(E)\colon ((a,b), (a',b'))
      \mapsto D(a,a')\otimes E(b,b').
    \end{equation}
  \item
    \label{inprop:localize-symmon}
    There is a unique symmetric monoidal structure
    \(\widehat{\otimes}\) on  \(\Cat_V\)
    such that the localization functor
    \((\flCat{V},\otimes )\to (\Cat_V,\widehat{\otimes})\)
    is symmetric monoidal;
    in particular, we have
    \(C\widehat{\otimes} D \simeq \widehat{C\otimes D}\)
    naturally for all $V$-categories \(C,D\).
  \item 
    \label{inprop:localized-symmon-is-cartesian}
    If \((V,\times)\)
    carries the cartesian symmetric monoidal structure,
    then the induced symmetric monoidal structures
    on \(\flCat{V}\) and on \(\Cat_V\) are also cartesian.
  \item
    \label{inprop:presentable-Cat-V}
    If \((V,\otimes)\) is presentably symmetric monoidal,
    then the same is true for \(\flCat{V}\) and \(\Cat_V\).
  \item
    \label{inprop:coe-symmonoidal}
    Change of enrichment along any symmetric monoidal adjunction\footnote{
      This means that $F$ is symmetric monoidal and that
      the induced lax symmetric monoidal structure on $G$ is strong.
    }
    \begin{equation}
      F\colon (V,\otimes) \rightleftarrows (W,\otimes) : G
    \end{equation}
    yields a symmetric monoidal adjunction
    \begin{equation}
      F_*\colon (\flCat{V},\otimes) \rightleftarrows (\flCat{W},\otimes) : G_*.
    \end{equation}
  \end{enumerate}
\end{theorem}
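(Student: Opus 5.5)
This theorem is mostly a collection of facts already available in Gepner--Haugseng~\cite{GH}. The plan is to locate each item there (or in \cite{Haugseng-rectification}) and check that the formulations agree, adding short formal arguments where they do not.

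For items (1) and (2), I would use that for a symmetric monoidal $(V,\otimes)$ the category $\Alg(O(A);V)$ inherits a pointwise symmetric monoidal structure. The reason is that algebras over any operad in a symmetric monoidal category form a symmetric monoidal category, with tensor product computed on underlying colors (via the Boardman--Vogt tensor with the commutative operad, or directly from \cite[\S 4]{GH}). This gives (1). For (2), I would note that $O(-)$ sends products of animae to the (Boardman--Vogt style) product of operads, with $O(A\times B)\to O(A)\times O(B)$ given on colors by the obvious identification. The external product $D\boxtimes E=\pr_A^*D\otimes\pr_B^*E$ is then well defined. The unstraightening of the lax-compatible family $A\mapsto\flCat[A]{V}$, together with the pullback functors $f^*$ (which are symmetric monoidal since they are restriction of algebras), gives a symmetric monoidal structure on the total category $\flCat{V}$. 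This is the standard construction of a symmetric monoidal structure on a cartesian fibration whose fibers and transition functors are symmetric monoidal, and whose base carries the cartesian product; this is what \cite[\S 4.3]{GH} does.

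For (3), I would apply the general criterion for a localization to be compatible with a monoidal structure: the complete equivalences must be stable under $-\otimes(A,D)$. This reduces to two facts. First, a fully faithful map stays fully faithful after $\boxtimes$, since homs are computed pointwise as $D(a,a')\otimes E(b,b')$. Second, a product of surjections of animae $A\times B\to A'\times B$ is surjective. The reflector of \Cref{thm:complete-localization} then induces $\widehat{\otimes}$ uniquely, and gives $C\widehat\otimes D\simeq\widehat{C\otimes D}$.

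For (4), I would observe that when $\otimes=\times$ is cartesian, the pointwise structure on algebras is cartesian because limits of algebras are computed pointwise. The formula in (2) then exhibits $(A\times B,\pr_A^*D\times\pr_B^*E)$ as the product in the total category, using that cartesian morphisms are given by restriction. A monoidal localization of a cartesian structure is again cartesian, since the reflector preserves finite products by (3).

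For (5), presentability of $\flCat{V}$ is \cite[\S 4.5]{GH}, and the tensor product preserves colimits in each variable because the fiberwise $\otimes$ does and $\boxtimes$ is compatible with pushforward along maps of animae. $\Cat_V$ is then an accessible reflective localization, so it is presentable, and $\widehat\otimes$ preserves colimits since $\widehat{(-)}$ does.

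For (6), $F_*$ and $G_*$ are defined fiberwise over $\An$ by postcomposition of algebras. A symmetric monoidal adjunction induces an adjunction on $\Alg(O(A);-)$ for each $A$, naturally in $A$, and therefore a fiberwise adjunction over $\An$ compatible with pullback, which assembles to an adjunction on total categories. Symmetric monoidality of $F_*$ follows from the pointwise formula in (2).

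The main obstacle is (2): rigorously producing the symmetric monoidal structure on the total category of the unstraightened family, rather than only fiberwise. I would try first to quote \cite{GH} directly, and otherwise to use the lax-monoidal straightening equivalence for cartesian fibrations over a cartesian monoidal base.
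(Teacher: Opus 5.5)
The paper gives no proof of this theorem. It presents it as a summary of facts extracted from \cite{GH}, so your plan of locating each item there matches what the paper does, and your supplementary sketches for (1)--(4) and (6) are sound. The problem is your argument for colimit preservation in (5). It rests on two claims, and both are false.

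First, the pointwise tensor product on a single fiber $\flCat[A]{V}=\Alg(O(A);(V,\otimes))$ does not preserve colimits in each variable. Already for $A=*$ and $V=\Set$ with the cartesian product, the fiber is the category of monoids with the product of monoids. The initial monoid $1$ satisfies $1\times N\cong N$, so the initial object is not preserved. Likewise $(M\ast M')\times N\not\cong (M\times N)\ast(M'\times N)$ in general; take $M=M'=1$.

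Second, $\boxtimes$ is not compatible with pushforward along maps of animae. Take $f\colon\{a,a'\}\to *$, let $D$ be the initial (discrete) $\{a,a'\}$-flagged category, and let $(B,E)=(*,M)$ for a monoid $M$. Then $f_!D$ is the trivial monoid, so $f_!D\boxtimes E\simeq M$. But $D\boxtimes E$ consists of two disjoint copies of $M$, so $(f\times\id)_!(D\boxtimes E)\simeq M\ast M$. Hence $-\otimes(B,E)$ does not even preserve cocartesian morphisms of $\flCat{V}\to\An$. Colimit preservation in the total category therefore cannot be assembled from fiberwise statements in the way you propose.

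The claim itself is nevertheless true. For $V=\An$, for instance, flagged $\An$-categories are Segal animae, which form a cartesian closed category. The reason is global: in a colimit in $\flCat{V}$ objects get identified, and this forces the corresponding copies of $E$ to be identified as well. In the example, $(*,1)$ is the coequalizer of the two points $(*,1)\rightrightarrows(\{a,a'\},D)$. After tensoring with $(*,M)$, the relevant coequalizer is that of the two inclusions $M\rightrightarrows M\ast M$ of monoids, which is $M$ again.

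To repair (5) you can cite the corresponding statement in \cite{GH} directly. Alternatively, show that $-\otimes(B,E)$ admits a right adjoint, i.e.\ construct internal homs of flagged $V$-categories, with hom-objects given by ends in $V$. Your argument that $\widehat{\otimes}$ preserves colimits on $\Cat_V$ reduces, via (3), to the flagged case, so it has the same gap until this is fixed.
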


\begin{lemma}
  \label{lem:enriched-adjunction-Cat}
  Let \(V,W\) be two categories with finite products,
  which we consider with the cartesian symmetric monoidal structure.
  Let \(F\colon V \rightleftarrows W : G\) be an adjunction between them.
  Assume that the left adjoint \(F\) preserves finite products
  so that the adjunction is symmetric monoidal\footnote{
    The right adjoint $G$ always preserves products.
  }.
  Then completion and change of enrichment yield an adjunction
  \begin{equation}
    \widehat{F_*}\colon \Cat_V \rightleftarrows \Cat_W : G_*,
  \end{equation}
  where the left adjoint again preserves finite products.
\end{lemma}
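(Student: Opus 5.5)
We will derive the adjunction in three steps: first on flagged categories, then on complete ones via the reflector of \Cref{thm:complete-localization}, and finally check that the composite left adjoint preserves finite products.

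\emph{Step 1 (flagged level).} Since $F$ preserves finite products, it is symmetric monoidal for the cartesian structures. The right adjoint $G$ always preserves products, so the induced lax structure on $G$ is strong. Hence \Cref{prop:flCat-inherits-monoidal}\eqref{inprop:coe-symmonoidal} gives a symmetric monoidal adjunction
\[
F_*\colon \flCat{V}\rightleftarrows \flCat{W} : G_*
\]
over $\An$. Concretely it acts fiberwise by postcomposition, with unit and counit induced pointwise on hom-objects.

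\emph{Step 2 (passing to complete objects).} Let $\iota_V\colon\Cat_V\hookrightarrow\flCat{V}$ and $\iota_W$ denote the inclusions, with reflectors $\widehat{(-)}$ from \Cref{thm:complete-localization}. Composing adjunctions gives $\widehat{F_*}\circ\iota_V \dashv G_*\circ\iota_W$, where the right adjoint lands a priori in $\flCat{V}$.

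The key point is that $G_*$ preserves completeness. Completeness is a condition on the underlying flagged $\An$-category, obtained by change of enrichment along $V(\monunit,-)$, where $\monunit=*$ is terminal. Since $G$ preserves the terminal object,
\[
V(*,G-)\simeq W(F*,-)\simeq W(*,-).
\]
So the underlying flagged $\An$-category of $G_*E$ is identified with that of $E$, compatibly with the lax structures, i.e.\ with composition and identities. Both sides have the same anima of objects, so the invertible-arrow subanimae and the completeness map agree. Therefore $G_*E$ is complete whenever $E$ is.

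The right adjoint thus restricts to $G_*\colon\Cat_W\to\Cat_V$. Because $\iota_V$ is fully faithful, we get
\[
\Map_{\Cat_W}(\widehat{F_*C},E)\simeq\Map(F_*C,E)\simeq\Map(C,G_*E)
\]
for $C\in\Cat_V$ and $E\in\Cat_W$, which is the desired adjunction.

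\emph{Step 3 (products).} By \Cref{prop:flCat-inherits-monoidal}\eqref{inprop:localized-symmon-is-cartesian}, the symmetric monoidal structures on $\flCat{V}$ and $\Cat_V$ are cartesian. Since $F_*$ is symmetric monoidal, it preserves finite products of flagged categories. By \Cref{prop:flCat-inherits-monoidal}\eqref{inprop:localize-symmon}, $\widehat{(-)}$ is symmetric monoidal for $\widehat{\otimes}$, which is again the cartesian product. Finite products in $\Cat_V$ are computed as $\widehat{C\times D}=C\times D$, because the product of complete objects is complete, products being computed in the reflective subcategory. Hence
\[
\widehat{F_*}(C\times D)\simeq \widehat{F_*C\times F_*D}\simeq \widehat{F_*C}\times\widehat{F_*D},
\]
and terminal objects are handled the same way.

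The main obstacle is Step 2, specifically the identification of the underlying $\An$-enrichments under $G_*$ as lax monoidal functors, not just objectwise. We would handle it by noting that $V(*,-)\circ G\simeq W(F*,-)\simeq W(*,-)$ is an equivalence of lax (indeed strong, cartesian) monoidal functors. This holds because all three are product-preserving functors between cartesian categories, and such functors have essentially unique monoidal structures.
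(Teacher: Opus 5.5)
Your proposal is correct and follows the paper's proof essentially step for step. It uses the induced symmetric monoidal adjunction \(F_*\dashv G_*\) on flagged categories, composes with the completion reflector, and observes that \(G_*\) leaves the underlying flagged \(\An\)-category unchanged and hence preserves completeness. It then gets product preservation of the composite because all the monoidal structures involved are cartesian.

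One small slip in Step 2: the identification \(W(F*,-)\simeq W(*,-)\) uses that \(F\) (not \(G\)) preserves the terminal object. This holds by hypothesis and is how the paper phrases it. Your extra remark about the uniqueness of monoidal structures on product-preserving functors usefully fills in a point the paper leaves implicit.
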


\begin{proof}
  The assumption that \(F\) preserves finite products means that
  the adjunction \(F\dashv G\) is uniquely symmetric monoidal
  with respect to the cartesian structures.
  Thus by \Cref{prop:flCat-inherits-monoidal}
  we have an induced adjunction \(F_*\dashv G_*\),
  where \(F_*\) preserves products.
  We consider the composed adjunction
  \begin{equation}
    \label{eq:change-of-enrichment-adjunction}
    \begin{tikzcd}
      \widehat{F_*}\colon
      &
      \flCat{V}
      \ar[r,bend left, "F_*"]
      \ar[r,phantom,"\bot"]
      &
      \flCat{W}
      \ar[r,bend left, "\widehat{(-)}"]
      \ar[r,phantom,"\bot"]
      \ar[l,bend left,"G_*"]
      &
      \Cat_{W}
      \ar[l,bend left,hookrightarrow]
      &
      : G_*
    \end{tikzcd}
  \end{equation}
  First, we will show that the composite right adjoint actually takes values in $\Cat_V$.

  Since \(F\) preserves the terminal object (which is the monoidal unit),
  we see that \(G\) induces an equivalence
  \begin{equation}
    G\colon W(\monunit, -)\simeq V(\monunit,G(-)),
  \end{equation}
  hence
  \begin{equation}
    V(\monunit,-)\circ G_* \simeq W(\monunit,-),
  \end{equation}
  which means that, for each \(D\in \flCat{W}\),
  its underlying flagged \(\An\)-category agrees with that of \(G_*(D)\).
  Since completeness is defined purely with respect
  to the underlying flagged $\An$-category,
  this means that \(G_*\) preserves and detects completeness.
  In particular, the functor
  \(G_*\colon \Cat_W\to \flCat{V}\) takes values in \(\Cat_V\subset\flCat{V}\),
  which means that the composite adjunction
  \eqref{eq:change-of-enrichment-adjunction}
  restricts to the desired adjunction
  \begin{equation}
    \widehat{F_*}\colon \Cat_V\rightleftarrows \Cat_W : G_*.
  \end{equation}

  Since both \(F_*\colon \flCat{V}\to\flCat{W}\)
  and \(\widehat{(-)}\colon \flCat{W}\to \Cat_W\)
  are symmetric monoidal
  (see \Cref{prop:flCat-inherits-monoidal},
  parts \ref{inprop:coe-symmonoidal} and \ref{inprop:localize-symmon}, respectively),
  their composite \(\widehat{F_*}\) preserves products as claimed
  because all appearing symmetric monoidal structures are the cartesian ones
  (see \Cref{prop:flCat-inherits-monoidal},
  part \ref{inprop:localized-symmon-is-cartesian}).
\end{proof}

Under suitable conditions on the enriching category $V$
(that will always be satisfied in the examples considered in this paper),
one can also describe $V$-categories as simplicial objects.
For this we recall that an \emph{absolute distributor}
is a presentable category $V$
such that the unique colimit-preserving functor $\An\to V$ sending $*$
to the terminal object
is fully faithful and satisfies certain descent properties
which we will not recall here precisely;
see \cite[Definition 7.2]{Haugseng-rectification} for details.
We always consider such a $V$ with its cartesian monoidal structure.

\begin{theorem}[\cite{Haugseng-rectification}, Theorem~7.18]
  \label{thm:V-cat-simplicial}
  If $V$ is an absolute distributor,
  then we have a fully faithful embedding
  \begin{equation}
    \Cat_V\hookrightarrow \Fun(\Delta^\op,V)
  \end{equation}
  whose essential image consists of those simplicial objects
  $X\colon \Delta^\op\to V$ satisfying the following three conditions
  \begin{itemize}
  \item
    $X_0$ is an anima, i.e., lies in the image of the full embedding $\An\hookrightarrow V$.
  \item
    $X$ is Segal, i.e.,
    $X_n\xrightarrow{\sim} X_1\times_{X_0}\dots \times_{X_0}X_1$
    (induced by the usual spine inclusion in $\Delta^n$).
  \item
    $X$ is complete, i.e.,
    $X_0\xrightarrow{\sim} {X_0}\times_{X_1}X_2\times_{X_1} X_2\times_{X_1} X_0$
    (induced by the usual simplicial presentation of the walking isomorphism).
  \end{itemize}
  Under this equivalence we can describe functorially for each $V$-category $C$
  \begin{itemize}
  \item
    the anima of objects as $C^\simeq\simeq C_0$, and
  \item
    for each $x,y\in C$ the corresponding hom-object as the fiber
    \begin{equation}
      C(x,y)\simeq \fib_{(x,y)}(C_1\to C_0\times C_0)
    \end{equation}
    (induced by the usual boundary inclusion in $\Delta^1$).
  \end{itemize}
\end{theorem}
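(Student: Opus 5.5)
This theorem is cited from Haugseng, so the plan is to reduce it to a comparison of three models of $V$-categories rather than prove everything from scratch. The first model is the Gepner--Haugseng algebras over $O(A)$. The second is Segal objects in $V$ whose object part $X_0$ is an anima. The third is the complete Segal objects among these. I would establish, in order:
\begin{enumerate}
\item an equivalence between $\flCat{V}$ and the category of Segal objects $X\colon\Delta^\op\to V$ with $X_0\in\An$;
\item the fact that this equivalence matches completeness on both sides;
\item the identification of objects and hom-objects as in the statement.
\end{enumerate}

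For step (1), fix the anima of objects $A$. An $O(A)$-algebra should correspond to a Segal object $X$ with $X_0\simeq A$. The dictionary sets
\[
X_n\simeq\coprod_{x_0,\dots,x_n\in A} D(x_0,x_1)\times\dots\times D(x_{n-1},x_n),
\]
where the coproduct over the anima $A^{n+1}$ is understood as a colimit in $V$ along $\An\hookrightarrow V$. Face maps come from composition and degeneracies from units.

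The key input is the descent property of an absolute distributor. Pulling back along $X_0\to *$ identifies $V_{/A}$ with $\Fun(A,V)$. Consequently, objects of $V$ over $X_0\times X_0$, equipped with a Segal-type composition, are the same as $A\times A$-indexed families with an $O(A)$-algebra structure. Haugseng's rectification compares the operadic description $\Alg(O(A);V)$ with algebras for the associative "$\Delta^\op_A$" pattern, and these are exactly Segal objects with $X_0=A$ fixed. Varying $A$ via the cartesian unstraightening then gives step (1) globally.

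Fully faithfulness of the final embedding follows once completeness is handled. Step (2) checks that the Gepner--Haugseng completeness of the underlying flagged $\An$-category is equivalent to the Rezk-style condition
\[
X_0\simeq X_0\times_{X_1}X_2\times_{X_1}X_2\times_{X_1}X_0 .
\]
Since $V(*,-)$ preserves limits and is the identity on animae, the right-hand side computes the anima of pairs $(x,y)$ together with an isomorphism in the underlying $\An$-category, i.e. $\coprod_{x,y}D^\inv(x,y)$. The condition therefore says precisely that $A(x,y)\to D^\inv(x,y)$ is an equivalence.

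Step (3) is immediate from the dictionary in step (1): $C_0\simeq A$, and the fiber of $C_1\to C_0\times C_0$ over $(x,y)$ is $D(x,y)$, again by descent.

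The main obstacle is step (1). Identifying $O(A)$-algebras with Segal objects requires the operadic rectification machinery, namely comparing the generalized non-symmetric operad $\Delta^\op_A$ with $O(A)$ and using descent to turn fibrewise data into objects of $V$ over animae. I would take that comparison from \cite{Haugseng-rectification} as a black box. The remaining steps are routine limit manipulations once descent for $V$ is granted.
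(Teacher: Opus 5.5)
Since the paper gives no argument beyond citing Haugseng, black-boxing the comparison between $O(A)$-algebras and Segal objects with $X_0\in\An$ follows the same route. Your steps (1) and (3) are a fair outline of that comparison. One small correction: the descent equivalence $\Fun(A,V)\simeq V_{/A}$ sends a family to its colimit over $A$, with inverse given by taking fibres over the points of $A$. It is not obtained by pulling back along $X_0\to *$.

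Step (2), however, has a gap. The statement asks that
$X_0\to X(J)\coloneqq X_0\times_{X_1}X_2\times_{X_1}X_2\times_{X_1}X_0$
be an equivalence \emph{in $V$}, but what you compute is $V(*,X(J))$. Your identification of $V(*,X(J))$ with the anima of pairs $(x,y)$ together with an isomorphism in the underlying category is fine. However, $V(*,-)$ detects equivalences only between animae, so you get just one implication: the condition in $V$ implies completeness in the Gepner--Haugseng sense. The converse needs $X(J)$ to lie in $\An$. You never show this, and it fails for a general absolute distributor.

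For a counterexample, take the $\infty$-topos $V=\Fun([1],\An)$, with $\An\hookrightarrow V$ the constant diagrams, so that $V(*,-)=\mathrm{ev}_0$. Let $X$ be the map from the nerve of the walking arrow to the nerve of the walking isomorphism, both with object set $\{0,1\}$. Its underlying Segal anima is the nerve of $[1]$, so $X$ is complete. Yet $X(J)$ is the non-constant diagram $\{0,1\}\to\{0,1\}^2$, so $X_0\to X(J)$ is not an equivalence in $V$.

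There are two ways to close the gap. You can read completeness as completeness of the underlying Segal anima $V(*,X_\bullet)$; for that reading your computation is exactly what is needed. Alternatively, for the cases $V=\An$ and $V=\Cat_d$ that the paper actually uses, you can prove separately that $X(J)$ is already an anima, e.g.\ by a 2-out-of-6 argument in the hom-categories showing that all higher cells of $X(J)$ are invertible.
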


\begin{example}
  Examples of absolute distributors $V$ are $V=\An$
  (so that \Cref{thm:V-cat-simplicial} subsumes \Cref{thm:Cat-An})
  and $V=\Cat_d$ which we will introduce momentarily
  (see also \cite[Definition~7.19]{Haugseng-rectification}).
\end{example}

\subsection{$d$-categories}
\label{sec:d-cats}

We just say ``$d$-category'' for what is usually called an $(\infty,d)$-category.

\begin{definition}
  \label{def:cat-d}
  By induction on $d\in \naturals$ 
  we define the category of \(d\)-categories as
  \begin{equation}
    \Cat_0\coloneqq \An
    \quad\text{and}\quad
    \Cat_d\coloneqq \Cat_{\Cat_{d-1}} \text{(for $d\geq 1$)},
  \end{equation}
  where we always equip $\Cat_d$ with the cartesian symmetric monoidal structure.
\end{definition}

\begin{remark}
  \label{rem:Cat-d-presentable}
  Since $(\An,\times)$ is presentably symmetric monoidal,
  the same is true for each $(\Cat_d,\times)$;
  in other words, $\Cat_d$ is presentable and cartesian closed.
  Moreover, it follows from \Cref{thm:Cat-An} that we have \(\Cat_1\simeq \Cat\).
\end{remark}

Denote by \(\pred{d}\coloneqq \max(0,d-1)\)
the predecessor function on natural numbers,
and \(\pred[n]{d}= \max(0,d-n)\) its $n$-fold iteration.

\begin{definition}
  \label{def:n-cat-arrows}
  Let \(d,n\in \naturals\) and let \(C\) be a $d$-category.
  \begin{itemize}
  \item
    The \emph{objects} of \(C\) are the objects of \(C^\simeq\).
    We write \(x\in C^\simeq\) or \(x\in C\) interchangeably.
    Note that when $d\leq 1$ this agrees with the old definition.
  \item
    A \emph{parallel pair of $0$-arrows} $(x,y)$ in \(C\)
    is just a pair of objects \(x,y\in C\).
    In this case we have the \(\pred{d}\)-category
    $C(x,y)$
    whose objects are called the \emph{$1$-arrows} of \(C\).
  \item
    For \(n\geq 1\), a \emph{parallel pair of $n$-arrows} \((x,y)\)
    consists of two objects $s,t\in C$ --- %
    which we drop from the notation --- %
    and a parallel pair \((x,y)\) of $(n-1)$-arrows in the
    $\pred{d}$-category $C(s,t)$.

    In this case we inductively define the \(\pred[(n+1)]{d}\)-category
    \begin{equation}
      C\arlev{n+1}(x,y)\coloneqq C(s,t)\arlev{n}(x,y)
    \end{equation}
    whose objects are called \emph{$(n+1)$-arrows}
    and depicted as $x\to y$.

  \item
    It is notationally convenient to declare that a parallel pair of $(-1)$-arrows
    \(()\) amounts to no data.
    Then we set \(C\arlev0()\coloneqq C\) and call its objects
    (which are just those of $C$) \emph{$0$-arrows}.
  \item
    We write $\Par_n C$ for the anima of parallel pairs of $n$-arrows of $C$,
    and $\Mor_n C$ for the anima of $n$-arrows.
  \item
    For each $x\in \Mor_n C$ and $k\geq n$ we denote
    its $k$-dimensional identity arrow by
    $\id^k_x\coloneqq \id_{\id^{k-1}_x}\colon \id^{k-1}_x\to \id^{k-1}_x$
    starting with $\id^n_x=x$ and $\id^{n+1}_x=\id_x$.
  \item
    For each $x\in \Mor_n C$ and $k>n$ we write
    $\End^k_C(x)\coloneqq C\arlev{n}(\id^{k-1}_x,\id^{k-1}_x)^\simeq$
    for the anima of $k$-dimensional endomorphisms of $x$.
  \end{itemize}
\end{definition}

\begin{remark}
  \begin{itemize}
  \item[]
  \item
    The reason we have to use the slightly clunky predecessor operation
    is that for $d\geq 1$, parallel arrows between two given objects in a $d$-category form a $(d-1)$-category,
    but for $d=0$ they again form a $0$-category.
    One could also avoid this annoyance by defining \(\Cat_d\coloneqq \An\)
    for all \(d\leq 0\).
  \item
    Explicitly unpacking the tail-recursion in \Cref{def:n-cat-arrows}
    we see that for $n\geq 0$ an $(n+1)$-arrow $f\colon x\to y$
    amounts to a sequence
    \begin{align}
      s_0,t_0&\in C,
      \\
      s_1,t_1 &\in C(s_0,t_0),
      \\
      \dots
      \\
      x,y &\in C\arlev{n}(s_{n-1},t_{n-1})\coloneqq C(s_0,t_0)\cdots (s_{n-1},t_{n-1}),
      \\
      f&\in C\arlev{n+1} (x,y)\coloneqq C(s_0,t_0)\cdots (s_{n-1},t_{n-1})(x,y),
    \end{align}
    where we usually omit all the \((s_k,t_k)\) from the notation.
  \item
    For all \(0\leq k < n\), we call \(s_k\) the \emph{$k$-source}
    and \(t_k\) the \emph{$k$-target} of \(x, y, f\);
    we call \(x\) and \(y\)
    the \emph{source} (short for $n$-source)
    and \emph{target} (short for $n$-target) of \(f\),
    respectively.
  \item
    Two $(n+1)$-arrows $f\colon x\to y$ and $g\colon y\to z$
    can be composed in the \(\pred[n]{d}\)-category
    \(C\arlev{n}(s_{n-1},t_{n-1})\)
    (where $s_{n-1}$ and $t_{n-1}$ are the $(n-1)$-dimensional source/target of
    $x,y,z$),
    which by convention is just \(C\arlev0()=C\) itself in the case $n=0$.
  \item
    The completeness condition which distinguishes
    an enriched category from a flagged one
    precisely says that every invertible $(n+1)$-arrow
    \(f\colon x\xrightarrow{\sim} y\) arises uniquely from an identification
    \(x\simeq y\), i.e., we have
    \begin{equation}
      C\arlev n(s_{n-1},t_{n-1})^\simeq(x,y)
      \xrightarrow{\simeq}
      C\arlev{n+1}^\inv(x,y)^\simeq.
    \end{equation}
    where
    \begin{equation}
      C\arlev{n+1}^\inv(x,y)^\simeq
      \subseteq
      C\arlev{n+1}(x,y)^\simeq
    \end{equation}
    denotes the subanima of those $(n+1)$-arrows \(f\colon x\to y\) which are invertible
  \item
    Let \(n\geq d\) and \(C\) a $d$-category.
    Then \(\pred[n]{d}=0\).
    Hence for every parallel pair \((x,y)\) of ${(n-1)}$-arrows,
    \(C\arlev{n}(x,y)\) is an anima,
    which means that all \emph{its} arrows --- %
    that are precisely the \((n+1)\)-arrows of \(C\) --- are invertible.
  \end{itemize}
\end{remark}

\subsection{Localizations and cores}

\begin{theorem}
  \label{thm:L-I-R}
  For each \(d\in \naturals\) we have adjunctions
  \begin{equation}
    \label{eq:L-I-R-adjunction}
    \begin{tikzcd}
      \Cat_{d}
      \ar[r,"I^{d+1}"]
      \ar[r,bend right=15, phantom,"\bot"]
      \ar[r,bend left=35, phantom,"\bot"]
      &
      \Cat_{d+1}
      \ar[l,bend right=60, "L_{d}"']
      \ar[l,bend left=40, "R_{d}"]
    \end{tikzcd}
  \end{equation}
  such that
  \begin{itemize}
  \item
    $I^{d+1}$ is fully faithful with essential image consisting
    of those $(d+1)$-categories $C$
    in which all $(d+1)$-arrows are invertible,
  \item
    the left adjoint functor $L_n$ preserves products.
  \end{itemize}
\end{theorem}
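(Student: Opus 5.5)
We argue by induction on $d$, realizing all three functors as change of enrichment and invoking \Cref{lem:enriched-adjunction-Cat}.

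\textbf{Base case $d=0$.} Here $\Cat_1\simeq\Cat$ by \Cref{rem:Cat-d-presentable}, and $I^1\colon\An\hookrightarrow\Cat$ is the usual inclusion. Its right adjoint is the core $R_0=(-)^\simeq$ and its left adjoint is the localization $L_0=|{-}|$. The essential image of $I^1$ consists of the categories in which every arrow is invertible. To see that $|{-}|$ preserves finite products, we use the simplicial description: $|C|$ is the colimit over $\Delta^\op$ of the complete Segal anima of $C$. Since $\Delta^\op$ is sifted, geometric realization commutes with finite products, and products of categories are computed levelwise.

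\textbf{Inductive step.} Suppose we have $L_{d-1}\dashv I^d\dashv R_{d-1}$ between $\Cat_{d-1}$ and $\Cat_d$, with $L_{d-1}$ preserving products and $I^d$ fully faithful. We first apply \Cref{lem:enriched-adjunction-Cat} to $L_{d-1}\dashv I^d$. Since $L_{d-1}$ preserves finite products, this yields an adjunction $\widehat{(L_{d-1})_*}\dashv I^d_*$ between $\Cat_d=\Cat_{\Cat_{d-1}}$ and $\Cat_{d+1}$, whose left adjoint again preserves products. We set $L_d\coloneqq\widehat{(L_{d-1})_*}$ and $I^{d+1}\coloneqq I^d_*$.

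Next we apply the lemma to $I^d\dashv R_{d-1}$. This requires $I^d$ to preserve finite products, which holds because $I^d$ is a right adjoint. The lemma then gives $\widehat{I^d_*}\dashv (R_{d-1})_*$. We must check that $\widehat{I^d_*}=I^d_*$, i.e.\ that $I^d_*$ already lands in complete flagged categories. This follows from the argument in the lemma's proof: $I^d$ is fully faithful and preserves the terminal object, so $\Cat_d(*,I^d-)\simeq\Cat_{d-1}(*,-)$. Hence the underlying flagged $\An$-category is unchanged, and completeness is preserved. We set $R_d\coloneqq (R_{d-1})_*$.

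\textbf{Full faithfulness of $I^{d+1}$.} The functor $I^{d+1}=I^d_*$ fixes the anima of objects and is fully faithful on each hom-object. Any map $(A,I^d_*D)\to(B,I^d_*E)$ consists of a map $A\to B$ together with an algebra map $I^d_*D\to f^*I^d_*E$. Because $I^d$ is fully faithful and product-preserving, it induces a fully faithful functor on $O(A)$-algebras, so these algebra maps correspond exactly to algebra maps $D\to f^*E$. Equivalently, the unit $\mathrm{id}\to R_dI^{d+1}$ is $(R_{d-1}I^d)_*$ applied to the identity, and this is an equivalence by induction.

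\textbf{Essential image.} A $(d+1)$-category $C$ lies in the image of $I^{d+1}$ iff each hom $C(x,y)$ lies in the image of $I^d$. By induction this means all $d$-arrows of each $C(x,y)$ are invertible, i.e.\ all $(d+1)$-arrows of $C$ are invertible. One direction is clear. For the converse, if every hom lies in the image of $I^d$, then the composition maps, being maps in $\Cat_d$ between objects of the full subcategory, lift uniquely, so $C$ is in the image of $I^d_*$.

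\textbf{Main obstacle.} The delicate points are making sure that completion does not interfere on the $R$ side (handled above) and verifying the essential-image description. The latter uses that an algebra over $O(A)$ with values in a full subcategory closed under products is an algebra in that subcategory.
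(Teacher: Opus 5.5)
Your proposal is correct and follows the paper's own proof. It uses the same induction with the same base case ($L_0=|{-}|$, $R_0=(-)^\simeq$, siftedness for products) and the same two applications of \Cref{lem:enriched-adjunction-Cat}, including the identification $\widehat{(I^d)_*}\simeq (I^d)_*$; you simply spell out in more detail the full faithfulness and essential-image steps that the paper states in a sentence.
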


\begin{proof}[Construction]
  We proceed by induction on $d$.
  \begin{itemize}
  \item
    For $d=0$,
    we take functor \(I^1\colon \An\hookrightarrow \Cat\) to just be the usual inclusion;
    $L_0=|{-}|$ and $R_0=(-)^\simeq$ are its left and right adjoint, respectively.
    It is a basic fact of category theory
    that the classifying space functor $|{-}|$ preserves products.\footnote{
      The usual way to see this is to compute the localization  $|C|$ of $C$
      as the geometric realization (=colimit)
      of its associated Segal simplicial anima
      and then note that $\Delta$ is sifted,
      so that geometric realizations preserve products.
    }

  \item
    For \(d\geq 1\) we apply \Cref{lem:enriched-adjunction-Cat}
    to the two adjunctions
    \begin{equation}
      \begin{tikzcd}
        \Cat_{d-1}
        \ar[r,bend right,"I^{d}"']
        \ar[r,phantom,"\bot"]
        &
        \ar[l,bend right,"L_{d-1}"']
        \Cat_{d}
      \end{tikzcd}
      \quad
      \text{and}
      \quad
      \begin{tikzcd}
        \Cat_{d-1}
        \ar[from=r,bend left,"R_{d-1}"]
        \ar[r,phantom,"\bot"]
        &
        \ar[from=l,bend left,"I^{d}"]
        \Cat_{d}
      \end{tikzcd}
    \end{equation}
    where the left adjoints $L_{d-1}$ and $I^d$ preserve products:
    the former by the induction hypothesis, the latter because it is also a right adjoint.
    This yields the desired adjunction \eqref{eq:L-I-R-adjunction}
    by setting
    \begin{equation}
      L_{d}\coloneqq \widehat{(L_{d-1})_*}
      \quad\text{,}\quad
      R_{d}\coloneqq (R_{d-1})_*
      \quad\text{and}\quad
      I^{d+1}\coloneqq \widehat{(I^{d})_* }\simeq (I^{d})_*
    \end{equation}
    and guarantees that $L_d$ preserves products.

    Finally, since $I^d$ is fully faithful by the inductive hypothesis,
    the same is true for $I^{d+1}=(I^d)_*$.
    The $(d+1)$-categories $C$ in its image
    are precisely those such that each pair of objects $x,y\in C$,
    the $d$-category $C(x,y)$ lies in the image of $I^d$;
    by the inductive hypothesis, this happens precisely
    if all $d$-arrows of all $C(x,y)$ --- %
    which are precisely all the $(d+1)$-arrows of $C$ --- %
    are invertible.
    \qedhere
  \end{itemize}
\end{proof}

\begin{notation}
  By abuse of notation we denote by \(I^n\) any composite of the form
  \begin{equation}
    I^n\colon \Cat_{d}\xrightarrow{I^{d+1}} \cdots \xrightarrow{I^n} \Cat_n
  \end{equation}
  (with \(d\leq n\)).
  Similarly, we write \(L_n\) or \(R_n\)
  for any composite of the form
  \begin{equation}
    L_n\colon \Cat_{d}\xrightarrow{L_{d-1}} \cdots \xrightarrow{L_n} \Cat_n
    \quad\text{or}\quad
    R_n\colon \Cat_{d}\xrightarrow{R_{d-1}} \cdots \xrightarrow{R_n} \Cat_n,
  \end{equation}
  respectively (with \(d\geq n\)).
  In other words, \(R_n, L_n, I^n\) always take values in $n$-categories
  irrespective of their source.
  If we ever want to keep track of the source of the functor,
  we write \(R_n^d, L_n^d, I^n_d\) instead\footnote{
    The convention is such that the bigger number is always on top.
  }.

  We call \(R_n\) and \(L_n\) the \emph{$n$-core} and \emph{$n$-localization}
  functor, respectively.
  Moreover, we view the categories
  \begin{equation}
    \An=\Cat_0\subset \Cat_1\subset\cdots \subset \Cat_{d-1}\subset \Cat_{d}\subset\cdots
  \end{equation}
  as contained in each other via the fully faithful functors \(I^d\),
  which we drop from the notation when convenient.
\end{notation}

\begin{remark}
  \label{rem:identify-R_0}
  The right adjoint to the canonical inclusion
  $I^d\colon \An\hookrightarrow \Cat_d$
  can be described in three different ways,
  which therefore are all identified
  (compatibly with the respective units).
  \begin{itemize}
  \item
    $\Cat_d(*,-)$,
    because $I^d$ is the unique colimit preserving functor that maps $*\mapsto *$,
    hence $I^d\simeq -\otimes *$
    (using the unique $\An$-tensoring on the presentable category $\Cat_d$).
  \item
    $R_0^d$, by the construction of \Cref{thm:L-I-R}.
  \item
    The projection map
    \((-)^\simeq\colon \Cat_d\subset \flCat{\Cat_{d-1}}\to \An\)
    (where on the right we have the defining cartesian fibration
    \((A,D)\mapsto A\)).
    This adjunction is exhibited pointwise for each $(C^\simeq,C)\in \Cat_d$
    by the unit map
    $C^\simeq \simeq (C^\simeq,C^\inv(-,-)^\simeq)\to (C^\simeq,C)$,
    where
    \begin{equation}
      C^\inv(-,-)^\simeq\colon O(C^\simeq)\to \An\subseteq \Cat_{d-1}
    \end{equation}
    is the subalgebra of $C\colon O(C^\simeq)\to \Cat_{d-1}$
    defined for each color $(x,y)\in C^\simeq\times C^\simeq$
    by the subanima
    \begin{equation}
      C^\inv(x,y)^\simeq \subseteq C(x,y)^\simeq\subseteq C(x,y)
    \end{equation}
    of invertible $1$-arrows.
  \end{itemize}
\end{remark}

From the construction of \Cref{thm:L-I-R}
we can easily deduce the following recursive description
of the $n$-core and $n$-localization functor.

\begin{lemma}[Objects and arrows of the $n$-core]
  \label{lem:arrows-of-n-core}
  Let \(0\leq n\leq d\) and \(C\) a \(d\)-category.
  \begin{enumerate}
  \item
    The counit \(R_nC\to C\)
    induces an equivalence \((R_nC)^\simeq\xrightarrow{\simeq} C^\simeq\) on objects.
  \item
    For each $k\geq 0 $ and each parallel pair \((x,y)\) of $(k-1)$-arrows in \(R_nC\),
    it induces equivalences 
    \begin{equation}
      (R_nC)\arlev{k}(x,y)
      \xrightarrow{\sim} 
      \begin{cases}
        R_{n-k}(C\arlev{k}(x,y)),\quad \text{for } k\leq n
        \\
        C^\inv\arlev{k}(x,y)^\simeq, \quad\text{for } k > n
      \end{cases}
    \end{equation}
    of $\pred[k]{n}$-categories.
  \end{enumerate}
\end{lemma}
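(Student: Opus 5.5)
Since $R_n^d$ is by definition the composite $R_n\circ R_{n+1}\circ\cdots\circ R_{d-1}$, and each factor is a change of enrichment, the plan is a double induction. The outer induction is on the number $d-n$ of single steps. The inner induction is on $k$, using the tail-recursive definition $C\arlev{k}(x,y)=C(s,t)\arlev{k-1}(x,y)$. First we treat a single step $R_{d-1}\colon\Cat_d\to\Cat_{d-1}$, and only then compose.

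\emph{Single step.} For $d=1$, $R_0=(-)^\simeq$ and the counit $C^\simeq\to C$ is the identity on objects. For $k\geq 1$ the arrows of the anima $C^\simeq$ are identifications, and by univalence these are exactly the invertible arrows $C^\inv(x,y)^\simeq$ (see \Cref{rem:identify-R_0}). For $d\geq 2$ the construction of \Cref{thm:L-I-R} gives $R_{d-1}=(R_{d-2})_*$. Change of enrichment lies over $\An$, so the anima of objects is unchanged; this is part (1). The counit of $I^d_*\dashv (R_{d-2})_*$ is $(I^{d-1})_*(R_{d-2})_*C\to C$, which is the identity on $C^\simeq$ and the counit of $I^{d-1}\dashv R_{d-2}$ on each hom-object. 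Hence $(R_{d-1}C)(s,t)\simeq R_{d-2}(C(s,t))$, which is the case $k=1$. For larger $k$ we apply the inductive hypothesis (single step, one dimension lower) to the $(d-1)$-category $C(s,t)$, with the pair $(x,y)$ one level lower. The hom-level indices $n-1$, $k-1$ then reproduce the two cases: $k\leq n$ gives $R_{n-k}$, and $k>n$ gives invertible arrows. At the bottom, $R_0$ produces the core, and all higher arrows of an anima are identifications, i.e.\ the invertible ones by completeness.

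\emph{Composition.} For general $n<d$ we write $R_n^d=R_n^{d-1}\circ R_{d-1}^d$ and apply the statement to each factor. For $k\leq n$ we obtain $R_{n-k}(R_{d-1-k}(C\arlev{k}(x,y)))\simeq R_{n-k}(C\arlev{k}(x,y))$, using the notational convention that composites of cores are cores. For $k>n$ we get the invertible $k$-arrows of $R_{d-1}C$. These are the invertible $k$-arrows of $C$ whose boundary lies in $R_{d-1}C$, since invertibility at level $k$ only involves composites and identities at levels $\geq k$. Also $R_{d-1}$ is fully faithful on all arrows of dimension $<d$ and keeps only the invertible arrows of dimension $d$.

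The hardest step is the identification of the counit under change of enrichment in the single step. Here the completion $\widehat{(-)}$ in \Cref{lem:enriched-adjunction-Cat} could in principle interfere. It does not, because $(I^{d-1})_*$ already lands in complete objects, and $(R_{d-2})_*$ preserves completeness (its underlying flagged $\An$-category is unchanged). So the counit really is the hom-wise counit, and it is an equivalence on objects.
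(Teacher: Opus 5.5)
Your proof is correct, but it is organized differently from the paper's.

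\textbf{How the paper argues.} The paper never splits $R_n^d$ into single steps for part (2). For $n\geq 1$ it uses directly that $R_n^d\simeq (R_{n-1}^{d-1})_*$, since change of enrichment is functorial in the enriching functor. This gives the $k=1$ case for all $n<d$ at once; the case $n=0$ is just univalence. Higher $k$ are then handled by peeling off the \emph{top} level: write $C\arlev{k}(x,y)=C\arlev{k-1}(s,t)(x,y)$ with $(s,t)$ of dimension $k-2$, apply the $(k-1)$ case to $(s,t)$, and then apply the $k=1$ case to the hom-category $C\arlev{k-1}(s,t)$. What this buys is that the paper never compares invertibility in two different categories, except at the very bottom. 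There it uses that $C^\inv(s,t)^\simeq\hookrightarrow C(s,t)^\simeq$ is an embedding of animae, together with univalence of $C(s,t)$.

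\textbf{What your route costs.} You prove a single step via the hom-wise counit of $(I^{d-1})_*\dashv (R_{d-2})_*$, induct along the tail recursion, and then compose $R_n^{d-1}\circ R_{d-1}^d$. This is clean, but the composition step for $k>n$ needs an extra fact: $R_{d-1}$ does not change which $k$-arrows are invertible. Your justification is terse but sound.
\begin{itemize}
\item For $k<d$, invertibility of a $k$-arrow is read off the underlying flagged $\An$-category of $C\arlev{k-1}(s,t)$. Since $(R_{d-1}C)\arlev{k-1}(s,t)\simeq R_{d-k}(C\arlev{k-1}(s,t))$ with $d-k\geq 1$, that underlying flagged $\An$-category is unchanged.
\item For $k\geq d$, every $k$-arrow of the $(d-1)$-category $R_{d-1}C$ is invertible, and your single-step case already identifies them with $C^\inv\arlev{k}(x,y)^\simeq$.
\end{itemize}

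\textbf{One step to spell out.} In your base case $d=1$, $k\geq 2$, the phrase ``by completeness'' hides the same point the paper makes explicit. Higher identifications in $C^\simeq(s,t)$ agree with those in $C(s,t)$ because $C^\simeq(s,t)\simeq C^\inv(s,t)^\simeq$ is a union of path components of the anima $C(s,t)$. Your single-step induction in dimension $d\geq 2$ bottoms out in exactly this case, so it should be stated.
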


\begin{remark}
  The lemma can be summarized as saying that
  \(R_nC\) is the sub-$n$-category of \(C\)
  with the same $k$-arrows for $k\leq n$
  and only the invertible $k$-arrows for $k>n$.
  
  The identifications of \Cref{lem:arrows-of-n-core}
  are well-formed, because inductively we can
  interchangeably treat \((x,y)\)
  either as parallel $(k-1)$-arrows in $R_nC$
  or as parallel $(k-1)$-arrows in $C$ (which are invertible for $k-1>n$).
\end{remark}

\begin{proof}
  \begin{enumerate}
  \item
    By decomposing the counit as $R_nC\to R_{n+1}C\to \dots \to C$,
    it suffices to show the statement for $d=n+1$.
    For $n=0$ and $d=1$, we have $R_0C=C^\simeq$ by definition.
    For $d\geq 2$,
    If we view $C=(C^\simeq,C)$ as a $C^\simeq$-flagged $\Cat_{d-1}$-category,
    then by construction, the counit $R_nC\to C$ is just the map
    \begin{equation}
      (R_{n-1})_*(C^\simeq,C)\coloneqq (C^\simeq,R_{n-1}\circ C)\to (C^\simeq,C)
    \end{equation}
    induced by the counit transformation
    $I^{d-1}R_{n-1}\to \id\colon \Cat_{d-1}\to \Cat_{d-1}$;
    here $R_{n-1}\circ C$ denotes the composite of the operad map
    (a.k.a.\ $O(C^\simeq)$-algebra)
    $C\colon O(C^\simeq)\to (\Cat_{d-1},\times)$
    with the product-preserving functor $R_{n-1}$.
    From this description, it is manifest that it is the identity on objects.
  \item
    The case $n=d$ is vacuous, as is the case $k=0$.

    The case $0=n<d$ and $k=1$ is
    \begin{equation}
      C^\simeq(x,y)\simeq (R_0C)(x,y)
      \xrightarrow{\sim}
      C^\inv(x,y)^\simeq,
    \end{equation}
    (using the identification of \Cref{rem:identify-R_0})
    which is precisely the univalence condition that distinguishes
    $d$-categories from flagged $\Cat_{d-1}$-categories.

    In the case $1\leq n$ and $k=1$,
    the counit is the map $R_nC\simeq (R_{n-1})_*(C)\to C$
    which for every pair of objects $(x,y)$ of $C$
    induces the equivalence
    \begin{equation}
      (R_nC)(x,y)\xrightarrow{\sim}R_{n-1}(C(x,y)).
    \end{equation}

    We prove the remaining cases by double induction on $d$ and $k$.
    Let $k\geq 2$ and denote by $(s,t)$ the $(k-2)$-dimensional source
    and target of $(x,y)$.
    Then by inductive hypothesis applied to $(s,t)$
    (instead of $(x,y)$, by induction on $k$), we have the equivalence
    \begin{equation}
      (R_nC)(s,t)\xrightarrow{\sim}
      \begin{cases}
        R_{n-k+1}(C(s,t)),
        \quad \text{for } k-1\leq n
        \\
        C^\inv(s,t)^\simeq
        \quad \text{for } k-1> n
      \end{cases}
    \end{equation}
    We pass to hom-$\pred[k]{n}$-categories and treat the two cases separately:
    In the first case ($k\leq n+1$) we have
    \begin{equation}
      (R_nC)(s,t)(x,y)
      \xrightarrow{\sim}
      R_{n-k+1}(C(s,t))(x,y)
      \xrightarrow{\sim}
      \begin{cases}
        R_{n-k}(C(s,t)(x,y))
        \quad\text{for } k\leq n
        \\
        C(s,t)^\inv(x,y)^\simeq
        \quad\text{for } k=n+1
      \end{cases}
    \end{equation}
    where the second equivalence is the inductive hypothesis
    (for $C(s,t)$ instead of $C$, by induction on $d$).
    In the second case ($k>n+1$), $x,y\colon s\to y$ are then invertible
    and we have
    \begin{equation}
      (R_nC)(s,t)(x,y)\xrightarrow{\sim}C^\inv(s,t)^\simeq(x,y)
      \xrightarrow{\sim}
      C(s,t)^\simeq(x,y)
      \xrightarrow{\sim}
      C(s,t)^\inv(x,y)
    \end{equation}
    justified as follows:
    the middle equivalence follows from the fact that
    $C^\inv(s,t)^\simeq\hookrightarrow C(s,t)^\simeq$
    is an embedding of anima,
    the rightmost equivalence is univalence for the $\pred[(k-1)]{n}$-category $C(s,t)$.
  \end{enumerate}
\end{proof}

\begin{lemma}[Objects and arrows of the $n$-localization]
  \label{lem:arrows-of-n-localization}
  Let \(0\leq n\leq d\) and \(C\) a \(d\)-category.
  \begin{enumerate}
  \item
    The unit \([-]\colon C\to L_n C\) is surjective on objects.
  \item
    For each \(n \geq k\geq 0\) and each parallel pair
    \((x,y)\) of \((k-1)\)-arrows in \(C\), it induces an equivalence
    \begin{equation}
      L_{n-k}(C\arlev{k}(x,y))\xrightarrow{\sim} (L_n C)\arlev{k}([x],[y])
    \end{equation}
    of \((n-k)\)-categories.
  \end{enumerate}
\end{lemma}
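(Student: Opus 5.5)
We mirror the proof of \Cref{lem:arrows-of-n-core}, replacing the counit of the core by the unit of the localization, and using the explicit construction $L_{m}\coloneqq\widehat{(L_{m-1})_*}$ from \Cref{thm:L-I-R}. Writing the unit $C\to L_nC$ as the composite $C\to L_{d-1}C\to\dots\to L_nC$, and noting that surjectivity on objects and the claimed equivalences compose along this chain, we first reduce to the case $d=n+1$; the case $n=d$ is vacuous. The general statement for the composite then follows by composing, using that $L_{n-k}$ of the composite is the composite of the $L$'s.

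For $d=n+1$ we unwind the unit $C\to L_nC$. It factors as
\begin{equation}
  (C^\simeq,C)\to (C^\simeq,L_{n-1}\circ C)=(L_{n-1})_*(C)\to \widehat{(L_{n-1})_*(C)}=L_nC,
\end{equation}
where the first map is the identity on the anima of objects and is induced by the unit of $L_{n-1}\dashv I^n$ on hom-$(d-1)$-categories, and the second map is the completion map of \Cref{thm:complete-localization}. For $n=0$ we instead have $L_0C=|C|$ and simply use that $C^\simeq\to|C|$ is surjective. This holds because $|C|$ is the geometric realization of the Segal anima and $C_0\to|C_\bullet|$ is surjective (every point of a realization is hit by $0$-simplices). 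In all cases the completion map is a complete equivalence, hence surjective on objects and fully faithful, and the first map is the identity on objects. This proves (1) and also gives the case $k=0$ of (2), which is tautological.

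For $k=1$ and $n\geq1$, full faithfulness of the completion map combined with the first map gives $L_{n-1}(C(x,y))\xrightarrow{\sim}(L_{n-1})_*(C)(x,y)\xrightarrow{\sim}(L_nC)([x],[y])$. We then proceed by double induction on $d$ and $k$, exactly as for the core. For $k\geq2$ with $(k-2)$-source and target $(s,t)$, the inductive hypothesis gives $L_{n-k+1}(C(s,t))\simeq (L_nC)([s],[t])$. Passing to homs at $(x,y)$ and applying the $k=1$ case to the $(d-1)$-category $C(s,t)$, with $n-k+1$ in place of $n$, yields $L_{n-k}(C(s,t)(x,y))\simeq L_{n-k+1}(C(s,t))([x],[y])$. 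It remains to check that the identification is induced by the unit, i.e.\ that the hom of the unit of $L_n$ is the unit of $L_{n-1}$. This naturality bookkeeping is the only real obstacle, and it is handled by the explicit factorization above.
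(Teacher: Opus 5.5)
Your proof is correct. For part (2) it is essentially the paper's argument. The case $k=1$ comes from full faithfulness of the completion map $(L_{n-1})_*C\to L_nC$, and for $k\geq 2$ you use the same two-step composite through $L_{n-k+1}(C\arlev{k-1}(s,t))([x],[y])$.

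The only organizational difference in (2) is that you first reduce to a single step $d=n+1$ and then compose along $C\to L_{d-1}C\to\dots\to L_nC$. The paper instead applies the complete equivalence $(L_{n-1})_*C\to L_nC$ directly for arbitrary $d$. In doing so it tacitly identifies $L^d_n$ with $\widehat{(L^{d-1}_{n-1})_*}$, since both are left adjoint to $I^d_n=(I^{d-1}_{n-1})_*$.

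Part (1) is where you genuinely differ. The paper argues purely formally. The full sub-$n$-category $D\subseteq L_nC$ spanned by the image of $C^\simeq$ receives a factorization of the unit, and the universal property forces $L_nC\to D\hookrightarrow L_nC$ to be the identity, so $D=L_nC$. This uses nothing about how $L_n$ is built, and needs neither the reduction to $d=n+1$ nor a separate case $n=0$.

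You instead read surjectivity off the construction. For $n\geq 1$ the unit is the identity on objects followed by a completion map, which is surjective on objects by \Cref{thm:complete-localization}. For $n=0$ you use that $C_0\to|C_\bullet|$ is $\pi_0$-surjective for any simplicial anima. This costs you the explicit form of the unit of the composite adjunction $\widehat{F_*}\dashv G_*$ and an extra base case. In return, the same explicit factorization also settles the compatibility of the hom-level equivalences with the unit, which the paper leaves implicit.

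One small slip: the category to which you apply the $k=1$ case is $C\arlev{k-1}(s,t)$, a $(d-k+1)$-category rather than a $(d-1)$-category. This is harmless, since in your reduced setting it is an $(n'+1)$-category with $n'=n-k+1\geq 1$.
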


\begin{proof}
  Note that the case $n=d$ is vacuous because in this case $L^d_n\simeq \id_{\Cat_d}$.
  So let us assume $n<d$.

  \begin{enumerate}
  \item
    Consider the full sub-$n$-category $D\subseteq L_nC$
    spanned by the image of the map $[-]\colon C^\simeq \to (L_nC)^\simeq$.
    Then by universality, the functor $[-]\colon L\to D$
    factors through the unit $[-]\colon C\to L_nC$
    so that we have a commutative diagram
    \begin{equation}
      \begin{tikzcd}
        &C
        \ar[ld,"{[-]}"']
        \ar[rd,"{[-]}"]
        \ar[d]
        \\
        L_nC\ar[r]&D\ar[r,hookrightarrow]&L_nC,
      \end{tikzcd}
    \end{equation}
    where the lower composite is (uniquely identified with) the identity
    $L_nC\to L_n C$ (again by universality).
    It follows that $D=L_nC$, which is what we needed to show.
  \item
    The case $d\leq 1$ is vacuous.
    We prove the case $d\geq 2$ by induction.
    We recall that $L_nC$ comes equipped with
    the defining complete equivalence
    \begin{equation}
      \label{eq:completion-L-star}
      (L_{n-1})_*C\to  L_nC
    \end{equation}
    (of flagged $\Cat_{d-1}$-categories)
    from the construction of \Cref{thm:L-I-R}
    (which is an instance of \Cref{thm:complete-localization}).
    For any parallel pair $(x,y)$ of objects of $C$
    (or, equivalently, of $(L_{n-1})_*C$)
    the complete equivalence \eqref{eq:completion-L-star}
    induces an equivalence
    \begin{equation}
      ((L_{n-1})_*C)(x,y) \coloneqq L_{n-1}(C(x,y))\xrightarrow{\sim} (L_nC)([x],[y])
    \end{equation}
    of $(n-1)$-categories which yields the case $k=1$.
    The case $k=0$ is vacuous.
    Finally, we prove the case $k\geq 2$ by induction.
    Denote by $(s,t)$ the $(k-2)$-dimensional source and target of $(x,y)$;
    then the desired equivalence is the composite
    \begin{equation}
      L_{n-k}(C(s,t)(x,y))
      \xrightarrow{\sim}
      (L_{n-k+1}(C(s,t)))([x],[y])
      \xrightarrow{\sim}
      (L_nC)([s],[t])([x],[y]),
    \end{equation}
    where the first equivalence is the inductive hypothesis applied to the
    $(d-k+1)$-category $C(s,t)$
    (instead of $C$, by induction on $d$)
    and the second equivalence 
    is the inductive hypothesis applied to $(s,t)$
    (instead of $(x,y)$, by induction on $k$)
    and then passing to homs.
  \end{enumerate}
\end{proof}

\begin{lemma}[Objects and arrows of sequential (co)limits]
  \label{lem:sequential-homs}
  Fix $d\in \naturals$.
  \begin{enumerate}
  \item
    Consider a sequential limit
    $C\coloneqq \lim_{k:\omega^\op}C_k$
    of $d$-categories.
    We have $C^\simeq\xrightarrow{\sim} \lim_k (C_k)^\simeq$,
    and, for each $x,y\in C$, 
    the induced equivalence
    \begin{equation}
      C(x,y)\xrightarrow{\simeq}\lim_{k:\omega^\op}C_k(\pr_kx,\pr_ky)
    \end{equation}
    of $\pred{d}$-categories.
  \item
    Consider a sequential limit
    $C\coloneqq\colim_{k:\omega}C_k$
    of $d$-categories.
    We have $\colim_k (C_k)^\simeq\xrightarrow{\sim} C^\simeq$,
    and, for each $x,y\in C_0$, the induced equivalence
    \begin{equation}
      \colim_{k:\omega} C_k(x_k,y_k)\xrightarrow{\simeq}C(x_\infty,y_\infty)
    \end{equation}
    of $\pred{d}$-categories,
    where we denote by $x_k$ and $x_\infty$ the image of $x\in C_0$
    in $C_k$ and $C$, respectively.
  \end{enumerate}
\end{lemma}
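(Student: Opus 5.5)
Both parts reduce to statements about $\Cat_{d-1}$-enriched categories. We use the simplicial description of \Cref{thm:V-cat-simplicial} (with $V=\Cat_{d-1}$ an absolute distributor). Under this description a $d$-category $C$ is a simplicial object $C_\bullet$ in $\Cat_{d-1}$ with $C_0$ an anima. Moreover, $C^\simeq\simeq C_0$ and $C(x,y)\simeq\fib_{(x,y)}(C_1\to C_0\times C_0)$. The plan is to show that the full subcategory $\Cat_d\subseteq\Fun(\Delta^\op,\Cat_{d-1})$ is closed under the relevant sequential (co)limits, computed levelwise. The formulas for objects and homs then follow by commuting (co)limits with fibers.

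\emph{Limits.} Since $\Cat_d\hookrightarrow\Fun(\Delta^\op,\Cat_{d-1})$ is a reflective embedding (both sides are presentable, and the embedding is a right adjoint, e.g.\ by the completion of \Cref{thm:complete-localization}), limits in $\Cat_d$ are computed levelwise. Alternatively, one checks directly that levelwise limits preserve all three conditions. First, $\An\subseteq\Cat_{d-1}$ is closed under limits, being the image of the right adjoint $I$ (it also has a left adjoint $L_0$). Second, the Segal and completeness conditions are themselves limit conditions, and limits commute with limits. Hence $C_0\simeq\lim_k(C_k)_0$, which gives $C^\simeq\simeq\lim_k C_k^\simeq$. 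Taking fibers of $C_1\to C_0\times C_0$ commutes with the limit, which yields $C(x,y)\simeq\lim_k C_k(\pr_kx,\pr_ky)$.

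\emph{Colimits.} Here the argument is less formal. Form the levelwise colimit $X_\bullet=\colim_k (C_k)_\bullet$ in $\Fun(\Delta^\op,\Cat_{d-1})$. I claim $X$ already satisfies all three conditions:
\begin{itemize}
\item $X_0$ is an anima, because $\An\subseteq\Cat_{d-1}$ is closed under colimits (it is the image of a left adjoint).
\item The Segal condition holds because sequential (filtered) colimits commute with finite limits in $\Cat_{d-1}$. This uses that $\Cat_{d-1}$ is compactly generated, or more concretely, by induction on $d$ using this very lemma at dimension $d-1$ and the analogous fact for animae.
\item Completeness is again a finite limit condition, so it is preserved for the same reason.
\end{itemize}
Thus $X$ lies in $\Cat_d$ and is the colimit there, with $C^\simeq\simeq\colim_k C_k^\simeq$. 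For homs, fix $x,y\in C_0$ and use that filtered colimits commute with fibers over a compatible system of base points $(x_k,y_k)$. This gives $C(x_\infty,y_\infty)\simeq\colim_k C_k(x_k,y_k)$.

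\emph{Main obstacle.} The delicate point is the commutation of sequential colimits with finite limits (pullbacks) in $\Cat_{d-1}$ rather than in $\An$. I would settle it by induction on $d$ with a strengthened hypothesis: sequential colimits in $\Cat_{d-1}$ are computed by the colimit of object animae together with the colimit of hom-$(d-2)$-categories. Pullbacks in $\Cat_{d-1}$ are computed on objects and homs, since objects are given by a right adjoint and homs by fibers. Given this description, commuting the sequential colimit past the pullback reduces to the same statement one dimension lower, and eventually to filtered colimits commuting with pullbacks in $\An$. One subtlety remains: homs out of a colimit are a priori only described for objects coming from some stage. This is harmless, because every object of $X_0=\colim_k(C_k)_0$ merely lifts to some stage, and the statement is a proposition, so we may choose such a lift.
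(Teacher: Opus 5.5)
Your proposal is correct and follows the paper's proof. Embed $\Cat_d$ into $\Fun(\Delta^\op,\Cat_{d-1})$ via \Cref{thm:V-cat-simplicial}, check that the three conditions cutting out the essential image are stable under limits and sequential colimits so that both are computed levelwise, and then read off objects and homs by evaluating at $\Delta^0$ and taking fibers of $C(\Delta^1)\to C(\Delta^0)\times C(\Delta^0)$. The only difference is that you explicitly justify why sequential colimits commute with finite limits in $\Cat_{d-1}$ (via compact generation, or a joint induction on $d$), a fact the paper uses without comment.
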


\begin{proof}
  Since $\Cat_{d-1}$ is an absolute distributor
  (see \cite[Definition~7.19]{Haugseng-rectification}),
  we have the full embedding
  \begin{equation}
    \Cat_d\coloneqq \Cat_{\Cat_{d-1}}\subseteq\Fun(\Delta^\op, \Cat_{d-1})
  \end{equation}
  of \Cref{thm:V-cat-simplicial}.
  All three conditions describing the essential image
  are preserved by limits and filtered colimits
  (the first two are defined in terms of finite limits,
  and $\An\hookrightarrow \Cat_{d-1}$ has both adjoints).
  Thus we may compute limits and filtered colimits of $d$-categories
  in this ambient category $\Fun(\Delta^\op,\Cat_{d-1})$.
  In this ambient, the core functor is just $C\mapsto C(\Delta^0)$
  and the hom-$\pred{d}$-categories of $C$ are obtained as fibers of the map
  $C(\Delta^1)\to C(\boundary\Delta^1)\simeq C(\Delta^0)\times C(\Delta^0)$.
  \Cref{lem:sequential-homs} thus follows from the fact
  that both of these operations (evaluation and fibers)
  commute with limits and filtered colimits.
\end{proof}

\section{Surjectivity}
\label{sec:surjectivity}

The key notion that we are going to use to study $d$-categories
in the limit \(d\to \infty\) is that of $n$-surjective maps for various
$n\in \naturals$ (which is a number that in general is unrelated to \(d\)).

\begin{remark}
  The notion of $n$-surjectivity has also been studied extensively
  in \cite[Section~5.3]{LMRSW} and in \cite{Loubaton-effectivity}.
  Note that while Loubaton uses the same numbering convention as us,
  the one in \cite{LMRSW} differs by $1$:
  they call $(n-1)$-surjective what we would call $n$-surjective.

  Many of the results in this section can also be found in or deduced from those works;
  but since most proofs are rather elementary
  we prefer to record them here for completeness.
\end{remark}

\begin{definition}
  \label{def:surj}
  Let \(n,d\in \naturals\).
  A map \(F\colon C\to D\) of $d$-categories is called $n$-surjective if
  \begin{itemize}
  \item
    \(F\) is surjective on objects, and
  \item
    if $n\geq 1$, for each $x,y \in C$,
    the induced map \(C(x,y)\to D(Fx,Fy)\) is an $(n-1)$-surjective map of
    $\pred{d}$-categories.
  \end{itemize}
\end{definition}

\begin{remark}
  \label{rem:surjectivity-explicit}
  Unraveling the recursion, we see that \(F\colon C\to D\)
  is $n$-surjective if and only if,
  for each $0\leq k\leq n$
  and each parallel pair $(x,y)$ of $(k-1)$-arrows,
  the induced map
  \(C\arlev{k}(x,y)^\simeq\to D\arlev{k}(x,y)^\simeq\)
  of animae is surjective.
  Recall that, for $k=0$, this is just the map
  \(C^\simeq\to D^\simeq\) on objects by convention
  (a parallel pair of $(-1)$-arrows is no data).

  Explicitly, \(F\colon C\to D\) is $n$-surjective if and only if
  for every object $z\in D$ there merely exists \(x\in C\) with
  \(Fx\simeq z\),
  and for every $k$-arrow (\(1\leq k\leq n\))
  of the form \(f\colon Fx\to Fy\) of $D$
  there merely exists a $k$-arrow \(f'\colon x\to y\) in \(C\)
  with \(Ff'\simeq f \colon x\to y\).

  In particular, $n$-surjectivity gets progressively stronger as $n$ increases.
\end{remark}

\begin{lemma}
  \label{lem:n-surj-anima}
  A map \(F\colon A\to B\) of animae is \(n\)-surjective
  if and only if for every \(0\leq k\leq n\),
  and every lifting problem
  \begin{equation}
    \label{eq:sphere-lifting-for-k-surjectivity}
    \begin{tikzcd}
      {\sphere{k-1}}
      \ar[r,"u"]
      \ar[d]
      &
      {A}
      \ar[d,"F"]
      \\
      {*}
      \ar[ur,dashed,"f'"]
      \ar[r,"f"]
      &
      {B}
    \end{tikzcd}
  \end{equation}
  there merely exists a dashed solution as indicated.
\end{lemma}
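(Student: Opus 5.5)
We argue by induction on $n$, comparing the lifting problems for $F$ with those for the induced maps on path animae. For an anima $A$, the recursive \Cref{def:surj} asks that $F$ be surjective on objects and, for $n\geq1$, that for all $a,a'\in A$ the map $A(a,a')\to B(Fa,Fa')$ of identification animae be $(n-1)$-surjective.

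For $n=0$ the only case is $k=0$. Here $\sphere{-1}=\emptyset$, so a lifting problem is just a point $f\in B$. A merely existing solution is a point $f'\in A$ with $Ff'\simeq f$, which is exactly surjectivity of $F$ on objects.

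For the inductive step, the case $k=0$ is again surjectivity on objects. For $1\leq k\leq n$ we use the decomposition $\sphere{k-1}\simeq *\sqcup_{\sphere{k-2}}*$ as the suspension of $\sphere{k-2}$, together with $\sphere{-1}=\emptyset$ and $\sphere{0}=*\sqcup *$.

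A map $u\colon\sphere{k-1}\to A$ amounts to points $a,a'\in A$ (the images of the two poles) and a map $v\colon\sphere{k-2}\to A(a,a')$. An extension of $Fu$ to $*$, i.e.\ the square in \eqref{eq:sphere-lifting-for-k-surjectivity}, amounts to a point $b\in B$ with identifications $Fa\simeq b\simeq Fa'$. Contracting along the first identification, this is the same as a point $f\in B(Fa,Fa')$ with a null-homotopy of $Fv$ at $f$, i.e.\ a square for $\sphere{k-2}\to *$ against $A(a,a')\to B(Fa,Fa')$.

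The same translation applies to solutions. A filler $*\to A$ compatible with $u$ amounts to $a''\in A$ with identifications $a\simeq a''\simeq a'$, which contracts to a point of $A(a,a')$ filling $v$ compatibly with $f$. Thus lifting problems of level $k$ for $F$ with poles $a,a'$ correspond bijectively, solutions included, to lifting problems of level $k-1$ for $A(a,a')\to B(Fa,Fa')$. Since all of this is formal pasting of pushouts and pullbacks in $\An$, the correspondence is an equivalence of animae of solutions, and hence respects mere existence.

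By the inductive hypothesis, $A(a,a')\to B(Fa,Fa')$ is $(n-1)$-surjective if and only if all its lifting problems of levels $0,\dots,n-1$ merely have solutions. These correspond exactly to the problems of levels $1,\dots,n$ for $F$ with endpoints $a,a'$. Every lifting problem for $F$ with $k\geq1$ has some pair of poles $(a,a')$, namely the values of $u$ at the two poles. So quantifying over all $a,a'$ gives the claim.

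The main obstacle is the careful identification of the anima of squares and fillers for $F$ with that for the map on path animae. In particular, one must check that contracting the endpoint identification, i.e.\ the based path space, does not lose any lifting problems. I would state this as an equivalence of fiber animae over $(a,a')\in A\times A$, obtained from the pushout description of $\sphere{k-1}$ and pasting of pullback squares.
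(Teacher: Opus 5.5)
Your proposal is correct and is essentially the paper's argument. The paper likewise uses the iterated-suspension cell structure on $S^{k-1}$ (two cells in each dimension, plus one $k$-cell for the disk) to identify maps $S^{k-1}\to A$, squares, and dashed lifts with, respectively, parallel pairs of $(k-1)$-arrows, $k$-arrows $Fx\to Fy$, and lifts $x\to y$; it then compares this with the unraveled description of $n$-surjectivity, and your induction through the maps $A(a,a')\to B(Fa,Fa')$ is just a more explicit rendering of the paper's ``one sees inductively''.
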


\begin{proof}
  Let \(F\colon A\to B\) be a map of animae and \(k\in\naturals\).
  We consider the standard cell structure on the $(k-1)$-sphere \(\sphere{k-1}\)
  with two cells in each dimension \(0,\dots, k-1\);
  its extension  \(\sphere{k-1}\to \disk{k}\simeq *\)
  to a disk is obtained by attaching a single \(k\)-cell.
  Using these cell structures one sees inductively that a map \(u\colon \sphere{k-1}\to A\)
  amounts to a parallel pair
  \((x,y)\) of (necessarily invertible) $(k-1)$-arrows of \(A\).
  Moreover, a solid square \eqref{eq:sphere-lifting-for-k-surjectivity}
  then amounts to an (invertible) $k$-arrow \(f\colon Fx\xrightarrow{\sim} Fy\)
  in \(B\),
  and a dashed lift as indicated amounts to an (invertible)
  $k$-arrow \(f'\colon x\xrightarrow{\sim} y\)
  with \(Ff'\simeq f\colon Fx\to Fy\).

  Thus we see that \(F\colon A\to B\) is $n$-surjective if and only if
  for each \(0\leq k\leq n\) and each solid square as in
  \eqref{eq:sphere-lifting-for-k-surjectivity},
  there merely exists a dashed lift as indicated.
\end{proof}

\begin{remark}
  \label{rem:surjective-anima}
  \Cref{lem:n-surj-anima} says that a map $F$ of animae is $n$-surjective
  if and only if
  it is $n$-connected\footnote{
    Nowadays usually called ``$(n-1)$-connected'' in the context of topos theory%
  } in the classical sense of algebraic topology.
  Equivalently, the induced map
  \(\pi_k(F)\colon \pi_k(A)\to\pi_k(B)\) on homotopy groups is
  \begin{itemize}
  \item
    a bijection for \(0\leq k<n\)
  \item
    a surjection for \(k=n\)
  \end{itemize}
  (for all choices of base point);
  see any book on algebraic topology such as \cite[Section~6.7]{tomDieck}.
\end{remark}

\begin{corollary}
  \label{cor:finit-infty-surj-equiv}
  A map \(F\colon A\to B\) of animae is an equivalence
  if and only if it is \(n\)-surjective for all \(n\in\naturals\).
\end{corollary}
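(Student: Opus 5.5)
We reduce to homotopy groups and then invoke hypercompleteness of the ambient theory of animae. One direction is immediate: an equivalence $F\colon A\xrightarrow{\sim}B$ is $n$-surjective for every $n$. In the lifting problem of \Cref{lem:n-surj-anima}, we may take $f'\coloneqq F^{-1}f$, and the identification $Ff'\simeq f$ together with the compatibility $f'|_{\sphere{k-1}}\simeq u$ comes from the unit and counit of the inverse equivalence. Alternatively, in the language of \Cref{def:surj}, $F$ is surjective on objects, and each induced map on identification animae $A(x,y)\to B(Fx,Fy)$ is again an equivalence. So we can argue by induction on $n$ that $F$ is $n$-surjective.

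For the converse, assume $F$ is $n$-surjective for all $n\in\naturals$. By \Cref{rem:surjective-anima}, $n$-surjectivity for all $n$ means exactly that $\pi_0(F)$ is a bijection and that $\pi_k(F)$ is a bijection for every $k\geq 1$ and every choice of base point in $A$. Indeed, for fixed $k$ the map $\pi_k(F)$ is bijective as soon as $F$ is $(k+1)$-surjective. Hence $F$ is $\infty$-connected in the classical sense.

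We then conclude using the hypercompleteness assumption on the ambient theory of animae, stated in the remark preceding \Cref{rem:GH-hyper}: every $\infty$-connected map is an equivalence. In the classical topological model this is Whitehead's theorem. This is the only non-formal step, and the obstacle is simply that it cannot be proved from the lifting characterization alone; it has to be imported as the hypercompleteness axiom. To ensure the axiom applies as stated, we should phrase ``$\infty$-connected'' in exactly the form used there, namely $n$-connected (equivalently $n$-surjective, via \Cref{lem:n-surj-anima}) for all $n$. With that phrasing the corollary is essentially a restatement of hypercompleteness in the terminology of \Cref{def:surj}.
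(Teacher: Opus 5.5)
Your proposal is correct and follows essentially the same argument as the paper. The paper gives no separate proof of this corollary. It reads it off from \Cref{lem:n-surj-anima} and \Cref{rem:surjective-anima}, which identify ``$n$-surjective for all $n$'' with being a $\pi_*$-isomorphism, that is, $\infty$-connected. It then uses the fact, made explicit in the $d=0$ case of the proof of \Cref{lem:d-cat-equiv-n-surj}, that equivalences of animae are exactly the $\pi_*$-isomorphisms (Whitehead's theorem, i.e.\ the hypercompleteness assumption).
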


Bootstrapping from the case of animae, it is easy to deduce the same
for \(d\)-categories:

\begin{lemma}
  \label{lem:d-cat-equiv-n-surj}
  Let \(d\in \naturals\).
  A map \(F\colon C\to D\) of \(d\)-categories
  is an equivalence if and only if \(F\) is $n$-surjective for all \(n\in\naturals\).
\end{lemma}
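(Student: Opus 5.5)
We argue by induction on $d$; the base case $d=0$ is \Cref{cor:finit-infty-surj-equiv}. One direction needs no induction: if $F$ is an equivalence, then $F^\simeq$ is an equivalence and each $C(x,y)\to D(Fx,Fy)$ is an equivalence of $\pred{d}$-categories. In particular every induced map $C\arlev{k}(x,y)^\simeq\to D\arlev{k}(x,y)^\simeq$ is an equivalence of animae, hence surjective. By \Cref{rem:surjectivity-explicit}, $F$ is therefore $n$-surjective for all $n$.

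For the converse, let $d\geq 1$ and suppose $F$ is $n$-surjective for all $n$. We use the criterion recalled in \Cref{rem:isos-in-cat-V}: equivalences of $d$-categories are exactly the complete equivalences, i.e.\ the maps that are surjective on objects and fully faithful.

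\begin{itemize}
\item Surjectivity on objects is part of $0$-surjectivity.
\item For fixed $x,y\in C$, the map $C(x,y)\to D(Fx,Fy)$ is $(n-1)$-surjective for every $n\geq 1$, hence $m$-surjective for every $m\in\naturals$.
\item These hom-maps are maps of $\pred{d}$-categories. When $d\geq1$ we have $\pred{d}=d-1<d$, so the inductive hypothesis applies and shows that each $C(x,y)\to D(Fx,Fy)$ is an equivalence.
\end{itemize}

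It follows that $F$ is fully faithful, so $F$ is a complete equivalence and hence an equivalence.

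The only point needing care is the identification of equivalences in $\Cat_d=\Cat_{\Cat_{d-1}}$ with maps that are surjective on objects and fully faithful; this is exactly \Cref{rem:isos-in-cat-V}, applied with $V=\Cat_{d-1}$. The base case uses the hypercompleteness assumption through \Cref{cor:finit-infty-surj-equiv}. I do not expect a genuine obstacle beyond bookkeeping with the predecessor $\pred{d}$ at $d=1$, where the hom-objects are animae and the base case applies directly.
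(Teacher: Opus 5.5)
Your proposal is correct and follows essentially the same argument as the paper. The ``only if'' direction is immediate. The ``if'' direction goes by induction on $d$, with the anima case as the base; in the inductive step the hom-maps are equivalences by induction, so $F$ is a complete equivalence and hence an equivalence by \Cref{rem:isos-in-cat-V}.
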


\begin{proof}
  ``Only if'' is immediate from the fact that equivalences
  are surjective on objects and induce equivalences on hom-$\pred{d}$-categories.
  So we only prove ``if''.
  
  We prove the claim by induction on \(d\in\naturals\).
  For \(d=0\) we are just talking about animae,
  where the equivalences are precisely the \(\pi_*\)-isomorphisms;
  by \Cref{rem:surjective-anima} these are precisely
  the maps that are $n$-surjective for all $n$.

  Now assume that \(d\geq 1\) and that \(F\) is $n$-surjective for all $n\in \naturals$.
  Then for each pair of objects \(x,y\in C\)
  the induced functor \(F\colon C(x,y)\to D(Fx,Fy)\)
  of \((d-1)\)-categories is (by definition) again $n$-surjective for all $n$,
  hence an equivalence by induction.
  Since \(F\) induces equivalences on all hom-$(d-1)$-categories
  it is fully faithful (by definition).
  Since \(F\) is by assumption surjective on objects ($0$-surjective),
  it is thus a complete equivalence of complete flagged \(\Cat_{d-1}\)-categories,
  hence an equivalence of $d$-categories (see \Cref{rem:isos-in-cat-V}).
\end{proof}

\begin{lemma}
  \label{lem:d+1-surj-conservative}
  Every $(d+1)$-surjective functor of $F\colon C\to D$ of $d$-categories
  is conservative,
  i.e., only sends invertible arrows to invertible arrows (in all dimensions).
\end{lemma}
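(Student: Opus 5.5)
We read the statement as follows: if $F\colon C\to D$ is $(d+1)$-surjective and $f$ is a $k$-arrow of $C$ (any $k\geq 1$) with $Ff$ invertible, then $f$ is invertible. The plan is to reduce to $1$-arrows by passing to hom-categories, and then to handle $1$-arrows by induction on $d$.

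\emph{Reduction to $1$-arrows.} A $k$-arrow $f\colon x\to y$ with $k\geq 2$ is a $1$-arrow of the $\pred[(k-1)]{d}$-category $C\arlev{k-1}(s,t)$, where $(s,t)$ is its $(k-2)$-dimensional source and target. Its invertibility is also tested there. The induced map $C\arlev{k-1}(s,t)\to D\arlev{k-1}(Fs,Ft)$ is $(d+1-(k-1))$-surjective by unwinding \Cref{def:surj}. This is at least $(\pred[(k-1)]{d}+1)$-surjective as long as $k-1\leq d$. For $k-1>d$ the hom is an anima, so every arrow is already invertible. So it suffices to show, by induction on $d$, that every $1$-arrow $f\colon x\to y$ of $C$ with $Ff$ invertible is invertible.

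\emph{Base case $d=0$.} Here $C$ is an anima and every arrow is invertible, so there is nothing to prove.

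\emph{Inductive step for $d\geq 1$.}
\begin{itemize}
\item Let $g\colon Fy\to Fx$ be the inverse of $Ff$. Since $F$ is $1$-surjective onto homs between objects in its image, there merely exists $g'\colon y\to x$ with $Fg'\simeq g$. Because invertibility is a proposition, we may choose such a $g'$.
\item Then $F(g'f)\simeq gFf\simeq \id_{Fx}$, which gives an invertible $2$-arrow $\alpha\colon F(\id_x)\to F(g'f)$ in $D(Fx,Fx)$.
\item The map $C(x,x)\to D(Fx,Fx)$ is $d$-surjective. Applying $1$-surjectivity at this level, we lift $\alpha$ to a $2$-arrow $\alpha'\colon \id_x\to g'f$ in $C(x,x)$ with $F\alpha'\simeq\alpha$.
\item The map $C(x,x)\to D(Fx,Fx)$ is a $d$-surjective functor of $(d-1)$-categories, i.e. $((d-1)+1)$-surjective. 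By the inductive hypothesis it is conservative, so $\alpha'$ is invertible. Hence $\id_x\simeq g'f$ by univalence of $C(x,x)$.
\item Symmetrically, $fg'\simeq\id_y$, using the map $C(y,y)\to D(Fy,Fy)$.
\end{itemize}
Thus $f$ has left and right inverses and is invertible.

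The main obstacle is the bookkeeping of surjectivity degrees. We must check that the lifting of $\alpha$ uses only $2$-surjectivity of $F$, which holds since $d+1\geq 2$. We must also check that the conservativity needed one level down requires exactly $d$-surjectivity for $(d-1)$-categories, which is what the hypothesis $(d+1)$ provides. The index count is tight, so the induction hypothesis has to be stated uniformly for all $k$-arrows, as done in the reduction step, and applied to the hom-functor.
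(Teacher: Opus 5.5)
Your proof is correct and is essentially the paper's argument: use $(d+1)$-surjectivity to lift the inverse(s) and the witnessing higher cells, then conclude by induction that the lifted witnesses are invertible, so they give identifications by univalence. The difference is only organizational. The paper runs a downward induction on the arrow dimension $k$ within a fixed $d$-category, starting from the trivial case $k>d$, and lifts separate one-sided inverses $g'_\pm$. You instead pass to iterated hom-categories and induct on $d$, using a single lifted two-sided inverse $g'$.
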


\begin{proof}
  Let $f'\colon x\to y$ be a $k$-arrow such that $f=F(f')$ is an isomorphism.
  If $k>d$ then $f'$ is trivially an isomorphism.
  We prove the case $k\leq d$ by downward induction on $k$.
  Take a left and a right inverse $g_-,g_+\colon Fy\to Fx$
  with isomorphisms $h_+\colon \id_{Fx}\xrightarrow{\sim} g_+f$
  and $h_-\colon fg_-\xrightarrow{\sim} \id_{Fy}$.
  Since $F$ is $(d+1)$-surjective,
  there merely exist lifts $g'_-,g'_+\colon x\to y$ and
  $h'_-\colon f'g'_-\to \id_y$ and $h'_+\colon \id_x\to g'_+f'$ to $C$
  (under the implicit identifications $F(f'g'_-)\simeq fg_-$, $F(\id_y)\simeq \id_{Fy}$, etc.),
  where $h'_-$ and $h'_+$ are isomorphisms by induction
  (since they are sent to isomorphisms $h_-$ and $h_+$, respectively).
  It follows that $f'$ is an isomorphism, as claimed.
\end{proof}

\begin{proposition}[Surjectivity and cores]
  \label{prop:surjective-core}
  Let \(n\leq d\) and let \(F\colon C\to D\) be a functor of \(d\)-categories.
  \begin{enumerate}
  \item
    The counit map \(R_nC\to C\) is \(n\)-surjective.
  \item
    For all \(k\leq n\):
    \begin{equation}
      F\colon C\to D
      \text{ is $k$-surjective }
      \iff
      R_nF\colon R_nC\to R_nD
      \text{ is $k$-surjective}
    \end{equation}
  \item
    \label{lem:surj-cores-k>d}
    For all \(k > d\):
    \begin{equation}
      F\colon C\to D
      \text{ is $k$-surjective }
      \implies
      R_nF\colon R_nC\to R_nD
      \text{ is $k$-surjective}.
    \end{equation}
  \end{enumerate}
\end{proposition}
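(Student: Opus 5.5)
All three parts will be read off from the explicit description of $n$-surjectivity in \Cref{rem:surjectivity-explicit}, combined with the description of the arrows of $R_nC$ in \Cref{lem:arrows-of-n-core}. Write $\epsilon\colon R_nC\to C$ for the counit. By \Cref{lem:arrows-of-n-core}, $\epsilon$ is an equivalence on objects. It identifies $(R_nC)\arlev{j}(x,y)^\simeq$ with $R_{n-j}(C\arlev{j}(x,y))^\simeq\simeq C\arlev{j}(x,y)^\simeq$ for $j\leq n$, using part (1) of that lemma applied to $C\arlev{j}(x,y)$. For $j>n$ it identifies this anima with the subanima $C^\inv\arlev{j}(x,y)^\simeq\subseteq C\arlev{j}(x,y)^\simeq$ of invertible $j$-arrows. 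These identifications are natural in $F$, since $\epsilon$ is a natural transformation.

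\emph{Part (1).} For $0\leq j\leq n$ and any parallel pair $(x,y)$ of $(j-1)$-arrows in $R_nC$, the map $(R_nC)\arlev{j}(x,y)^\simeq\to C\arlev{j}(x,y)^\simeq$ is an equivalence by the above, hence surjective. So $\epsilon$ is $n$-surjective by \Cref{rem:surjectivity-explicit}.

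\emph{Part (2).} Fix $k\leq n$. The parallel pairs of $(j-1)$-arrows with $j\leq k\leq n$ are the same in $R_nC$ and in $C$, because all arrows of dimension $<n$ agree. In the naturality square for $\epsilon$ at $F$, both horizontal maps on $j$-arrow animae are equivalences for $j\leq n$. So $C\arlev j(x,y)^\simeq\to D\arlev j(Fx,Fy)^\simeq$ is surjective if and only if the corresponding map for $R_nF$ is. Quantifying over $j\leq k$ and all parallel pairs gives the equivalence of the two conditions.

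\emph{Part (3).} Let $k>d$ and let $F$ be $k$-surjective; in particular $F$ is $(d+1)$-surjective, hence conservative by \Cref{lem:d+1-surj-conservative}. For $j\leq n$, surjectivity on $j$-arrows of $R_nF$ follows as in part (2). For $n<j\leq k$, let $(x,y)$ be a parallel pair of $(j-1)$-arrows in $R_nC$. Such a pair is a parallel pair in $C$ whose components of dimension $>n$ are invertible. Take an invertible $j$-arrow $f\colon Fx\to Fy$ in $D$. Since $F$ is $k$-surjective, there merely exists a $j$-arrow $f'\colon x\to y$ in $C$ with $Ff'\simeq f$. By conservativity $f'$ is invertible, so it lies in $C^\inv\arlev{j}(x,y)^\simeq\simeq(R_nC)\arlev j(x,y)^\simeq$ and lifts $f$.

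The only delicate point is checking that the identifications of \Cref{lem:arrows-of-n-core} are compatible with $F$, so that surjectivity on these subanimae can be tested inside $C$ and $D$. This follows from naturality of the counit $\epsilon$. The bound $k>d$ in part (3) is exactly what is needed to invoke \Cref{lem:d+1-surj-conservative}.
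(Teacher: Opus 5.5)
Your proof is correct and follows the paper's own argument. For parts (1) and (2), the paper simply observes that $k$-surjectivity only depends on arrows of dimension at most $k$, and hence only on $R_kF$; you unpack this explicitly via \Cref{lem:arrows-of-n-core} and naturality of the counit. Part (3) is exactly the paper's argument: lift using $k$-surjectivity, then use conservativity from \Cref{lem:d+1-surj-conservative} to see that the lift is invertible and therefore lies in $R_nC$.
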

\begin{proof}
  The first two statements follow from the fact that $k$-surjectivity
  of a functor $F$
  is defined just in terms of arrows of dimensions at most $k$
  (and identifications between them),
  so only depends on $R_kF$.

  For the last statement let $F\colon C\to D$ be $k$-surjective
  and $l\leq k$.
  Then for every parallel pair $(x,y)$ of $(l-1)$ arrows in $R_nC$
  and every $l$-arrow $f\colon Fx\to Fy$ in $R_nD$,
  there merely exists a lift 
  $f'\colon x\to y$ in $C$ with $Ff'\simeq f$
  (as $l$-arrows $Fx\to Fy$ of $C$).
  It suffices to show that $f'$ again lies in $R_nC$
  (because then the identification $Ff'\simeq f$ also lies in $R_nC$);
  in other words (by \Cref{lem:arrows-of-n-core}),
  we have to show that if $l>n$ then $f'$ is invertible
  (for $l\leq n$ there is nothing more to show because then $f'$
  lies in $R_nC$ regardless of invertibility).
  This follows from the fact that $F$ is conservative
  by \Cref{lem:d+1-surj-conservative}:
  if $l>n$, then $f\colon Fx\to Fy$ is invertible
  (because it is an $l$-arrow of $R_nD$),
  hence so is $f'\colon x\to y$ that maps to it.
\end{proof}

\begin{remark}
  Note that the implication
  \begin{equation}
    F\colon C\to D
    \text{ is $k$-surjective }
    \implies
    R_nF\colon R_nC\to R_nD
    \text{ is $k$-surjective}.
  \end{equation}
  holds if either \(k\leq n\) or \(d<k\),
  but typically fails in the range \(n<k\leq d\).

  For example, for \(n=0, k=d=1\), one may consider the terminal map
  \begin{equation}
    t\colon W\coloneqq \{u\colon 0\rightleftarrows 1 : v\}\to *
  \end{equation}
  from the walking anti-parallel pair
  (freely generated by two $1$-arrows \(u\colon 0\to 1\) and \(v\colon 1\to 0\)).
  It is clearly surjective on objects
  and it is easily seen to be surjective on $1$-arrows;
  indeed, \(u,v,\id_0,\id_1\) are lifts of the (unique) arrows
  \begin{equation}
    t(0)\to t(1),\quad t(1)\to t(0),\quad t(0)\to t(0),\quad t(1)\to t(1)
  \end{equation}
  in \(*\), respectively.
  Since the only invertible arrows in \(W\)
  are the identities,
  we see that \(R_0t \colon R_0 W \to R_0(*) \simeq *\)
  is just the terminal map
  \begin{equation}
    t\colon \{0,1\} \to *,
  \end{equation}
  which is not $1$-surjective
  because there does not exist any lift \(0\to 1\) of the arrow
  \(\id_*\colon t(0)\xrightarrow{\sim} t(1)\).

  This example illustrates the general problem:
  while $k$-surjectivity guarantees that any \(k\)-arrow
  $f\colon Fx\to Fy$ of $D$
  can be lifted to a $k$-arrow \(f'\colon x\to y\) of $C$,
  there is in general no guarantee that for invertible $f$
  (so that $f$ is even a $k$-arrow of $R_{k-1}D$)
  the lift \(f'\) can be chosen to be invertible as well
  (so that $f$ is even a $k$-arrow of $R_{k-1}C$).
\end{remark}

\begin{remark}
  The converse implication 
  \begin{equation}
    F\colon C\to D
    \text{ is $k$-surjective }
    \impliedby
    R_nF\colon R_nC\to R_nD
    \text{ is $k$-surjective}
  \end{equation}
  typically fails when \(n<k\).
  Indeed, knowing that for \(n<l\leq k\)
  there exist (invertible) lifts for
  \emph{invertible} $l$-arrows
  \(Fx\xrightarrow{\sim} Fy\)
  (which are the $l$-arrows of \(R_n D\))
  does not let us conclude anything about lifts of non-invertible $l$-arrows.

  As a trivial example (with $d=1,n=0,k=1$),
  consider the inclusion $\{0,1\}\hookrightarrow \{0\to 1\}$
  which is not $1$-surjective but even becomes an iso after applying $R_0$.
\end{remark}

\begin{lemma}
  \label{lem:surjectivity-lifts-parallel-pairs}
  Let $n \in \naturals$ and let \(F\colon C\to D\) be an $n$-surjective functor of \(d\)-categories.
  Then for each $k\leq n$ the two induced maps
  \begin{equation}
    F\colon \Mor_kC\to \Mor_k D
    \quad\text{and}\quad
    F\colon \Par_kC\to \Par_k D
  \end{equation}
  of animae are surjective.
\end{lemma}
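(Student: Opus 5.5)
We argue by induction on $k$, proving both surjectivity claims together. The tools are the explicit description of $n$-surjectivity in \Cref{rem:surjectivity-explicit} and the fact that surjective maps of animae are stable under base change and composition, together with the fibration structure of $\Mor_k$ and $\Par_k$. Specifically, for each $k\geq 0$ there are canonical maps
\begin{equation}
  \Mor_k C\to \Par_{k-1}C
\end{equation}
(source and target), whose fiber over a parallel pair $(x,y)$ is $C\arlev{k}(x,y)^\simeq$, where $\Par_{-1}C\simeq *$. There are also identifications
\begin{equation}
  \Par_k C\simeq \Mor_kC\times_{\Par_{k-1}C}\Mor_kC,
\end{equation}
since a parallel pair of $k$-arrows is just two $k$-arrows with the same boundary pair of $(k-1)$-arrows. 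Both of these are natural in $C$, so $F$ induces maps of these fibrations.

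First, $\Mor_k$. We use the following elementary fact about animae: if $E\to B$ and $E'\to B'$ are maps over $B\to B'$, the base map is surjective, and for every $b\in B$ the map on fibers $E_b\to E'_{Fb}$ is surjective, then $E\to E'$ is surjective. To see this, take $e'\in E'$ over $b'$, merely lift $b'$ to some $b$ with an identification $Fb\simeq b'$, transport $e'$ along that identification into $E'_{Fb}$, and then merely lift within the fiber. Apply this to $\Mor_kC\to\Par_{k-1}C$. The base map $\Par_{k-1}C\to\Par_{k-1}D$ is surjective by induction (for $k=0$ it is $*\to *$). The fiber maps $C\arlev{k}(x,y)^\simeq\to D\arlev{k}(Fx,Fy)^\simeq$ are surjective for $k\leq n$ by \Cref{rem:surjectivity-explicit}. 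Hence $\Mor_kC\to\Mor_kD$ is surjective.

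Next, $\Par_k$. Here we again fiber over $\Par_{k-1}$, using the map $\Par_kC\to\Par_{k-1}C$ that sends a pair to its common boundary. The fiber over $(x,y)$ is $C\arlev{k}(x,y)^\simeq\times C\arlev{k}(x,y)^\simeq$, and on this fiber $F$ induces the product of two surjective maps, which is again surjective. The base map is surjective by induction, so the same lemma gives surjectivity of $\Par_kC\to\Par_kD$. Equivalently, one can observe that this argument shows $\Par_k C\to\Par_{k-1}C$ is a fiberwise product of $\Mor_k$ with itself.

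The main obstacle is purely bookkeeping: setting up the fibration descriptions of $\Mor_k$ and $\Par_k$ compatibly with \Cref{def:n-cat-arrows}. There the hom-categories are defined by tail recursion, iterating homs from the bottom, rather than by fibering over lower boundaries. The cleanest way I expect to handle this is to avoid the recursion mismatch altogether and instead induct on $k$ via homs. By definition $\Mor_kC\simeq\coprod_{(s,t)}\Mor_{k-1}C(s,t)$, i.e. the total anima over $C^\simeq\times C^\simeq$ with fibers $\Mor_{k-1}C(s,t)$, and likewise for $\Par_k$. The map $C(s,t)\to D(Fs,Ft)$ is $(n-1)$-surjective, so by induction on $k$ (applied to hom-categories) it is surjective on $\Mor_{k-1}$ and $\Par_{k-1}$. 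Surjectivity on $C^\simeq\times C^\simeq$ then comes from $0$-surjectivity, and the fiberwise lemma above concludes. This version needs only $k\geq1$ with the base cases $\Mor_0=\Par_0\simeq C^\simeq$-type statements (with $\Par_0C=C^\simeq\times C^\simeq$), which follow directly from surjectivity on objects.
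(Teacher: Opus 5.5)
Your proof is correct, and the hom-based induction you settle on at the end is essentially the paper's argument: lift the $0$-dimensional source and target using $0$-surjectivity, then apply the inductive hypothesis to the $(n-1)$-surjective functor on hom-categories. Your fiberwise surjectivity lemma only packages this element chase, and proving all $k\leq n$ at once makes the paper's preliminary reduction to $k=n$ unnecessary. Your first, boundary-fibration route is also valid and relies only on the explicit characterization in \Cref{rem:surjectivity-explicit}, because the identification $\Par_k C\simeq \Mor_k C\times_{\Par_{k-1}C}\Mor_k C$ follows from the hom recursion by a short induction (dependent sums over $C^\simeq\times C^\simeq$ commute with fiber products).
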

\begin{proof}
  Without loss of generality we may assume $k=n$
  (because if $F$ is $n$-surjective, then also $k$-surjective;
  see \Cref{rem:surjectivity-explicit}).
  We just prove the second statement, the first one is analogous.
  We have to show that for every parallel pair \((x,y)\) of $n$-arrows in \(D\),
  there merely exists a parallel pair \((x',y')\) of $n$-arrows in \(C\)
  with \(F(x',y')\simeq (x,y)\).
  If \(n=0\), this is immediate by applying the definition of $0$-surjectivity twice.

  For \(n\geq 1\),
  we can view \((x,y)\) as a parallel pair of \((n-1)\)-arrows in \(D(s_0,t_0)\),
  where $s_0,t_0\in D$ are the $0$-dimensional source and target of $(x,y)$, respectively.
  By the $0$-surjectivity of \(F\), there merely exist two objects
  \(s'_0,t'_0\in C\) with \(Fs'_0\simeq s_0\) and \(F t'_0\simeq t_0\).
  Since the induced functor \(F\colon C(s'_0,t'_0)\to D(s_0,t_0)\)
  is \((n-1)\)-surjective,
  there exists by induction a parallel pair \((x',y')\)
  of \((n-1)\)-arrows in \(C(s'_0,t'_0)\)
  with \(F(x',y')\simeq (x,y)\).
  Viewing $(x',y')$ as a parallel pair of $n$-arrows in $C$ yields the claim.
\end{proof}

The interaction of surjectivity with localization is more straightforward
than the one with cores but the proof requires more sophisticated techniques.
We give a relatively elementary proof in \Cref{subsec:surj-loca} below.
It can also be deduced from the existence of the
unique factorization system
($n$-surjective, $(n+1)$-fully faithful)
established in \cite{Loubaton-effectivity} or \cite{LMRSW};
see also \Cref{thm:factorization-surj-ff}.

\begin{proposition}[Surjectivity and localization]
  \label{prop:surjective-localization}
  Let \(n\leq d\) and
  let \(F\colon C\to D\) be a functor of $d$-categories.
  \begin{enumerate}
  \item
    The unit map \(C\to L_n C\) is \(n\)-surjective.
  \item
    For all $k\in \naturals$:
    \begin{equation}
      F\colon C\to D \text{ is $k$-surjective}
      \implies
      L_nF\colon L_nC\to L_nD \text{ is $k$-surjective}.
    \end{equation}
  \end{enumerate}
\end{proposition}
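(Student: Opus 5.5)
The plan is to use the recursive description of $L_n$ from \Cref{thm:L-I-R} and \Cref{lem:arrows-of-n-localization}. Both statements reduce by induction on the arrow dimension to a statement about objects plus a statement about homs. The only genuinely new input is the case of the classifying-space functor $L_0=|{-}|$, where I expect to need \Cref{lem:connected-realization}.

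\emph{Part (1).} By \Cref{rem:surjectivity-explicit}, I must show that for each $0\le k\le n$ and each parallel pair of $(k-1)$-arrows in $L_nC$, the map on $k$-arrows is surjective. I proceed by induction on $k$, with $k=0$ being \Cref{lem:arrows-of-n-localization}(1).

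Suppose $1\le k\le n$. Since the unit is $(k-1)$-surjective by induction, \Cref{lem:surjectivity-lifts-parallel-pairs} shows that every parallel pair of $(k-1)$-arrows in $L_nC$ is, up to identification, of the form $([x],[y])$ for a pair $(x,y)$ in $C$. By \Cref{lem:arrows-of-n-localization}(2), the map $C\arlev{k}(x,y)\to (L_nC)\arlev{k}([x],[y])$ is identified with the unit $C\arlev{k}(x,y)\to L_{n-k}(C\arlev{k}(x,y))$. That unit is surjective on objects by \Cref{lem:arrows-of-n-localization}(1).

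\emph{Part (2), reduction.} Write $L^d_n=L^{n+1}_n\circ\dots\circ L^d_{d-1}$; it then suffices to treat $L_{d-1}$ for each $d$. Fix a $k$-surjective $F\colon C\to D$ of $d$-categories.

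Surjectivity on objects is formal. The composite $L_nF\circ[-]_C\simeq[-]_D\circ F$ is surjective on objects, so $L_nF$ is surjective on objects.

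Since $L_nC$ is surjective on objects from $C$, it suffices to check hom-categories between objects $[x],[y]$. There \Cref{lem:arrows-of-n-localization}(2) identifies the induced map with $L_{n-1}$ applied to the $(k-1)$-surjective map $C(x,y)\to D(Fx,Fy)$ of $\pred{d}$-categories. This is $(k-1)$-surjective by induction on $d$, as long as $n\ge 1$. So everything reduces to the case $n=0$.

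Using $L_0^d=L_0^1\circ L_1^d$ and the $n\ge1$ case just established, we further reduce to the following claim: a $k$-surjective functor $F\colon C\to D$ of $1$-categories induces a $k$-connected map $|C|\to|D|$ (\Cref{rem:surjective-anima}).

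\emph{Main obstacle: the case of $1$-categories.} Here one cannot simply use the Rezk nerves $N_mC=\Fun([m],C)^\simeq$. Since $C^\simeq\to D^\simeq$ is merely surjective, the levelwise maps need not be highly connected. Instead I would factor $F$ through flagged categories.

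Let $Y$ be the Segal simplicial anima of the flagged category $(C^\simeq,C)$, so that
\begin{equation}
Y_m=\textstyle\coprod_{x_0,\dots,x_m\in C^\simeq}\prod_i C(x_{i-1},x_i).
\end{equation}
Let $Z$ be the Segal simplicial anima of the pullback flagging $(C^\simeq,F^*D)$, so that
\begin{equation}
Z_m=\textstyle\coprod_{x_0,\dots,x_m}\prod_i D(Fx_{i-1},Fx_i).
\end{equation}

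The map $(C^\simeq,F^*D)\to D$ is fully faithful and surjective on objects, i.e., a complete equivalence. Complete equivalences are inverted by $|{-}|$: both sides complete to the same $D$ by \Cref{thm:complete-localization}, and realization of the Segal object factors through completion. Hence $|Z|\simeq|D|$, and likewise $|Y|\simeq|C|$.

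The map $Y\to Z$ is an equivalence in degree $0$. In degree $m\ge1$ it is a fiberwise product, over the same base, of $(k-1)$-connected maps of animae, hence $(k-1)$-connected.

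I would then invoke \Cref{lem:connected-realization}. My expectation is that it gives the needed form: if $Y_m\to Z_m$ is $(k-m)$-connected for every $m$, then $|Y|\to|Z|$ is $k$-connected. Its hypothesis holds, since the map is an equivalence for $m=0$ and $(k-1)$-connected, hence $(k-m)$-connected, for $m\ge1$. This yields the claim.

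The delicate points I anticipate are making the identification of realizations of flagged nerves rigorous, and confirming that \Cref{lem:connected-realization} is stated with this degree shift.
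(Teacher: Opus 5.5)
Your proof is correct. Part (1), and the reduction of part (2) to the base case $L^1_0=|{-}|$ on $1$-categories, are what the paper does: $0$-surjectivity is formal, homs are handled by \Cref{lem:arrows-of-n-localization}, and one inducts on $d$. The base case is where you take a genuinely different route.

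The paper splits the base case in two:
\begin{itemize}
\item For $k=1$ it lifts zig-zags representing arrows of $|D|$.
\item For $k\geq 2$ it does use the Rezk nerves. It applies \Cref{lem:connected-realization} to $C\to D$ viewed as simplicial animae, after establishing via \Cref{lem:surjectivity-simplicial-complex} that each $C(\Delta^i)\to D(\Delta^i)$ is $(k-i)$-surjective. That lemma is a skeletal induction through \Cref{lem:fiber-boundary-simplex-hom} together with the long exact sequence chase of \Cref{rem:les-computation}.
\end{itemize}

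So your claim that the Rezk-nerve route is blocked holds only for $k\leq 1$. When $k\geq 2>d=1$, $F$ is conservative, and $F^\simeq=R_0F$ is itself $k$-surjective by \Cref{prop:surjective-core}~\ref{lem:surj-cores-k>d}. This is exactly why the paper needs $k\geq 2$ there.

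Your pulled-back nerve $Z$ (objects of $C$, homs of $D$) makes the levelwise analysis trivial. The map $Y\to Z$ is an equivalence in degree $0$ and a fiberwise product of hom-maps in degrees $m\geq 1$. This handles all $k$ uniformly and removes the zig-zags, the skeletal induction and the long exact sequence bookkeeping.

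The price is the identification $|Z|\simeq|D|$, i.e.\ invariance of realization under complete equivalences. Your justification is sound:
\begin{itemize}
\item Constant simplicial animae are complete Segal, so $|{-}|$, being left adjoint to the constant functor, factors through completion.
\item Under the Gepner--Haugseng identification of flagged $\An$-categories with Segal animae (already behind \Cref{thm:Cat-An}), the completion of $(C^\simeq,F^*D)$ is $D$ by \Cref{thm:complete-localization}.
\end{itemize}
\Cref{lem:connected-realization} is stated with exactly the degree shift you need.
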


\begin{remark}
  Note that, unlike the case of cores,
  the converse implication
  can fail even for \(k\leq n\).
  For example, for \(k=n=0, d=1\)
  one may consider the inclusion
  \begin{equation}
    \{0\}\to [1]\coloneqq\{0\to 1\}
  \end{equation}
  of one end-point into the walking arrow.
  It is clearly not $0$-surjective,
  but even becomes an equivalence
  \begin{equation}
    \{0\}\simeq L_0\{0\}\to L_0[1]\simeq *
  \end{equation}
  upon passing to \(0\)-localization.
\end{remark}

It is clear by definition that surjective maps are closed under composition.
Additionally, they satisfy certain cancellation properties
which are not quite as straightforward as one might expect:

\begin{proposition}[Cancellation properties of $n$-surjective maps]
  \label{prop:cancellation-surj}
  Let \(d,n\in \naturals\).
  Consider two composable functors
  \(B\xrightarrow{F}C\xrightarrow{G} D\) 
  of \(d\)-categories
  and assume that \(GF\) is \(n\)-surjective.
  \begin{enumerate}
  \item
    (easy cancellation)
    If \(F\) is $(n-1)$-surjective, then \(G\) is \(n\)-surjective.
  \item
    (hard cancellation)
    If \(G\) is $(n+1)$-surjective, then \(L_nF\)  is $n$-surjective.
  \end{enumerate}
\end{proposition}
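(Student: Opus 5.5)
We prove both parts by induction on $n$, using the recursive \Cref{def:surj} and passing to hom-$\pred{d}$-categories. Throughout, ``$(-1)$-surjective'' means no condition. We only ever prove propositions, so we may freely choose witnesses of mere existence.

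\emph{Easy cancellation.} First, $G$ is surjective on objects. Indeed, for $z\in D$ there merely exists $b\in B$ with $GFb\simeq z$, so $Fb$ is a preimage of $z$; this already settles $n=0$. For $n\geq 1$, let $x,y\in C$. Since $F$ is in particular $0$-surjective, we may choose $x',y'\in B$ with $Fx'\simeq x$ and $Fy'\simeq y$, and transport along these identifications. The composite $B(x',y')\to C(x,y)\to D(Gx,Gy)$ is the hom-map of $GF$, hence $(n-1)$-surjective, and the first map is $(n-2)$-surjective. The inductive hypothesis, applied to $\pred{d}$-categories, shows that $C(x,y)\to D(Gx,Gy)$ is $(n-1)$-surjective. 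Hence $G$ is $n$-surjective.

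\emph{Hard cancellation.} The key auxiliary claim, which I expect to be the main obstacle, is an analogue of \Cref{lem:d+1-surj-conservative} that holds only after localizing:
\begin{quote}
If $G\colon C\to D$ is $(n+1)$-surjective and $f'$ is a $k$-arrow of $C$ with $k\leq n$ whose image $Gf'$ is invertible, then the image of $f'$ in $L_nC$ is invertible.
\end{quote}
I would prove this by downward induction on $k$, mimicking the proof of \Cref{lem:d+1-surj-conservative}. Lift left and right inverses of $Gf'$ along $G$, and then lift the witnessing $(k+1)$-arrows $\id\to g'_+f'$ and $f'g'_-\to\id$. These lifts exist because $G$ is $(n+1)$-surjective, but they need not be invertible in $C$. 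Their images under $G$ are invertible, so by the inductive hypothesis they become invertible in $L_nC$. The base case $k+1=n+1$ is automatic, since all $(n+1)$-arrows of $L_nC$ are invertible. The one subtle point is to make this downward induction well-founded and compatible with the identifications of hom-categories of $L_nC$ given by \Cref{lem:arrows-of-n-localization}.

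Given the claim, the main argument runs as follows.

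\textbf{Objects.} By \Cref{lem:arrows-of-n-localization}, the unit $C\to L_nC$ is surjective on objects, so every object of $L_nC$ is of the form $[c]$ with $c\in C$. Fix such a $c$. Since $GF$ is $0$-surjective, choose $b\in B$ and an identification $Gc\simeq GFb$. Viewing this identification as an invertible $1$-arrow of $D$, lift it along the $1$-surjective $G$ to an arrow $c\to Fb$ in $C$. By the claim with $k=1$ (for $n\geq1$), this arrow becomes invertible in $L_nC$, so $[c]\simeq[Fb]$ by univalence. For $n=0$ no claim is needed: any arrow $c\to Fb$ already yields a path in $|C|$. Hence $L_nF$ is surjective on objects.

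\textbf{Homs.} By the previous step, every object of $L_nB$ is of the form $[b]$, so it suffices to treat hom-categories between such objects. For $b,b'\in B$, \Cref{lem:arrows-of-n-localization} identifies the hom-map of $L_nF$ at $([b],[b'])$ with
\begin{equation}
  L_{n-1}\bigl(B(b,b')\bigr)\to L_{n-1}\bigl(C(Fb,Fb')\bigr).
\end{equation}
We apply the inductive hypothesis to the composite
\begin{equation}
  B(b,b')\to C(Fb,Fb')\to D(GFb,GFb').
\end{equation}
Here the composite is $(n-1)$-surjective and the second map is $n$-surjective. So $L_{n-1}$ of the first map is $(n-1)$-surjective, and therefore $L_nF$ is $n$-surjective.
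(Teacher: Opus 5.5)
Your proof is correct. The easy cancellation matches the paper's in substance. Your recursion through hom-categories is the inductive form of the paper's argument, which unrolls the recursion and lifts parallel pairs of $(k-1)$-arrows along $F$ via \Cref{lem:surjectivity-lifts-parallel-pairs}. The hard cancellation, however, follows a genuinely different route from the paper.

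\emph{The paper's route.} It first applies \Cref{prop:surjective-localization} to replace $F,G$ by $L_nF, L_nG$, which keeps $GF$ $n$-surjective and $G$ $(n+1)$-surjective. This reduces to the case $d=n$. There $G$ is an $(n+1)$-surjective functor of $n$-categories, hence conservative by \Cref{prop:surjective-core}. For a $k$-arrow $f\colon Fx\to Fy$ one lifts $Gf$ along $GF$ to some $f'$, and then lifts the invertible $(k+1)$-arrow $GFf'\xrightarrow{\sim}Gf$ to an invertible $Ff'\xrightarrow{\sim}f$. This handles all $k\leq n$ at once, with no induction on $n$.

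\emph{Your route.} You never use \Cref{prop:surjective-localization}. You induct on $n$ through hom-categories via \Cref{lem:arrows-of-n-localization}, so only objects need genuine work. For these you prove a conservativity-after-localization claim directly. That claim is exactly what the paper gets for free from its reduction: it is conservativity of the $(n+1)$-surjective functor $L_nG$ of $n$-categories (\Cref{lem:d+1-surj-conservative}), applied to images of arrows of $C$.

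\emph{Trade-off.} The paper's argument is shorter once \Cref{prop:surjective-localization} is available. Yours is more self-contained: it depends only on the description of objects and homs of $L_nC$, not on the preservation of surjectivity under $L_n$, whose proof needs the more elaborate arguments of \Cref{subsec:surj-loca}.

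The subtlety you flag is not a real issue. The downward induction runs over the finitely many values $k=n,\dots,1$, for all $k$-arrows at once. It needs no hom-identifications, because the unit $C\to L_nC$ is a functor and so preserves identities and composites in every dimension. The base case is just that all $(n+1)$-arrows of the $n$-category $L_nC$ are invertible.

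One minor correction: the fact that every object of $L_nB$ has the form $[b]$ is \Cref{lem:arrows-of-n-localization}(1) applied to $B$, not a consequence of your objects step.
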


\begin{proof}
  \begin{enumerate}
  \item 
    Let \(k\leq n\), and let \((x,y)\)
    be a parallel pair of \((k-1)\)-arrows in \(C\).
    Then,  by the $(n-1)$-surjectivity of \(F\colon B\to C\),
    \Cref{lem:surjectivity-lifts-parallel-pairs}
    guarantees the existence of a parallel pair \((x',y')\)
    of \((k-1)\)-arrows in \(B\) with \(F(x',y')\simeq (x,y)\).

    Now let \(f\colon Gx\to Gy\) be a $k$-arrow,
    which we can view as a $k$-arrow \(f\colon GF x'\to GF x'\).
    By the \(n\)-surjectivity of \(GF\) there merely exists
    a $k$-arrow \(f'\colon x'\to y'\) with \(GFf'\simeq f\);
    hence there exists a desired lift \(Ff'\colon x\to y\)
    of \(f\colon Gx\to Gy\).
  \item
    By \Cref{prop:surjective-localization},
    the maps \(L_n(GF)\) and \(L_n(G)\) are still $n$-surjective and \((n+1)\)-surjective,
    respectively. 
    By replacing \(F\) and \(G\) (hence $GF$) by their $n$-localizations,
    we may thus restrict to the case \(d=n\) where we have to prove that \(F=L_nF\)
    is $n$-surjective.

    To this end, let \(f\colon Fx\to Fy\) be a $k$-arrow \((k\leq n)\) in $C$,
    where \((x,y)\) is a parallel pair of \((k-1)\)-arrows in \(B\).
    By the $n$-surjectivity of \(GF\) there exists a $k$-arrow
    \(f'\colon x\to y\) in $B$ with \(GF f'\simeq G f\) in $D$.

    Since \(G\) is $(n+1)$-surjective and \(n+1>n=d\), it follows from 
    \Cref{prop:surjective-core}
    that \(R_{k}G\) is still $(n+1)$-surjective.
    In particular, there exists a lift of the $(k+1)$-isomorphism
    \(GF f'\xrightarrow{\sim} Gf\)
    to a $(k+1)$-isomorphism \(Ff'\xrightarrow{\sim} f\) in $C$,
    which means that there exists the desired \(f'\colon x\to y\) in $B$
    with \(Ff'\simeq f\) in $C$.
    \qedhere
  \end{enumerate}
\end{proof}

\begin{remark}
  In general, the hard cancellation really only yields that \(L_n F\)
  is $n$-surjective, and not $F$ itself.
  For example, for \(n=0,d=1\), one may consider the composite
  \begin{equation}
    \{0\}\xhookrightarrow{F} W\coloneqq \{0\rightleftarrows 1\} \xrightarrow{G} *.
  \end{equation}
  The terminal map \(G\) from the walking anti-parallel pair is $1$-surjective,
  and the composite \(GF\) is even an equivalence.
  But even though the induced map
  \begin{equation}
    L_0F\colon \{0\}\to L_0(W)\simeq S^1
  \end{equation}
  of animae is $0$-surjective,
  the inclusion \(F\colon \{0\}\hookrightarrow W\) itself is clearly not.
\end{remark}

\subsection{Surjectivity and localization}
\label{subsec:surj-loca}

Before we can prove 
\Cref{prop:surjective-localization}
we need a few preliminary lemmas about categories and simplicial animae.
As is customary, we denote by
$\boundary\Delta^n\subset \Delta^n\colon \Delta^\op \to \Set\subset \An$
the simplicial sets
describing the $n$-simplex and its boundary.

\begin{lemma}[\cite{ERW}, Lemma~2.4] 
  \label{lem:connected-realization}
  Let $k\in \naturals$ and let $f\colon X\to Y\colon \Delta^\op\to\An$
  be a map of simplicial animae.
  Assume that for each $i\leq k$
  the map $f(\Delta^i)\colon X(\Delta^i)\to Y(\Delta^i)$
  is $(k-i)$-surjective.
  Then the induced map $|X|\to |Y|$ of animae is $k$-surjective.
\end{lemma}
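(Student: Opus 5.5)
We prove \Cref{lem:connected-realization} by induction on $k$, using the skeletal filtration of simplicial animae together with the fact that geometric realization commutes with colimits. Recall that $|X|\simeq\colim_m |\mathrm{sk}_m X|$ and that $\mathrm{sk}_m X$ is obtained from $\mathrm{sk}_{m-1}X$ by a pushout along $L_mX\times\Delta^m\cup X_m\times\boundary\Delta^m\to X_m\times\Delta^m$, where $L_m X$ is the latching object. The realization of this pushout square is a pushout of animae, in which $X_m\times \sphere{m-1}\to X_m\times \disk{m}$ is glued in relative to the latching part.

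\textbf{Alternative, cleaner route.} We avoid latching objects via a two-step reduction. First we factor $f$ through a ``truncated replacement'': let $Y'$ be the simplicial anima with $Y'_i\coloneqq Y_i$ for $i\le k$ that agrees with $X$ in higher degrees is awkward. So instead we use the standard fact that $|X|\simeq\colim_{\Delta^\op}X$ and compute the fiber of $|X|\to|Y|$ over a point via descent.

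Concretely, by pulling back along $*\to|Y|$ and using that colimits in $\An$ are universal, together with the fact that $\Delta^\op$-colimits of \emph{cartesian} maps are computed fiberwise, we reduce to showing that $|X|\to|Y|$ has $(k-1)$-connected fibers, i.e.\ is $k$-surjective in our indexing by \Cref{rem:surjective-anima}. A direct proof goes through the hypercover/bisimplicial argument:
\begin{enumerate}
\item Reduce to the case where $Y$ is levelwise a point by considering the map of simplicial objects $X\to Y$ in the slice over the bisimplicial decomposition. Concretely, write $|X|\simeq|[i]\mapsto X_i|$ and compare it with the diagonal of the bisimplicial anima obtained by the Čech nerve of each $X_i\to Y_i$.
\item For a map of animae $g\colon A\to B$ that is $j$-surjective, the Čech nerve $\check C(g)$ has realization $B$ (effective epimorphism for $j\ge0$), and the levels $\check C(g)_p=A^{\times_B (p+1)}\to B$ are again $j$-surjective.
\item Apply these in both simplicial directions and use the elementary realization estimate: if a simplicial anima $Z$ has $Z_i$ being $(k-i)$-connected in the appropriate sense, then $|Z|$ is $k$-connected. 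This is proved by induction on $k$ via the skeletal filtration, since attaching $Z_i\times\disk{i}$ along $\sphere{i-1}$ with $Z_i$ being $(k-i)$-connected changes nothing below degree $k$.
\end{enumerate}

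The main obstacle is the skeletal-filtration step, namely controlling the latching objects $L_mX\to L_mY$, whose connectivity must be deduced from the hypothesis on lower simplices. This is exactly where the cited topological proof in \cite{ERW} does its work. If that proves too cumbersome, we would instead simply cite \cite[Lemma~2.4]{ERW} as the paper does, verifying only that our indexing convention ($k$-surjective $=$ classically $k$-connected, by \Cref{rem:surjective-anima}) matches theirs.
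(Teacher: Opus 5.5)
Your fallback of citing \cite[Lemma~2.4]{ERW} is exactly what the paper does; it gives no argument of its own. The sketch you offer in place of the citation, however, has genuine gaps.

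\textbf{Gaps in the sketch.} The ``descent'' step reduces nothing: having $(k-1)$-connected fibers is just the classical reformulation of $k$-surjectivity (\Cref{rem:surjective-anima}). Since $f$ is not cartesian, the fibers of $|X|\to|Y|$ are not realizations of the levelwise fibers. Universality of colimits only gives $\fib_y(|X|\to|Y|)\simeq|X\times_Y P|$ with $P_i\coloneqq Y_i\times_{|Y|}\{y\}$. That is a reduction to the case $|Y|\simeq *$, not to $Y$ levelwise a point, and this case still needs the full estimate.

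Step~1 does not close this gap. Realizing the \v{C}ech direction of $X_i\to Y_i$ only recovers the image of $f_i$, which you do not control for $i>k$. You also never say how the resulting bisimplicial object maps back to $|X|\to|Y|$.

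Step~3 attaches $Z_i\times\disk{i}$ along $Z_i\times\sphere{i-1}$. That is \emph{not} the skeletal filtration of a $\Delta^\op$-colimit: it silently drops exactly the latching objects you yourself name as the main obstacle, and which you leave unresolved.

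\textbf{The missing idea} is that latching objects can be avoided altogether. The inclusion $\Delta_{\mathrm{inj}}^\op\hookrightarrow\Delta^\op$ is cofinal (Lurie, \emph{Higher Topos Theory}, Lemma~6.5.3.7), so $|X|\simeq\colim_{\Delta_{\mathrm{inj}}^\op}X$. In $\Delta_{\mathrm{inj}}^\op$ the object $[m]$ receives no non-identity maps, and its proper faces form a poset with realization $\sphere{m-1}$. Hence the partial colimits $|X|_{\le m}\coloneqq\colim_{\Delta_{\mathrm{inj},\le m}^\op}X$ satisfy $|X|_{\le m}\simeq|X|_{\le m-1}\sqcup_{X_m\times\sphere{m-1}}X_m$, with no latching terms.

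With this, no reduction is needed. The map $|X|_{\le m}\to|Y|_{\le m}$ factors as two cobase changes: first of $|X|_{\le m-1}\to|Y|_{\le m-1}$, then of the gap map $X_m\sqcup_{X_m\times\sphere{m-1}}(Y_m\times\sphere{m-1})\to Y_m$. The fiber of the gap map over $y\in Y_m$ is the join $\fib_y(f_m)\star\sphere{m-1}$.
\begin{itemize}
\item For $m\le k$, $\fib_y(f_m)$ is $(k-m-1)$-connected, so the join is $(k-1)$-connected.
\item For $m>k$, the join is at least $(m-2)$-connected, hence $(k-1)$-connected.
\end{itemize}
So the gap map is $k$-surjective in every case. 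Since $k$-surjective maps of animae are closed under cobase change, composition and sequential colimits, induction on $m$ proves the lemma.

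This is essentially a synthetic version of the topological argument. The same cofinality statement is also what you need to legitimately import \cite{ERW}, which concerns fat realizations of semi-simplicial spaces. The indexing check you mention is necessary, but it is not the only translation step.
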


\begin{lemma}
  \label{lem:fiber-boundary-simplex-hom}
  Let $C$ be a category viewed as a simplicial anima and $n\in\naturals$.
  Then each fiber of the map $C(\Delta^n)\to C(\boundary\Delta^n)$
  is identified with $C(x,y)$
  for some parallel pair $(x,y)$ of $(n-1)$-arrows in $C$.
\end{lemma}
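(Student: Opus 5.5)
The plan is to compute the fiber directly, using the Segal and completeness conditions together with the description of hom-animae from \Cref{thm:V-cat-simplicial} (for $V=\An$). I would first dispose of the small cases. For $n=0$ we have $\boundary\Delta^0=\emptyset$, so $C(\boundary\Delta^0)\simeq *$, and the single fiber is $C(\Delta^0)=C^\simeq=C\arlev0()$, which is the case of the empty parallel pair of $(-1)$-arrows. For $n=1$ the map is $C(\Delta^1)\to C^\simeq\times C^\simeq$, whose fiber over $(x,y)$ is $C(x,y)$ by \Cref{thm:V-cat-simplicial}.

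For $n\geq 2$, note that since $C$ is a $1$-category, every $k$-arrow with $k\geq 2$ is an identification in a hom-anima. So for $n\ge 2$ a parallel pair of $(n-1)$-arrows amounts to a map $\sphere{n-2}\to C(s,t)$ for a fixed pair of objects $s,t$, and $C\arlev{n}(x,y)$ is the corresponding path anima. This is consistent with \Cref{lem:n-surj-anima}.

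To identify the fiber, I would use the Segal condition to write $C(\Delta^n)\simeq C(\mathrm{Sp}^n)$, where $\mathrm{Sp}^n$ is the spine. More usefully, I would use the equivalence $C(\Delta^n)\simeq C(\Lambda^n_k)$ for an inner horn $0<k<n$, which holds since inner horn inclusions are generated by spine inclusions (Segal objects are local with respect to inner anodyne maps). The boundary decomposes as $\boundary\Delta^n=\Lambda^n_k\cup_{\boundary\Delta^{n-1}}\Delta^{n-1}$, gluing in the $k$-th face. This gives a pullback
\begin{equation}
  C(\boundary\Delta^n)\simeq C(\Lambda^n_k)\times_{C(\boundary\Delta^{n-1})}C(\Delta^{n-1}).
\end{equation}
Under this pullback, the map $C(\Delta^n)\simeq C(\Lambda^n_k)\to C(\boundary\Delta^n)$ becomes the base change of the diagonal-type map $C(\boundary\Delta^{n-1})\to C(\boundary\Delta^{n-1})\times_{C(\boundary\Delta^{n-1})}C(\Delta^{n-1})$. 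Equivalently, its fiber over a point is the path anima between two points $d_k\sigma$ and $\tau$ in the fiber of $C(\Delta^{n-1})\to C(\boundary\Delta^{n-1})$ over a common boundary.

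By induction on $n$, that fiber is $C\arlev{n-1}(x',y')$ for a parallel pair $(x',y')$ of $(n-2)$-arrows. The path anima between two of its objects $x,y$ is $C\arlev{n-1}(x',y')(x,y)=C\arlev n(x,y)$, because this is an anima for $n\ge2$. This completes the induction.

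The main obstacle will be making the inductive identification coherent: one has to check that the fiber of $C(\Delta^{n-1})\to C(\boundary\Delta^{n-1})$, re-expressed through the pullback, is really the hom-anima of the inductively identified $C\arlev{n-1}(x',y')$. One also has to check that the relevant points $x,y$ are parallel $(n-1)$-arrows there, with matching sources and targets. I would treat $n=2$ carefully by hand: with $k=1$, the fiber over a boundary $(f,g,h)$ is the anima of identifications $gf\simeq h$ in $C(x_0,x_2)$, i.e.\ $C(x_0,x_2)(gf,h)$. This case exhibits the pattern, and the higher cases repeat the same pullback argument.
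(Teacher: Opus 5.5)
Your proposal is correct and follows essentially the same route as the paper: you choose an inner horn $\Lambda^n_k$, use the Segal condition to get $C(\Delta^n)\simeq C(\Lambda^n_k)$, and use the pullback coming from $\partial\Delta^n=\Lambda^n_k\cup_{\partial\Delta^{n-1}}\Delta^{n-1}$ to identify each fiber with a path anima in the fiber of $C(\Delta^{n-1})\to C(\partial\Delta^{n-1})$ (between the $k$-th face of the filler and the prescribed $k$-th face), concluding by recursion. The only slip is the displayed ``diagonal-type map'', which as written has target $\simeq C(\Delta^{n-1})$; you mean the relative diagonal $C(\Delta^{n-1})\to C(\Delta^{n-1})\times_{C(\partial\Delta^{n-1})}C(\Delta^{n-1})$, and your ``equivalently'' description of the fiber that follows it is already the correct one.
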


\begin{proof}[Construction]
  We construct the desired identification by recursion on $n$.
  For $n=0,1$ this is just saying that $C(\Delta^0)\simeq C^\simeq$
  are the objects of $C$ and the fibers of
  $C(\Delta^1)\to C(\boundary\Delta^1)\simeq C^\simeq \times C^\simeq$
  are its hom-animae.
  In the recursion step $n\geq 2$, select any $0<j<n$
  (say $j\coloneqq 1$ for concreteness)
  and consider the inner horn $\Lambda_j^n\subset\boundary\Delta^n\subset \Delta^n$
  as a simplicial set.
  For each $\sigma \in C(\boundary\Delta^n)$ denote by $\tau$
  its restriction in $C(\Lambda_j^n)$ and consider the following diagram
  obtained by forming pullbacks:
  \begin{equation}
    \label{eq:extract-top-simplex-as-arrow}
    \begin{tikzcd}
      Q\ar[r]\ar[d]\isCartesian&*\ar[d,"\sigma_1"]\ar[r]\isCartesian&C(\Delta^n)\ar[d]
      \\
      *\ar[rr,bend left=10,"\sigma" near start]\ar[r,"\sigma_0"']&P\ar[d]\ar[r]\isCartesian
      &C(\boundary\Delta^n)\ar[d]
      \isCartesian \ar[r,"d_j"]
      &C(\Delta^{n-1})\ar[d]
      \\
      &*\ar[r,"\tau"]&C(\Lambda_j^n)\ar[r,"d_j"]&C(\boundary\Delta^{n-1})
    \end{tikzcd}
  \end{equation}
  Note that as indicated
  \begin{itemize}
  \item
    the lower right square is a pullback square because
    the simplicial set $\boundary\Delta^n$ arises from $\Lambda^n_j$
    by attaching a single $(n-1)$-simplex (the missing $j$-th face);
  \item
    the top middle anima is trivial, because it is a fiber
    of the composite $C(\Delta^n)\to C(\Lambda^n_j)$ 
    which is an equivalence as a consequence of the Segal condition.
  \end{itemize}
  By recursion, we have an identification of $P$ ---
  which is the fiber of $C(\Delta^{n-1})\to C(\boundary\Delta^{n-1})$
  at $d_j\tau$ ---
  with $C(\tau_0,\tau_1)$, for some parallel pair $(\tau_0,\tau_1)$
  of $(n-2)$-arrows in $C$.
  Under this identification,
  the objects $\sigma_0,\sigma_1\in P$
  correspond to parallel $(n-1)$-arrows $\sigma_0,\sigma_1\in C(\tau_0,\tau_1)$;
  it follows from the upper left pullback that $Q$ ---
  which is the fiber of $C(\Delta^n)\to C(\boundary\Delta^n)$ at $\sigma$ ---
  is identified with $C(\tau_0,\tau_1)(\sigma_0,\sigma_1)$ as desired.
\end{proof}

\begin{lemma}
  \label{lem:surjectivity-simplicial-complex}
  Let $k\geq 2$ and $i\leq k$,
  and let $F\colon C\to D$ be a $k$-surjective functor of categories,
  viewed as a map of simplicial animae.
  Then for any finite simplicial set $K$ of dimension $\leq i$,
  the induced map
  $F(K)\colon C(K)\to D(K)$ of animae is $(k-i)$-surjective.
\end{lemma}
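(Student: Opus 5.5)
We argue by induction on the number of nondegenerate simplices of $K$, building $K$ by cell attachments. Since $C$ and $D$ are simplicial animae, the functors $K\mapsto C(K)$ and $K\mapsto D(K)$ (right Kan extended) send pushouts of simplicial sets to pullbacks of animae. Write $K=K'\cup_{\boundary\Delta^j}\Delta^j$, where $j\le i$ is the dimension of a top nondegenerate simplex. We then obtain a map of pullback squares
\begin{equation}
  \begin{tikzcd}
    C(K)\ar[r]\ar[d]&C(\Delta^j)\ar[d]\\
    C(K')\ar[r]&C(\boundary\Delta^j)
  \end{tikzcd}
  \quad\longrightarrow\quad
  \begin{tikzcd}
    D(K)\ar[r]\ar[d]&D(\Delta^j)\ar[d]\\
    D(K')\ar[r]&D(\boundary\Delta^j)
  \end{tikzcd}
\end{equation}
and we check $(k-i)$-surjectivity of $C(K)\to D(K)$ with the sphere-lifting criterion of \Cref{lem:n-surj-anima}. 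By induction (as $K'$ still has dimension $\le i$), $C(K')\to D(K')$ is $(k-i)$-surjective. It therefore suffices to control the fiberwise maps. The fiber of $C(K)\to C(K')$ over a point $\rho$ is the fiber of $C(\Delta^j)\to C(\boundary\Delta^j)$ over the restriction of $\rho$, which by \Cref{lem:fiber-boundary-simplex-hom} is a hom-anima $C(x,y)$ for a parallel pair $(x,y)$ of $(j-1)$-arrows. Correspondingly, the map on fibers is $C(x,y)\to D(Fx,Fy)$.

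The key input is that, for a parallel pair of $(j-1)$-arrows, the map $C(x,y)\to D(Fx,Fy)$ of animae is $(k-j)$-surjective. Indeed, a $j$-arrow in $C$ regarded as a $1$-category (the higher arrows being identifications) is a $j$-cell of the anima $C(x,y)$ viewed through \Cref{lem:n-surj-anima}. Concretely, $(m)$-surjectivity of $C(x,y)\to D(Fx,Fy)$ asks to lift $l$-spheres for $l\le m$, and these correspond to $(j+l)$-arrows of $C$; for $j+l\ge 2$ these are invertible, hence identifications. By the explicit description in \Cref{rem:surjectivity-explicit}, $k$-surjectivity of $F$ lifts all such arrows of dimension $\le k$, so we get $(k-j)\ge(k-i)$-surjectivity on fibers. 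I would make this precise by noting that the iterated fibers of hom-animae are themselves hom-animae, $C(x,y)(a,b)=C\arlev{j+1}(a,b)$, so that \Cref{lem:n-surj-anima} applied to $C(x,y)$ unwinds exactly to the explicit description of $k$-surjectivity of $F$ in degrees $j,\dots,k$. The hypothesis $k\ge2$ guarantees that in the relevant dimensions ($\ge2$) arrows of a $1$-category are identifications, so the anima-level and categorical notions agree.

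Finally, we combine the two ingredients: a map of fibrations (over the map $C(K')\to D(K')$) whose base map is $m$-surjective and whose fiber maps are $m$-surjective induces an $m$-surjective map on total spaces. This follows from the long exact sequence of homotopy groups together with the five lemma, in its surjectivity and injectivity halves (via \Cref{rem:surjective-anima}), or directly by lifting spheres with the help of \Cref{lem:n-surj-anima}. The care needed is at $\pi_0$ and $\pi_1$, where there is no group structure, but the standard argument of checking over every basepoint still works. The main obstacle I expect is making the identification of fiber maps in \Cref{lem:fiber-boundary-simplex-hom} natural in $F$, which is needed for the fibration comparison; I would handle it by redoing that construction for the map $F$ itself, since all its squares are functorial pullbacks. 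The base case $K=\emptyset$ is trivial.
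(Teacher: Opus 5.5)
There is a genuine gap at the vertex-attaching steps $j=0$. These steps cannot be avoided, since you build $K$ starting from $\emptyset$. There $\partial\Delta^0=\emptyset$, and the fiber map is $C^\simeq\to D^\simeq$, i.e.\ $R_0F$, which must be $(k-i)$-surjective as a map of animae.

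Lifting a $1$-sphere along this map means the following: given $x,y\in C$ and an \emph{isomorphism} $f\colon Fx\xrightarrow{\sim}Fy$ in $D$, produce an \emph{isomorphism} $x\xrightarrow{\sim}y$ in $C$ lying over it. The explicit description of $k$-surjectivity (\Cref{rem:surjectivity-explicit}) only gives some $1$-arrow $f'\colon x\to y$ with $Ff'\simeq f$, and nothing guarantees that $f'$ is invertible. So your claim that sphere-lifting on the fiber ``unwinds exactly'' to $k$-surjectivity of $F$ is false at $j=0$. It is correct for $j\ge1$: there the fiber is a hom-anima of $j$-arrows, and the recursion in \Cref{def:surj} directly gives $(k-j)$-surjectivity.

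A symptom of this gap is that your argument never uses $k\ge 2$ essentially. The fact that arrows of dimension $\ge2$ in a $1$-category are identifications holds in every $1$-category, regardless of $k$. So your argument would equally prove the case $k=1$, which is false. The terminal functor from the walking anti-parallel pair $W=\{0\rightleftarrows 1\}$ is $1$-surjective, yet $W(\Delta^0)=\{0,1\}\to *$ is not $1$-surjective.

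The missing idea is conservativity. By \Cref{lem:d+1-surj-conservative}, a $2$-surjective functor of $1$-categories reflects isomorphisms: one lifts the one-sided inverses together with the identifications $\id\simeq g_+f$ and $fg_-\simeq\id$. Hence the lift $f'$ above is automatically invertible. This is \Cref{prop:surjective-core}\ref{lem:surj-cores-k>d} with $d=1$, $n=0$, and it is exactly what the paper uses in its base case to show that $F(\Delta^0)=R_0F$ is $k$-surjective. Higher spheres cause no further trouble, because the subanima of isomorphisms $x\xrightarrow{\sim}y$ is a union of path components of $C(x,y)$.

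With this step inserted, the rest of your proposal is sound. It differs from the paper only in bookkeeping. The paper inducts over skeleta, attaching all $i$-simplices at once, and uses $(k-i+1)$-surjectivity on the $(i-1)$-skeleton in a long-exact-sequence chase. You attach one simplex at a time and use the correct, slightly sharper fact that $m$-surjective base and fiber maps give an $m$-surjective map of total spaces.
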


\begin{proof}
  We prove the lemma by induction on $i$ starting with $i=0$.
  If $K\simeq *$ is a point,
  then $F(*)\colon C(*)\to D(*)$ is just 
  the map $R_0 F\colon R_0C \to R_0D$.
  Since $k\geq 2>1$ we may apply
  \Cref{prop:surjective-core}~\ref{lem:surj-cores-k>d}
  with $d=1$ and $n=0$
  to conclude that this map is indeed $k$-surjective.
  The case of more general finite $0$-dimesional complexes
  follows immediately because finite products
  of $k$-surjective maps are $k$-surjective.

  For the induction step, we may assume $i\geq 1$,
  and that $F(K')$ is $(k-i+1)$-surjective,
  where $\iota\colon K'\subseteq K$ is the $(i-1)$-skeleton.
  Then for any $\sigma'\in C(K')$ we get a map of (horizontal) fiber sequences
  \begin{equation}
    \begin{tikzcd}
      P\coloneqq \fib_{\sigma'}(\iota^*)\ar[d]\ar[r] &C(K)\ar[r,"\iota^*"]\ar[d,"F(K)"]&C(K')\ar[d,"F(K')"]
      \\
      Q\coloneqq \fib_{F(\sigma')}(\iota^*)\ar[r]&D(K)\ar[r,"\iota^*"]&D(K')
    \end{tikzcd}
  \end{equation}
  We claim that the left vertical map is $(k-i)$-surjective for every choice of $\sigma'$.
  Granting this, it is a routine computation with the long exact sequence that
  the middle map $F(K)$ is also $(k-i)$-surjective;
  for the convenience of the reader
  we provide a sketch of this computation separately in \Cref{rem:les-computation} below.

  To prove the claim, we analyze the map $P\to Q$:
  Let $S$ be the (finite) set of non-degenerate $i$-simplices of $K$.
  Then we have a pullback square
  \begin{equation}
    \cdsquareNA[pb]
    {C(K)}
    {\prod_{s\in S}C(\Delta^i)}
    {C(K')}
    {\prod_{s\in S}C(\boundary\Delta^i)}
  \end{equation}
  which means that any $P$---%
  which is a fiber of the left vertical map---%
  is identified with some fiber of the right vertical map.
  Thus by \Cref{lem:fiber-boundary-simplex-hom}
  we have an identification $P\simeq \prod_{s\in S}C(x_s,y_s)$,
  where each $(x_s,y_s)$ is some parallel pair of $(i-1)$-arrows of $C$
  (which of course also depends on $\sigma'$).
  Doing the same for $Q$ and observing that the construction of
  \Cref{lem:fiber-boundary-simplex-hom}
  is functorial in $C$
  (because the diagram \eqref{eq:extract-top-simplex-as-arrow} is),
  we see that the mystery map $P\to Q$
  is identified with the map
  \begin{equation}
    F\colon \prod_{s\in S}C(x_s,y_s)\to \prod_{s\in S} D(Fx_s,F y_s);
  \end{equation}
  it is indeed $(k-i)$-surjective because $F$ is $k$-surjective
  and the $x_s,y_s$ are arrows of dimension $(i-1)$.
\end{proof}

\begin{remark}
  \label{rem:les-computation}
  Here is a sketch of the missing computation in the proof of 
  \Cref{lem:surjectivity-simplicial-complex}.

  For every $j\in\naturals$ we denote by $q_j$, $f_j$ and $f'_j$
  the maps induced on homotopy groups $\pi_j$
  by the maps $P\to Q$, $F(K)$ and $F(K')$,
  respectively
  (this of course depends on the choice of base-points).
  Then we consider the following segment of the long exact sequences:
  \begin{equation}
    \begin{tikzcd}
      \pi_{j+1}C(K')\ar[d,"{f'_{j+1}}"]\ar[r]& \pi_jP\ar[d,"q_j"]\ar[r] &\pi_jC(K)\ar[d,"f_j"]\ar[r]&\pi_jC(K')\ar[r]\ar[d,"f'_j"]&\pi_{j-1}P\ar[d,"q_{j-1}"]
      \\
      \pi_{j+1}C(K')\ar[r]&\pi_jQ\ar[r]&\pi_j D(K)\ar[r]&\pi_jD(K')\ar[r]&\pi_{j-1}Q
    \end{tikzcd}
  \end{equation}

  Recall from \Cref{rem:surjective-anima}
  that a map $g$ of animae is $n$-surjective if
  $\pi_n(g)$ is surjective and $\pi_j(g)$ is bijective for all $0\leq j<n$
  (for all choices of base-points).

  We consider cases:
  \begin{itemize}
  \item
    $j=k-i$.
    In this case $f'_j$ and $q_{j-1}$ are isos and $q_j$ is surjective.
    It follows that $f_j$ is surjective.
    ($f'_{j+1}$ is surjective too, but we don't need that.) 
  \item
    $j<k-i$.
    In this case $f'_{j+1}$, $q_j$, $f'_j$ and $q_{j-1}$
    are all isos.
    It follows that $f_j$ is an iso.
  \end{itemize}
  For very low $j$ one has to be a little bit careful (but this is standard):
  \begin{itemize}
  \item
    To prove surjectivity for $j=0$ (where $q_{j-1}$ is not defined)
    just observe that each object of $[d]\in \pi_0D(K)$ lives over \emph{some}
    $[d']\in \pi_0D(K')$ which then lifts to $[c']\in\pi_0C(K')$ by surjectivity of $f'_0$.
    Choosing $\sigma$ to be a representative of this class,
    we see that the surjectivity of $q_j$ implies that $[d]$ is hit by $f_j$.
  \item
    To prove injectivity for $j=0$ and surjectivity for $j=1$
    (both of which involve $q_{0}$ or $f'_0$ which are not group homomorphisms)
    one has to use the group action of $\pi_1$ on the fiber
    and the corresponding enhanced notion of exactness.
  \end{itemize}
  In all cases one has to remember that $P\to Q$ of course depends
  on the choice of base-point $\sigma'$
  and the long exact sequence exists for \emph{every} choice.
  For some arguments one needs to let the base-point vary;
  it is not enough to consider them one at a time.
\end{remark}

\begin{proof}[Proof of \Cref{prop:surjective-localization}]
  By considering one functor at a time in the defining composition
  $L^d_n=L^{n+1}_n\circ \dots \circ L^d_{d-1}$,
  we may assume without loss of generality that $n=d-1$.
 
  By \Cref{lem:arrows-of-n-localization},
  the unit $C\to L_nC$ is $0$-surjective.
  If $n\geq 1$, then it is also $n$-surjective
  because for each $x,y\in C$ the map
  $C(x,y)\to (L_nC)([x],[y])$ is identified with the unit
  $C(x,y)\to L_{n-1}(C(x,y))$, which is $(n-1)$-surjective by induction.
  This shows the first claim.
  
  To prove the second claim we first consider the commutative square
  \begin{equation}
    \cdsquare
    {C}{D}
    {L_nC}{L_nD}
    {F}{}{}{L_nF}
  \end{equation}
  where the two vertical maps are definitely $0$-surjective (even $n$-surjective)
  and $F$ is (at least) $0$-surjective by assumption;
  thus $L_nF$ is also $0$-surjective
  by composition and easy cancellation.

  This means that we can reduce the second statement by induction on $n$
  (or $d=n+1$)
  to the case $d=1,n=0$,
  by using for each $x,y\in C$ the identification
  \begin{equation}
      {L_{n-1}(F_{x,y})}\simeq
      {(L_nF)_{[x],[y]}}
      \colon
      {(L_nC)([x],[y])}
      \to
      {(L_nD)([Fx],[Fy])}.
  \end{equation}
  of \Cref{lem:arrows-of-n-localization}
  (because in the presence of $0$-surjectivity,
  $k$-surjectivity of $F$ is equivalent to $(k-1)$-surjectivity
  of each $F_{x,y}$, and similarly for $L_nF$).
 
  Finally we prove the case $d=1, n=0$, where we distinguish three cases for $k$:
  \begin{itemize}
  \item
    We already saw the case $k=0$.
  \item
    Let $k=1$ and assume that $F$ is $1$-surjective.
    Let $\overline{x},\overline{y}\in L_0 C$ be objects
    and $f\colon (L_0 F)\overline{x}\xrightarrow{\sim} (L_0F)\overline{y}$
    a (necessarily invertible) arrow in $L_0D$.
    Then there merely exist $x,y\in C$
    representing $\overline{x}$ and $\overline{y}$
    and a zig-zag
    \begin{equation}
      \label{eq:zig-zag-to-lift}
      Fx \leftarrow d_1 \rightarrow \dots  \leftarrow d_l \rightarrow Fy
    \end{equation}
    in $D$ representing the arrow $f$.
    By $1$-surjectivity of $F$
    there merely exist $c_i \in C$ with $Fc_i\simeq d_i$,
    and arrows
    \begin{equation}
      x \leftarrow c_1 \rightarrow \dots \leftarrow c_l\rightarrow y
    \end{equation}
    individually lifting the arrows in the original zig-zag \eqref{eq:zig-zag-to-lift}.
    The resulting zig-zag represents the desired arrow
    $\overline{x}\xrightarrow{\sim} \overline{y}$
    lifting $f$.
  \item
    Finally, we consider the case $k\geq 2$.
    We view $F\colon C\to D$ as a map of simplicial animae.
    Applying \Cref{lem:surjectivity-simplicial-complex}
    to the simplicial sets $\Delta^i$ for $i=0,\dots, k$,
    we see that $F$ satisfies the assumption of \Cref{lem:connected-realization},
    which then yields the desired $k$-surjectivity
    of the map $L_0C\to L_0D$.
    \qedhere
  \end{itemize}
\end{proof}

\section{Biinductive systems}

Going forward, we are only going to use a few select facts
about $d$-categories, cores, localizations and $n$-surjectivity.
To make the argument more transparent,
we now encapsulate these aspects into an abstract setup.

\begin{definition}
  A \emph{biinductive system} on a category \(C\) consists of a sequence
  \begin{equation}
    C_0\subseteq C_1\subseteq C_2 \subseteq
    \cdots
    \subseteq C_d \subseteq
    \cdots
    \subseteq C
  \end{equation}
  of full subcategories satisfying the following conditions:
  \begin{enumerate}[label=(C\arabic*),ref=(C\arabic*)]
  \item
    The sequence \(C_\bullet\) is exhaustive, i.e.,
    \begin{equation}
      \bigcup_{d\in \naturals}C_d \eqqcolon C_{<\infty}=C.
    \end{equation}
    (the union is considered as full subcategories of $C$).
  \item
    Each inclusion \(I_d^{<\infty}\colon C_d\hookrightarrow C\) has adjoints
    \begin{equation}
      L_d\colon C\to C_d
      \quad
      \text{and}
      \quad
      R_d\colon C\to C_d
    \end{equation}
    on the left and on the right, respectively.
  \item
    Each category \(C_d\) has sequential limits and colimits.
  \end{enumerate}
\end{definition}

\begin{construction}
  Let \(C_\bullet\) be a biinductive system.
  We construct the following categories:
  \begin{align}
    C^R&\coloneqq
    \lim\left(
    \cdots
    \xrightarrow{R_d} C_d \to
    \cdots
    \xrightarrow{R_1} C_1
    \xrightarrow{R_0} C_0
    \right)
    \\
    C^L&\coloneqq
    \lim\left(
    \cdots
    \xrightarrow{L_d} C_d \to
    \cdots
    \xrightarrow{L_1} C_1
    \xrightarrow{L_0} C_0
    \right)
  \end{align}
\end{construction}

\begin{lemma}
  \label{lem:C-R-L-emb-Fun}
  We have full embeddings
  \begin{equation}
    C^R\subseteq \Fun(\omega,C)
    \quad\text{and}\quad
    C^L\subseteq \Fun(\omega^\op,C)
  \end{equation}
  identifying \(C^R\) and \(C^L\)
  with the full subcategories of those sequences
  \begin{equation}
    X_\bullet=(X_0\to X_1\to \cdots\to X_d\to \cdots)
    \quad\text{or}\quad
    Y_\bullet=(\cdots \to Y_d \to\cdots \to Y_1\to Y_0)
  \end{equation}
  in \(C\) such that for each \(d\in \naturals\)
  \begin{itemize}
  \item
    the object \(X_d\) or \(Y_d\) lies in \(C_d\subseteq C\), and
  \item
    the structure maps induce isomorphisms
    \begin{equation}
      X_d\xrightarrow{\sim} R_dX_{d+1}
      \quad\text{or}\quad
      L_dY_{d+1}\xrightarrow{\sim} Y_d.
    \end{equation}
  \end{itemize}
\end{lemma}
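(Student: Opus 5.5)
We treat the $R$ case in detail; the $L$ case is formally dual (replace $\omega$ by $\omega^\op$, right adjoints by left adjoints, and counits by units). The key observation is that a limit of categories along right adjoints is equivalent to the category of sections of the associated cartesian fibration over $\omega^\op$ which are cartesian. Alternatively, and more concretely, we use the adjunctions $I_d\dashv R_d$ to convert the tower $\cdots\to C_{d+1}\xrightarrow{R_d}C_d$ into the tower of left adjoints $C_d\xrightarrow{I}C_{d+1}$. Since all of these are full subcategories of the single category $C$, we can realize everything inside $\Fun(\omega,C)$.

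Concretely, we first recall the general fact (e.g.\ from Lurie's HTT, or via straightening): given a diagram $p\colon\omega^\op\to\Cat$ whose transition functors $R_d\colon C_{d+1}\to C_d$ admit left adjoints $I^{d+1}$, the limit is equivalent to the full subcategory of the lax limit, i.e.\ of sections of the cocartesian fibration $\int I\to\omega$ classified by the left adjoints. This full subcategory consists of those sections $d\mapsto X_d$ with maps $I X_d\to X_{d+1}$ whose adjoint $X_d\to R_dX_{d+1}$ is invertible. This is the standard statement that a limit along right adjoints is identified, via passing to adjoints, with the category of ``right-adjoint-cartesian'' sections of the colimit-type fibration.

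Second, we identify the cocartesian fibration classified by $d\mapsto C_d$ with inclusions as the full subcategory $\{(d,X): X\in C_d\}\subseteq \omega\times C$. Indeed, the inclusions are fully faithful and compatible, so the Grothendieck construction of the functor $\omega\to\Cat$, $d\mapsto C_d$, embeds fully into the trivial fibration $\omega\times C\to\omega$. Sections of this are precisely functors $X_\bullet\colon\omega\to C$ with $X_d\in C_d$. Since $C_d\subseteq C$ is full, sections form a full subcategory of $\Fun(\omega,C)$, and the cartesian condition $X_d\xrightarrow{\sim}R_dX_{d+1}$ is a property. This cuts out a further full subcategory, giving the claimed embedding.

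The main obstacle is the first step: rigorously justifying that the limit of right adjoints agrees with the full subcategory of ``cartesian'' sections of the lax colimit built from the left adjoints. We would invoke the equivalence $\Cat^{\mathrm{R}}\simeq(\Cat^{\mathrm{L}})^\op$ of categories with right/left adjoint functors. Under this equivalence the limit over $\omega^\op$ of right adjoints corresponds to a colimit in $\mathrm{Pr}^L$-style. Alternatively we would use directly that the limit is the category of cartesian sections of the cartesian fibration $\int R\to\omega^\op$, and that a cartesian fibration whose transition functors are right adjoints is also cocartesian with the left adjoints as transitions (a bicartesian fibration). The cartesian sections over $\omega^\op$ then become sections over $\omega$ whose morphisms $X_d\to X_{d+1}$ are cartesian edges, i.e.\ satisfy $X_d\xrightarrow{\sim}R_dX_{d+1}$. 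Only existence of the adjoints (axiom (C2)) is needed, not any (co)limit hypotheses.
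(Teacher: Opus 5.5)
Your proposal is correct and follows essentially the same route as the paper. You unstraighten $d\mapsto C_d$ (with the inclusions as transition functors) to a cocartesian fibration $P\to\omega$, which is also cartesian with transition functors $R_d$, and identify $C^R$ with its cartesian sections. You then embed $P$ fully faithfully into $\omega\times C$, so that sections become sequences in $\Fun(\omega,C)$ with $X_d\in C_d$ and $X_d\xrightarrow{\sim}R_dX_{d+1}$.

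The only blemish is a variance slip in your last paragraph. The cartesian fibration classified by $(C_\bullet,R_\bullet)\colon\omega^\op\to\Cat$ lives over $\omega$, not over $\omega^\op$, and the limit is its category of cartesian sections over $\omega$. With that corrected, your argument is exactly the paper's, and the aside about the equivalence between categories with right and left adjoints is not needed.
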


\begin{proof}[Construction]
  We construct the identification for \(C^R\);
  the one for $C^L$ is analogous.
  
  Let $p\colon P \to\omega$
  be the cocartesian unstraightening of the sequence
  \(C_\bullet\colon \omega \to \Cat\) 
  (whose transition maps are the inclusions \(C_{d}\hookrightarrow C_{d'}\)
  for \(d\leq d'\)).
  The fact that all arrows in the sequence \(C_\bullet\colon \omega\to \Cat\)
  have a right adjoint means precisely that
  \(p\colon P\to \omega\)
  is not just a cocartesian fibration (by construction),
  but also a \emph{cartesian} fibration;
  its cartesian straightening
  is exactly the diagram
  \((C_\bullet,R_\bullet)\colon \omega^\op \to \Cat\).
  By the general formula for limits of categories,
  this means that we can identify
  \begin{equation}
    C^R=\lim_{\omega^\op}(C_\bullet,R_\bullet)
    \simeq \Fun^{\mathrm{cart}}_\omega(\omega,P)
    \subseteq \Fun_\omega(\omega,P)
  \end{equation}
  where the right side
  denotes the category of cartesian sections of \(p\colon P\to \omega\)
  as a full subcategory of all sections.

  The inclusions assemble into a natural transformation
  \(I^{<\infty}_\bullet\colon C_\bullet \to C \colon \omega \to \Cat\)
  to the constant diagram with value \(C\).
  After unstraightening, this yields a functor
  \(\overline{I}\colon P\to \omega\times C\) over \(\omega\)
  which is fully faithful because
  each component \(I_d^{<\infty}\colon C_d\hookrightarrow C\)
  of the natural transformation was fully faithful.
  Finally, we pass to categories of sections over \(\omega\),
  and obtain a functor
  \begin{equation}
    \Fun_\omega(\omega, P) \xrightarrow{\overline{I}\circ -}
    \Fun_\omega(\omega, \omega\times C)\simeq \Fun(\omega, C)
  \end{equation}
  by postcomposition, which is again fully faithful.

  By construction, the image consists of those sequences \(X_\bullet\)
  such that \(X_d\in C_d\) for all \(d\in \naturals\).
  Such a sequence corresponds to a $p$-\emph{cartesian} section --- %
  i.e., an object of \(C^R\) --- %
  if and only if each arrow \(X_d\to X_{d+1}\) is $p$-cartesian
  (because each arrow in \(\omega\) is a composite of arrows of the form \(d\to d+1\)).
  which is equivalent to saying that the induced map
  \(X_d\to R_dX_{d+1}\) is an isomorphism
  (because the $p$-cartesian transport functor along $d\to d+1$ is precisely $R_d\colon C_{d+1}\to C_d$).
\end{proof}

\begin{lemma}
  \label{lem:C-eventually-constant}
  \begin{enumerate}
  \item[]
  \item
    For each $d\in \naturals$, the evaluation functor
    $R_d\colon C^R\subset \Fun(\omega, C)\xrightarrow{\ev_d}C_d$
    has a fully faithful left adjoint $I_d^R\colon C_d\hookrightarrow C^R$
    which sends an object \(X \in C_d\)
    to the unique\footnote{
      unique in $C^R$, not in the ambient category $\Fun(\omega,C)$
    } sequence in $C^R$ that has $X$ in degree $d$ and is constant afterwards,
    namely
    \begin{equation}
      I^R_dX \coloneqq
      (R_0 X \to \cdots \to R_{d-1} X \to X \xrightarrow{=} X \xrightarrow{=} \cdots) 
      \in C^R.
    \end{equation}
  \item
    \label{it:I^R-proof}
    By taking the union over \(d\to \infty\),
    we obtain a fully faithful functor
    \begin{equation}
      I^R_{<\infty}\coloneqq\bigcup_d I^R_d\colon C\hookrightarrow C^R,
      \quad X\mapsto R_\bullet X.
    \end{equation}
    which identifies \(C\) with the full subcategory of
    \(C^R\subseteq \Fun(\omega,C)\)
    spanned by the eventually constant sequences.
  \item
    Dually, we have a fully faithful inclusion
    \begin{equation}
      I^L_{<\infty}\colon C\hookrightarrow C^L
    \end{equation}
    which identifies \(C\) with the full subcategory of
    $C^L\subseteq \Fun(\omega,C)$
    spanned by the eventually constant sequences.
  \end{enumerate}
\end{lemma}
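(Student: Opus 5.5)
We work throughout in the description $C^R\subseteq\Fun(\omega,C)$ of \Cref{lem:C-R-L-emb-Fun}, so a morphism $I^R_dX\to Y_\bullet$ is just a natural transformation of sequences.

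The first task is to check that $I^R_dX$ really lies in $C^R$. Each term $R_jX$ lies in $C_j$. For $j<d$ the structure map $R_jX\to R_jR_{j+1}X$ is an isomorphism, because the adjoints compose: $R_j\simeq R_jR_{j+1}$ on $C_{j+1}$. For $j\geq d$ we have $X\in C_d\subseteq C_j$, so $R_jX\simeq X$ via the counit, which is invertible since $C_j\hookrightarrow C$ is fully faithful.

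Next we establish the adjunction by computing mapping animae. Given $Y_\bullet\in C^R$, we have
\begin{equation}
  \Map_{C^R}(I^R_dX,Y_\bullet)\simeq \Map_{\Fun(\omega,C)}(I^R_dX,Y_\bullet).
\end{equation}
We split $\omega$ as $\{0,\dots,d\}$ followed by the tail $\{d,d+1,\dots\}$.

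On the tail, $I^R_dX$ is constant. A map from a constant sequence into $Y_{\ge d}$ is determined by its component at $d$: indeed $\omega_{\ge d}$ has initial object $d$, so the constant diagram is left Kan extended from $\{d\}$.

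On $\{0,\dots,d\}$, the sequence $R_\bullet X$ is obtained from $X$ by iterated right adjoints. A map $R_jX\to Y_j$ compatible with the structure map of $Y$ corresponds, by adjunction, to a map into $R_jY_{j+1}$. Since $Y_j\simeq R_jY_{j+1}$, such a map is uniquely determined by the component at $j+1$. Inducting downward from $d$, we conclude that the whole transformation is determined by its $d$-component $X\to Y_d$.

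A clean way to package this is the observation that the restriction $C^R\to C^R_{\le d}\simeq C_d$ (the limit of the finite tower) is $\ev_d$, and that $\ev_d$ has the left adjoint given by left Kan extension along $\{d\}\hookrightarrow\omega$ followed by the cartesian reflection. More concretely, we can argue with cartesian sections of $p\colon P\to\omega$: a section that is $p$-cartesian below $d$ is the right Kan extension relative to $p$. This gives
\begin{equation}
  \Map(I^R_dX,Y_\bullet)\simeq C(X,Y_d)\simeq C_d(X,R_dY).
\end{equation}
Full faithfulness of $I^R_d$ then follows because the unit $X\to\ev_dI^R_dX=X$ is the identity.

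For part \ref{it:I^R-proof}, the functors $I^R_d$ are compatible along $C_d\subseteq C_{d+1}$, because $I^R_{d+1}X$ for $X\in C_d$ is again the sequence $R_\bullet X$. Using exhaustiveness, they glue to $I^R_{<\infty}$ on $C=\bigcup C_d$, which is fully faithful since each restriction is. Its essential image consists of the eventually constant sequences: if $Y_j\to Y_{j+1}$ is an isomorphism for all $j\ge d$, then the counit $I^R_dY_d\to Y_\bullet$ is an isomorphism. In degrees $<d$ this holds because $Y_j\simeq R_jY_d$, and in degrees $\geq d$ it holds by constancy.

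The dual statement for $C^L$ follows by passing to opposite categories, which exchanges $L$ with $R$.

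The main obstacle is making the tail and downward-induction argument coherent, rather than merely checking it on components. The plan is to handle this via relative Kan extension in the bicartesian fibration $p\colon P\to\omega$.
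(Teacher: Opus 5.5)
Your argument is correct and is essentially the paper's. The paper likewise shows that a map $X\to Y_d$ extends uniquely to a transformation $I^R_dX\to Y_\bullet$: trivially on the constant tail, and on the head by the universal property of the counits $Y_j\simeq R_jY_d\to Y_d$ (which you unwind one step at a time via $Y_j\simeq R_jY_{j+1}$). It then reads off the essential image and dualizes exactly as you do. The one thing to drop is the aside about ``left Kan extension along $\{d\}\hookrightarrow\omega$ followed by the cartesian reflection''. That would need initial objects to fill the degrees below $d$, and a reflector onto $C^R\subseteq\Fun(\omega,C)$; the axioms provide neither, and your direct argument does not use them.
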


\begin{proof}
  \begin{enumerate}
  \item
    We only have to show that for any $X\in C_d$
    and any sequence $X'_\bullet\in C^R$,
    any map $f\colon X\to X'_d$ in $C_d$
    extends uniquely to a dashed transformation as indicated
    \begin{equation}
      \begin{tikzcd}
        R_0X\ar[r]
        \ar[d,dashed]
        &
        \cdots\ar[r]
        &
        R_{d-1}X\ar[r]
        \ar[d,dashed]
        &
        X
        \ar[r,"="]
        \ar[d,"f"]
        &
        X\ar[r,"="]
        \ar[d,dashed]
        &
        \cdots
        \\
        X'_0
        \ar[r]
        &
        \dots
        \ar[r]
        &
        X'_{d-1}
        \ar[r]
        &
        X'_d
        \ar[r]
        &
        \ar[r]
        X'_{d+1}
        \ar[r]
        &\dots
      \end{tikzcd}
    \end{equation}
    This is immediate on the right of $f$
    and follows on the left by the universal property of
    the counit maps $X'_{n}\simeq R_nX'_d\to X'_d$ (for $0\leq n<d$).
  \item
    By construction, the image of $I_d^R$
    consists of those sequences that are constant starting from the index $d$;
    taking the union over all $d\to \infty$
    yields precisely the eventually constant sequences.
  \item
    Dual to \ref{it:I^R-proof}.
  \end{enumerate}
\end{proof}

\begin{construction}
  \label{constr:bimodule-L-R}
  We construct the bimodule (a.k.a.\ profunctor)
  \begin{equation}
    M\colon (C^R)^\op\times C^L \to \An,
    \quad
    X,Y \mapsto \lim_{i,j : \omega^\op} C(X_i,Y_j)
  \end{equation}
  as the composite
  \begin{align}
    (C^R)^\op\times C^L
    &\subseteq
      \Fun(\omega,C)^\op \times \Fun(\omega^\op,C)
    \\
    &\simeq
      \Fun(\omega^\op,C^\op) \times \Fun(\omega^\op,C)
    \\
    &\xrightarrow{-\times -}
      \Fun(\omega^\op\times \omega^\op, C^\op\times C)
    \\
    &\xrightarrow{\Map_C\circ -}
      \Fun(\omega^\op\times \omega^\op, \An)
      \xrightarrow{\lim_{\omega^\op\times\omega^\op}} \An
  \end{align}
  It is immediate from the construction that
  on the full subcategories \(C\subseteq C^R\) and \(C\subseteq C^L\)
  of eventually constant sequences,
  this bimodule is just the identity bimodule
  \begin{equation}
    M|_{C^\op\times C} \simeq C(-,-) \colon C^\op\times C\to \An,
  \end{equation}
  a.k.a.\ the hom-functor.
\end{construction}

\begin{proposition}
  \label{lem:biinductive-adjunction}
  The bimodule \(M\) of \Cref{constr:bimodule-L-R}
  is representable in both variables,
  and corresponds to an adjunction
  \begin{equation}
    \label{eq:adjunction:C-R-L}
    \begin{tikzcd}
      &C_{<\infty}\ar[dl,"I_{<\infty}^R"',hookrightarrow]\ar[dr,hookrightarrow,"I_{<\infty}^L"]
      \\
      C^R\ar[rr,"L"]\ar[rr,phantom,"\bot",bend right=15]&&C^L\ar[ll,"R",bend left]
    \end{tikzcd}
  \end{equation}
  under \(C_{<\infty}=C\),
  where the functors \(L\) and \(R\) are given explicitly via the formulas
  \begin{equation}
    \label{eq:formulas-L-R}
    L(X)_\bullet\simeq \colim_{i : \omega} L_\bullet X_i
    \quad
    \text{and}
    \quad
    R(Y)_\bullet\simeq \lim_{i : \omega^\op} R_\bullet Y_i,
  \end{equation}
  respectively.
\end{proposition}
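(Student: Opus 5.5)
We show representability in each variable separately by checking that the proposed formulas in \eqref{eq:formulas-L-R} define objects of $C^L$ and $C^R$ and corepresent/represent $M$. The compatibility with $I^R_{<\infty}$ and $I^L_{<\infty}$ is then obtained from the computation of $M$ on eventually constant sequences in \Cref{constr:bimodule-L-R}.

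\emph{Right adjoint.} Fix $Y_\bullet\in C^L$. For each $i$, the sequence $R_\bullet Y_i=I^R_{<\infty}Y_i$ lies in $C^R$ by \Cref{lem:C-eventually-constant}, since $Y_i\in C_i\subseteq C$. The maps $Y_{i+1}\to Y_i$ give an $\omega^\op$-diagram $i\mapsto R_\bullet Y_i$ in $C^R$. We define $R(Y)\coloneqq\lim_i R_\bullet Y_i$ and must show this limit exists in $C^R$ and is computed levelwise. Levelwise, $R(Y)_d=\lim_i R_dY_i$ is a sequential limit in $C_d$, which exists by (C3). It remains to check that $R(Y)_\bullet$ is again an object of $C^R$, i.e.\ that $R(Y)_d\to R_dR(Y)_{d+1}$ is an isomorphism. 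This holds because $R_d\colon C_{d+1}\to C_d$ is a right adjoint, hence commutes with the limit, and $R_dR_{d+1}Y_i\simeq R_dY_i$. Consequently $C^R$ is closed in $\Fun(\omega,C)$ under this limit, where the relevant limits are computed in the $C_d$ (no limits in $C$ itself are needed).

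Now for $X\in C^R$ we compute
\begin{equation}
  C^R(X,R(Y))\simeq\lim_i C^R(X,R_\bullet Y_i)\simeq \lim_i C(\colim_j X_j, Y_i)\text{?}
\end{equation}
The colimit $\colim_j X_j$ need not exist in $C$, so we avoid it. Instead we use that $R_\bullet Y_i$ is constant from degree $i$ on, together with the description of maps into $I^R_{<\infty}$. A map $X\to R_\bullet Y_i$ in $C^R$ is determined by its components in degrees $j\ge i$, namely compatible maps $X_j\to Y_i$. The components in lower degrees are forced by the cartesian property, exactly as in the proof of \Cref{lem:C-eventually-constant}. Hence $C^R(X,R_\bullet Y_i)\simeq\lim_{j:\omega^\op}C(X_j,Y_i)$, where cofinality lets us restrict to $j\ge i$. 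Formally this says that the restriction $\Fun_\omega^{\mathrm{cart}}\to$ sections over $\omega_{\ge i}$ is an equivalence onto sections valued in $C$, and that the constant-then-$R$ diagram is the right Kan extension. Taking $\lim_i$ gives $M(X,Y)$, naturally in $X$.

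\emph{Left adjoint.} Dually, fix $X\in C^R$ and set $L(X)\coloneqq\colim_i L_\bullet X_i$ in $C^L$, computed levelwise via sequential colimits in $C_d$ (again by (C3)). Closure under this colimit holds because each $L_d$ is a left adjoint and $L_dL_{d+1}\simeq L_d$. The analogous computation gives
\begin{equation}
  C^L(L(X),Y)\simeq\lim_i C^L(L_\bullet X_i,Y)\simeq\lim_i\lim_j C(X_i,Y_j)=M(X,Y).
\end{equation}
Since the same bimodule $M$ is represented on both sides, we obtain $L\dashv R$. The identification on $C_{<\infty}$ follows from $M|_{C^\op\times C}\simeq C(-,-)$: for $X\in C$, $L(I^R X)$ corepresents $C(X,-)$ on $C\subseteq C^L$. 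Alternatively, the colimit of an eventually constant diagram is directly $L_\bullet X=I^L X$.

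\emph{Main obstacle.} The delicate step is the identification $C^R(X,I^R_{<\infty}Y_i)\simeq\lim_j C(X_j,Y_i)$. This requires that the cartesian-section mapping anima into an eventually constant sequence reduces to the tail, and that all of this holds naturally and coherently rather than just objectwise. I would prove it using the adjunction $I^R_d\dashv\ev_d$ of \Cref{lem:C-eventually-constant} in a form that handles a general source $X$ rather than only objects of $C_d$. Concretely, I would write $X\simeq\colim_j I^R_jX_j$ in $C^R$, which itself needs checking via the levelwise criterion: in degree $d$ this is $\colim_{j}R_dX_j\simeq X_d$, since the diagram is eventually constant. Then
\begin{equation}
  C^R(X,-)\simeq\lim_j C^R(I^R_jX_j,-)\simeq\lim_j C_j(X_j,R_j(-)),
\end{equation}
and evaluating on $I^R Y_i$ yields $\lim_j C(X_j,Y_i)$ for $j\ge i$.
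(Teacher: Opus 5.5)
Your proposal is correct and follows essentially the same route as the paper. Both arguments verify representability by computing $C^R(X,\lim_k R_\bullet Y_k)$ (and dually $C^L(\colim_i L_\bullet X_i,Y)$) using only the universal property of the sequential (co)limit and the adjunctions $I^{<\infty}_i\dashv R_i$ (resp.\ $L_j\dashv I^{<\infty}_j$), which reduce maps into (resp.\ out of) eventually constant sequences to $\lim_j C(X_j,Y_i)$. The difference is only bookkeeping: the paper does this in one chain inside the end formula for $\Fun(\omega,C)$ and collapses the end because it becomes constant in one variable, whereas you first pull the limit out in $C^R$ (explicitly checking that the levelwise formulas land in $C^R$ and $C^L$, which the paper leaves implicit) and then handle maps into $I^R_{<\infty}Y_i$ via the presentation $X\simeq\colim_j I^R_jX_j$.
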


\begin{proof}[Construction\footnotemark]\footnotetext{
    The reason this is a ``construction'' and not a ``proof''
    is that,
    while the representability statement of 
    \Cref{lem:biinductive-adjunction}
    is indeed just a proposition,
    the explicit identifications \eqref{eq:formulas-L-R}
    correspond exactly to the additional data of universal elements
    in $M(X_\bullet,\colim_i L_\bullet X_i)$ and in $M(\lim_{k}R_\bullet Y_k,Y_\bullet)$,
    respectively.
  }
  We show that for each object $Y_\bullet\in C^L$, the presheaf
  $M(-,Y_\bullet)\colon (C^R)^\op\to \An$
  is represented by $RY\coloneqq \lim_{k:\omega^\op}R_\bullet Y_k$;
  representability in the other variable is analogous.
  Functorially for each $X_\bullet : C^R$, this is witnessed by the isomorphism
  \begin{align}
    \label{eq:iso-bimodule-L-R}
    C^R(X_\bullet,\lim_{k:\omega^\op}R_\bullet Y_k)
    &\simeq
    \int^{i:\omega}C(X_i,\lim_{k}R_i Y_k)
    \\
    &\simeq \int^{i} \lim_{k}C(X_i,R_iY_k)
    \\
    &\simeq \int^{i} \lim_{k}C(X_i,Y_k)
    \\
    &\simeq \lim_{i:\omega^\op} \lim_{k:\omega^\op}C(X_i,Y_k)=M(X_\bullet,Y_\bullet)
  \end{align}
  Here the first line uses the embedding $C^R\subseteq \Fun(\omega^\op,C)$
  and the usual end-formula for mapping anima in functor categories;
  the second and third line use the universal properties of \(\lim_k\) and \(R_i\),
  respectively;
  the last line simplifies the end to a limit
  because it is now constant in the second variable\footnote{
    Recall that an end is just a limit over the twisted arrow category.
    Being constant in the second variable
    just means that the diagram in question factors through the projection
    $\mathrm{Tw}(\omega)\to \omega^\op$;
    this projection is in initial as a cocartesian fibration with weakly contractible fibers,
    hence induces an equivalence on limits.
  }.

  Finally, it is clear from the pointwise formulas
  \eqref{eq:formulas-L-R}
  that $L$ and $R$ 
  send eventually constant sequences to eventually constant sequences.
  Since $M|_{C^\op\times C}$ is just the identity bimodule,
  it follows that the adjunction $L\dashv R$ intertwines $I_{<\infty}^R$ and $I_{<\infty}^L$ as claimed.
\end{proof}

\begin{remark}
  \label{rem:counit-LR-explicit}
  Unraveling the construction one sees that
  the isomorphism \eqref{eq:iso-bimodule-L-R}
  is given explicitly via the formula
  \begin{equation}
    \left(f_\bullet\colon X_\bullet \to \lim_l R_\bullet Y_l\right)
    \mapsto \left(
    X_i \xrightarrow{f_i} \lim_l R_iY_l \xrightarrow{\pr_{l\coloneqq k}} R_iY_k \to Y_k
    \right)_{i,k}
  \end{equation}
  and its inverse by
  \begin{equation}
    \left(f_{i,k}\colon X_i\to Y_k \right)_{i,k}
    \mapsto
    \left(
      X_\bullet\xrightarrow{(\overline{f}_{\bullet,k})_k} \lim_{k}R_\bullet Y_k
    \right),
  \end{equation}
  where $\overline{f}_{\bullet,k}\colon X_\bullet \to R_\bullet Y_k$
  denotes the map induced by $f_{\bullet,k}\colon X_\bullet\to Y_k$.
  Thus it is not hard to explicitly describe the counit \(\epsilon\) and unit \(\eta\)
  of the adjunction \(L\dashv R\)
  by tracing the identity of $R$ and $L$ through the composite isomorphisms
  \begin{equation}
    C^R(R-,R-)\simeq M(R-,-)\simeq C^L(LR-,-)
    \quad\text{and}\quad
    C^R(-,RL-)\simeq M(-,L-)\simeq C^L(L-,L-),
  \end{equation}
  respectively:
  \begin{itemize}
  \item
    For any \(Y_\bullet \in C^L\) and each $d\in\naturals$, the component
    \begin{equation}
      (\epsilon Y)_d\colon 
      (LRY)_d\simeq \colim_{i:\omega} L_d \lim_{j:\omega^\op}R_i Y_j \to Y_d
    \end{equation}
    amounts to the compatible system
    \begin{equation}
      \left(L_d\lim_{j:\omega^\op} R_iY_j\xrightarrow{L_d\pr_{j\coloneqq d}} L_dR_i Y_d\to L_dY_d\simeq Y_d\right)_{i:\omega}
    \end{equation}
  \item
    Dually, for any \(X_\bullet\in C^R\) and each $d\in\naturals$,
    the component
    \begin{equation}
      (\eta X)_d\colon
      X_d\to \lim_{i:\omega^\op}R_d\colim_{j:\omega}L_iX_j\simeq (RLX)_d
    \end{equation}
    amounts to the compatible system
    \begin{equation}
      \left(
          X_d\simeq R_dX_d\to R_dL_i X_d \xrightarrow{R_d\incl_{j\coloneqq d}}R_d\colim_{j:\omega} L_iX_j
      \right)_{i:\omega^\op}
    \end{equation}
  \end{itemize}
  Here ``$\pr$'' and ``$\incl$'' (with appropriate indices)
  denote the respective structure maps from the limit or into the colimit.
\end{remark}

\begin{remark}
  For each fixed \(d\in\naturals\) we have the adjunctions
  \begin{equation}
    \begin{tikzcd}[row sep=large]
      &C_d
      \ar[dr,"I^L_d", bend left]
      \ar[dl,bend right,"I^R_d"']
      \\
      C^R
      \ar[ur,"R_d"']
      \ar[rr,"L"]
      &&
      C^L
      \ar[ul,"L_d"]
      \ar[ll,"R", bend left]
    \end{tikzcd}
  \end{equation}
  by \Cref{lem:biinductive-adjunction} (bottom)
  and 
  by \Cref{lem:C-eventually-constant} (left and right), respectively.
  By composing the adjunctions,
  we conclude that $R_d$ and $L_d$ have adjoints on both sides:
  \begin{equation}
    L_d L \dashv RI^L_d\simeq I_d^R\dashv R_d
    \quad\text{and}\quad
    L_d\dashv I^L_d\simeq LI_d^R\dashv R_dR.
  \end{equation}
  For the middle identifications we use that the adjunction $L\dashv R$
  intertwines the inclusions $I_d^R$ and $I_d^L$
  of sequences that are constant above degree $d$.
\end{remark}

\subsection{... with a bias}
\label{sec:biinductive}

\begin{definition}
  \label{def:bias}
  A \emph{bias} on a biinductive system \(C_\bullet\)
  is a sequence
  \begin{equation}
    C^\simeq \subseteq
    \cdots
    \subseteq  S_n \subseteq
    \cdots
    \subseteq S_1\subseteq S_0\subseteq S_{-1} \coloneqq C
  \end{equation}
  of wide subcategories satisfying the following conditions:
  \begin{enumerate}[label=(S\arabic*), ref=(S\arabic*)]
  \item
    \label{axiom:S-isos}
    The \(S_\bullet\) jointly detect isomorphisms, i.e.,
    \begin{equation}
      C^\simeq=\bigcap_{n\in \naturals}S_n
    \end{equation}
    (the intersection is considered as wide subcategories of $C$).
  \item
    \label{axiom:S-unit}
    For each object \(X\in C\) and each \(d\in \naturals\),
    we have
    \begin{equation}
      \left(
        X\xrightarrow{\eta_d X} L_d X
      \right)
      \in S_d
      \quad \text{and}\quad
      \left(
        R_d X\xrightarrow{\epsilon_d X} X
      \right)
      \in S_d,
    \end{equation}
    where \(\eta_d\) and \(\epsilon_d\) denote the unit and counit
    of the adjunctions
    \(L_d\dashv I_d^{<\infty}\)
    and
    \(I_d^{<\infty}\dashv R_d\), respectively.
  \item
    \label{axiom:S-countable}
    For each \(d,n\in\naturals\),
    the arrows in \(S_n\cap C_d\) are closed under countable compositions.
    This means that we have
    \begin{equation}
      \left(\incl_{i\coloneqq 0}\colon X_0\to \colim_{i:\omega}X_i\right) \in S_n
      \quad
      \text{and}
      \quad
      \left(\pr_{i\coloneqq 0}\colon \lim_{i:\omega^\op}Y_i\to Y_0 \right) \in S_n,
    \end{equation}
    for each diagram
    \(X_\bullet\colon \omega \to C_d\cap S_n\)
    and
    \(Y_\bullet\colon \omega^\op \to C_d\cap S_n\), respectively. 
    (Here the limit/colimit is computed in \(C_d\), which has sequential (co)limits by assumption.)
  \item
    \label{axiom:S-cancellation}
    Given \(n\in \naturals\) and composable arrows
    $X\xrightarrow{f} Y\xrightarrow{g} Z$
    in \(C_n\),
    we have the following cancellation properties:
    \begin{align}
      f\in S_{n-1}\,\, \wedge\,\, gf\in S_n \quad\implies\quad g\in S_n
      \\
      g\in S_{n+1}\,\,\wedge\,\, gf\in S_n \quad\implies\quad f\in S_n
    \end{align}
  \item
    \label{axiom:S-L}
    For each \(n,d\in \naturals\) and each arrow $f$ in $C$, we have 
    \begin{equation}
      f\in S_n\quad\implies \quad L_df\in S_n.
    \end{equation}
  \item
    \label{axiom:S-R}
    For each \(n \in \naturals\) and each arrow $f$ in $C$, we have
    \begin{equation}
      f\in S_n\quad\iff\quad R_n f\in S_n.
    \end{equation}
  \end{enumerate}
\end{definition}

\begin{remark}
  A wide subcategory $S_n$ of $C$ is the same datum as a subanima
  $S_n\subseteq \Mor C$
  that contains all isomorphisms and is closed under composition.
  This translation justifies writing an expression such as
  $f\in S_n$ rather than the more verbose $f \in \Mor (S_n)$
  (given an arrow $f$ of $C$).
\end{remark}

For the remainder of the section,
let \((C_\bullet,S_\bullet)\) be a biased biinductive system.
In the presence of the other axioms,
Axioms~\ref{axiom:S-cancellation} and~\ref{axiom:S-R}
admit the following straightforward strenghtenings.

\begin{lemma}
  \begin{enumerate}[label=(S6'), ref =(S6')]
  \item
    \label{axiom:S-R-new}
    For each \(n\leq d\) and each arrow $f$ in $C$, we have
    \begin{equation}
      f\in S_n\quad\iff\quad R_d f\in S_n
    \end{equation}
    (not just when $d=n$).
  \end{enumerate}
  \begin{enumerate}[label=(S4'), ref =(S4')]
  \item 
    \label{axiom:S-cancellation-new}
    Given composable arrows \(X\xrightarrow{f}Y\xrightarrow{g}Z\) in \(C\)
    and any \(n\in \naturals\), we have:
    \begin{align}
      f\in S_{n-1}\,\, \wedge\,\, gf\in S_n \quad\implies\quad \phantom{R_n}g\in S_n
      \tag{easy cancellation}
      \\
      g\in S_{n+1}\,\,\wedge\,\, gf\in S_n \quad\implies\quad L_nf\in S_n
      \tag{hard cancellation}
    \end{align}
    (not just when \(X,Y,Z\in C_n\subseteq C\)).
  \end{enumerate}
\end{lemma}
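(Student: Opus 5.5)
For (S6') I would first reduce to the chain of adjoint functors $R^d_n = R^{n+1}_n\circ\dots\circ R^d_{d-1}$ inside the biinductive system. Here I read $R_d f$ as $R_d$ applied to $f$, for arbitrary $f$ in $C$. The plan is to apply \ref{axiom:S-R} twice. Because $C_n\subseteq C_d$ and $R_n$ is right adjoint to the inclusion $C_n\hookrightarrow C$, we have a natural identification $R_n\simeq R_n\circ R_d$: the composite $R_n R_d$ is right adjoint to the composite inclusion $C_n\subseteq C_d\subseteq C$. Applying \ref{axiom:S-R} to the arrow $R_df$ then gives $R_df\in S_n\iff R_nR_df\in S_n$. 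Applying it to $f$ gives $f\in S_n\iff R_nf\in S_n$. Since $R_nR_df\simeq R_nf$ and $S_n$ is replete (it is a wide subcategory, so membership is invariant under identification of arrows), the two equivalences combine to $f\in S_n\iff R_df\in S_n$.

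For the easy cancellation in (S4'), take $f\colon X\to Y$, $g\colon Y\to Z$ in $C$ with $f\in S_{n-1}$ and $gf\in S_n$. I would apply $R_n$, which is a functor, so $R_n(gf)=R_ng\circ R_nf$ in $C_n$. By \ref{axiom:S-R}, $R_n(gf)\in S_n$. For $R_nf$ I would use (S6') with the pair $n-1\le n$, which gives $R_nf\in S_{n-1}$. When $n=0$ we have $S_{-1}=C$ and there is nothing to check. Now \ref{axiom:S-cancellation} in $C_n$ yields $R_ng\in S_n$, and a final application of \ref{axiom:S-R} gives $g\in S_n$.

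For the hard cancellation in (S4'), assume $g\in S_{n+1}$ and $gf\in S_n$. Since $C=\bigcup_d C_d$, I choose $d\ge n+1$ with $X,Y,Z\in C_d$. I would then apply the left adjoint $L_n$, which is compatible with the restricted functor $L^d_n$. By \ref{axiom:S-L}, $L_n(gf)=L_ng\circ L_nf$ lies in $S_n$ and $L_ng$ lies in $S_{n+1}$. All three arrows $L_nf$, $L_ng$, $L_n(gf)$ live in $C_n$, so the second clause of \ref{axiom:S-cancellation} gives $L_nf\in S_n$ directly.

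The only step needing care is the identification $R_n\simeq R_nR_d$ (and similarly $L_nL_d\simeq L_n$), which I expect to be the main obstacle, though a mild one. It follows from uniqueness of adjoints together with the fullness of the nested inclusions. Once it is in place, every remaining step is a direct citation of the axioms.
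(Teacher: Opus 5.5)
Your proposal is correct and follows the paper's proof essentially step for step. Like the paper, you obtain (S6') from two applications of (S6) together with $R_nR_d\simeq R_n$, the easy cancellation by applying $R_n$ and invoking (S6') and (S4) in $C_n$, and the hard cancellation by applying $L_n$ and invoking (S5) and then (S4) in $C_n$. Choosing $d$ with $X,Y,Z\in C_d$ is harmless but unnecessary, since $L_n$ is already defined on all of $C$.
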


\begin{proof}
  \begin{itemize}
  \item[]
  \item
    We get \ref{axiom:S-R-new} as
    \begin{equation}
      f\in S_n \xLeftrightarrow{\ref{axiom:S-R}} R_nf \in S_n \iff R_nR_df\in S_n
        \xLeftrightarrow{\ref{axiom:S-R}} R_df \in S_n,
    \end{equation}
    by applying Axiom~\ref{axiom:S-R} to $f$ to $R_df$,
    and using \(R_nR_d\simeq R_n\) in the middle.
  \item
    We deduce the easy cancellation as
    \begin{align}
      f\in S_{n-1}\,\wedge\, gf\in S_n
      &\iff 
        R_n f\in S_{n-1}\,\wedge\, R_n(gf)\in S_n
      &
        \ref{axiom:S-R-new}
      \\
      &\implies
        R_ng \in S_n
      &
        \ref{axiom:S-cancellation} 
        \\
      &\iff g \in S_n
        &
        \ref{axiom:S-R-new},
    \end{align}
    by applying the already proven \ref{axiom:S-R-new}
    to $f$ and $gf$,
    and applying the original Axiom~\ref{axiom:S-cancellation}
    to the composable arrows
    \(R_nX\xrightarrow{R_nf}R_nY\xrightarrow{R_ng}R_nZ\) in \(C_n\).
  \item
    We deduce the hard cancellation as
    \begin{align}
      g\in S_{n+1}\,\wedge\, gf\in S_n
      &\implies
        L_ng\in S_{n+1}\,\wedge\, L_n(gf)\in S_n
      &
        \ref{axiom:S-L}
      \\&\implies 
      L_nf\in S_n
      &
        \ref{axiom:S-cancellation},
    \end{align}
    by applying Axiom~\ref{axiom:S-L} to $g$ and $gf$,
    and applying the original Axiom~\ref{axiom:S-cancellation}
    to the composable arrows
    \(L_nX\xrightarrow{L_nf}L_nY\xrightarrow{L_ng}L_nZ\) in \(C_n\).
    \qedhere
  \end{itemize}
\end{proof}

We can characterize isomorphisms in \(C^L\) using the system \(S_\bullet\).

\begin{lemma}
  \label{lem:isos-in-C-L}
  Let \(f_\bullet\colon Y_\bullet \to Y'_\bullet\) be an arrow in \(C^L\).
  The following are equivalent:
  \begin{enumerate}
  \item
    $f_\bullet$ is an isomorphism in \(C^L\).
  \item
    \label{it:f-n-iso}
    For all \(d\in\naturals\), the arrow \(f_d\colon Y_d\to Y'_d\)
    is an isomorphism.
  \item
    \label{it:f-n-surj}
    For all \(d\in\naturals\), we have \(f_d\in S_d\).
  \item
    \label{it:f-n-m-surj}
    For all \(n\in\naturals\) there exists \(m\geq n\)
    such that \(L_n f_m\in S_n\).
  \end{enumerate}
\end{lemma}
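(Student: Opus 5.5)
We prove the cycle of implications (1)$\Rightarrow$(2)$\Rightarrow$(3)$\Rightarrow$(4)$\Rightarrow$(2)$\Rightarrow$(1).

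\emph{The formal steps.} By \Cref{lem:C-R-L-emb-Fun}, $C^L$ is a full subcategory of $\Fun(\omega^\op,C)$. In a functor category, isomorphisms are detected pointwise. Hence (1)$\Leftrightarrow$(2).

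The implication (2)$\Rightarrow$(3) holds because every $S_d$ is a wide subcategory, so it contains all isomorphisms.

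For (3)$\Rightarrow$(4), take $m\coloneqq n$. Since $Y_n,Y'_n\in C_n$, the functor $L_n$ acts as the identity on $f_n$. Hence $L_nf_n\simeq f_n\in S_n$.

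\emph{The substantive step (4)$\Rightarrow$(2).} Fix $d$; we show that $f_d$ is an isomorphism. By Axiom~\ref{axiom:S-isos}, applied inside $C$, it suffices to show $f_d\in S_n$ for every $n\in\naturals$. So fix $n$. Since $S_\bullet$ is decreasing, we may assume $n\geq d$.

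By hypothesis (4) there is some $m\geq n$ with $L_nf_m\in S_n$. The structure maps of $Y_\bullet$ and $Y'_\bullet$ give $L_nY_m\simeq Y_n$ and $L_nY'_m\simeq Y'_n$. Under these identifications $L_nf_m$ corresponds to $f_n$, by naturality of the $C^L$ structure. Hence $f_n\in S_n$.

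Next we apply $L_d$. By Axiom~\ref{axiom:S-L}, $L_df_n\in S_n$. Again using the structure maps, $L_df_n\simeq f_d$. Therefore $f_d\in S_n$.

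Since $n\geq d$ was arbitrary, $f_d$ lies in $S_n$ for all sufficiently large $n$, and hence for all $n$ by monotonicity. Then $f_d\in\bigcap_n S_n=C^\simeq$, so $f_d$ is an isomorphism.

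\emph{Main obstacle.} The delicate point is the bookkeeping identification of $L_nf_m$ with $f_n$, and of $L_df_n$ with $f_d$. This amounts to naturality of the isomorphisms $L_nY_m\xrightarrow{\sim}Y_n$. It follows from the fact that $f_\bullet$ is a map in $C^L$, i.e.\ a natural transformation of the sequences. Also, $S_n$ is closed under conjugation by isomorphisms, since it is a wide subcategory containing all isomorphisms. So membership transfers along these identifications.

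Note that neither the cancellation axioms nor countable compositions are needed; only Axioms~\ref{axiom:S-isos} and \ref{axiom:S-L} enter.
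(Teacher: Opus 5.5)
Your proof is correct and follows essentially the same route as the paper. The paper also dispatches (1)$\Leftrightarrow$(2)$\Rightarrow$(3)$\Rightarrow$(4) formally. For (4)$\Rightarrow$(2) it uses the identification $f_d\simeq L_dL_nf_m$, which you split into two steps, $L_nf_m\simeq f_n$ and then $L_df_n\simeq f_d$, together with Axioms~\ref{axiom:S-L} and~\ref{axiom:S-isos}.
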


\begin{proof}
  The second condition is just a reformulation of the first
  using the embedding
  \(C^R\subseteq \Fun(\omega,C)\)
  and the fact that isomorphisms in functor categories are detected objectwise.
  
  Clearly, the last three conditions get progressively weaker,
  so we only have to show \ref{it:f-n-m-surj}$\implies$\ref{it:f-n-iso}.
  Fix \(d\in \naturals\).
  Then for every \(n\geq d\) there exists \(m\geq n\) with
  \(L_nf_m\in S_n\), hence \(f_d\simeq L_d L_n f_m \in S_n\)
  by Axiom~\ref{axiom:S-L};
  hence \(f_d\) is indeed an isomorphism because the \(S_n\) jointly detect isomorphisms
  (Axiom~\ref{axiom:S-isos}).
\end{proof}

\begin{lemma}
  \label{lem:S-structure-maps}
  \begin{enumerate}
  \item[]
  \item
    For 
    each \(Y_\bullet\in C^L\) and each \(i\geq d\) we have
    \begin{equation}
      \left(
        \pr_{j\coloneqq d}\colon (RY)_i\simeq  \lim_{j:\omega^\op} R_i Y_j\to R_iY_d
      \right)
      \in S_d.
    \end{equation}
  \item
    For each \(X_\bullet\in C^R\) and each \(i\geq d\) we have
    \begin{equation}
      \left(
        \incl_{j\coloneqq d}\colon L_i X_d \to \colim_{j:\omega} L_iX_j\simeq (LX)_i
      \right)
      \in S_d.
    \end{equation}
  \end{enumerate}
\end{lemma}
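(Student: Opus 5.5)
For the first statement I will use Axiom~\ref{axiom:S-countable} for the sequential limit computed in \(C_i\), after checking that every transition map of the diagram \(j\mapsto R_iY_j\) with \(j\geq d\) lies in \(S_d\). The limit over \(\omega^\op\) can be computed on the cofinal part \(j\geq d\), and \(\pr_{j\coloneqq d}\) is then the structure map to the initial term. So it suffices to show that each transition map \(R_iY_{j+1}\to R_iY_j\) with \(j\geq d\) lies in \(S_d\), where this map is \(R_i\) applied to the structure map \(Y_{j+1}\to Y_j\) in \(C\).

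In \(C^L\) the structure map \(Y_{j+1}\to Y_j\) is identified with the unit \(\eta_j\colon Y_{j+1}\to L_jY_{j+1}\simeq Y_j\), by \Cref{lem:C-R-L-emb-Fun}. Axiom~\ref{axiom:S-unit} gives \(\eta_j\in S_j\subseteq S_d\), since \(j\geq d\) and the \(S_\bullet\) are decreasing. Because \(d\leq i\), the strengthened form \ref{axiom:S-R-new} gives \(R_i\eta_j\in S_d\). This shows the diagram \(j\mapsto R_iY_j\) (for \(j\geq d\)) lies in \(C_i\cap S_d\), and Axiom~\ref{axiom:S-countable} with \(n=d\) in \(C_i\) gives \(\pr_{j\coloneqq d}\in S_d\). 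I must also confirm that the limit used in \eqref{eq:formulas-L-R} is the one computed in \(C_i\). It is: it is a limit of objects \(R_iY_j\in C_i\), and \(R\) was defined via \(C^R\), whose degree-\(i\) component lives in \(C_i\), with \(R_i\) commuting with limits as a right adjoint.

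The second statement is dual. The transition maps of \(j\mapsto L_iX_j\) are \(L_i\) applied to the structure maps \(X_j\to X_{j+1}\). In \(C^R\) these are the counits \(\epsilon_j\colon R_jX_{j+1}\to X_{j+1}\), which lie in \(S_j\subseteq S_d\) for \(j\geq d\) by Axiom~\ref{axiom:S-unit}. Axiom~\ref{axiom:S-L} keeps \(L_i\epsilon_j\) in \(S_d\), with no restriction relating \(i\) and \(d\) needed here. Then Axiom~\ref{axiom:S-countable} applied to the colimit in \(C_i\) gives \(\incl_{j\coloneqq d}\in S_d\).

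The main care needed is bookkeeping: restricting to the cofinal segment \(j\geq d\), and checking that the (co)limits in the formulas for \(L\) and \(R\) are the ones computed in \(C_i\), so that Axiom~\ref{axiom:S-countable} applies verbatim. The only place the hypothesis \(i\geq d\) enters is the use of \ref{axiom:S-R-new} in the first statement.
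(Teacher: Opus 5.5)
Your proposal is correct and follows essentially the same argument as the paper. Both identify the structure maps with units (respectively counits), place them in \(S_j\subseteq S_d\) via Axiom~\ref{axiom:S-unit}, preserve this under \(R_i\) using the forward direction of \ref{axiom:S-R-new} (this is where \(i\geq d\) is needed), respectively under \(L_i\) using Axiom~\ref{axiom:S-L}, and conclude with Axiom~\ref{axiom:S-countable} on the cofinal segment \(j\geq d\) in \(C_i\). Your extra check that the (co)limits are computed componentwise in \(C_i\) is correct: the transition functors are adjoints, so they preserve the relevant (co)limits.
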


\begin{proof}
  \begin{enumerate}
  \item
    For each \(j\geq d\), the structure 
    map \(y_{j}\colon Y_{j+1}\to Y_{j}\)
    of \(Y_\bullet\) is identified with the unit
    \begin{equation}
      \eta_j Y_{j+1}\colon Y_{j+1}\xrightarrow{\eta_j Y_{j+1}} L_jY_{j+1}\simeq Y_j
    \end{equation}
    by the explicit identification \(C^L\subseteq \Fun(\omega,C)\)
    of \Cref{lem:C-R-L-emb-Fun};
    hence \(y_j\simeq \eta_jY_{j+1}\in S_j\subseteq S_d\) by Axiom~\ref{axiom:S-unit}.
    Since \(i\geq d\), we still have \(R_i(y_j)\in S_d\)
    by the forward direction of Axiom~\ref{axiom:S-R-new}.
    The claim \(\pr_{j\coloneqq d}\in S_d\) then follows from Axiom~\ref{axiom:S-countable},
    because \(\pr_{j\coloneqq d}\) is precisely the countable composition
    \begin{equation}
      \pr_{j\coloneqq d}\colon
      \lim_{j:\omega^\op}R_iY_j \simeq \lim_{j\geq d} R_iY_j
      \to \dots
      \xrightarrow{R_i(y_{d+1})}
      R_i Y_{d+1}\xrightarrow{R_i(y_d)} R_i Y_d
    \end{equation}
    of these arrows in \(S_d\cap C_i\).
  \item
    The proof of the second statement is  dual,
    by invoking Axiom~\ref{axiom:S-L}
    instead of the forward direction of Axiom~\ref{axiom:S-R-new}.
    \qedhere
  \end{enumerate}
\end{proof}

\begin{lemma}
  \label{lem:reformulate-S}
  Let \(f_\bullet\colon X_\bullet\to X'_\bullet\) be an arrow in \(C^R\).
  The following are equivalent:
  \begin{enumerate}
  \item
    \label{it:S-all-m}
    For each \(n\in \naturals\) and each $m>n$, we have \(L_nf_m\in S_n\).
  \item
    \label{it:S-n+1}
    For each \(n\in\naturals\), we have \(L_nf_{n+1}\in S_n\).
  \item
    \label{it:S-original}
    For each \(n\in \naturals\), there exists an \(n\leq m\) with \(L_nf_{m}\in S_n\).
  \end{enumerate}
\end{lemma}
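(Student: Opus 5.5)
The implications \ref{it:S-all-m}$\implies$\ref{it:S-n+1}$\implies$\ref{it:S-original} are immediate: take $m\coloneqq n+1$, and then $m\coloneqq n+1\geq n$. So the content is \ref{it:S-original}$\implies$\ref{it:S-all-m}. Fix $n$ and $m>n$. By assumption there is some $m'\geq n$ with $L_nf_{m'}\in S_n$, and we have to transport this from $m'$ to $m$. The tool is the structure maps of $X_\bullet$ and $X'_\bullet$. For $k\geq 0$ these are the counits $x_k\colon X_k\simeq R_kX_{k+1}\to X_{k+1}$, which lie in $S_k$ by Axiom~\ref{axiom:S-unit}. We then apply $L_n$ and use Axiom~\ref{axiom:S-L}, which gives $L_n x_k\in S_k$. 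For $k\geq n+1$ we have $S_k\subseteq S_{n+1}$, so $L_nx_k\in S_{n+1}$.

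Consider the naturality square in $C_n$
\begin{equation}
  \begin{tikzcd}
    L_nX_a\ar[r,"L_nf_a"]\ar[d]&L_nX'_a\ar[d]\\
    L_nX_b\ar[r,"L_nf_b"]&L_nX'_b
  \end{tikzcd}
\end{equation}
for $n+1\leq a\leq b$ (or more generally $n\leq a\leq b$). Here the vertical maps are composites $L_nx_{b-1}\cdots L_nx_a$, which lie in $S_a\subseteq S_n$, and in $S_{n+1}$ if $a\geq n+1$. We treat the two directions separately.

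\emph{Going up} ($a=m'<b=m$): from $L_nf_{m'}\in S_n$ and the right vertical map in $S_n$, the composite $L_nX_{m'}\to L_nX'_m$ lies in $S_n$. It equals $L_nf_m\circ(\text{left vertical})$, and the left vertical map lies in $S_{m'}\subseteq S_{n-1}$. Easy cancellation (Axiom~\ref{axiom:S-cancellation} in $C_n$) then gives $L_nf_m\in S_n$.

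\emph{Going down} ($a=m<b=m'$, with $m\geq n+1$): we know $L_nf_{m'}\in S_n$, and the right vertical map $g\colon L_nX'_m\to L_nX'_{m'}$ lies in $S_m\subseteq S_{n+1}$. The composite $g\circ L_nf_m$ equals $L_nf_{m'}\circ(\text{left vertical})$, a composite of two maps in $S_n$, hence in $S_n$. Hard cancellation (Axiom~\ref{axiom:S-cancellation} in $C_n$, where $L_n$ is the identity) then yields $L_nf_m\in S_n$.

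The main obstacle is the down direction. It needs the vertical map to lie in $S_{n+1}$, which is exactly why the statement requires $m>n$ rather than $m\geq n$. We must check carefully that the structure maps $x_k$ for $k\geq n+1$ survive $L_n$ in $S_{n+1}$. This uses only \ref{axiom:S-unit} and \ref{axiom:S-L}, with the index bookkeeping $x_k\in S_k\subseteq S_{n+1}$. The case $m'=n$ in \ref{it:S-original} falls under the up direction, which needs only $S_{n-1}$ on the left vertical map, so it also works.
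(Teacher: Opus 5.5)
Your proof is correct and is essentially the paper's argument. Fix a witness index $m_0\geq n$ and compare $L_nf_{m_0}$ with $L_nf_m$ via naturality squares in $C_n$ whose other sides are $L_n$ of the counit structure maps; these lie in $S_n$, or in $S_{n+1}$ when the source index exceeds $n$, by (S2) and (S5). Then easy cancellation propagates membership in $S_n$ upward and hard cancellation propagates it downward; your observation that the downward step is what forces $m>n$ matches the paper's treatment of the edge case $n=m_0$.
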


\begin{proof}
  Clearly the conditions get progressively weaker,
  so it suffices to prove the implication
  \ref{it:S-original}\(\implies\) \ref{it:S-all-m}.
  So we consider any map \(f\) satisfying \ref{it:S-original}
  and show that \(L_nf_m\in S_n\) for all \(n<m\).
  
  Fix \(n\in \naturals\). By assumption, we may choose
  \(m_0\geq n\) such that \(L_nf_{m_0}\in S_n\).
  Then for any \(n<m'\leq m_0\) and any \(m_0\leq m'' \)
  we may consider the following two commutative squares in \(C_n\)
  \begin{equation}
    \begin{tikzcd}
      L_nX_{m'}
      \ar[d,"L_nf_{m'}"]
      \ar[r,"\in S_{n+1}"]
      &
      L_nX_{m_0}
      \ar[d,"L_nf_{m_0}\in S_n"]
      \ar[r,"\in S_{n}"]
      &
      L_nX_{m''}
      \ar[d,"L_nf_{m''}"]
      \\
      L_nX'_{m'}
      \ar[r,"\in S_{n+1}"]
      &
      L_nX'_{m_0}
      \ar[r,"\in S_{n}"]
      &
      L_nX'_{m''}
    \end{tikzcd}
  \end{equation}
  All horizontal arrows lie in $S_{n+1}$ or $S_n$ as indicated:
  indeed, these arrows arise by applying $L_n$ to a counit \(X_k\simeq R_k X_l\to X_l\)
  for \(l=m_0 \geq k= m'>n\) (on the left)
  or  \(l =m''\geq k=m_0\geq n\) (on the right);
  this counit lies in
  in \(S_{n+1}\) or \(S_n\), respectively (by Axiom~\ref{axiom:S-unit}),
  and applying $L_n$ does not change this (by Axiom~\ref{axiom:S-L}).
  It follows from the cancellation axioms (Axiom~\ref{axiom:S-cancellation})
  that both \(L_nf_{m'}\) and \(L_nf_{m''}\) lie in \(S_n\).
  (In the edge case \(n=m_0\), there is no \(m'\);
  in this case we only consider the right square of the diagram,
  which still behaves as described.)
  Since \(n<m'\leq m_0\)  and \(m_0\leq m''\) were arbitary,
  we conclude that indeed \(L_nf_m\in S_n\) holds for all \(m>n\).
\end{proof}

\begin{definition}
  \label{def:S}
  We write 
  \begin{equation}
    S\coloneqq\{f_\bullet\colon X_\bullet\to X'_\bullet
    \mid \forall_{n\in \naturals}: L_nf_{n+1}\in S_n\}
  \end{equation}
  for the collection of those arrows in \(C^R\)
  that satisfy any of the equivalent conditions of \Cref{lem:reformulate-S}.
\end{definition}

\begin{lemma}
  \label{lem:S-2-out-of-3}
  The collection \(S\) defines a wide subcategory \(S\subseteq C^R\)
  that satisfies 2-out-of-3.
\end{lemma}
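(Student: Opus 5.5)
We have to show three things: $S$ contains all isomorphisms, $S$ is closed under composition, and $S$ has the two cancellation properties. We will use the formulation of \Cref{lem:reformulate-S}\ref{it:S-n+1}: $f_\bullet\in S$ iff $L_nf_{n+1}\in S_n$ for every $n$.

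\textbf{Isomorphisms and composition.} If $f_\bullet$ is an isomorphism in $C^R$, then each $f_{n+1}$ is an isomorphism, since isomorphisms in $\Fun(\omega,C)$ are detected objectwise. Hence $L_nf_{n+1}$ is an isomorphism and lies in the wide subcategory $S_n$. For composition, note that $L_n(g_{n+1}f_{n+1})\simeq L_ng_{n+1}\circ L_nf_{n+1}$, and $S_n$ is closed under composition.

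\textbf{2-out-of-3.} Take composable $X\xrightarrow{f}Y\xrightarrow{g}Z$ in $C^R$. We need two implications.

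First, suppose $gf,g\in S$; we show $f\in S$. The plan is hard cancellation~\ref{axiom:S-cancellation} inside $C_n$, applied to the pair $L_nf_{n+1}$, $L_ng_{n+1}$. This needs $L_ng_{n+1}\in S_{n+1}$, which is not part of the data. We obtain it as follows. From $g\in S$ we have $L_{n+1}g_{n+2}\in S_{n+1}$. By \ref{axiom:S-L}, $L_nL_{n+1}g_{n+2}\simeq L_ng_{n+2}\in S_{n+1}$. \Cref{lem:reformulate-S}\ref{it:S-all-m} also gives $L_n(gf)_{n+2}\in S_n$. Hard cancellation in $C_n$ applied to $L_nf_{n+2},L_ng_{n+2}$ then yields $L_nf_{n+2}\in S_n$. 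This is an instance of condition \ref{it:S-original} (with $m=n+2$), so $f\in S$.

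Second, suppose $gf,f\in S$; we show $g\in S$. The plan is easy cancellation, which needs $L_nf_m\in S_{n-1}$. For $n\ge1$, \ref{axiom:S-L} gives $L_{n-1}f_n\in S_{n-1}$. The obstacle is that this statement concerns $L_{n-1}$, whereas we need $L_nf_{n+1}\in S_{n-1}$. Descending directly via \ref{axiom:S-L} goes the wrong way, since $L_n$ is finer than $L_{n-1}$.

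To get around this, I would compare $L_nf_{n+1}$ with the map $L_nf_n$ through the squares used in \Cref{lem:reformulate-S}. The horizontal maps $L_nX_n\to L_nX_{n+1}$ are $L_n$ of counits, hence lie in $S_n$ by \ref{axiom:S-unit} and \ref{axiom:S-L}. Since $f\in S$, we have $L_nf_{n+1}\in S_n$, and in particular $L_nf_{n+1}\in S_{n-1}$. So $f$ itself supplies exactly what easy cancellation needs. Applying easy cancellation in $C_n$ to $L_nf_{n+1}\in S_{n-1}$ and $L_n(gf)_{n+1}\in S_n$ gives $L_ng_{n+1}\in S_n$, hence $g\in S$. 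The case $n=0$ is covered by the convention $S_{-1}=C$.

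The third implication, that $f,g\in S$ gives $gf\in S$, is just closure under composition.

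The step I expect to be the main obstacle is the hard-cancellation direction. The index shift to $m=n+2$ through \Cref{lem:reformulate-S} is what makes the needed $S_{n+1}$ hypothesis available, so it is worth checking carefully that \ref{axiom:S-L} is applied to the right composite $L_nL_{n+1}\simeq L_n$.
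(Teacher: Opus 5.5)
Your proposal is correct and matches the paper's proof. Isomorphisms and composition are immediate. The implication $f,gf\in S\Rightarrow g\in S$ is easy cancellation, using $L_nf_{n+1}\in S_n\subseteq S_{n-1}$. For $g,gf\in S\Rightarrow f\in S$, the shift to index $n+2$ gives $L_ng_{n+2}\simeq L_nL_{n+1}g_{n+2}\in S_{n+1}$, so hard cancellation applies, exactly as in the paper.

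The detour comparing $L_nf_{n+1}$ with $L_nf_n$ through the squares of \Cref{lem:reformulate-S} is never used and can be dropped.
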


\begin{proof}
  From any of the three characterizations it is clear that all isomorphisms
  lie in \(S\).
  It remains to show that the class \(S\) is closed under composition and cancellation.

  Consider two composable maps
  \(X\xrightarrow{f}X'\xrightarrow{g}X''\).
  It is clear from either characterization \ref{it:S-all-m} or \ref{it:S-n+1}
  of Lemma~\ref{lem:reformulate-S} that \(f,g\in S\) implies \(gf\in S\);
  similarly \(f\in S\) and \(gf\in S\) imply \(g\in S\)
  by the easy cancellation axiom.
  
  So let us assume that \(g,gf\in S\) and show \(f\in S\).
  For any $n\in \naturals$ we consider the composable maps
  \begin{equation}
    \begin{tikzcd}
      L_nX_{n+2}
      \ar[r,"L_nf_{n+2}"]
      &
      L_nX'_{n+2}
      \ar[r,"L_ng_{n+2}"]
      &
      L_nX''_{n+2}
    \end{tikzcd}
  \end{equation}
  where we know
  that the composite \(L_n(gf)_{n+2}\) lies in \(S_n\) 
  (by choosing $m\coloneqq n+2$ in condition \ref{it:S-all-m})
  and that the map \(L_ng_{n+2}\simeq L_nL_{n+1}g_{n+2}\) lies in \(S_{n+1}\)
  (by condition \ref{it:S-n+1} for $n+1$ and Axiom~\ref{axiom:S-L}).
  Thus it follows from the hard cancellation axiom
  that also \(L_n{f_{n+2}}\) lies in \(S_n\).
  Since \(n\in\naturals\) was arbitrary, this implies that \(f\in S\)
  (using characterization \ref{it:S-original}, choosing \(m\coloneqq n+2\)).
\end{proof}

\begin{theorem}
  \label{thm:bias-localization}
  Let \((C_\bullet,S_\bullet)\) be a biased biinductive system on \(C\).
  Then in the adjunction
  \begin{equation}
    L\colon C^R \rightleftarrows C^L : R
  \end{equation}
  of \Cref{lem:biinductive-adjunction} the following hold:
  \begin{enumerate}
  \item
    The right adjoint \(R\) is fully faithful.
  \item
    The left adjoint \(L\) precisely inverts the arrows in \(S\), i.e.,
    \begin{equation}
      L(f)\in (C^L)^\simeq \iff f\in S
    \end{equation}
    for all arrows \(f\) of \(C^R\).
  \end{enumerate}
  In particular, the adjunction exhibits $C^L$ as the localization
  \begin{equation}
    C^R[S^{-1}]\xrightarrow{\simeq} C^L
  \end{equation}
  of \(C^R\) at $S$.
\end{theorem}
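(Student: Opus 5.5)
The plan is to treat the two parts separately. Both reduce, via \Cref{lem:isos-in-C-L}, to checking that certain components lie in the appropriate $S_n$. Throughout we use the explicit formulas \eqref{eq:formulas-L-R}, the unit/counit description of \Cref{rem:counit-LR-explicit}, and the structure-map estimates of \Cref{lem:S-structure-maps}.

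\emph{Full faithfulness of $R$.} It suffices to show that the counit $\epsilon Y\colon LRY\to Y$ is an isomorphism for each $Y_\bullet\in C^L$. By \Cref{lem:isos-in-C-L} it is enough to show $(\epsilon Y)_d\in S_d$ for every $d$.

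Fix $d$ and put $X\coloneqq RY\in C^R$, so that $X_i=\lim_j R_iY_j$. By \Cref{rem:counit-LR-explicit}, the composite
\begin{equation}
  L_dX_d\xrightarrow{\incl_{j\coloneqq d}}(LX)_d\xrightarrow{(\epsilon Y)_d}Y_d
\end{equation}
is identified with
\begin{equation}
  L_d X_d\xrightarrow{L_d\pr_{j\coloneqq d}}L_dR_dY_d\to L_dY_d\simeq Y_d.
\end{equation}
Here $\pr_{j\coloneqq d}\in S_d$ by \Cref{lem:S-structure-maps} (with $i=d$), and so is its image under $L_d$ by Axiom~\ref{axiom:S-L}. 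The second map is an isomorphism, since $Y_d\in C_d$ gives $R_dY_d\simeq Y_d$. Hence the composite lies in $S_d$.

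The first map $\incl_{j\coloneqq d}$ lies in $S_d\subseteq S_{d-1}$ by the second part of \Cref{lem:S-structure-maps} (again with $i=d$). Easy cancellation \ref{axiom:S-cancellation-new} then gives $(\epsilon Y)_d\in S_d$, as required.

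\emph{$L$ inverts exactly $S$.} Let $f\colon X\to X'$ in $C^R$ and fix $n$ and $m\geq n+1$. Since $L_n(LX)_m\simeq(LX)_n$, we have a commutative square
\begin{equation}
  \begin{tikzcd}
    L_nX_m\ar[r,"L_n\incl_{j\coloneqq m}"]\ar[d,"L_nf_m"'] & (LX)_n\ar[d,"(Lf)_n"]\\
    L_nX'_m\ar[r,"L_n\incl_{j\coloneqq m}"] & (LX')_n
  \end{tikzcd}
\end{equation}
The horizontal maps are $L_n$ applied to $\incl_{j\coloneqq m}\colon L_mX_m\to(LX)_m$, which lies in $S_m$ by \Cref{lem:S-structure-maps}. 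By Axiom~\ref{axiom:S-L} they therefore lie in $S_m\subseteq S_{n+1}$.

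Suppose $f\in S$. Then $L_nf_m\in S_n$, so the lower-left composite lies in $S_n$. The top arrow lies in $S_{n+1}\subseteq S_{n-1}$, so easy cancellation gives $(Lf)_n\in S_n$ for all $n$. By \Cref{lem:isos-in-C-L}, $Lf$ is an isomorphism.

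Conversely, suppose $Lf$ is an isomorphism. Then the composite $(Lf)_n\circ L_n\incl$ lies in $S_n$, so the equal composite $L_n\incl\circ L_nf_m$ does too. The bottom arrow lies in $S_{n+1}$, so hard cancellation \ref{axiom:S-cancellation-new} yields $L_nL_nf_m\simeq L_nf_m\in S_n$. By \Cref{lem:reformulate-S} this gives $f\in S$.

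\emph{Localization and the main obstacle.} The final claim is formal: a reflective localization with fully faithful right adjoint exhibits the target as the localization of the source at exactly the arrows that the left adjoint inverts. I expect the main obstacle to be the index bookkeeping. One has to choose the indices ($i=d$ in the first part, $m\geq n+1$ in the second) so that the relevant structure maps land in $S_{d}$ or $S_{n+1}$, since this is precisely what the two cancellation laws require.
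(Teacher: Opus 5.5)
Your proposal is correct, and it reaches the result by a noticeably shorter route than the paper.

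In part~(1) the paper tests $(\epsilon Y)_d$ against the colimit inclusion at index $d+1$. Since \Cref{lem:S-structure-maps} only covers inclusions $L_iX_d\to(LX)_i$ with $i\geq d$, the paper must separately show that $\incl_{i\coloneqq d+1}\colon L_d(RY)_{d+1}\to(LRY)_d$ lies in $S_d$. It does this by hard cancellation on the transition maps $t_i$ followed by a countable composition, and ends with the stronger but unneeded conclusion $(\epsilon Y)_d\in S_{d+1}$. Testing at index $d$, as you do, lets you apply both halves of \Cref{lem:S-structure-maps} verbatim with $i=d$ and finish with easy cancellation alone.

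In part~(2) your forward direction is the paper's square, with its $d,n+1$ replaced by your $n,m$. For the converse, the paper shows that every unit $\eta X$ lies in $S$, by an argument dual to its part~(1). It then concludes from the naturality square of $\eta$ together with the 2-out-of-3 property of \Cref{lem:S-2-out-of-3}. You instead read the same square backwards and apply hard cancellation, obtaining $L_nf_{n+1}\in S_n$ directly.

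Your version therefore needs neither the unit computation nor \Cref{lem:S-2-out-of-3}. It also only uses Axiom (S4) in its original form, since every object in your squares lies in $C_n$ or $C_d$. The extra information the paper's route produces, namely $(\epsilon Y)_d\in S_{d+1}$ and $\eta X\in S$, is not needed for the theorem. The latter, as well as 2-out-of-3 for $S$, also follows a posteriori from your argument, because $L(\eta X)$ is invertible once $R$ is fully faithful.
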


\begin{proof}
  \begin{enumerate}
  \item
    We need to show that for each \(Y_\bullet\in C^L\),
    the counit \(\epsilon Y\colon LR Y\to Y \) is an isomorphism.
    By \Cref{lem:isos-in-C-L} it suffices to show that
    for each fixed \(d\in \naturals\) we have 
    \((\epsilon Y)_d \in S_{d}\),
    in fact, our proof will even yield $(\epsilon Y)_d \in S_{d+1}$.

    Now for each \(i\geq d+1\), we consider
    the commutative square
    \begin{equation}
        \cdsquareOpt
        {\lim\limits_{j\geq d+1}R_iY_j}
        {\lim\limits_{j\geq d+1} R_{i+1} Y_j}
        {R_iY_{d+1}}
        {R_{i+1}Y_{d+1}}
        {"t_i",dashed}
        {"S_{d+1}\ni\pr_{j\coloneqq d+1}"'}
        {"\pr_{j\coloneqq d+1}\in S_{d+1}"}
        {"\sim"}
    \end{equation}
    in \(C_{i+1}\),
    where $t_i$ is the map induced by the system
    \((R_iY_j\to R_{i+1}Y_j)_{j\geq d+1}\)
    upon passing to the limit \(j\to \infty\).
    \Cref{lem:S-structure-maps} yields that both vertical maps lie in \(S_{d+1}\).
    Moreover, the lower horizontal map is an isomorphism,
    because \(Y_{d+1}\in C_{d+1}\subseteq C_i\)
    so that \(R_iY_{d+1}\simeq R_{i+1}Y_{d+1}\simeq Y_{d+1}\).
    By the hard cancellation we can thus deduce that,
    while the top horizontal arrow \(t_i\) might not itself lie in \(S_d\),
    we nonetheless have \(L_dt_i\in S_d\).

    Passing to the countable composition of the \((L_dt_i)_{i\geq d+1}\),
    we see (Axiom~\ref{axiom:S-countable})
    that the left vertical arrow in the following triangle
    lies in \(S_d\)
    \begin{equation}
      \begin{tikzcd}[column sep=large]
        {\colim\limits_{i\geq d+1}L_d\lim\limits_{j\geq d+1}R_iY_j}
        \ar[r,"(\epsilon Y)_d"]
        &
        {Y_d}
        \\
        {L_d\lim\limits_{j\geq d+1}R_{d+1}Y_j}
        \ar[u,"S_{d}\ni\incl_{i\coloneqq d+1}"]
        \ar[ur,"s_d"',dashed]
      \end{tikzcd}
    \end{equation}
    By the explicit description of the counit \((\epsilon Y)_d\)
    (\Cref{rem:counit-LR-explicit}) with \(i\coloneqq d+1\),
    we see that the dashed composite \(s_d\) is the map
    \begin{equation}
      L_d\lim_j R_{d+1}Y_j \xrightarrow{L_d\pr_{j\coloneqq d}} L_dR_{d+1}Y_d\simeq Y_d,
    \end{equation}
    which, by factoring $\pr_{j\coloneqq d}$ through $\pr_{j\coloneqq d+1}$,
    is identified with the composite
    \begin{equation}
      L_d\lim_{j\geq d+1}R_{d+1}Y_j\xrightarrow{L_d\pr_{j\coloneqq d+1}}
      L_dR_{d+1}Y_{d+1}\xrightarrow{\sim} L_d R_{d+1} Y_d\simeq Y_d,
    \end{equation}
    where the second arrow is (identified with) the structure isomorphism
    \(L_dY_{d+1}\xrightarrow{\sim} Y_d\).
    Since \(\pr_{j\coloneqq d+1}\in S_{d+1}\) by \Cref{lem:S-structure-maps},
    we also have \(s_d\in S_{d+1}\) by Axiom~\ref{axiom:S-L}.
    Finally we conclude \((\epsilon Y)_d\in S_{d+1}\) by the easy cancellation,
    which is what we set out to prove.
  \item
    We start by showing that each arrow
    \((f_\bullet\colon X_\bullet \to X'_\bullet)\in S\)
    gets sent by \(L\) to an isomorphism,
    i.e., that \((Lf)_d\) is invertible for each \(d\in \naturals\).
    For each \(n\geq d\), consider the commutative square
    \begin{equation}
      \begin{tikzcd}
        {\colim_{i:\omega} L_dX_i}
        \ar[r,"(Lf)_d"]
        &
        {\colim_{i:\omega} L_dX'_i}
        \\
        {L_dX_{n+1}}
        \ar[r,"L_df_{n+1}"]
        \ar[u]
        &
        {L_dX'_{n+1}}
        \ar[u]
      \end{tikzcd}
    \end{equation}
    in \(C_d\) induced by the system \((L_d f_i)_{i\geq n+1}\) upon passing to the colimit \(i\to \infty\).
    Observe the following:
    \begin{itemize}
    \item
      By the assumption \(f\in S\), we have \(L_nf_{n+1}\in S_n\),
      hence also \(L_df_{n+1}\simeq L_dL_nf_{n+1}\in S_n\) by Axiom~\ref{axiom:S-L}.
    \item
      The two vertical arrows lie in \(S_{n-1}\) (in fact, even in $S_{n+1}$), because
      they arise as the countable composition
      (for $i\geq n+1$)
      of the arrows
      \begin{equation}
        L_d(X_{i}\simeq R_iX_{i+1}\xrightarrow{\epsilon_i X_{i+1}} X_{i+1})
        \quad\text{and}\quad
        L_d(X'_{i}\simeq R_iX'_{i+1}\xrightarrow{\epsilon_iX'_{i+1}} X'_{i+1}),
      \end{equation}
      respectively.
      (Here we use Axioms~\ref{axiom:S-unit}, \ref{axiom:S-L} and \ref{axiom:S-countable}
      in that order).
    \end{itemize}
    By composition and easy cancellation,
    we thus deduce that the top horizontal arrow
    \((Lf)_d\) also lies in \(S_n\).
    Since \(n\geq d\) was arbitrary,
    we see that \((Lf)_d\in \bigcap_n S_n\),
    hence \((Lf)_d\) is an isomorphism by Axiom\ref{axiom:S-isos}.\\

    For the converse, it suffices  to show that for each object \(X\in C^R\)
    the unit \(\eta X\colon X\to RLX\) lies in \(S\).
    Indeed, for any \(f_\bullet \colon X_\bullet\to X'_\bullet\)
    with invertible \(Lf\), one can then consider the naturality square 
    \begin{equation}
      \cdsquare
      {X}
      {X'}
      {RLX}
      {RLX'}
      {f}
      {\eta X}
      {\eta X'}
      {RL f}
    \end{equation}
    where the lower horizontal arrow is an isomorphism by assumption
    and the two vertical arrows lie in \(S\);
    since \(S\subset C^R\) is a wide subcategory closed under 2-out-of-3
    (\Cref{lem:S-2-out-of-3}),
    one then also gets \(f\in S\).

    With this in mind, let \(X\in C^R\);
    we need to show that for each \(d\in \naturals\),
    we have \(L_d(\eta X)_{d+1}\in S_d\).
    For each \(i\geq d+1\) we consider the commutative square
    \begin{equation}
      \begin{tikzcd}
        {\colim\limits_{j\geq d+1}L_{i+1}X_j}
        \ar[r,"u_i",dashed]
        &
        {\colim\limits_{j\geq d+1}L_{i}X_j}
        \\
        {L_{i+1}X_{d+1}}
        \ar[r]
        \ar[u,"\incl_{j\coloneqq d+1}"]
        &
        {L_{i} X_{d+1}}
        \ar[u,"\incl_{j\coloneqq d+1}"']
      \end{tikzcd}
    \end{equation}
    in \(C_{i+1}\),
    where $u_i$ is the map induced by the system
    \((L_{i+1}X_{j}\to L_iX_{j})_{j\geq d+1}\)
    upon passing to the colimit \(j\to \infty\).
    \Cref{lem:S-structure-maps} yields that both vertical maps lie in \(S_{d+1}\).
    Moreover, the lower horizontal map is an isomorphism,
    because \(X_{d+1}\in C_{d+1}\subseteq C_i\)
    so that \(X_{d+1}\simeq L_{i+1}X_{d+1}\simeq L_{i}X_{d+1}\).
    By the easy cancellation we can thus deduce that \(u_i\in S_{d+1}\),
    hence also \(R_{d+1}u_i\in S_{d+1}\) by the forward direction of Axiom~\ref{axiom:S-R}.
    Passing to the countable composition of the \((R_{d+1}u_i)_{i\geq d+1}\),
    we see (Axiom~\ref{axiom:S-countable})
    that the right vertical arrow in the following triangle lies in $S_{d+1}$:
    \begin{equation}
      \begin{tikzcd}
        X_{d+1}
        \ar[r,"(\eta X)_{d+1}"]
        \ar[dr,dashed]
        &
        \lim\limits_{i\geq d+1}R_{d+1}\colim_{j\geq d+1}L_i X_j
        \ar[d,"\pr_{i\coloneqq d+1}\in S_{d+1}"]
        \\
        &
        R_{d+1}\colim\limits_{j\geq d+1}L_{d+1}X_j
      \end{tikzcd}
    \end{equation}
    By the explicit description of the unit \((\eta X)_{d+1}\)
    (\Cref{rem:counit-LR-explicit}) with \(i\coloneqq d+1\),
    we see that the dashed diagonal arrow is obtained by applying
    \(R_{d+1}\) to the structure map
    \begin{equation}
      \incl_{j\coloneqq d+1}\colon X_{d+1}\simeq L_{d+1}X_{d+1}\to \colim\limits_{j\geq d+1} L_{d+1}X_j,
    \end{equation}
    hence lies in \(S_d\) (even \(S_{d+1}\))
    by \Cref{lem:S-structure-maps} and the forward direction of Axiom~\ref{axiom:S-R}.
    Finally we conclude \(L_d(\eta X)_{d+1}\in S_{d}\) by the hard cancellation,
    which is what we had to prove.
    \qedhere
  \end{enumerate}
\end{proof}

\subsection{Left and right $\infty$-categories}

We will simply write ``$\infty$-category''
(possibly with additional qualifiers) to speak about (various versions of)
what is usually called an ``$(\infty,\infty)$-category''
or ``$(\infty,\omega)$-category''.

\begin{definition}
    We define 
    \begin{equation}
      \Cat_{<\infty}\coloneqq \bigcup_{d\in \naturals}\Cat_d
      \coloneqq
      \colim(
      \Cat_0\xhookrightarrow{I^1}\Cat_1\hookrightarrow\cdots\xhookrightarrow{I^d}
      \Cat_d\hookrightarrow\cdots
      )
    \end{equation}
    as the category of \emph{finite-dimensional $\infty$-categories}.
    Each finite-dimensional \(\infty\)-category
    is just a $d$-category for some $d\in \naturals$.
\end{definition}

\begin{proposition}
  \label{prop:Cat-bias}
  \begin{enumerate}
  \item
    The sequence
    \begin{equation}
      \Cat_\bullet=
      \left(
        \An=\Cat_0\hookrightarrow \Cat_1\hookrightarrow \cdots 
        \hookrightarrow \Cat_d\hookrightarrow
      \right)
    \end{equation}
    is a biinductive system on $\Cat_{<\infty}$.
  \item
    Let $S_n\coloneqq \{\text{$n$-surjective}\}\subseteq \Cat_{<\infty}$
    denote the wide subcategory of $n$-surjective functors.
    Then $S_\bullet$ is a bias on the biinductive system \(\Cat_\bullet\).
  \end{enumerate}
\end{proposition}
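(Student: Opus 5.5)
We check the axioms of a biinductive system and of a bias one at a time, mostly by pointing to results already established. For the biinductive system: exhaustiveness is the definition of $\Cat_{<\infty}$ as the union. The adjoints $L_d$, $R_d$ of $\Cat_d\hookrightarrow\Cat_{<\infty}$ come from \Cref{thm:L-I-R}: on $\Cat_{d'}$ for $d'\ge d$ we take the composite $L^{d'}_d$ (resp.\ $R^{d'}_d$); for $d'\le d$ we take the identity. These are compatible with the inclusions $I^{d'+1}$ because $L_{d'}I^{d'+1}\simeq\id\simeq R_{d'}I^{d'+1}$. A map out of an object of $\Cat_{d'}$ into $\Cat_d$ can be computed in any $\Cat_e$ containing both, since the inclusions are fully faithful, so these are genuinely adjoints on the union. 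Sequential (co)limits exist because each $\Cat_d$ is presentable (\Cref{rem:Cat-d-presentable}).

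For the bias, we first note that $S_n$ is a wide subcategory: equivalences are $n$-surjective and $n$-surjections compose. The $S_n$ are decreasing by \Cref{rem:surjectivity-explicit}, and $n$-surjectivity of a functor between $d$-categories does not depend on the ambient $d$, since it only involves arrows and their identifications. The axioms then go as follows.
\begin{itemize}
\item \ref{axiom:S-isos} is \Cref{lem:d-cat-equiv-n-surj}.
\item \ref{axiom:S-unit} is part (1) of \Cref{prop:surjective-localization} and of \Cref{prop:surjective-core}.
\item \ref{axiom:S-cancellation} is \Cref{prop:cancellation-surj}, where $L_nf=f$ in $\Cat_n$.
\item \ref{axiom:S-L} is part (2) of \Cref{prop:surjective-localization}, using the trivial case $d\le n$ where $L_d$ is the identity.
\item \ref{axiom:S-R} is part (2) of \Cref{prop:surjective-core} with $k=n$, applied to $f$ viewed in some $\Cat_d$ with $d\ge n$. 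If $f$ lives in a $\Cat_d$ with $d<n$, then $R_n f=f$ and there is nothing to prove.
\end{itemize}

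The main work is \ref{axiom:S-countable}: for a sequence $X_0\to X_1\to\cdots$ in $\Cat_d$ of $n$-surjective maps, the map $X_0\to\colim X_i$ is $n$-surjective, and dually for towers. We would argue by induction on $n$ and $d$ using \Cref{lem:sequential-homs}.

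For colimits, the objects are $\colim (X_i)^\simeq$, so $X_0^\simeq\to\colim X_i^\simeq$ is surjective because every point of the colimit comes from some stage, where it lifts. On homs the map is $X_0(x,y)\to\colim_k X_k(x_k,y_k)$, which is again a sequential colimit of $(n-1)$-surjections of $\pred{d}$-categories, and induction applies. Because of this, the colimit case needs no choice at all.

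For limits, the objects form $\lim_i Y_i^\simeq$, and we must show that $\lim Y_i^\simeq\to Y_0^\simeq$ is surjective given that each $Y_{i+1}^\simeq\to Y_i^\simeq$ is surjective. This is exactly the axiom of dependent choice assumed in the preliminary remark: a point of $Y_0$ lifts step by step to a compatible system. On homs, \Cref{lem:sequential-homs} identifies $(\lim Y)(x,y)\simeq\lim_k Y_k(\pr_kx,\pr_ky)$, a tower of $(n-1)$-surjections, so we again induct.

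The step I expect to be the most delicate is checking that the limit case genuinely reduces to dependent choice. One has to say precisely that surjectivity of animae is closed under countable inverse composition, and also be careful that the hom-level towers are indexed by a chosen compatible system of lifted objects. This is fine because we only prove a proposition after choosing the witnesses.
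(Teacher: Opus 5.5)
Your proposal is correct and follows the paper's proof: you cite the same results for (S1), (S2), (S4), (S5) and (S6), and you prove (S3) by the same induction on $n$ using \Cref{lem:sequential-homs}, with dependent choice used only to lift objects through the tower. Your extra care about constructing the adjoints on $\Cat_{<\infty}$ and about independence of the ambient dimension spells out points the paper leaves implicit. Your worry about the hom-level towers in the limit case is unnecessary, though harmless: there $x_\bullet,y_\bullet$ are given objects of the limit, so that step involves no choice.
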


\begin{proof}
  \begin{enumerate}
  \item
    Each functor $\Cat_{d}\hookrightarrow\Cat_{d+1}$ is fully faithful
    and has adjoints $L_d$ and $R_d$ by \Cref{thm:L-I-R}.
    Each $\Cat_d$ is presentable (see \Cref{rem:Cat-d-presentable}),
    hence in particular has sequential limits and colimits.
    The sequence $\Cat_\bullet$ exhausts $\Cat_{<\infty}$ by construction.
  \item
    \ref{axiom:S-isos} is \Cref{lem:d-cat-equiv-n-surj}.
    \ref{axiom:S-unit}, \ref{axiom:S-L} and \ref{axiom:S-R}
    follow from
    \Cref{prop:surjective-core} and \Cref{prop:surjective-localization}.
    \ref{axiom:S-cancellation} is \Cref{prop:cancellation-surj}.

    To conclude the proof, we prove \ref{axiom:S-countable}
    using the explicit description of
    \Cref{lem:sequential-homs} and induction on $n$.
    \begin{itemize}
    \item
      Let $X_0\to X_1\to \dots $ be a sequence of $n$-surjective maps of $d$-categories;
      we need to show that the countable composite $X_0\to X_\infty\coloneqq \colim_i X_i$
      is also $n$-surjective.

      For every object $x_\infty\in X_\infty$ in the sequential colimit,
      there merely exists an index $i\in \omega$ and an object $x_i\in X_i$
      with $x_i\mapsto x_\infty$;
      by the $0$-surjectivity of $X_0\to X_i$ there furthermore merely exists a lift to
      $X_0\ni x_0\mapsto x_i\mapsto x_\infty$;
      this proves the $0$-surjectivity of $\incl_{0}\colon X_0\to X_\infty$.
      If $n\geq 1$, then we consider $x_0,y_0\in X_0$
      and observe that
      $X_0(x_0,y_0)\to X_\infty(\incl_{0}x_0,\incl_{0}y_0)$ is again an $\omega$-indexed composition
      of $(n-1)$-surjective maps, hence $(n-1)$-surjective by induction.
    \item
      Let $\dots \to Y_1\to Y_0$ be a cosequence of $n$-surjective maps of $d$-categories;
      we need to show that the countable composite
      $Y_\infty\coloneqq \lim_i Y_i$ is also $n$-surjective.
      For every object $y_0\in Y_0$ there exist successive
      lifts $Y_{i+1}\ni y_{i+1}\mapsto y_i\in Y_i$ by $0$-surjectivity;
      hence by dependent choice there exists an object
      $y_\bullet\in Y_\infty$
      in the sequential limit with $y_\bullet \mapsto y_0$;
      this proves $0$-surjectivity of $Y\to Y_0$.
      If $n\geq 1$, then we consider $x_\bullet,y_\bullet\in Y_\infty$
      and observe that
      $Y(x_\bullet,y_\bullet)\to Y_0(x_0,y_0)$ is again an $\omega^\op$-indexed composition
      of $(n-1)$-surjective maps, hence $(n-1)$-surjective by induction.
      \qedhere
    \end{itemize}
  \end{enumerate}
\end{proof}

We therefore may specialize the abstract discussion of \Cref{sec:biinductive}
as follows:

\begin{definition}
  \label{def:infty-cat}
  \begin{itemize}
  \item[]
  \item
    The category of \emph{right}\footnote{
      Many authors consider this to be the ``correct''
      notion of $(\infty,\infty)$-categories,
      while the ``left'' one is sometimes considered to be somewhat pathological.
      For now, we treat the two notions on the same footing
      by using symmetric qualifiers ``right'' and ``left''.
      This will change in \Cref{sec:strong},
      where we treat $\Cat^R_\infty$
      as the default universe of discourse and $\Cat^L_\infty$
      as only one of various possible localizations/subcategories thereof.
    } \emph{$\infty$-categories} is 
    \begin{equation}
      \Cat_\infty^R\coloneqq
      \lim(\cdots\xrightarrow{R_d}\Cat_d\to\cdots \xrightarrow{R_1}\Cat_1\xrightarrow{R_0}\Cat_0)
    \end{equation}
  \item
    The category of \emph{left $\infty$-categories} is 
    \begin{equation}
      \Cat_\infty^L\coloneqq
      \lim(\cdots\xrightarrow{L_d}\Cat_d\to\cdots \xrightarrow{L_1}\Cat_1\xrightarrow{L_0}\Cat_0)
    \end{equation}
  \end{itemize}
  (Recall also the explicit description of these limits from \Cref{lem:C-R-L-emb-Fun}.)
\end{definition}

\begin{remark}
  \label{rem:cat-infty-presentable}
  \(\Cat_\infty^R\) and \(\Cat_\infty^L\)
  are the colimit of the sequence
  \begin{equation}
    \Cat_0\xhookrightarrow{I^1} \Cat_1\hookrightarrow \cdots \xhookrightarrow{I^d} \Cat_d\hookrightarrow \cdots
  \end{equation}
  computed in \(\PrL\) and $\PrR$, respectively\footnote{
    Note that the symbols ``$L$'' and ``$R$'' swap roles,
    because colimits in $\PrL$ are computed as limits in $\Cat$ of the right adjoints,
    and colimits in $\PrR$ are computed as limits in $\Cat$ of the left adjoints.
  }.
  In particular, both of them are again presentable categories.
  In contrast, the category $\Cat_{<\infty}$
  does not have sequential limits or colimits.
\end{remark}

\begin{definition}
  \label{def:infty-surjective}
  Let \(F_\bullet\colon C_\bullet \to D_\bullet\)
  be a map of right \(\infty\)-categories.
  We say that $F$ is
  \begin{itemize}
  \item
    \emph{$\infty$-surjective}\footnote{
      Called ``$\omega$-surjective'' by Loubaton, see \cite[Remark~1.2.13]{Loubaton-effectivity}.
    } if,
    for all $n\in \naturals$,
    the map $F_n\colon C_n\to D_n$ of $n$-categories is $n$-surjective.
  \item
    \emph{weakly \(\infty\)-surjective} if, 
    for all $n\in \naturals$,
    the map \(L_nF_{n+1}\colon L_nC_{n+1}\to L_nD_{n+1}\) of $n$-categories
    is $n$-surjective.
  \end{itemize}
\end{definition}

\begin{remark}
  \label{rem:weak-strong-surjective}
  As the terminology suggests, every $\infty$-surjective map
  is weakly $\infty$-surjective.
  Indeed, this follows from the characterization of
  \Cref{lem:reformulate-S}:
  for an $\infty$-surjective map one can just take $m\coloneqq n$
  in condition \ref{it:S-original}.

  For now, the definition of weak surjectivity might be a bit opaque,
  since it is defined in terms of the localization functors $L_n$,
  which can be hard to compute explicitly.
  But in \Cref{lem:L-iso-coind-inv} below
  we will give a more explicit characterization.
\end{remark}

Note that strenghtening the notion of $\infty$-surjective maps
by even one level makes it collapse.

\begin{lemma}
  \label{lem:d+1-surj-equiv}
  Let $F\colon C\to D$ be a map of right $\infty$-categories
  such that $F_d$ is $(d+1)$-surjective for each $d\in \naturals$.
  Then $F$ is an equivalence.
\end{lemma}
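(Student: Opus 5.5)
Since isomorphisms in $\Cat^R_\infty\subseteq\Fun(\omega,\Cat_{<\infty})$ are detected levelwise (\Cref{lem:C-R-L-emb-Fun}), it suffices to show that each $F_d\colon C_d\to D_d$ is an equivalence of $d$-categories. By \Cref{lem:d-cat-equiv-n-surj}, this amounts to showing that $F_d$ is $k$-surjective for every $k\in\naturals$.

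Fix $d$ and $k$. If $k\leq d+1$, then $F_d$ is $k$-surjective directly by assumption, so assume $k>d+1$. Pick any $m\geq k$ with $m\geq d$, say $m\coloneqq k$. The assumption says that $F_m\colon C_m\to D_m$ is $(m+1)$-surjective, hence in particular $k$-surjective. We then transport this down along the core. By the defining property of right $\infty$-categories (\Cref{lem:C-R-L-emb-Fun}), the map $F_d$ is identified with $R_dF_m\colon R_dC_m\to R_dD_m$. Now apply \Cref{prop:surjective-core}~\ref{lem:surj-cores-k>d} to the $m$-categories $C_m,D_m$ with $n\coloneqq d$ and $k>m$. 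If $k>m$ we are done; but with $m=k$ we have $k\not>m$. So instead take $m\coloneqq k-1$, which satisfies $m\geq d+1>d$. Then $F_m$ is $(m+1)=k$-surjective with $k>m$, so the cited part of \Cref{prop:surjective-core} yields that $R_dF_m\simeq F_d$ is $k$-surjective.

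Since $k$ was arbitrary, $F_d$ is $k$-surjective for all $k$, hence an equivalence, and therefore $F$ is an equivalence. The only delicate point is matching the indices so that the hypothesis ``$k>$ dimension'' of \Cref{prop:surjective-core}~\ref{lem:surj-cores-k>d} applies. The choice $m=k-1$ makes the $(m+1)$-surjectivity assumption exactly sufficient, because the conservativity input \Cref{lem:d+1-surj-conservative} needs $(m+1)$-surjectivity of a map of $m$-categories.
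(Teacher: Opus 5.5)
Your proof is correct and is essentially the paper's argument: both reduce levelwise to $k$-surjectivity of each component via \Cref{lem:d-cat-equiv-n-surj}, and both obtain it from the identification of $F_d$ with the core $R_d$ of a higher component (in your indexing $R_dF_{k-1}$, the paper's $R_nF_d$ with $k=d+1$), to which \Cref{prop:surjective-core}~\ref{lem:surj-cores-k>d} applies. You also spell out the case $k\le d+1$, which the paper leaves implicit, and you can simply delete the abandoned choice $m=k$.
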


\begin{proof}
  Fix $n\in \naturals$; we need to show that $F_n\colon C_n\to D_n$ is an equivalence.
  By \Cref{lem:d-cat-equiv-n-surj} it suffices
  to show that $F_n$ is $d$-surjective for all $d\in \naturals$.
  Since we are assuming that $F_{d}$ is ${d+1}$-surjective,
  it follows from \Cref{prop:surjective-core}\ref{lem:surj-cores-k>d}
  (with $k=d+1$) that also $F_n\simeq R_nF_{d}$ is $(d+1)$-surjective,
  hence in particular $d$-surjective.
\end{proof}

We now obtain the main theorem of this article.

\begin{theorem}[Right vs.\ left $\infty$-categories]
  \label{thm:main}
  We have an adjunction
  \begin{equation}
    \label{eq:adjunction:C-R-L}
    \begin{tikzcd}
      &\Cat_{<\infty}\ar[dl,"I_{<\infty}^R"',hookrightarrow]\ar[dr,hookrightarrow,"I_{<\infty}^L"]
      \\
      \Cat^R_\infty\ar[rr,"L"]\ar[rr,phantom,"\bot",bend right=15]&&
      \Cat^L_\infty\ar[ll,"R",bend left,hookrightarrow]
    \end{tikzcd}
  \end{equation}
  \begin{itemize}
  \item
    compatible with the respective subcategories of
    finite-dimensional $\infty$-categories.
  \item
    The left adjoint $L$ precisely inverts the weakly $\infty$-surjective maps;
  \item
    the right adjoint $R$ is fully faithful.
  \end{itemize}
  It exhibits \(\Cat_\infty^L\) as the localization of $\Cat^R_\infty$
  at the weakly $\infty$-surjective maps.
\end{theorem}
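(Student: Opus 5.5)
The theorem is a direct specialization of the abstract \Cref{thm:bias-localization} to the biased biinductive system of \Cref{prop:Cat-bias}. The plan is to take $C\coloneqq\Cat_{<\infty}$ with the filtration $C_d\coloneqq\Cat_d$ and bias $S_n\coloneqq\{n\text{-surjective functors}\}$. By \Cref{prop:Cat-bias} this is a biased biinductive system, so all the abstract machinery of \Cref{sec:biinductive} applies. By definition we have $C^R=\Cat^R_\infty$ and $C^L=\Cat^L_\infty$ (\Cref{def:infty-cat}).

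First, \Cref{lem:biinductive-adjunction} gives the adjunction $L\dashv R$ between $\Cat^R_\infty$ and $\Cat^L_\infty$. It is given explicitly by the formulas \eqref{eq:formulas-L-R} and intertwines the full embeddings $I^R_{<\infty}$ and $I^L_{<\infty}$ of $\Cat_{<\infty}$ as eventually constant sequences from \Cref{lem:C-eventually-constant}. This is the asserted compatibility with finite-dimensional $\infty$-categories.

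Second, \Cref{thm:bias-localization} yields that $R$ is fully faithful and that $L$ inverts exactly the arrows of the class $S$ of \Cref{def:S}. It also shows that the adjunction exhibits $C^L$ as the localization $C^R[S^{-1}]$.

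It then remains only to identify $S$ with the weakly $\infty$-surjective maps. An arrow $F_\bullet$ of $\Cat^R_\infty$ lies in $S$ exactly when $L_nF_{n+1}\in S_n$ for all $n$. With our choice of bias this says that $L_nF_{n+1}$ is $n$-surjective for every $n$, which is verbatim \Cref{def:infty-surjective}.

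There is essentially no obstacle, since all the substantive work has already been carried out in \Cref{prop:Cat-bias} and \Cref{thm:bias-localization}. The one point to check is that the localization functors $L_n$ occurring in \Cref{def:S} agree with the $n$-localizations of \Cref{thm:L-I-R}. They do, because both are the left adjoints to the same inclusions $\Cat_n\hookrightarrow\Cat_{<\infty}$ and adjoints are unique.
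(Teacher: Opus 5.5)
Your proposal is correct and follows the paper's proof exactly: the paper likewise specializes \Cref{thm:bias-localization} to the biased biinductive system $(\Cat_\bullet,\{\bullet\text{-surjective}\})$ of \Cref{prop:Cat-bias}, noting that the class $S$ of \Cref{def:S} is verbatim the class of weakly $\infty$-surjective maps. Your additional remarks on the compatibility with $\Cat_{<\infty}$ via \Cref{lem:biinductive-adjunction} and on the uniqueness of the adjoints $L_n$ only make explicit what the paper leaves implicit.
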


\begin{proof}
  If we consider the biased biinductive system
  $(\Cat_\bullet,S_\bullet\coloneqq\{\bullet\text{-surjective}\})$
  of \Cref{prop:Cat-bias},
  the weakly $\infty$-surjective maps
  are precisely those that lie in the wide subcategory $S$ from \Cref{def:S}.
  Thus the theorem is a direct consequence of \Cref{thm:bias-localization}.
\end{proof}

We note that the localization functor $L$ has a non-trivial kernel
as the following example shows.
We will study the nature of this localization, as well as some other intermediate ones
in \Cref{sec:strong} below.

\begin{example}[$\infty$-category of spans]
  \label{ex:spans}
  For each anima $X$ and each $d\in \naturals$ we consider
  the $d$-category $\Span_d(\An/X)$ of \emph{spans}
  (a.k.a.\ \emph{correspondences}) of animae over $X$;
  see \cite[Definition 5.16]{HaugsengIterated} for a construction.
  It has the following recursive description:
  \begin{itemize}
  \item
    The objects of $\Span_d(\An/X)$ are animae over $X$.
  \item
    If $d\geq 1$, then for each $A,B\in (\An/X)^\simeq$
    the hom-$(d-1)$-categories are
    \begin{equation}
      \label{eq:Span-homs}
      \Span_d(\An/X)(A,B)\simeq \Span_{d-1}(\An/(A\times_X B));
    \end{equation}
    composition is induced by fiber products over $X$.
  \end{itemize}
  Moreover, we may consider the sequence
  \begin{equation}
    \Span_\infty(\An) \coloneqq \left(\Span_0(\An)\to \Span_1(\An)\to \dots\right)
    \in \Fun(\omega, \Cat_{<\infty}),
  \end{equation}
  where the maps $\Span_d(\An/X)\to\Span_{d+1}(\An/X)$
  are induced recursively from the core inclusion
  \begin{equation}
    \Span_0(\An/X)=(\An/X)^\simeq\to \Span_1(\An/X)
  \end{equation}
  in each hom (by replacing $X$ with $A\times_X B$ in the recursive step).
  Note that $\Span_\infty(\An/X)$ is actually a right $\infty$-category
  (in the sense of \Cref{lem:C-R-L-emb-Fun})
  because inductively one has
  $\Span_{d}(\An/X)\simeq R_d\Span_{d+1}(\An/X)$
  for all $d\in \naturals$.
  Finally we observe that the functor
  $\Span_0(\An/X)=(\An/X)^\simeq\to *$
  is trivially $0$-surjective
  (for example, witnessed by $\emptyset\in (\An/X)^\simeq$),
  hence it follows inductively from \ref{eq:Span-homs} that each
  $\Span_d(\An/X)\to *$ is $d$-surjective
  (again, by replacing $X$ by $A\times_X B$ in the induction step);
  in other words, $\Span_\infty(\An/X)\to *$ is $\infty$-surjective.

  We conclude that $\Span_\infty(\An/X)$ is a non-trivial
  right $\infty$-category
  (i.e., componentwise non-trivial as a sequence in $\Fun(\omega,\Cat_{<\infty})$)
  which becomes trivial under the localization functor $L$
  as a left $\infty$-category;
  in particular it does not lie in the image of $R$.
\end{example}

\section{Invertibility and completeness ``at $\infty$''}
\label{sec:strong}

Recall that for finite $d\in\naturals$, a $d$-category $C$
is always assumed to be univalent,
which means that an arrow $x\to y$ in $C$
(of any dimension, between parallel lower-dimensional arrows)
comes from a (unique) identification $x\simeq y$
if and only if it is an isomorphism in the categorical sense.
This in turn means that there is only one reasonable notion of invertibility of arrows,
because there is no distinction between
``invertible up to isomorphism'' and ``invertible up to identification''.

In particular, the isomorphisms form the unique\footnote{
  To see uniqueness, note that any $K\subseteq \Mor_{\geq 1}C$
  satisfying \ref{recursion-formula-E}
  only consists of isomorphisms:
  for any arrow above dimension $d$ this is trivial;
  it follows for all others by downward induction.
} subanima
$K\subseteq \Mor_{\geq 1} C\coloneqq \coprod_{n\geq 1}\Mor_n C$
of arrows that contains all identifications
and satisfies the defining recursion formula:
\begin{enumerate} [label ={(E)}, ref = {(E)}]
\item
  \label{recursion-formula-E}
  An arrow $f\colon x\to y$ (of any dimension $n\geq 1$)
  lies in $K$ if and only if there merely exist
  \begin{itemize}
  \item
    $n$-arrows $g_-,g_+\colon y\to x$
  \item
    $(n+1)$-arrows
    $h_+\colon \id_x\to g_+f$ and $h_-\colon fg_-\to \id_y$ that lie in $K$.
  \end{itemize}
\end{enumerate}

When passing to the limit $d\to \infty$
the situation changes and new notions of invertibility arise.
For example, there are new solutions to the recursion formula 
\ref{recursion-formula-E},
including a unique \emph{maximal} one, called ``coinductive invertibility''
(see \Cref{prop:coinductive-equiv-char}).

Given a new ``notion of invertibility'',
viewed as a suitable\footnote{
  Usually this means that it contains all identifications,
  satisfies some version of the recursion formula
  \ref{recursion-formula-E},
  and is functorial in $C$.
} collection of arrows $K(C)\subseteq \Mor_{\geq 1} C$
for each $C\in \Cat^R_\infty$,
we then also get new notions of completeness/univalence and surjectivity.
\begin{itemize}
\item
  A (right) $\infty$-category $C$ is called $K$-\emph{complete},
  if $K(C)$ only consists of the isomorphisms.
\item
  A map $F\colon C\to D$ is \emph{$\infty$-surjective up to} arrows in $K$
  if for every $k$-arrow of the form $f\colon Fx\to Fy$ in $D$
  there merely exists a $k$-arrow $f'\colon x\to y$ in $C$
  and a $(k+1)$-arrow $Ff'\to f$ in $K(D)$.
\end{itemize}

These notions in turn yield
a full subcategory of $\Cat^R_\infty$ (of $K$-complete objects)
and a localization of $\Cat^R_\infty$ (at the $\infty$-surjections up to $K$),
respectively.
These do not always agree
For example, while every right $\infty$-category is tautologically complete 
with respect to identifications,
localizing $\Cat^R_\infty$ at the $\infty$-surjections (up to identifications)
yields the proper full subcategory of coinductively complete objects;
see \Cref{cor:strong}.

The goal of this section is to study various subcategories/localizations
obtained in this way and compare them to the reflective localization
$L\colon \Cat^R_\infty\to \Cat^L_\infty$ 
from our main theorem.

\subsection{Preliminaries}

To reflect its status as the universe of discourse in this section,
we will abbreviate $\Cat_\infty\coloneqq\Cat_\infty^R$
and just call its objects $\infty$-categories
(as opposed to \emph{right} $\infty$-categories).
Recall from \Cref{lem:C-R-L-emb-Fun}
that we consider an $\infty$-category $C$ as a sequence
\begin{equation}
  C = (C_0\to C_1\to \dots\to C_d\to \dots) \in \Fun(\omega, \Cat_{<\infty})
\end{equation}
where each $C_d$ is a $d$-category
and such that $C_d\xrightarrow{\sim} R_d C_{d+1}$.

We can talk about $k$-arrows of an $\infty$-category $C$:
by this we just mean $k$-arrows of $C_k$
(or, equivalently, of $C_d$ for any $d\geq k$).
We set
\begin{equation}
  \Mor_kC\coloneqq \Mor_kC_k\simeq \colim_{d:\omega} \Mor_k C_d
  \quad
  \text{and}
  \quad
  \Par_kC\coloneqq \Par_kC_k\simeq \colim_{d:\omega} \Par_k C_d.
\end{equation}

For each $d\in \naturals$, we have the \emph{(categorical) suspension} functor
defined as the composition
\begin{equation}
  \label{eq:Susp-ladj-hom}
  \Susp\colon \Cat_d\xrightarrow{\Susp_{0,1}} \{0,1\}/\Cat_{d+1}\to \Cat_{d+1},
\end{equation}
where $\Susp_{0,1}$ is the left adjoint to the hom-category functor
\begin{equation}
  (f\colon \{0,1\}\to C)\mapsto C(f0,f1);
\end{equation}
see also \cite[Definition~4.3.21]{GH}.
Given a $d$-category $C$,
the $(d+1)$-category \(\Susp C\) is uniquely determined by the data
$(\Susp C)^\simeq\simeq \{0,1\}$ and
\begin{equation}
  \Cat_{d}
  \ni
  (\Susp C) (i,j)\simeq
  \begin{cases}
    \{\id_i\}, \quad \text{ if } i=j
    \\
    \emptyset, \quad \text{ if } i=1 , j=0
    \\
    C, \quad \text{ if } i=0, j=1
  \end{cases}
\end{equation}
(There is only one way to define the composition structure.\footnote{
  A more precise way to say this is to consider the map $*\to O(\{0,1\})$
  of operads that selects the color $(0,1)\in \{0,1\}\times \{0,1\}$.
  Restriction and operadic left Kan extension yields an adjunction
  $\Susp_{0,1}\colon\Cat_d\rightleftarrows\flCat[\{0,1\}]{\Cat_{d}}:\ev_{(0,1)}$
  where the left adjoint is fully faithful
  because $*\to O(\{0,1\})$ is.
  The image of the left adjoint consists precisely of those
  $\{0,1\}$-flagged $\Cat_d$-categories
  $D$ with $D(1,0)\simeq \emptyset$ and $D(0,0)\simeq *\simeq D(1,1)$;
  hence such a $D$ is uniquely determined by its evaluation
  $D(0,1)\coloneqq\ev_{(0,1)}(D)$ in $\Cat_d$.
})
The categorical suspension functor extends to $\infty$-categories
via the formula
\begin{equation}
  \label{eq:Susp-infty}
  \Susp\colon \Cat_\infty\to\Cat_\infty,
  \quad
  C_\bullet \mapsto
  \Susp C_{\bullet-1}
  \coloneqq
  (\{0,1\}\to \Susp C_0\to \Susp C_1\to \dots\to \Susp C_{d-1}\to \dots)
\end{equation}
(with the convention $C_{-1}=\emptyset$).

The \emph{$d$-globe} is $\globe{d}\coloneqq \Susp^d*\in \Cat_d$
and its \emph{boundary} is $\boundary\globe{d}\coloneqq\Susp^d\emptyset \in \Cat_{d-1}$.
(The dimension shift arises because the category 
$\Susp(\emptyset)\simeq\sphere{0}$
exceptionally lies in $\Cat_0\subseteq \Cat_1$.)
We denote by $\iota_d\colon \boundary\globe{d}\hookrightarrow \globe{d}$
the canonical ($(d+1)$-fully faithful) inclusion
(induced by $\emptyset\hookrightarrow *$).
The $1$-globe $\globe{1}=\Susp(*)=\{0\to 1\}$ is the walking arrow.

\begin{remark}
  \label{rem:globe-surj}
  Note that a map $\boundary\globe{n}\to C$
  into an $\infty$-category $C$
  amounts to a parallel pair $(x,y)$ of $(n-1)$-arrows;
  an extension to $\globe{n}$ amounts to an $n$-arrow $x\to y$.
  More generally, for $F\colon C\to D$ and \(f\colon Fx\to Fy\), a dashed lift
  \begin{equation}
    \cdsquare[lifting]
    {\boundary\globe{n}}
    {C}
    {\globe{n}}
    {D}
    {{(x,y)}}
    {}{F}
    {f}
  \end{equation}
  amounts to an $n$-arrow $f'\colon x\to y$ with $Ff'\simeq f$.
\end{remark}

\begin{definition}
  A map $F\colon C\to D$ of $\infty$-categories is called
  \emph{$n$-fully faithful}\footnote{
    We follow the terminology of \cite{Loubaton-effectivity}.
    Beware that these maps are called $(n-2)$-faithful
    (without ``fully'') in \cite{LMRSW}.
  } if
  \begin{itemize}
  \item
    $n=0$ and $F$ is an equivalence, or
  \item
    $n\geq 1$ and for all $x,y\in C$ the induced map $C(x,y)\to D(Fx,Fy)$
    is $(n-1)$-fully faithful.
  \end{itemize}
\end{definition}

\begin{theorem}
  \label{thm:factorization-surj-ff}
  [\cite{Loubaton-effectivity}, Definition~1.2.6 or \cite{LMRSW}, Theorem~5.3.7]
  For every $n\in \naturals$, we have a unique factorization system
  \begin{equation}
    (\text{$n$-surjective}, \text{$(n+1)$-fully faithful})
  \end{equation}
  on the category of $\infty$-categories.
\end{theorem}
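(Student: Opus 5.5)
The factorization system is cofibrantly generated by the globe boundary inclusions. I would show that \((n+1)\)-fully faithful maps are exactly the maps with unique right lifting against \(\iota_k\colon\boundary\globe{k}\hookrightarrow\globe{k}\) for \(k\geq n+1\). The \(n\)-surjective maps should then be exactly the left class generated by the \(\iota_k\) with \(k\leq n\).

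\emph{Step 1: unique lifting against globes.} A map \(F\) has unique right lifting against \(\iota_k\) precisely when, for every parallel pair \((x,y)\) of \((k-1)\)-arrows, the map of animae \(C\arlev{k}(x,y)^\simeq\to D\arlev{k}(Fx,Fy)^\simeq\) is an equivalence. This follows from \Cref{rem:globe-surj}, applied to the mapping animae rather than only to \(\pi_0\). Unwinding the recursive definition of \((n+1)\)-fully faithful, I expect \(F\) to be \((n+1)\)-fully faithful if and only if it is uniquely right orthogonal to \(\iota_k\) for all \(k\geq n+1\). The implication from orthogonality back to full faithfulness uses \Cref{lem:d-cat-equiv-n-surj} levelwise on the finite stages \(F_d\), together with hypercompleteness.

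\emph{Step 2: the factorization.} Every set of maps in the presentable category \(\Cat_\infty\) (see \Cref{rem:cat-infty-presentable}) generates a factorization system by the small object argument. Here the left class is the saturation of \(\{\iota_k\}_{k\geq n+1}\) and the right class is the \((n+1)\)-fully faithful maps, which is the wrong way round for this statement. I would therefore not use the small object argument; instead I would produce the factorization by hand, by the usual image construction. Given \(F\colon C\to D\), define \(E\) to have the objects of \(C\). Its \(k\)-arrows for \(k\leq n-1\) are those of \(C\). Its hom-anima of \(n\)-arrows is the image of \(C\arlev{n}(x,y)^\simeq\to D\arlev{n}(Fx,Fy)^\simeq\), taken in the (surjective, mono) factorization of animae. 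For \(k\geq n+1\) the arrows are those of \(D\). I would build \(E\) recursively via the homs \(E(x,y)\coloneqq\) the analogous \((n-1)\)-factorization of \(C(x,y)\to D(Fx,Fy)\). The base case \(n=0\) is the essential-image factorization: objects form the image of \(C^\simeq\to D^\simeq\), with homs taken from \(D\). Composition on \(E\) is inherited from \(D\). This lives in flagged enriched categories, and one must check that \(E\) is complete by means of \Cref{thm:complete-localization}. The construction is compatible with \(R_d\), so it assembles into a map of right \(\infty\)-categories.

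\emph{Step 3: orthogonality.} By construction \(C\to E\) is \(n\)-surjective and \(E\to D\) is \((n+1)\)-fully faithful. For orthogonality, let \(A\to B\) be \(n\)-surjective and \(X\to Y\) be \((n+1)\)-fully faithful. Lifts are determined on arrows of dimension \(\geq n+1\) by the right map, and on arrows of dimension \(\leq n\) by surjectivity, using \Cref{lem:surjectivity-lifts-parallel-pairs}. The remaining contractibility of the anima of lifts reduces recursively on homs to the case \(n=0\): a surjective-on-objects map against a fully faithful one. That case is classical. Uniqueness of the factorization is then formal.

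I expect the main obstacle to be Step 2's completeness check. The image at level \(n\) must be formed as a univalent object: the \(n\)-arrows of \(E\) which become invertible must come from identifications. Completion, however, might destroy \(n\)-surjectivity. I would handle this by noting that completion is a complete equivalence, so it is \(0\)-surjective and fully faithful, and hence preserves both classes.
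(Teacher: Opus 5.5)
\paragraph{Status of the statement.} The paper does not prove this theorem; it imports it from Loubaton and LMRSW. Your sketch therefore has to carry the whole argument itself. At two load-bearing points it asserts things that are false, and its central step is not actually argued.

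\paragraph{Step 2: the construction is not compatible with all $R_d$.} For maps of $(d+1)$-categories, $R_d$ preserves $n$-surjectivity when $n\leq d$ or $n>d+1$ (\Cref{prop:surjective-core}), but not when $n=d+1$. Concretely, take $n=1$ and the map $t\colon W\to *$ from the walking anti-parallel pair.
\begin{itemize}
\item In $\Cat_1$, every hom of $W$ is inhabited. So your flagged $E_1$ is chaotic on $\{0,1\}$ and completes to a point, giving $E_1\simeq *$.
\item Run on $R_0t\colon\{0,1\}\to *$ in $\Cat_0$, your recipe gives $E_0=\{0,1\}\neq *\simeq R_0E_1$. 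This reflects the fact, from the remark following \Cref{prop:surjective-core}, that $R_0t$ is not $1$-surjective.
\end{itemize}
So the levelwise outputs do not form an object of $\Cat_\infty$ in the sense of \Cref{lem:C-R-L-emb-Fun}. The fix is to factor only in levels $d\geq n$, where $R_d$ preserves both classes, and to set $E_d\coloneqq R_dE_n$ below level $n$.

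\paragraph{Step 2: composition is not inherited from $D$.} This claim is only correct for $n\leq 1$. For $n\geq 2$, the map $E(x,y)\to D(Fx,Fy)$ is merely $n$-fully faithful, not a monomorphism. For example, with $F\colon C\to *$ and $n=2$, the $1$-arrows of $E$ must compose as in $C$, not as in $D$. The coherent $O(C^\simeq)$-algebra structure on $(x,y)\mapsto E(x,y)$ has to be produced from the level-$(n-1)$ factorization. That requires your induction to carry the extra fact that both classes are closed under finite products, so that the homwise factorization functor is cartesian and lifts to algebras.

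\paragraph{Step 3: orthogonality is only asserted.} This is the real content of the theorem. A functor is not the sum of separate maps on the $\Mor_k$, so ``lifts are determined on arrows of each dimension'' says nothing about contractibility of the anima of lifts. Moreover, for $n\geq 1$ you cannot even build the object part of a lift by lifting in animae, since $X^\simeq\to Y^\simeq$ need not be injective in any sense.

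A mechanism that does work is the following.
\begin{itemize}
\item Since $p\colon A\to B$ is surjective on objects, $(A^\simeq,p^*B)\to B$ is a complete equivalence.
\item By \Cref{thm:complete-localization}, lifts $B\to X$ are therefore lifts out of the flagged category $(A^\simeq,p^*B)$ with the object map fixed.
\item These are lifts, in $O(A^\simeq)$-algebras, against a square whose left leg is homwise $(n-1)$-surjective and whose right leg is homwise $n$-fully faithful.
\item They exist uniquely because the product-compatible level-$(n-1)$ factorization system lifts to a pointwise one on algebras. The same lemma also supplies the composition missing in Step 2.
\end{itemize}

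\paragraph{Framing.} Your framing is inverted.
\begin{itemize}
\item By \Cref{rem:globe-surj}, the $n$-surjective maps are those with the (non-unique) \emph{right} lifting property against $\iota_k$ for $k\leq n$. They are not the left class these maps generate: $\iota_0\colon\emptyset\to *$ is in that class but is not $0$-surjective.
\item The small object argument for $\{\iota_k\}_{k\geq n+1}$ is not ``the wrong way round''. By your own Step 1, its right class is exactly the $(n+1)$-fully faithful maps.
\end{itemize}
All the work therefore lies in identifying ${}^{\perp}\{(n+1)\text{-fully faithful}\}$ with the $n$-surjective maps, and that needs precisely the two ingredients described above.
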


\begin{remark}
  For $n=0$ this is just the unique factorization system
  \begin{equation}
    (\text{surjective on objects, fully faithful}).
  \end{equation}
  \end{remark}

\begin{corollary}
  \label{cor:surj-colims-cobase}
  Let $n\in \naturals\cup\{\infty\}$.
  \begin{enumerate}
  \item
    The $n$-surjective arrows are closed under
    composition, colimits and cobase change.
  \item
    The suspension of an $n$-surjective map is $(n+1)$-surjective
    (with $\infty+1=\infty$).
  \end{enumerate}
\end{corollary}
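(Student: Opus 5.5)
For part (1) I would take the left class of the factorization system of \Cref{thm:factorization-surj-ff}. For finite $n$, the $n$-surjective maps are by definition the left class of a factorization system, namely the one for $(n,\,n+1\text{-fully faithful})$. It is a general fact about factorization systems on a category that the left class is closed under composition, under all colimits computed in the arrow category, and under cobase change. So part (1) is immediate for finite $n$.

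For $n=\infty$, I would use \Cref{def:infty-surjective} and \Cref{prop:surjective-core} to identify the $\infty$-surjective maps with the intersection over $n$ of the $n$-surjective maps of $\infty$-categories. Here a map $F$ is $n$-surjective if it lifts all $k$-arrows for $k\le n$. This only depends on $F_n$, and by \Cref{prop:surjective-core}(2) it can equivalently be tested on $F_d$ for any $d\ge n$. An intersection of classes that are each closed under composition, colimits and cobase change is again closed under these operations, which gives part (1) for $n=\infty$.

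For part (2), the cleanest route is via the lifting characterization. By \Cref{rem:globe-surj}, $n$-surjectivity means having the (mere) right-lifting-type property against $\iota_k\colon\boundary\globe{k}\to\globe{k}$ for $k\le n$. Dually, by \Cref{thm:factorization-surj-ff}, it is the left orthogonality against the $(n+1)$-fully faithful maps. I would therefore show that $\Susp F$ is left orthogonal to every $(n+2)$-fully faithful map $G\colon X\to Y$.

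A square from $\Susp F$ to $G$ is determined by the following data. First, the images $x_0,x_1\in X$ of the two objects, and their images in $Y$. Second, a square in $\infty$-categories from $F$ to $G_{x_0,x_1}\colon X(x_0,x_1)\to Y(Gx_0,Gx_1)$. This description uses the adjunction $\Susp_{0,1}\dashv$ hom from \eqref{eq:Susp-ladj-hom}, relative to the fixed endpoints. On the objects $0,1$ the map $\Susp F$ is an equivalence, so the endpoints are determined uniquely. The remaining lifting problem is exactly a lifting problem of $F$ against $G_{x_0,x_1}$, which is $(n+1)$-fully faithful by definition. That problem has a unique solution since $F$ is $n$-surjective.

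Uniqueness of lifts is what orthogonality requires. The anima of lifts for the square from $\Susp F$ to $G$ is equivalent to the anima of lifts for the hom-square, which is contractible, so $\Susp F$ is $(n+1)$-surjective.

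Alternatively, and more elementarily, one can check part (2) directly from \Cref{def:surj}. First, $\Susp F$ is bijective on objects. Second, its hom maps are identities on the homs $(i,i)$ and $(1,0)$, and equal $F$ on the hom $(0,1)$. Hence $\Susp F$ is $0$-surjective, and its hom maps are $n$-surjective, so $\Susp F$ is $(n+1)$-surjective. Compatibility with the levelwise formula \eqref{eq:Susp-infty} handles the $\infty$-categorical case: $(\Susp F)_{d}=\Susp F_{d-1}$, which is $d$-surjective by the finite case. Taking this over all $d$ also gives the $n=\infty$ case.

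The main obstacle is making the reduction for $n=\infty$ in part (1) rigorous. Specifically, one must verify that $\infty$-surjectivity in the sense of \Cref{def:infty-surjective} coincides with being $n$-surjective as a map of $\infty$-categories for every $n$, which requires the stability of $n$-surjectivity under $R_d$ from \Cref{prop:surjective-core}.
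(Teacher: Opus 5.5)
Your part (1) is exactly the paper's argument: the left class of a unique factorization system is closed under composition, colimits and cobase change, and the $\infty$-case follows by intersecting over $n$. Your extra care in identifying $\infty$-surjectivity with $\bigcap_n\{n\text{-surjective}\}$ via \Cref{prop:surjective-core} is correct; the paper simply treats that identification as definitional.

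For part (2), the paper's entire proof is your ``alternative'' argument. $\Susp F$ is the identity on objects and on the homs $(i,i)$ and $(1,0)$, and it is $F$ on the hom $(0,1)$, so \Cref{def:surj} applies directly; the levelwise formula for $\Susp$ on $\infty$-categories then handles the rest.

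Your primary orthogonality route is genuinely different, and it is also valid. Via $\Susp_{0,1}\dashv$ hom, you identify the anima of lifts of $\Susp F$ against an $(n+2)$-fully faithful $G$ with the anima of lifts of $F$ against $G_{x_0,x_1}$. You then use that the left class is the full left orthogonal complement of the right class. The step you should make explicit is this: since $\Susp F$ is the identity on $\{0,1\}$, the endpoints are fixed by the top map of the square. Hence the lifting anima in $\Cat_\infty$ coincides with the one computed in $\{0,1\}/\Cat_\infty$, which is where the adjunction applies.

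What each approach buys: your route phrases part (2) entirely in terms of the factorization system. However, it makes part (2) depend on the nontrivial \Cref{thm:factorization-surj-ff}. The paper's direct check is elementary and already works for $d$-categories without any factorization system.
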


\begin{proof}
  \begin{enumerate}
  \item
    This is true for every left class of a unique factorization system,
    hence also for the intersection
    \begin{align}
      \{\text{$\infty$-surjective}\} =\bigcap_{n\in\naturals}\{\text{$n$-surjective}\}. 
    \end{align}
  \item
    Follows directly from the explicit description of categorical suspension.
    \qedhere
  \end{enumerate}
\end{proof}

\subsection{Categorical cell complexes}
\begin{definition}
  \label{def:free-maps}
  A \emph{cell filtration} of a map $j\colon C\to D$ of $\infty$-categories
  is a sequence
  \begin{equation}
    \label{eq:presentation-free-map}
    j\colon C=D^{-1}\to D^0\to\dots\to D^d\to \dots \to \colim_{d\to \infty}D^d \simeq D
  \end{equation}
  such that for each $d\geq 0$ there merely exists a cell-attaching pushout square
  \begin{equation}
    \label{eq:cell-attaching-po}
    \cdsquare[po]
    {T_{d}\times \boundary\globe{d}}
    {D^{d-1}}
    {T_d\times \globe{d}}
    {D^d}
    {\phi_d}{T_d\times \iota_d}{}{\Phi_d}
  \end{equation}
  in $\Cat_\infty$,
  where each $T_d$ is a set;
  the additional choice of such pushout squares
  is called a \emph{cell presentation} of $j$.

  A map $j\colon C\to D$ is called \emph{free}
  if there merely exists a cell filtration for it.
  An $\infty$-category $D$ is called
  a \emph{(categorical) cell complex} if $\emptyset\to D$ is free.
\end{definition}

\begin{remark}[Uniqueness of cell filtration]
  \label{rem:upper-lower-cell-index}
  Let $D$ be a categorical cell complex.
  Then just by virtue of being a (right) $\infty$-category,
  $D$ amounts to a sequence
  \begin{equation}
    \label{eq:subscripts-sequence-cell-complex}
    D_0\to D_1\to \dots \to D_d \to \dots
  \end{equation}
  of $d$-categories with $D_d\xrightarrow{\sim} R_d D_{d+1}$.
  Given a cell filtration $D^\bullet$ of $D$,
  there is a unique transformation $D^\bullet \to D_\bullet$
  of $\omega$-indexed sequences compatible with
  $\colim_d D^d \simeq D\simeq \colim_d D_d$.
  It is not hard to show that this transformation is an equivalence
  if and only if $D_\bullet$ is already a cell filtration of $D$
  if and only if each cell-attaching pushout square
  \eqref{eq:cell-attaching-po}
  (in $\Cat_d$)
  of the cell filtration $D^\bullet$
  is preserved by the $(d-1)$-core functor $R_{d-1}$.

  We suspect that this is indeed always\footnote{
    The functor $R_{d-1}$ definitely does \emph{not} preserve pushouts in general;
    the suspicion it that it preserves those
    of the specific form
    \eqref{eq:cell-attaching-po}.
  } the case,
  so that \eqref{eq:subscripts-sequence-cell-complex}
  would in fact be the unique cell filtration of $D$.
  Unfortunately, we were not able to prove (or disprove) this conjecture.
\end{remark}

\begin{remark}[Freeness and suspension]
  \label{rem:free-susp}
  From the left adjoint description of the categorical suspension $\Susp$
  it follows that a map $f\colon \Susp C\to Z$
  just amounts to the data of two objects $z_0\coloneqq f(0), z_1\coloneqq f(1)\in Z$
  and a map $C\to Z(z_0,z_1)$.

  Hence by unraveling the universal property of cell attaching pushouts,
  it is not hard to see that suspensions of free maps are free.
  More precisely,
  for every cell presentation of $j\colon C\to D$ with generating cells $T_d$ in dimensions $d$,
  we have a cell presentation of $\Susp j\colon \Susp C\to \Susp D$
  with the same generating cells $T_d$ but now in dimensions $d+1$;
  they follow the old attaching rules,
  except for $d=0$ where the old generating $0$-cells now are generating $1$-cells $0\to 1$;
  the new cell presentation has no generating $0$-cells.

  Similarly, if $D$ is a cell complex then $\Susp D$ is again a cell complex:
  For any cell presentation of $D$
  we obtain a cell presentation of $\Susp D$
  with two new generating $0$-cells $0,1$
  and with the old generating cells shifted up in dimension
  (with the same attaching rules
  except for $d=0$, where the old generating $0$-cells now are $1$-cells $0\to 1$).
\end{remark}

\begin{lemma}
  \label{lem:free-surjective-lifting}
  Free maps have the left lifting property with respect
  to $\infty$-surjective maps,
  i.e., in any square
  \begin{equation}
    \cdsquare[lifting]
    {C}{E}
    {D}{F}
    {}{j}{s}{}
  \end{equation}
  where $j$ is free and $s$ is $\infty$-surjective,
  there merely exists a dashed lift.
\end{lemma}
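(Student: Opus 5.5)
The plan is to reduce to single cell attachments and then assemble the lift along the filtration. Since the conclusion is a proposition (mere existence of a lift), we may choose a cell filtration of $j$ together with a cell presentation, i.e.\ sets $T_d$ and pushout squares \eqref{eq:cell-attaching-po}.

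The basic case is a single globe boundary inclusion $\iota_n\colon\boundary\globe{n}\to\globe{n}$. By \Cref{rem:globe-surj}, a square from $\iota_n$ to $s\colon E\to F$ amounts to a parallel pair $(x,y)$ of $(n-1)$-arrows of $E$ and an $n$-arrow $f\colon sx\to sy$ of $F$. A lift amounts to an $n$-arrow $f'\colon x\to y$ with $sf'\simeq f$. Such an $f'$ merely exists because $s$ is $n$-surjective, hence surjective on $n$-arrows between lifted parallel pairs (\Cref{rem:surjectivity-explicit}, applied to $s_n$).

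Next we pass to $T_d\times\iota_d=\coprod_{t\in T_d}\iota_d$. A square from this coproduct amounts to a $T_d$-indexed family of squares of the previous kind, and a lift amounts to a family of lifts. For each $t\in T_d$ a lift merely exists. Since $T_d$ is a set, the axiom of choice lets us pass from this to the mere existence of a simultaneous choice of lifts. This is the step where choice is used; for countable $T_d$, dependent or countable choice would suffice, as noted in the preliminaries. The lifting property is stable under cobase change, so the map $D^{d-1}\to D^d$ merely has the left lifting property against $s$. Concretely, a lift against the left map of the pushout square, combined with the given map $D^{d-1}\to E$, induces a map $D^d\to E$ by the universal property of the pushout, compatibly with the map to $F$.

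Finally we assemble the lifts along $\colim_d D^d\simeq D$. Starting from the given map $D^{-1}=C\to E$, we successively choose extensions $D^d\to E$ over $F$. Each step only merely exists, so producing the whole sequence of extensions needs dependent choice; this is where the countable-composition behaviour of surjections (equivalently, dependent choice) enters. Given a compatible sequence of maps $D^d\to E$, the colimit yields $D\to E$, which lifts the square.

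I expect the main obstacle to be bookkeeping rather than substance. We must make sure that the maps $D^d\to E$ are chosen compatibly, meaning each extends the previous one strictly as data and not merely up to some identification, and that their images in $F$ agree with the given map $D\to F$ restricted to $D^d$. To handle this, I would phrase each step as choosing a point in the fiber of the restriction map $\Map_{/F}(D^d,E)\to\Map_{/F}(D^{d-1},E)$ over the previously chosen point. Each such fiber is merely inhabited by the single-step argument above. Dependent choice then gives a point of the limit $\lim_d\Map_{/F}(D^d,E)\simeq\Map_{/F}(D,E)$.
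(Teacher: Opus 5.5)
Your proof is correct and follows the same route as the paper. Both choose a cell presentation, lift each generating cell using the $n$-surjectivity of $s_n$ (via the globe description of lifting squares), use the axiom of choice to assemble the lifts over the set $T_d$, and then propagate along the pushouts and the sequential colimit. The one difference is that you spell out the last step as a dependent-choice argument on the tower of restriction maps $\Map_{/F}(D^d,E)\to\Map_{/F}(D^{d-1},E)$, whereas the paper compresses it into ``by the universal property of colimit and pushout, it suffices to show inductively''.
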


\begin{proof}
  Since we are concerned with proving a mere existence statement,
  we may choose a cell filtration of $j\colon C\to D$ as in
  \Cref{def:free-maps}.
  Then by the universal property of colimit and pushout,
  it suffices to show inductively that there merely exist lifts of the form
  \begin{equation}
    \cdsquare[lifting]
    {T_d\times \boundary\globe{d}}
    {E}
    {T_d\times\globe{d}}
    {F}
    {}{}{s}{}
  \end{equation}
  The solid square amounts to, for each $t\in T_d$, 
  a parallel pair $(x_t,y_t)$ of $(d-1)$-arrows in $E$
  and a $d$-arrow $f_t\colon sx_t\to sy_t$ in $F$,
  for each of which there merely exist lifts $e_t\colon x_t\to y_t$ of $f_t$
  by the $\infty$-surjectivity of $s$.
  Since $T_d$ is a set, there merely exists a desired lift assembling them all
  (using the axiom of choice).
\end{proof}

\begin{lemma}
  \label{lem:distinct-0-cells}
  Let $D$ be a cell complex with cell presentation
  $(D^\bullet, T_\bullet, \phi_\bullet)$.
  \begin{enumerate}
  \item
    Let $f,g\in T_k$ be two distinct\footnote{
      We say that two objects $x$ and $y$ of an anima $A$
      are \emph{distinct} if there merely exists a map
      $A\to \{0,1\}$ (or, equivalently, $\pi_0A\to \{0,1\}$)
      with $x\mapsto 0$ and $y\mapsto 1$.
      Constructively (without excluded middle),
      this is usually stronger than just asking for $x\not\simeq y$,
      even when $A$ is a set.
      But if the set $\pi_0A$ has decidable equality
      (e.g., $\naturals$, $\{0,1\}$, or, more generally, any countable set)
      then the two notions agree even constructively.
    }
    generating $k$-cells.
    Then $f$ and $g$ are still distinct in $\Mor_k D$.
  \item
    Any (pasting of) generating $k$-cells is never invertible in $D$.
  \end{enumerate}
\end{lemma}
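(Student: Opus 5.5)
The approach is to map $D$ into suitably chosen target $\infty$-categories, using the universal property of the cell presentation. Distinctness can be detected this way. For invertibility, note that functors preserve isomorphisms, so it suffices to find one functor under which the given pasting becomes non-invertible.

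For the first claim, I would construct a functor $\chi\colon D\to E$ into a target $E$ whose $k$-arrows carry a discrete invariant. The plan is to choose $E=\Susp^{k-1}(M)$ for a suitable strict category $M$, and to define $\chi$ inductively along the filtration $D^{-1}=\emptyset\to D^0\to\cdots$. In dimensions $<k$ we send every generating cell to the unique (identity) cell of $\boundary\globe{k}$, viewed inside $\Susp^{k-1}M$, so all lower cells collapse to the single parallel pair. For the generating $k$-cells, $\chi$ is determined by a map $T_k\to \Mor M$, which we may choose freely because the attaching data are all identities. I would take $M=\mathbb{B}\naturals^{(T_k)}$, the one-object category on the free commutative monoid generated by $T_k$; it is a $1$-category, hence lies in $\Cat_1$. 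Generating cells of dimension $>k$ are sent to identities, and the attaching maps are compatible with this because $M$ is locally discrete. Then $f\mapsto e_f$ and $g\mapsto e_g$. Composing $\chi$ with the map of sets $\naturals^{(T_k)}\to\{0,1\}$ that detects the $f$-coordinate yields a map $\pi_0\Mor_kD\to\{0,1\}$ separating $f$ from $g$. One caveat is that choosing the values cell by cell requires choice, or at least a mere-existence argument, which is permitted here since we only need to prove a proposition.

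For the second claim, I apply the same $\chi$. A pasting of generating $k$-cells (together with lower identities) is sent to a sum $\sum n_t e_t$ with some $n_t\geq 1$. This element is not invertible in the monoid $\naturals^{(T_k)}$, and therefore not invertible as a $1$-arrow of $M$. Since $k$-arrows of $\Susp^{k-1}M$ are exactly the arrows of $M$, with composition and invertibility matching, the pasting is not invertible in $D$.

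I expect the main obstacle to be justifying that $\chi$ is well defined at the cell-attaching steps. Concretely, the attaching map $\phi_d\colon T_d\times\boundary\globe d\to D^{d-1}$ composed with $\chi$ must extend over $T_d\times\globe d$. For $d\le k$ this works because the boundary lands in the collapsed part. For $d>k$ we need the image of the boundary to be a parallel pair of equal $(d-1)$-arrows in $\Susp^{k-1}M$. This holds because $\Susp^{k-1}M$ has only identity arrows above dimension $k$: it is a $k$-category with discrete hom-sets at level $k$, so all higher parallel pairs agree and admit a unique filler. Verifying this last point, namely that $\Susp^{k-1}$ of a $1$-category has contractible spaces of $(d)$-arrows for $d>k$ between parallel pairs, follows from the explicit hom description of $\Susp$ recorded above.
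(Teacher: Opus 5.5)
\textbf{There is a genuine gap} at exactly the step you flagged as the main obstacle, and it cannot be repaired while keeping a finite-dimensional target.

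\textbf{The false step.} You claim that in a $k$-category whose $k$-dimensional homs are discrete, ``all higher parallel pairs agree and admit a unique filler.'' This fails in dimension $k+1$. The boundary of a generating $(k+1)$-cell is a parallel pair $(a,b)$ of $k$-arrows of $D^k$. A filler for $(\chi a,\chi b)$ exists only if $\chi a=\chi b$ in the set of $k$-arrows, and nothing in your construction forces this.

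\textbf{Counterexamples.} Take the complex with cells $f,g\colon 0\to 1$ and a $2$-cell $f\to g$. Any functor to a $1$-category sends that $2$-cell to an identification of the images of $f$ and $g$, so it can never separate them, although the lemma says $f$ and $g$ are distinct. In the walking coinductive isomorphism $E$ of \Cref{constr:standard-E}, the $2$-cell $u(+,0)\colon \id_0\to u(+)u(0)$ would force $0=e_{u(+)}+e_{u(0)}$ in $\naturals^{(T_1)}$. More fundamentally, by \Cref{lem:finit-is-local} (which only uses $L$-triviality), every functor from $E$ into a finite-dimensional category factors through $*$. So no finite-dimensional target whatsoever can show that $u$ is non-invertible, or that $0$ and $1$ are distinct, in $E$. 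That is precisely the use made of this lemma in \Cref{cor:E-0-neq-1}.

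\textbf{A secondary issue.} Your description of $\chi$ below dimension $k$ does not typecheck as stated. If $D^{k-1}$ is collapsed to a point, the $k$-cells must land in $k$-endomorphisms of an iterated identity, and these are trivial in $\Susp^{k-1}(\,\cdot\,)$. You would need the $k$-fold delooping of the commutative monoid $\naturals^{(T_k)}$ instead.

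\textbf{The missing idea.} Use an infinite-dimensional target $S$ for which $S\to *$ is $\infty$-surjective. Define the functor only on $D^k$, and obtain the extension to all of $D$ in one step from the lifting property of the free map $D^k\to D$ against $\infty$-surjective maps (\Cref{lem:free-surjective-lifting}), instead of building it cell by cell. The paper takes $S=\Span_\infty(\Finset)$, which is $\infty$-surjective over $*$ by the argument of \Cref{ex:spans}. It collapses $D^{k-1}$ to the object $*$ and sends $f\mapsto\emptyset$, $g\mapsto *$ in $\End^k_S(*)\simeq\Finset^\simeq$. Because $*$ has no automorphisms and $\pi_0\Finset^\simeq$ has decidable equality, these images are distinct as $k$-arrows. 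For part (2), it sends all generating $k$-cells to $\emptyset$; pastings of $\emptyset$ remain $\emptyset$, which is not invertible. Your stage-$k$ idea of collapsing lower cells and choosing a separating invariant on $T_k$ matches this; what must change is the target and the way the map is extended.
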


\begin{proof}
  Denote by $\Finset\subseteq \An$ the full subcategory spanned
  by the finite sets, i.e., those animae $A$
  for which there merely exists a (necessarily unique) $n\in \naturals$
  and a (definitely not unique, unless $n=0,1$) equivalence $A\simeq \{1,\dots, n\}$.
  Recall the $\infty$-category $\Span_\infty(\An)$
  of animae (over $*$) from \Cref{ex:spans}.
  We can consider the sub-$\infty$-category
  $S\coloneqq\Span_\infty(\Finset)$,
  whose objects are finite sets and where
  at each level of homs we only allow spans (of spans ... of spans)
  whose apex is a finite set.
  Note that the exact same argument as in 
  \Cref{ex:spans} shows that $S\to *$ is still $\infty$-surjective.
  \begin{enumerate}
  \item
    To show that $f$ and $g$ are distinct in $D$ it suffices to
    show the mere existence of a functor $F\colon D\to S$
    such that $F(f)$ and $F(g)$ are distinct in $S$.

    Since $f,g\in T_k$ are distinct, we may choose a map
    $T_k\to \{0,1\}$ with $f\mapsto 0$ and $g\mapsto 1$.
    Then we can define a map $D^{k}\to S$
    via the universal property of the cell-attaching pushout
    by specifying:
    \begin{itemize}
    \item
      $D^{k-1}\to *\xrightarrow{*}\Finset^\simeq$, and
    \item
      $T_k\to \{0,1\}\xrightarrow{(\emptyset,*)}\Finset^\simeq
      \simeq \End^k_S(*)$.
    \end{itemize}
    Then, as per \Cref{lem:free-surjective-lifting},
    there merely exists a solution $F$ to the lifting problem
    \begin{equation}
      \begin{tikzcd}
        {D^k}
        \ar[r]
        \ar[d]
        &
        {S}
        \ar[d]
        \\
        {D}
        \ar[ur,dashed,"F"]
        \ar[r]
        &
        {*}
      \end{tikzcd}
    \end{equation}
    because the right vertical map is $\infty$-surjective
    and the left vertical map is free.
    Since $F(f)\simeq\emptyset$ and $F(g)\simeq *$
    are distinct in $\Finset^\simeq$,
    they are distinct as $k$-dimensional endomorphisms of $*$ in
    $\Span_\infty(\Finset)$
    (or for $k=0$ just as objects);
    and we even have $F(f)\not\simeq F(g)$ as mere $k$-arrows
    because $*\in \Finset^\simeq$ has no automorphisms\footnote{
      An arrow of the form $f\colon x\to x$ in a category $C$
      can be considered either as an endomorphism
      (i.e., as an object of $\End_C(x)$)
      or just as an arrow
      (i.e., as an object of $\Mor C$),
      yielding two notions of identifications that are different in general.
      For example, \emph{every} automorphism is
      (uniquely identified with) an identity \emph{as an arrow}
      but usually not as an endomorphism.
      But if $x\in C$ has no automorphisms
      (i.e., $C^\simeq(x,x)\simeq *$),
      then the forgetful map $\End_C(x)\to \Mor(C)$ is an embedding
      so it does not matter where one considers identifications.
    }.
    Since $F(f)$ and $F(g)$ are distinct\footnote{
      The astute reader will have noticed that we only proved that $F(f)\not\simeq F(g)$,
      and not (the a priori stronger claim) that $F(f)$ and $F(g)$ are distinct.
      But in this case, these notions agree even constructively, because
      $\pi_0\Finset^\simeq$ is just the set of natural numbers which has decidable equality.
      This is the reason we used finite sets
      rather than animae:
      $\pi_0\An^\simeq$ need not have decidable equality.
    } in $\Mor_k S$, so are $f$ and $g$ in $\Mor_k D$.
  \item
    We can perform the same construction as above
    except that we use the constant map
    $T_k\to *\xrightarrow{\emptyset}\An^\simeq\simeq \End^k_S(*)$
    that sends all generating $k$-cells to $\emptyset$.
    Any pasting of $\emptyset\in \End^k_S(*)$ is $\emptyset$ again,
    which is not invertible;
    we conclude with the same reasoning that generating $k$-cells
    or their pastings cannot be invertible in $D$.
    \qedhere
  \end{enumerate}
\end{proof}

\begin{remark}
  In the setting of \Cref{lem:distinct-0-cells},
  if we only have $f\not\simeq g$ in $T_k$,
  then we can still deduce $f\not\simeq g$ in $\Mor_kD$.
  Indeed, the proof is almost exactly the same with the following modifications:
  \begin{itemize}
  \item
    Use $\Span_\infty(\An)$ rather than $\Span_{\infty}(\Finset)$.
  \item
    Instead of  the map $T_k\to \{0,1\}\to \Finset^\simeq$,
    use the map
    \begin{equation}
      T_k\xrightarrow{T_k(-,g)} \Prop^\simeq\hookrightarrow \An^\simeq
    \end{equation}
    which sends $g\mapsto *$ because $T_k$ is a set
    and $f \mapsto\emptyset$ because $f\not\simeq g$.
  \end{itemize}
\end{remark}

\subsection{Coinductive isomorphisms}

Many of the arguments in this subsection originated in a conversation
with F\'elix Loubaton, whom the authors would like to thank.

\begin{definition}
  An $\infty$-category $D$ is called \emph{$L$-trivial}
  if the terminal map \(D\to *\) is $\infty$-surjective.
\end{definition}

The following lemma justifies the name ``$L$-trivial''.

\begin{lemma}
  An $\infty$-category $C$ is $L$-trivial
  if and only if $LC\simeq *$.
\end{lemma}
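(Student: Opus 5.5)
The plan is to reduce the statement to \Cref{thm:main} and then check that, for maps with terminal target, weak $\infty$-surjectivity already implies $\infty$-surjectivity.

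First I identify $L(*)\simeq *$. The terminal object $*$ of $\Cat_\infty^R$ is the constant sequence on $*\in\Cat_0$, so it lies in the image of $I^R_{<\infty}$. The adjunction $L\dashv R$ of \Cref{thm:main} is compatible with $I^R_{<\infty}$ and $I^L_{<\infty}$, hence $L(*)\simeq I^L_{<\infty}(*)$, which is terminal in $\Cat^L_\infty$. Consequently $LC\simeq *$ holds if and only if $L$ sends the terminal map $C\to *$ to an isomorphism. By \Cref{thm:main} this happens precisely when $C\to *$ is weakly $\infty$-surjective. Since $\infty$-surjective maps are weakly $\infty$-surjective (\Cref{rem:weak-strong-surjective}), the ``only if'' direction is done.

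For the converse, I prove the following by induction on $n$, simultaneously for all $\infty$-categories $C$: if $C\to *$ is weakly $\infty$-surjective, then $C_n\to *$ is $n$-surjective.

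For $n=0$ I need $C\neq\emptyset$. By assumption $L_0C_1\to *$ is $0$-surjective, so $L_0C_1$ is inhabited. The unit $C_1\to L_0C_1$ is surjective on objects (\Cref{lem:arrows-of-n-localization}), and objects of $C_1$ are objects of $C_0$. Hence $C_0$ is inhabited.

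For the inductive step, it suffices to show that for all objects $x,y\in C$ the hom $\infty$-category $C(x,y)=(C_1(x,y)\to C_2(x,y)\to\cdots)$ again has weakly $\infty$-surjective terminal map. Once that holds, the induction hypothesis gives that $C(x,y)_{n-1}=C_n(x,y)\to *$ is $(n-1)$-surjective, and therefore $C_n\to *$ is $n$-surjective. To get weak $\infty$-surjectivity of $C(x,y)\to *$, I use the identification of \Cref{lem:arrows-of-n-localization}:
\begin{equation}
  L_{n-1}\bigl(C(x,y)_{n}\bigr)\simeq L_{n-1}\bigl(C_{n+1}(x,y)\bigr)\xrightarrow{\sim}(L_nC_{n+1})([x],[y]).
\end{equation}
Since $L_nC_{n+1}\to *$ is $n$-surjective, its hom $(L_nC_{n+1})([x],[y])\to *$ is $(n-1)$-surjective. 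This is exactly the weak $\infty$-surjectivity condition for $C(x,y)$ at level $n-1$, for every $n\geq 1$.

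The main obstacle is the index bookkeeping in this hom reduction. One has to confirm that the $(n-1)$-st level of $C(x,y)$ is $C_n(x,y)$, which uses $R_{n-1}$ on homs (\Cref{lem:arrows-of-n-core}), and that the localization lemma applies to the objects $[x],[y]$ coming from $C$. The first step is otherwise formal.
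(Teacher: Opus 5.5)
Your proof is correct, but for the ``only if'' direction it takes a longer route than the paper. The paper argues in one line. For each $n$ the composite $C_n\simeq R_nC_{n+1}\to C_{n+1}\to L_nC_{n+1}\to *$ is $n$-surjective, because:
\begin{itemize}
\item the counit is $n$-surjective by \Cref{prop:surjective-core};
\item the unit is $n$-surjective by \Cref{prop:surjective-localization};
\item the last map is $n$-surjective by weak $\infty$-surjectivity;
\item $n$-surjective maps are closed under composition.
\end{itemize}

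You instead run an induction on $n$, uniformly in $C$. You pass to the hom-$\infty$-categories $C(x,y)$ and transfer weak $\infty$-surjectivity to $C(x,y)\to *$ via the hom identification of \Cref{lem:arrows-of-n-localization}. Your bookkeeping checks out: $C(x,y)_m=C_{m+1}(x,y)$, which is a right $\infty$-category by \Cref{lem:arrows-of-n-core}.

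In effect you re-derive, in the special case of terminal target, the first parts of \Cref{prop:surjective-core} and \Cref{prop:surjective-localization}. The paper's proof that the unit is $n$-surjective is exactly this hom-wise induction through \Cref{lem:arrows-of-n-localization}. So your version is more self-contained and avoids closure under composition. The paper's version is shorter and makes clear that only those two easy statements plus composition are needed.

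Your explicit justification of $L(*)\simeq *$, via the compatibility of $L\dashv R$ with the inclusions of finite-dimensional $\infty$-categories, fills in a detail the paper leaves implicit.
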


\begin{proof}
  By \Cref{thm:main}, $L(C)\to L(*)\simeq *$ is an equivalence
  if and only if $C\to *$ is weakly $\infty$-surjective;
  we have to show that this happens if and only if it is actually $\infty$-surjective.

  Since ``if'' is trivial, we only prove ``only if'':
  For this, let $n\in \naturals$ and consider the composite
  \begin{equation}
    C_n\simeq R_n C_{n+1}\to C_{n+1} \to L_nC_{n+1} \to *,
  \end{equation}
  where the first two maps are $n$-surjective
  by \Cref{prop:surjective-core} and \Cref{prop:surjective-localization}, respectively,
  and the last map is $n$-surjective by the assumption that $C\to *$
  is weakly $\infty$-surjective.
  We conclude that each $C_n\to *$ is $n$-surjective,
  hence $C\to *$ is $\infty$-surjective.
\end{proof}

\begin{definition}
  \label{def:walking-coind}
  A \emph{walking coinductive isomorphism}
  is an $L$-trivial cell complex $E\in \Cat_\infty$
  equipped with a free map
  $u\colon \globe{1}\to E$
  whose image we call the \emph{tautological arrow} of $E$
  and also denote by $u\colon 0\to 1$.
\end{definition}

\begin{remark}
  \label{cor:E-0-neq-1}
  Let $(E,u\colon 0\to 1)$ be a walking coinductive isomorphism.
  It follows immediately from \Cref{lem:distinct-0-cells}
  that $0$ and $1$ are distinct objects of $E$
  and that $u$ is not invertible.
\end{remark}

\begin{definition}
  Let \((E,u)\) be a walking coinductive isomorphism.
  Let $C$ be an $\infty$-category
  and $f\colon x\to y$ an $(n+1)$-arrow.
  We say that $f$ is \emph{coinductively invertible}
  or a \emph{coinductive isomorphism} (with respect to $E$)
  if there merely exists a factorization
  \begin{equation}
    \label{eq:fact-coind-equiv}
    f\colon \Susp^n\globe{1}\xrightarrow{\Susp^n u} \Susp^nE\dashrightarrow C
  \end{equation}
  of $f$ through $\Susp^nu$.
\end{definition}

\begin{warning}
  For an $(n+1)$-arrow $f\colon x\to y$,
  the anima of lifts \eqref{eq:fact-coind-equiv}
  need not be a proposition.
  For most (fixed) choices of $(E,u)$,
  an arrow can be a coinductive isomorphism
  in many different ways;
  see \Cref{ex:non-uniqueness-coind} for an explicit example.
  
  Note that this is a purely $\infty$-dimensional phenomenon:
  we will see in \Cref{lem:finit-is-local}
  that if $C$ is finite-dimensional,
  then coinductive invertibility data
  (lifts along $\Susp^n\globe{1}\to \Susp^nE$)
  is the same as invertibility data
  (lifts along $\Susp^n\globe{1}\to \Susp^n*$)
  and the latter is always unique if it exists.
\end{warning}

\begin{remark}
  The name ``walking coinductive isomorphism''
  suggests a certain uniqueness or canonicity of $(E,u)$.
  While it is not true that $(E,u)$ itself is unique\footnote{
    One can ask whether at least there is an \emph{initial} $(E,u)$;
    see \Cref{q:epi-walking-coind}.
  } (for example, as a cell complex),
  we will show momentarily that at least the resulting notion of coinductive invertibility
  does not depend on the chosen walking coinductive isomorphism.
 
  In this subsection, we will also not address whether such
  a walking coinductive equivalence even exists.
  But lest the reader worry that we are building castles in the air,
  we refer to \Cref{constr:standard-E} below,
  where we will construct an explicit
  walking coinductive isomorphism $(E,u)$.
  That specific construction will then allow an alternative
  characterization of coinductive isomorphisms
  which also clarifies their name; see \Cref{prop:coinductive-equiv-char}.
\end{remark}

\begin{lemma}
  \label{lem:L-triv-only-coind-equiv}
  Let $E$ be a walking coinductive isomorphism.
  Let $C$ be an $L$-trivial $\infty$-category.
  Then all arrows of $C$ (of all dimensions $n+1$)
  are coinductive isomorphisms (with respect to $E$).
\end{lemma}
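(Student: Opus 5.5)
The plan is to reduce the claim to the lifting property of \Cref{lem:free-surjective-lifting}. Let $f\colon x\to y$ be an $(n+1)$-arrow of $C$, classified by a map $f\colon \Susp^n\globe{1}\to C$. We need a dashed extension of $f$ along $\Susp^n u\colon \Susp^n\globe{1}\to\Susp^n E$. Consider the square whose top arrow is $f$, whose left vertical arrow is $\Susp^n u$, whose right vertical arrow is some map $s$ out of $C$, and whose bottom arrow is a map from $\Susp^n E$ to the target of $s$. A lift in this square is exactly a factorization of the form \eqref{eq:fact-coind-equiv}.

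The obstacle is that $C\to *$ is $\infty$-surjective, but $\Susp^n E$ does not map to $*$ compatibly with the boundary data of $f$ unless $n=0$. So I would first reduce to the case $n=0$ by passing to hom-$\infty$-categories. By \Cref{rem:free-susp}, a map $\Susp^n Z\to C$ amounts to a parallel pair $(s,t)$ of $(n-1)$-arrows of $C$ and a map $Z\to C\arlev{n}(s,t)$, naturally in $Z$. Here the hom-$\infty$-category is understood levelwise through \Cref{lem:arrows-of-n-core}, and the iterated suspension adjunction is assembled from \eqref{eq:Susp-ladj-hom} and \eqref{eq:Susp-infty}. Applied to $Z=\globe{1}\to E$, this turns the lifting problem into extending a $1$-arrow $f\colon x\to y$ of the $\infty$-category $C' \coloneqq C\arlev{n}(s,t)$ along $u\colon\globe{1}\to E$.

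Next I check that $C'$ is again $L$-trivial. The map $C\to *$ is $\infty$-surjective. By \Cref{def:surj} and \Cref{rem:surjectivity-explicit}, $\infty$-surjectivity passes to hom-$\infty$-categories: if $C_d$ is $d$-surjective for all $d$, then $C_d(s,t)$ is $(d-1)$-surjective onto $*(\ast,\ast)\simeq *$. Iterating $n$ times shows $C'\to *$ is $\infty$-surjective. Strictly, one should verify this levelwise with the indexing shift $C'_{d}=C_{d+n}(\dots)$, which is a bookkeeping check using \Cref{lem:arrows-of-n-core}.

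Finally, for $n=0$ consider the square with top arrow $f\colon\globe{1}\to C'$, left arrow $u\colon\globe{1}\to E$, right arrow $C'\to *$ and bottom arrow $E\to *$. Here $u$ is free by \Cref{def:walking-coind} and $C'\to *$ is $\infty$-surjective, so \Cref{lem:free-surjective-lifting} gives a merely existing lift $E\to C'$ extending $f$. Transporting it back along the suspension adjunction gives the required factorization $\Susp^nE\to C$ of $f$ through $\Susp^nu$.

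The main obstacle is the reduction step: making precise that hom-$\infty$-categories of an $L$-trivial $\infty$-category are $L$-trivial, and that the suspension adjunction holds for right $\infty$-categories. As an alternative, one could avoid homs altogether. One would show directly that $\Susp^n u$ is free (\Cref{rem:free-susp}) and lift against the map $C\to \Susp^n *$ induced by $(s,t)$ after collapsing. However, that map is not obviously $\infty$-surjective, so I prefer the hom reduction.
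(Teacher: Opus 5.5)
Your proof is correct, but the detour through hom-$\infty$-categories is unnecessary, and the obstacle that motivates it is not real. The paper argues in one line. Take the square with top $f\colon \Susp^n\globe{1}\to C$, left $\Susp^n u$, right $C\to *$ and bottom $\Susp^n E\to *$. The left map is free by \Cref{rem:free-susp}, since suspensions of free maps are free. The right map is $\infty$-surjective by $L$-triviality. So \Cref{lem:free-surjective-lifting} gives a lift $h\colon\Susp^n E\to C$.

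There is no compatibility with the boundary data of $f$ to arrange. Since $*$ is terminal, the square commutes automatically. The parallel pair $(s,t)$ is respected because the upper triangle $h\circ\Susp^n u\simeq f$ of the lift is precisely the required factorization \eqref{eq:fact-coind-equiv}. In particular, you never need a map $C\to\Susp^n *$, which in general does not exist.

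Your route is also sound. You transport along the suspension--hom adjunction to a $1$-arrow of $C\arlev{n}(s,t)$, check that this hom is again $L$-trivial, and then lift $u$ against the terminal map. The $L$-triviality check works: the $d$-th level of the hom is a depth-$n$ hom of $C_{d+n}$, and $(d+n)$-surjectivity of $C_{d+n}\to *$ gives $d$-surjectivity there. This is the same kind of reduction the paper uses in \Cref{lem:finit-is-local}. It costs you the levelwise bookkeeping you flag. The direct argument avoids that bookkeeping entirely, because freeness is already stable under suspension and the terminal map needs no adjustment.
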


\begin{proof}
  If $C$ is $L$-trivial, then by \Cref{lem:free-surjective-lifting}
  there merely exists a solution
  for each lifting problem
  \begin{equation}
    \cdsquareNA[lifting]
    {\Susp^n\globe{1}}
    {C}
    {\Susp^nE}
    {*}
  \end{equation}
  because the left vertical map is free
  (see \Cref{rem:free-susp})
  and the right one is $\infty$-surjective.
\end{proof}

\begin{lemma}
  \label{lem:coind-equiv-indep}
  The notion of coinductive isomorphism does not depend
  on the choice of walking coinductive isomorphism.
\end{lemma}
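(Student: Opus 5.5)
Let $(E,u)$ and $(E',u')$ be two walking coinductive isomorphisms. By symmetry it suffices to show that every $(n+1)$-arrow $f$ of an $\infty$-category $C$ which is coinductively invertible with respect to $E$ is also coinductively invertible with respect to $E'$. Since this is a mere existence statement, we may choose a factorization $f\colon \Susp^n\globe{1}\xrightarrow{\Susp^n u}\Susp^n E\xrightarrow{g} C$. It therefore suffices to show that the tautological arrow $\Susp^n u$ of $\Susp^n E$ is coinductively invertible with respect to $E'$, i.e.\ that there merely exists a map $h\colon \Susp^n E'\to \Susp^n E$ with $h\circ \Susp^n u'\simeq \Susp^n u$. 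Then $g\circ h$ exhibits $f$ as coinductively invertible with respect to $E'$.

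To produce $h$, we first reduce to $n=0$: given a map $h_0\colon E'\to E$ under $\globe{1}$ (i.e.\ with $h_0 u'\simeq u$), applying the functor $\Susp^n$ yields $\Susp^n h_0$ under $\Susp^n\globe{1}$. So we only need a map $E'\to E$ extending $\globe{1}\xrightarrow{u} E$ along $u'$. We obtain it by solving the lifting problem
\begin{equation}
  \cdsquareNA[lifting]
  {\globe{1}}
  {E}
  {E'}
  {*}
\end{equation}
with top map $u$. The left vertical map $u'$ is free by the definition of a walking coinductive isomorphism, and the right vertical map $E\to *$ is $\infty$-surjective because $E$ is $L$-trivial. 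Hence \Cref{lem:free-surjective-lifting} provides a mere lift, exactly as in the proof of \Cref{lem:L-triv-only-coind-equiv}. Equivalently, we can invoke \Cref{lem:L-triv-only-coind-equiv} directly for $C=E$ and the $1$-arrow $u$ of $E$, which gives this for $n=0$ immediately.

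The only subtle point is compatibility with suspension. For $n\ge 1$ one could also argue directly, since $\Susp^n u'$ is free by \Cref{rem:free-susp}. However, $\Susp^n E\to *$ is not $\infty$-surjective, because $\Susp^n E$ has two non-identified objects. The reduction via functoriality of $\Susp^n$ avoids this issue. Everything else is formal: composing factorizations and using that coinductive invertibility is a mere proposition, so choosing witnesses is allowed.
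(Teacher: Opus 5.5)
Your proof is correct and follows essentially the paper's argument. The paper likewise uses \Cref{lem:L-triv-only-coind-equiv} (the free map out of $\globe{1}$ lifted against the $\infty$-surjection from an $L$-trivial object to $*$) to produce a map between the two walking coinductive isomorphisms under $\globe{1}$, then transports witnesses, and argues symmetrically. The only differences are that the paper runs the argument in the mirror-image direction and leaves the passage to suspensions implicit, whereas you spell it out.
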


\begin{proof}
  Let $(E,u)$ and $(E',u')$ be walking coinductive isomorphisms.
  It suffices to show that $u'$ --- %
  which is the universal coinductive isomorphism with respect to $E'$ --- %
  is also a coinductive isomorphism with respect to $E$.
  But this follows from \Cref{lem:L-triv-only-coind-equiv} because
  $E'$ is $L$-trivial.
  (And vice versa.)
\end{proof}

\begin{lemma}
  Let $C$ be an $\infty$-category.
  \begin{enumerate}
  \item
    Every isomorphism (of any dimension $n+1$)
    is a coinductive isomorphism.
  \item
    If $f$ is a coinductive isomorphism of dimension $(n+1)$ in $C$,
    then, viewed as an $(n+2)$-arrow in $\Susp C$
    it is still a coinductive isomorphism.
  \item
    Coinductive isomorphisms (of any dimension $n+1$)
    are closed under composition.
  \end{enumerate}
\end{lemma}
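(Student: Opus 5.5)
All three statements reduce to building suitable walking coinductive isomorphisms, or to comparing them via \Cref{lem:L-triv-only-coind-equiv}. We use freely that, by \Cref{lem:coind-equiv-indep}, we may pick whichever walking coinductive isomorphism is convenient. Throughout, a map $\Susp^n X\to C$ amounts to a pair of parallel $(n-1)$-arrows $(s,t)$ together with a map $X\to C\arlev{n}(s,t)$. So a factorization \eqref{eq:fact-coind-equiv} of an $(n+1)$-arrow $f$ is the same as a factorization of the $1$-arrow $f$ in the hom $C\arlev{n}(s,t)$ through $u\colon\globe{1}\to E$.

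\emph{(1) Isomorphisms.} Let $f$ be an invertible $(n+1)$-arrow. Then $f$ factors through $\Susp^n\globe{1}\to\Susp^n W$, where $W$ denotes the walking isomorphism, which is equivalent to $*$ with two objects identified. Since $E\to *$ is $\infty$-surjective, $E\to W\simeq *$ is as well. Hence the square
\[
\Susp^n\globe{1}\to \Susp^n W\to C,\qquad \Susp^nE\to *\simeq\Susp^n W \text{ (after suspending)}
\]
can be completed by a map $\Susp^nE\to\Susp^n W$ under $\Susp^n\globe{1}$. Such a map merely exists by \Cref{lem:free-surjective-lifting}: $\Susp^n u$ is free (\Cref{rem:free-susp}), and $\Susp^n E\to\Susp^n *$ is relevant only in that we need a lift of $\globe{1}\to W$ along $E\to *$. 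More concretely, we lift in the square with $\globe{1}\to W$ on top and $E\to *$ at the bottom, using that $W\to *$ is an equivalence, hence $\infty$-surjective. Composing gives the desired factorization.

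\emph{(2) Suspension.} This is immediate. A factorization $\Susp^nE\to C$ of $f$ suspends to $\Susp^{n+1}E\to\Susp C$, factoring the suspended arrow through $\Susp^{n+1}u$, by functoriality of $\Susp$.

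\emph{(3) Composition.} Reducing to the hom $C\arlev{n}(s,t)$, it suffices to treat composable $1$-arrows $f\colon x\to y$ and $g\colon y\to z$ with factorizations $\bar f,\bar g\colon E\to C$ (or the suspended version; the argument is identical after applying $\Susp^n$). Form the pushout $E\cup_{*}E$ of two copies of $E$ glued along $1\in E$ and $0\in E$. It contains the composable pair $u,u'$, and the pair $(\bar f,\bar g)$ induces a map $E\cup_* E\to C$ sending $u'u\mapsto gf$. It remains to factor $u'u\colon\globe{1}\to E\cup_*E$ through $u\colon \globe{1}\to E$.

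For this, apply \Cref{lem:free-surjective-lifting} to the square with $\globe{1}\xrightarrow{u'u}E\cup_*E$ on top, $u$ on the left and $E\cup_*E\to *$ on the right. The left map is free. The right map is $\infty$-surjective, since $E\cup_*E\to *\cup_* *\simeq *$ is a pushout of $\infty$-surjective maps, which is $\infty$-surjective by \Cref{cor:surj-colims-cobase}; equivalently, $E\cup_*E$ is $L$-trivial. This is the main check. One must confirm that the pushout is taken in $\Cat_\infty$ and that colimit-closure in \Cref{cor:surj-colims-cobase} applies to this pushout of arrows $E\to *$, $*\to *$, $E\to *$. After that, the lift $E\to E\cup_*E\to C$ witnesses $gf$ as a coinductive isomorphism.
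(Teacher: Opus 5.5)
Your proof is correct and follows the paper's own argument. In (1) you factor through $\Susp^n E\to\Susp^n *$; in (2) you suspend a witness; in (3) you form the pushout $E\cup_* E$, which is $L$-trivial, and lift along the free map $u$. The paper simply cites \Cref{lem:L-triv-only-coind-equiv} for that last lift, whose proof is exactly the lifting you spell out.

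In (1) the appeal to \Cref{lem:free-surjective-lifting} is superfluous. Since $W\simeq *$ is terminal, any map $E\to W$ is automatically compatible with $\globe{1}$. Note also that for $n\ge 1$ one has $\Susp^n W\simeq\Susp^n *=\globe{n}$, not $*$ as your displayed line says; this slip does not affect the argument.
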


\begin{proof}
  \begin{enumerate}
  \item
    Let $f\colon x\xrightarrow{\sim}y$ be an isomorphism of dimension $(n+1)$.
    This means that $f\colon \Susp^n\globe{1}\to C$
    factors (uniquely) through $\Susp^n\globe{1}\to\Susp^n*$.
    But then the dashed composite
    \begin{equation}
      \begin{tikzcd}
        \Susp^n\globe{1}\ar[r,"f"]
        \ar[d]
        &
        C
        \\
        \Susp^n E
        \ar[ru,dashed]
        \ar[r]
        &
        \Susp^n *\ar[u]
      \end{tikzcd}
    \end{equation}
    proves that $f$ is a coinductive isomorphism.
  \item
    Immediate from the definition.
  \item
    Consider the universal pair of composable coinductive isomorphisms,
    i.e., the pushout
    \begin{equation}
      \cdsquare[po]
      *
      E
      E
      {E'}
      0
      1 {r}
      {l}
    \end{equation}
    It suffices to show that the composite $r(u)\circ l(u)$
    is a coinductive isomorphism in $E'$,
    because then the same will hold for its image under any map $E'\to C$.
    But $E'$ is $L$-trivial as a pushout $L$-trivials;
    hence \emph{all} of its arrows are coinductive isomorphisms
    by \Cref{lem:L-triv-only-coind-equiv}.
    \qedhere
  \end{enumerate}
\end{proof}

\begin{lemma}
  \label{lem:finit-is-local}
  Let $C$ be a $d$-category for some finite $d$.
  Then $C$ is $(\Susp^n E\to\Susp^n*)$-local for each $n\in \naturals$.
\end{lemma}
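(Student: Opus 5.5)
The plan is to show that every finite-dimensional $C$ lies in the essential image of the fully faithful right adjoint $R\colon \Cat^L_\infty\hookrightarrow \Cat^R_\infty$ of \Cref{thm:main}. Objects in that image are local with respect to every map that $L$ inverts. It then suffices to check that $\Susp^n E\to \Susp^n *$ is weakly $\infty$-surjective.

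\emph{Step 1.} Regard the $d$-category $C$ as the eventually constant sequence $I^R_dC\in\Cat^R_\infty$. By the remark following \Cref{rem:counit-LR-explicit}, the adjunction $L\dashv R$ intertwines the inclusions of sequences that are constant above degree $d$, so $I^R_d\simeq R I^L_d$. Hence for every $X\in \Cat_\infty$ there are natural equivalences
\begin{equation}
  \Cat_\infty(X, I^R_dC)\simeq \Cat_\infty(X, R I^L_d C)\simeq \Cat^L_\infty(LX, I^L_dC).
\end{equation}
Consequently, if $f\colon X\to Y$ is a map with $Lf$ invertible, then precomposition with $f$ induces an equivalence $\Cat_\infty(Y,C)\xrightarrow{\sim}\Cat_\infty(X,C)$. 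In other words, $C$ is $f$-local. I expect this step to be the only place needing care: one must check that the identification $I^R_d\simeq RI^L_d$ is natural enough that the equivalence above is induced by precomposition. This follows from naturality of the adjunction equivalence in $X$.

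\emph{Step 2.} We show that $L$ inverts $\Susp^n E\to\Susp^n *$. Since $E$ is $L$-trivial, the map $E\to *$ is $\infty$-surjective. By \Cref{cor:surj-colims-cobase}(2), applied $n$ times with $\infty+1=\infty$, the suspension $\Susp^nE\to\Susp^n*$ is again $\infty$-surjective. By \Cref{rem:weak-strong-surjective} it is therefore weakly $\infty$-surjective. By \Cref{thm:main}, $L$ sends it to an equivalence.

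Combining Steps 1 and 2 with $f\coloneqq(\Susp^nE\to\Susp^n*)$ yields the claim for every $n\in\naturals$.
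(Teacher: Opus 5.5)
Your proof is correct, and its mechanism is the same as the paper's. Both arguments rest on the fact that a $d$-category is local for every map that $L$ inverts. The paper phrases this via the adjunction $L_d\dashv I_d$ together with $L_dE\simeq (LE)_d\simeq *$, while you use $I^R_d\simeq RI^L_d$ and $L\dashv R$. These give the same hom-equivalence $\Cat_\infty(X,I^R_dC)\simeq\Cat_d((LX)_d,C)$.

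The one real difference is how you handle $n\geq 1$. The paper passes to the hom-category $C\arlev{n}(s_{n-1},t_{n-1})$ to reduce to $n=0$. You instead keep $n$ and use that suspension preserves $\infty$-surjectivity (\Cref{cor:surj-colims-cobase}), which avoids unwinding the suspension--hom adjunction. Your version also proves more than stated: every object in the image of $R$, not only finite-dimensional ones, is $(\Susp^nE\to\Susp^n*)$-local. Combined with the easy direction of \Cref{lemma:char-coind-univalent}, this gives \Cref{cor:im-R-coind} directly.
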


\begin{proof}
  Let $f\colon \Susp^nE \to C$.
  If $n\geq 1$, then we may replace $C$ with the $\pred[n]{d}$-category
  $C\arlev{n}(s_{n-1},t_{n-1})$,
  where $s_{n-1}$ and $t_{n-1}$
  are the $(n-1)$-dimensional source and target of $f(u)$;
  thus may assume without loss of generality that $n=0$.

  Then by the adjunction $L_d\dashv I_d$,
  we have an equivalence
  \begin{equation}
    \{\overline{f}\colon E\to *\dashrightarrow C\}
    \simeq
    \{L_d\overline{f}\colon L_dE\to *\dashrightarrow C\}
  \end{equation}
  between animae of lifts.
  But the latter anima is trivial,
  because $L_dE\simeq (LE)_d\simeq *$
  by $L$-triviality of $E$.
\end{proof}

\begin{lemma}
  \label{lemma:char-coind-univalent}
  Let $C$ be an $\infty$-category.
  Let $E$ be a walking coinductive isomorphism.
  The following are equivalent:
  \begin{enumerate}
  \item
    \label{it:coind-complete}
    All coinductive isomorphisms of $C$
    (of all dimensions $n+1$) are isomorphisms.
  \item
    \label{it:E-local}
    $C$ is $(\Susp^n E\to\Susp^n*)$-local for each $n\in \naturals$.
  \end{enumerate}
\end{lemma}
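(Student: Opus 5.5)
For the implication \ref{it:E-local}$\implies$\ref{it:coind-complete}, let $f$ be a coinductive isomorphism of dimension $n+1$ in $C$. Choose a witness, i.e.\ a map $\overline{f}\colon \Susp^nE\to C$ with $\overline{f}\circ\Susp^n u\simeq f$; this is allowed because the claim is a proposition. By $(\Susp^nE\to\Susp^n*)$-locality, $\overline{f}$ factors through $\Susp^n E\to \Susp^n *$. Hence $f$ factors through the composite $\Susp^n\globe{1}\to\Susp^nE\to\Susp^n*$, which is the map $\Susp^n(\globe{1}\to *)$. A factorization of $f\colon\Susp^n\globe{1}\to C$ through $\Susp^n*$ is exactly the statement that $f$ is invertible, so $f$ is an isomorphism.

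For \ref{it:coind-complete}$\implies$\ref{it:E-local}, we need to show that restriction along $p_n\colon\Susp^nE\to\Susp^n*$ gives an equivalence $\Map(\Susp^n*,C)\to\Map(\Susp^nE,C)$. I would first reduce to $n=0$ by the same trick as in \Cref{lem:finit-is-local}. A map out of $\Susp^nE$ or $\Susp^n*$ is the data of parallel $(n-1)$-arrows $(s,t)$ together with a map from $E$ (respectively $*$) into the $\infty$-category $C\arlev{n}(s,t)$. Moreover, $p_n$ is the identity on the boundary $\Susp^n\emptyset$. So both mapping animae fiber compatibly over $\Map(\boundary\globe{n},C)$, and it suffices to treat each fiber. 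Hypothesis \ref{it:coind-complete} is inherited by every $C\arlev{n}(s,t)$: a coinductive isomorphism there suspends $n$ times to one of $C$, by the suspension compatibility lemma and the adjunction description of $\Susp$. So we may assume $n=0$ and must show that $C^\simeq\simeq\Map(*,C)\to\Map(E,C)$ is an equivalence.

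For $n=0$, take $g\colon E\to C$. Every arrow of $E$ (of every dimension) is a coinductive isomorphism by \Cref{lem:L-triv-only-coind-equiv}, and coinductive isomorphisms are preserved by functors. So every arrow in the image of $g$ is a coinductive isomorphism of $C$, hence an isomorphism by \ref{it:coind-complete}. Therefore $g$ factors through the sub-$\infty$-category of $C$ whose arrows in all positive dimensions are invertible. That sub-$\infty$-category is the anima $C^\simeq$, viewed as a constant sequence, i.e.\ $R_0C$ embedded via $I^R_0$. The precise statement I would use is that if a map $E\to C$ sends all $k$-arrows to isomorphisms for all $k\geq1$, then it factors uniquely through the counit $I^R_0R_0C\to C$. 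I would check this levelwise using \Cref{lem:arrows-of-n-core}, which identifies $(R_0 C_d)$ as the sub-$d$-category of invertible arrows, with uniqueness because the counit is a monomorphism on such data.

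It follows that $\Map(E,C)\simeq\Map(E,I^R_0C^\simeq)\simeq\Map(L_0E,C^\simeq)$ by the adjunction $L_0\dashv I^R_0$ on $\Cat_\infty$, where $L_0$ here means $L_0L$. Since $E$ is $L$-trivial, $L_0E\simeq (LE)_0\simeq *$, so this anima is $C^\simeq$. Finally one checks that the composite equivalence is inverse to restriction along $E\to*$, which holds because both are induced by the unit $E\to *$.

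The main obstacle is the factorization step: justifying that "all arrows invertible" cuts out precisely $I^R_0R_0C$ as a subobject through which maps factor uniquely. Here the infinite-dimensional nature matters, since the condition has to be imposed levelwise in every $C_d$ and compatibly with the $R_d$. I would handle it by proving it for each $C_d$ via \Cref{lem:arrows-of-n-core} and then passing to the limit description of \Cref{lem:C-R-L-emb-Fun}.
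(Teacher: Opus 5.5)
Your proposal is correct and follows essentially the paper's argument. The easy direction is identical. For the hard direction, the paper likewise uses \Cref{lem:L-triv-only-coind-equiv} to make a map $\Susp^nE\to C$ land in the core $R_nC$, and then cites \Cref{lem:finit-is-local}. You instead first pass to hom-$\infty$-categories to reduce to $n=0$, and then inline the $d=0$ case of that lemma via $L_0L\dashv I^R_0$ and $(LE)_0\simeq *$. Your explicit treatment of the counit $I^R_0R_0C\to C$ as a monomorphism through which such maps factor uniquely fills in a step the paper leaves implicit.
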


\begin{proof}
  \ref{it:E-local} implies \ref{it:coind-complete}
  because for every coinductive isomorphism $f\colon x\to y$
  there merely exists a lift of the map
  $f\colon \Sigma^n\globe{1}\to C$
  to a map 
  $\overline{f}\colon \Susp^nE\to C$,
  which then factors (uniquely) through $\Susp^nE\to \Susp^n*$ by locality,
  thus exhibiting $f\colon x\to y$ as an isomorphism in $C$.

  Conversely, assume that every coinductive isomorphism of $C$
  (of every dimension)
  is an isomorphism.
  We have to show that every map $\overline{f}\colon \Susp^nE\to C$
  factors uniquely through $\Susp^nE\to \Susp^n*$.

  Since all arrows (of all dimensions)
  of $E$ are coinductive isomorphisms
  (\Cref{lem:L-triv-only-coind-equiv})
  the same is true for their image under $\Susp^n E\to C$;
  hence these images are all isomorphisms by the assumption on $C$.
  In other words, the map $\overline{f}\colon \Susp^n E\to C$
  takes values in $C_n=R_nC\subseteq C$;
  and the same is of course true for any map $\Susp^n *\to C$.
  Hence we may replace $C$ by $C_n$ and assume that $C\in \Cat_n$ is finite dimensional;
  then we are done because every finite-dimensional $\infty$-category is
  $(\Susp^nE\to \Susp^n*)$-local
  by 
  \Cref{lem:finit-is-local}.
\end{proof}

\begin{remark}
  It is clear from the proof
  that the backward implication \ref{it:coind-complete}$\impliedby$\ref{it:E-local}
  of \Cref{lemma:char-coind-univalent} is true even
  for each fixed $n\in \naturals$ individually.
  But note that in the forward implication
  \ref{it:coind-complete}$\implies$\ref{it:E-local},
  to show that $C$ is $(\Susp^n E\to \Susp^n *)$-local
  we really needed to use the coinductive completeness of $C$
  not just for arrows of dimension $(n+1)$
  but for all dimensions above it too
  (because we apply it to all arrows in the image of $\Susp^n E\to C$).

  And indeed, for each fixed $n$,
  the condition of being $(\Susp^n E\to \Susp^n*)$-local
  is much stronger than just requiring the $(n+1)$-dimensional
  coinductive isomorphisms to be invertible.
  For example, consider the $\infty$-category
  $E'\coloneqq E[(\Mor_1E)^{-1}]$
  obtained from $E$ by inverting all $1$-arrows.
  Since all $1$-arrows of $E'$ are invertible, it trivially satisfies 
  condition \ref{it:coind-complete} for $n=0$.
  However $E'$ is not $(E\to *)$-local:
  for example, the defining localization map $E\to E'$
  does not factor through $E\to *$,
  because the generating arrows of $E$ of dimension $\geq 2$ are still\footnote{
    The proof of \Cref{lem:distinct-0-cells} goes through verbatim
    to show that (pastings of) generating $k$-cells of a cell complex $D$
    are non-invertible not just in $D$
    but also in the localization $D[(\Mor_{<k}D)^{-1}]$.
  } non-invertible in $E'$.
\end{remark}

\begin{definition}
  \label{def-coind-compl}
  If $C$ satisfies the equivalent conditions of 
  \Cref{lemma:char-coind-univalent},
  we say that $C$ is \emph{coinductively complete}.
  We denote by $L_\coind\colon \Cat_\infty\rightleftarrows \Cat^\coind_\infty$
  the reflective localization
  onto the full subcategory
  of coinductively complete $\infty$-categories.
\end{definition}

\begin{remark}
  By \Cref{lem:coind-equiv-indep}, the subcategory
  $\Cat^\coind_\infty$ does not depend on the choice
  of walking coinductive isomorphism $(E,u)$.

  For now, the reflector $L_\coind\colon \Cat_\infty\to\Cat^\coind_\infty$
  only exists for formal reasons,
  because the localization is generated by the small (even countable) family
  $\{\Susp^n E\to \Susp^n * \mid n\in \naturals\}$
  and $\Cat_\infty$ is presentable (see \Cref{rem:cat-infty-presentable}).
  In \Cref{prop:localization-coind-approx}
  we will give a direct construction of this reflector
  as the localization at the coinductive isomorphisms (as one might have expected).
\end{remark}

\begin{lemma}
  \label{lem:strongly-surject-reflects-coind-equiv}
  Let $F\colon C\to D$ be a $\infty$-surjective map.
  Then $F$ reflects coinductive isomorphisms.
\end{lemma}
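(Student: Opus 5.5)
We have to show: if $f'\colon x\to y$ is an $(n+1)$-arrow of $C$ whose image $Ff'$ is a coinductive isomorphism in $D$, then $f'$ is itself a coinductive isomorphism. The plan is a single lifting argument, using \Cref{lem:free-surjective-lifting} for the free map $\Susp^n u\colon\Susp^n\globe{1}\to\Susp^nE$.

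First, since the claim is a proposition, we may choose a factorization witnessing that $Ff'$ is coinductively invertible. This is a map $\overline{g}\colon \Susp^nE\to D$ with $\overline{g}\circ\Susp^n u\simeq F\circ f'$, where $f'\colon\Susp^n\globe{1}\to C$ denotes the classifying map of the arrow. Together these form a commutative square
\begin{equation}
  \cdsquare[lifting]
  {\Susp^n\globe{1}}
  {C}
  {\Susp^nE}
  {D}
  {f'}{\Susp^n u}{F}{\overline{g}}
\end{equation}
in $\Cat_\infty$.

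Next, we check the two hypotheses of \Cref{lem:free-surjective-lifting}. The left vertical map is free, because $u\colon\globe{1}\to E$ is free by \Cref{def:walking-coind} and suspensions of free maps are free by \Cref{rem:free-susp}. The right vertical map $F$ is $\infty$-surjective by assumption.

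Then \Cref{lem:free-surjective-lifting} gives the mere existence of a diagonal lift $\overline{f}\colon\Susp^nE\to C$ with $\overline{f}\circ\Susp^n u\simeq f'$. This is precisely a factorization of the form \eqref{eq:fact-coind-equiv} for $f'$, so $f'$ is a coinductive isomorphism with respect to $E$. By \Cref{lem:coind-equiv-indep}, the conclusion does not depend on which walking coinductive isomorphism was used.

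The only point needing care is the identification of the solid square: its commutativity has to be exactly the identification $F(f')\simeq\overline{g}\circ\Susp^n u$ coming from the factorization, and the sources and targets must be matched through $\Susp^n\globe{1}$. \Cref{rem:globe-surj} handles this, since it identifies maps out of $\Susp^n\globe{1}=\globe{n+1}$ with $(n+1)$-arrows. Beyond this bookkeeping I expect no real obstacle; all of the work has already been done in \Cref{lem:free-surjective-lifting}, where the axiom of choice is used.
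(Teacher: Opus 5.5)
Your proposal is correct and is essentially the paper's own proof: you lift the square with the free map $\Susp^n u$ on the left and the $\infty$-surjective $F$ on the right via \Cref{lem:free-surjective-lifting}, and the diagonal witnesses $f'$ as a coinductive isomorphism. The extra bookkeeping you add (freeness of $\Susp^n u$ via \Cref{rem:free-susp}, and the harmless appeal to \Cref{lem:coind-equiv-indep}) just spells out what the paper leaves implicit.
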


\begin{proof}
  This follows from the mere existence of lifts in the square
  \begin{equation}
    \cdsquare[lifting]
    {\Susp^n\globe{1}}
    {C}
    {\Susp^nE}
    {D}
    {f}{}{F}{\overline{g}}
  \end{equation}
  where the left vertical arrow is free and the right is $\infty$-surjective.
  Indeed, if $\overline{g}$ exhibits $g\coloneqq Ff$ as a coinductive isomorphism,
  then the dashed lift exhibits $f$ as one too.
\end{proof}

\begin{lemma}
  \label{lem:infty-surj-strong-coind}
  Let $F\colon C\to D$ be an $\infty$-surjective map between
  coinductively complete $\infty$-categories.
  Then $F$ is an equivalence.
\end{lemma}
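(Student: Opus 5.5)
By \Cref{lem:d+1-surj-equiv}, it is enough to prove that $F_d$ is $(d+1)$-surjective for every $d\in\naturals$. Given the recursive description of surjectivity (\Cref{rem:surjectivity-explicit}), this amounts to the following. For every parallel pair $(x,y)$ of $d$-arrows of $C$ and every $(d+1)$-arrow $g\colon Fx\to Fy$ of $D$ that is \emph{invertible}, there should merely exist an invertible lift. These invertible $(d+1)$-arrows are exactly the $(d+1)$-arrows of $D_d=R_dD$. The $\infty$-surjectivity of $F$ already takes care of all $k$-arrows with $k\le d$.

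So fix such a $g$. Since $F$ is $\infty$-surjective, some $(d+1)$-arrow $f\colon x\to y$ in $C$ with $Ff\simeq g$ merely exists. The claim to establish is that $f$ is itself an isomorphism, and then $f$ is an invertible lift of $g$. This is where coinductive completeness of both sides comes in. First, $g$ is an isomorphism, hence a coinductive isomorphism, by the first part of the lemma that isomorphisms are coinductive isomorphisms. Second, by \Cref{lem:strongly-surject-reflects-coind-equiv}, the $\infty$-surjective map $F$ reflects coinductive isomorphisms, so $f$ is a coinductive isomorphism in $C$. Third, $C$ is coinductively complete, so $f$ is an isomorphism. Consequently $f$ lies in $C_d=R_dC$, and the identification $Ff\simeq g$ lives in the cores as well, exactly as in the proof of \Cref{prop:surjective-core}. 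Applying this in every dimension shows that $F_d$ is $(d+1)$-surjective, and \Cref{lem:d+1-surj-equiv} finishes the proof.

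The step most likely to need care is the reduction from ``$F_d$ is $(d+1)$-surjective'' to lifting only invertible arrows. This uses the explicit description of the arrows of $R_dD$ in \Cref{lem:arrows-of-n-core}, which tells us that the $(d+1)$-arrows of $D_d$ between $Fx$ and $Fy$ are precisely the invertible ones. One must also check that the lift $f$ produced in the $\infty$-category $C$ really lands in $C_d$. Both points come down to unwinding $C_d\simeq R_dC$. Notably, the argument never uses the coinductive completeness of $D$; only that of $C$ is needed.
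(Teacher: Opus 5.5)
Your proof is correct and follows the paper's argument essentially verbatim: you reduce via \Cref{lem:d+1-surj-equiv} to showing that each $F_d\simeq R_dF_{d+1}$ admits invertible lifts of invertible $(d+1)$-arrows, and you obtain these by lifting along $F_{d+1}$ and using that $F$ reflects isomorphisms, via \Cref{lem:strongly-surject-reflects-coind-equiv}. Your remark that only the coinductive completeness of $C$ is needed is also right, since every isomorphism of $D$ is automatically a coinductive isomorphism; the paper invokes completeness of both sides but never uses it for $D$.
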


\begin{proof}
  By assumption, in $C$ and $D$ the coinductive isomorphisms
  are precisely the isomorphisms.
  Hence $F$ reflects isomorphisms by \cref{lem:strongly-surject-reflects-coind-equiv}.

  Now fix $d\in \naturals$.
  By assumption, $F_{d+1}$ is $(d+1)$-surjective;
  we now show that the same is true for $F_d\simeq R_dF_{d+1}$:
  We automatically have $d$-surjectivity (by \Cref{prop:surjective-core}).
  So it remains to show that for any parallel pair $(x,y)$ of $d$-arrows in $R_dC$
  and every $(d+1)$-arrow $f\colon Fx\to Fy$ in $R_dD$
  (i.e., an invertible $(d+1)$-arrow $Fx\xrightarrow{\sim} Fy$ in $D$)
  there merely exists a $(d+1)$-arrow $f'\colon x\to y$ in $R_dC$
  (i.e., an invertible $(d+1)$-arrow $f'\colon x\xrightarrow{\sim} y$ in $C$) with $Ff'\simeq f$.
  By the $(d+1)$-surjectivity of $F\colon C\to D$
  there exists an (a priori not necessarily invertible) $f'\colon x\to y$ in $C$
  with $Ff'\simeq f$;
  it is automatically an isomorphism because $F$ reflects isomorphisms.

  Since $d\in \naturals$ was arbitrary, it follows from
  \Cref{lem:d+1-surj-equiv}
  that $F$ is an equivalence.
\end{proof}

\begin{proposition}
  \label{prop:localization-coind-approx}
  Let $A$ be an $\infty$-category.
  Let $\ell\colon A\to A'\coloneqq A[\isos{\coind}^{-1}]$
  denote the localization at all the
  coinductive isomorphisms of $A$ (of all dimensions).
  \begin{enumerate}
  \item    \label{it:localization-coind-equiv-surj}
    The map $\ell\colon A\to A'$ is $\infty$-surjective.
  \item
    The map $\ell\colon A\to A'$ exhibits $A'$ as the coinductive completion of $A$
    (this means that $\ell \colon A\to A'\simeq L_\coind A$ is the counit).
  \end{enumerate}
\end{proposition}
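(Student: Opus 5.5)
The plan is to present $\ell$ as a cobase change of a coproduct of maps $\Susp^nE\to\Susp^n*$. We then get \ref{it:localization-coind-equiv-surj} from \Cref{cor:surj-colims-cobase}, and deduce completeness of $A'$ from the fact that $\infty$-surjective maps reflect coinductive isomorphisms (\Cref{lem:strongly-surject-reflects-coind-equiv}).

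\emph{Step 1 (presentation of $\ell$).} Let $T$ be the set $\pi_0$ of the anima of pairs $(n,f)$, where $f$ is a coinductive $(n+1)$-isomorphism of $A$. For each such class, the anima of factorizations $\Susp^n\globe{1}\to\Susp^nE\to A$ of $f$ is merely inhabited. By the axiom of choice we therefore merely obtain a family of maps $\overline f_t\colon\Susp^{n_t}E\to A$, indexed by $t\in T$, each lifting a representative $f_t$. Since we only want to prove propositions, we may fix such a choice. Define $A''$ as the pushout of $\coprod_t\Susp^{n_t}*\leftarrow\coprod_t\Susp^{n_t}E\to A$. We claim that $A\to A''$ agrees with $\ell$ under $A$, by comparing universal properties for maps $G\colon A\to Z$. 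A factorization through $A''$ amounts to factorizations of each $G\overline f_t$ through $\Susp^{n_t}*$. Such a factorization is unique if it exists, and it exists iff $G$ sends every arrow in the image of $\overline f_t$ to an isomorphism. Indeed, $\Susp^{n}E\to\Susp^n*$ is a localization of $\Susp^nE$ at all of its arrows: since $E$ is $L$-trivial, $E$ localized at all arrows lands in finite dimensions, and there it is $LE\simeq*$; compare \Cref{lem:finit-is-local}. All arrows of $\Susp^nE$ are coinductive isomorphisms by \Cref{lem:L-triv-only-coind-equiv}, so their images in $A$ are coinductive isomorphisms too. Hence the condition is equivalent to $G$ inverting all coinductive isomorphisms of $A$, and this identifies $A''\simeq A'$.

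\emph{Step 2 ($\infty$-surjectivity).} The map $E\to*$ is $\infty$-surjective by $L$-triviality. By \Cref{cor:surj-colims-cobase}, the suspensions $\Susp^nE\to\Susp^n*$ remain $\infty$-surjective, and so do their coproduct and its cobase change $\ell$. This proves \ref{it:localization-coind-equiv-surj}.

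\emph{Step 3 (completion).} First, $A'$ is coinductively complete. Let $g\colon x'\to y'$ be a coinductive $(n+1)$-isomorphism in $A'$. By \Cref{lem:surjectivity-lifts-parallel-pairs} applied to the $\infty$-surjective $\ell$, there merely exist a parallel pair $(x,y)$ in $A$ with $\ell(x,y)\simeq(x',y')$, and then an $(n+1)$-arrow $f\colon x\to y$ with $\ell f\simeq g$. By \Cref{lem:strongly-surject-reflects-coind-equiv}, $f$ is a coinductive isomorphism of $A$, so $\ell f\simeq g$ is invertible in $A'$. Second, $\ell$ is the unit of the reflection onto $\Cat_\infty^\coind$. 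For any coinductively complete $C$, every map $A\to C$ sends coinductive isomorphisms to coinductive isomorphisms (postcompose the witnessing $\Susp^nE\to A$), hence to isomorphisms. So $\Map(A',C)\to\Map(A,C)$ is an equivalence by the universal property of the localization $\ell$. Thus $\ell\colon A\to A'\simeq L_\coind A$.

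I expect the main obstacle to be Step 1. Two points need care there. One is organising the choice so that the indexing object is a set, which is why we pass to $\pi_0$ and choose representatives. The other is justifying that $\Susp^nE\to\Susp^n*$ is exactly the localization of $\Susp^nE$ at all its arrows, including all higher-dimensional ones. If this identification proves delicate, a fallback is to take $\ell$ to be the map $A\to A''$ directly and verify the universal property of $A''$ against coinductively complete targets only, which is all that the second part needs.
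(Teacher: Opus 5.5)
Your strategy is sound and in fact shorter than the paper's, but Step~1 as written contains a dimension slip that must be repaired. For $n\geq 1$, two of your claims are false: that $\Susp^nE\to\Susp^n*$ is the localization of $\Susp^nE$ at \emph{all} its arrows, and that all arrows of $\Susp^nE$ are coinductive isomorphisms. Indeed $\Susp^nE$ is not $L$-trivial, since there is no arrow $1\to 0$. Moreover, the $n$-arrows $0_E,1_E$ (the source and target of $\Susp^n u$) are sent to the non-invertible top cell of $\globe{n}$. Accordingly, a map $G\colon A\to Z$ inverting all coinductive isomorphisms need not invert the images of these arrows under $\overline f_t$ (the iterated sources and targets of $f_t$). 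So the ``iff'' conditions you state break down.

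The repair is to work with arrows of dimension $>n$ throughout. A map $\Susp^nE\to Z$ inverting all arrows of dimension $>n$ factors uniquely through the monomorphism $R_nZ\to Z$. By \Cref{lem:finit-is-local}, $R_nZ$ is $(\Susp^nE\to\Susp^n*)$-local; this is exactly the argument in the proof of \Cref{lemma:char-coind-univalent}. Since every map $\Susp^n*\to Z$ lands in $R_nZ$, it follows that $\Map(\Susp^n*,Z)\to\Map(\Susp^nE,Z)$ is the inclusion of the maps inverting all arrows of dimension $>n$. Those arrows are identities or come from arrows of $E$, so they are coinductive isomorphisms by \Cref{lem:L-triv-only-coind-equiv} and stability under suspension. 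Meanwhile $\Susp^{n_t}u$ itself has dimension $n_t+1$. With this correction, your comparison of universal properties goes through verbatim, and Yoneda gives $A''\simeq A'$ under $A$.

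With this fix, your route genuinely differs from the paper's. The paper starts from the defining pushout along $\coprod_d T_{d+1}\times\Susp^d\globe{1}\to\coprod_d T_{d+1}\times\Susp^d*$ and factors the attaching map through $\coprod_d T_{d+1}\times\Susp^dE$. It then proves the resulting square is a pushout by induction on $i$, swapping $\Susp^d\globe{1}$ for $\Susp^dE$ one dimension at a time. At stage $i$, the arrows of $\Susp^iE$ above dimension $i+1$ are already inverted in the intermediate pushout, so the map factors through its $(i+1)$-core and \Cref{lem:finit-is-local} applies; one then passes to the colimit over $i$.

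You use the same two ingredients (factoring through a finite core, and \Cref{lem:finit-is-local}), but only once. You package them as the statement that $\Susp^nE\to\Susp^n*$ is an epimorphism exhibiting the localization at arrows above dimension $n$. The identification of the pushout then becomes a direct comparison of subanimae of $\Map(A,Z)$, with no induction needed.

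Steps~2 and~3 match the paper, with one small difference. In Step~3 you lift the arrow and apply \Cref{lem:strongly-surject-reflects-coind-equiv}, whereas the paper lifts the witness $\Susp^kE\to A'$ through $\ell$; these are equivalent. Your fallback would not prove part~(1), which concerns the localization $A'$ itself, but after the fix it is not needed.
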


\begin{proof}
  \begin{enumerate}
  \item
    It is a general fact that a localization at a collection of arrows
    can be computed by individually trivializing the elements of a set of representatives
    of those arrows (assuming such a set exists).
    Specifically in our case, this means that we have 
    the outer pushout
    \begin{equation}
      \label{eq:po-arrow-E-localization}
      \begin{tikzcd}
        \coprod_{d\in \naturals} T_{d+1}\times \Susp^d\globe{1}
        \ar[r]
        \ar[d]
        \ar[rr,bend left=15]
        &
        \coprod_{d\in \naturals} T_{d+1}\times \Susp^d E 
        \ar[d]
        \ar[r,dashed]
        &
        A
        \ar[d]
        \\
        \coprod_{d\in\naturals} T_{d+1}\times \Susp^d*
        \ar[r,equal]
        &
        \coprod_{d\in\naturals} T_{d+1}\times \Susp^d*
        \ar[r]
        &
        A'
      \end{tikzcd}
    \end{equation}
    where $T_{d+1}\to \Mor_{d+1}A$ 
    is a set of representatives\footnote{
      Given an anima $X$,
      by a \emph{set of representatives} of $X$
      we mean the choice of a section of the counit map $X\to \pi_0 X$
      (the mere existence of such a section follows from the axiom of connected choice).
      We make such a choice for
      $X\coloneqq \cequiv[d+1]{A}\subseteq \Mor_{d+1}A$
      (the subanima of coinductive isomorphism of dimesion $d+1$).
      The top curved map in \eqref{eq:po-arrow-E-localization}
      is then adjoint to the maps
      $T_{d+1}\coloneqq \pi_0 \cequiv[d+1]{A}\dashrightarrow \cequiv[d+1]{A}\hookrightarrow \Mor_{d+1}A$
      (jointly for all $d$).
    }
    of the coinductive isomorphisms of dimension $d+1$.
    For each $f\in T_{d+1}$,
    the corresponding map $f\colon \Susp^d \globe{1}\to A$
    is a coinductive isomorphism by construction,
    hence merely admits a factorization through $E$;
    hence there merely exists the dashed arrow making the above diagram commute.
    (Note that here we use the axiom of choice,
    both to choose the representatives $T_{d+1}$
    and to assemble the dashed arrow from the individual choices for each $f\in T_{d+1}$.)

    We claim that the right square is again a pushout\footnote{
      Of course, it would suffice to show that the left square is a pushout. It is not.
    }.
    This then concludes the proof, because
    $E\to *$ is $\infty$-surjective
    (by definition of walking coinductive isomorphism)
    and $\infty$-surjective maps
    are closed under suspensions, colimits and cobase change
    (\Cref {cor:surj-colims-cobase}),
    yielding that $A\to A'$ is also $\infty$-surjective.
   
    For each $i\in \naturals\cup\{\infty\}$, we set
    \begin{equation}
      F_d^i\coloneqq
      \begin{cases}
        \Susp^d E, \quad \text{if }d<i\\
        \Susp^d\globe{1},\quad \text{if }d\geq i
      \end{cases}
    \end{equation}
    and prove by induction on $i$ that the induced square
    \begin{equation}
      \cdsquareNA
      {\coprod_{d\in\naturals}T_{d+1}\times F_d^i}
      {A}
      {\coprod_{d\in\naturals}T_{d+1}\times \Susp^d*}
      {A'}
    \end{equation}
    is a pushout;
    note that for $i=\infty$ this 
    is then exactly right square of \eqref{eq:po-arrow-E-localization}
    thus concluding the proof.
    The base case $i=0$ is precisely the original outer pushout square
    \eqref{eq:po-arrow-E-localization};
    the limiting step $i\to \infty$ follows because we have the equivalence
    \begin{equation}
      \colim_{i\to \infty}\coprod_d T_{d+1}\times F^i_d \xrightarrow{\sim} \coprod_{d}T_{d+1}\times F^\infty_d
    \end{equation}
    (trivially, because for each fixed $d$ the colimit stabilizes above $i=d+1$).
    It remains to show the successor step $i\to i+1$.
    We form the pushout
    \begin{equation}
      \label{eq:factor-A-A''}
      \begin{tikzcd}
        {\coprod_{d\neq i}T_{d+1}\times F^i_d}
        \ar[r,equal]
        \ar[d]
        &
        {\coprod_{d\neq i}T_{d+1}\times F^{i+1}_d}
        \ar[d]
        \ar[r]
        \iscoCartesian
        &
        A
        \ar[d]
        \\
        {\coprod_{d\neq i}T_{d+1}\times \Susp^d*}
        \ar[r,equal]
        &
        {\coprod_{d\neq i}T_{d+1}\times \Susp^d*}
        \ar[r]
        &
        A''
      \end{tikzcd}
    \end{equation}
    Then we consider the induced commutative diagram
    \begin{equation}
      \label{eq:factor-A''-A'}
      \begin{tikzcd}
        X=T_{i+1}\times \Susp^{i}\globe{1}
        \ar[r]
        \ar[d]
        &
        Y=T_{i+1}\times \Susp^i E 
        \ar[r]
        \ar[d]
        &
        A''
        \ar[d]
        \\
        Z=T_{i+1}\times \Susp^{i}*
        \ar[r,equal]
        &
        Z=T_{i+1}\times \Susp^{i}*
        \ar[r]
        &
        A'
      \end{tikzcd}
    \end{equation}
    where the outer square is pushout
    (because together with the outer pushout square of \eqref{eq:factor-A-A''}
    it forms the pushout square of the induction hypothesis);
    we need to show that the right square is also pushout
    (because it then combines with the right pushout square of \eqref{eq:factor-A-A''}
    to yield the desired pushout square of the induction claim.).

    Recall that all $(j+1)$-arrows of $Y$
    are coinductive isomorphisms, hence so is their image in $A$.
    By construction all these $(j+1)$-arrows then become invertible in $A''$
    for $j>i$
    (because $T_{j+1}\times F_j^{i+1}=T_{j+1}\times \Susp^j\globe{1}\to A$
    is surjective onto the coinductively invertible $(j+1)$-arrows of $A$),
    which means that the map $Y\to A''$
    factors (uniquely) through the $(i+1)$-core $R_{i+1}A''$.

    Finally, let $D\in \Cat_\infty$ be an arbitrary test-object.
    Then the induced comparison map
    $\Map(A',D)\to \Map(A'',D)\times_{\Map(Y,D)}\Map(Z,D)$
    is the composite of the following equivalences
    \begin{align}
      \Map(A',D)&\xrightarrow{\sim} \Map(A'',D)\times_{\Map(X,D)}\Map(Z,D)
      \\
                &\xleftarrow{\sim} \Map(A'',D)\times_{\Map(X,D_{i+1})}\Map(Z,D_i)
      \\
                &\xleftarrow{\sim} \Map(A'',D)\times_{\Map(Y,D_{i+1})}\Map(Z,D_i)
      \\
                &\xrightarrow{\sim} \Map(A'',D)\times_{\Map(Y,D)}\Map(Z,D)
    \end{align}
    justified as follows:
    \begin{itemize}
    \item
      By definition of the outer pushout \eqref{eq:factor-A''-A'}.
    \item
      Because $X$ and $Z$ are a $(i+1)$-category and $i$-category, respectively.
    \item
      Because every $(i+1)$-category is local for $X\to Y$
      (\Cref{lem:finit-is-local}).
    \item
      Because $Y\to A''$ factors through the $(i+1)$-core $R_{i+1}A''$.
    \end{itemize}
    This establishes the desired right pushout square \eqref{eq:factor-A''-A'}.
  \item
    First we show that $A'$ is coinductively complete.
    Let $f'\colon x'\to y'$ be a coinductive isomorphism
    in $A'$ (of some dimension $k+1$),
    witnessed by a map $\overline{f'}\colon\Susp^k E\to A'$.
    Since $A\to A'$ is $\infty$-surjective by part~\ref{it:localization-coind-equiv-surj},
    and $\Susp^kE$ is free,
    we may lift $\overline{f'}$ to
    (a witness $\overline{f}\colon \Susp^k E\to A$ of)
    a coinductive isomorphism $f\colon x\to y$ in $A$.
    But by construction $\ell\colon A\to A'$ is the localization at all
    coinductive isomorphisms,
    which in particular means that $f'=\ell(f)$ is an isomorphism.

    Finally,
    we have to show that $\ell\colon A\to A'$ is a coinductively complete approximation.
    For this, let $C$ be a coinductively complete $\infty$-category.
    Since any map $F\colon A\to C$ preserves coinductive isomorphisms
    and these are just isomorphisms in $C$,
    the map $F$ factors uniquely through $\ell \colon A\to A'$
    by the defining universal property of the localization.
    In other words, this is saying that $C$ is  $\ell$-local,
    which is precisely what we needed to show.
    \qedhere
  \end{enumerate}
\end{proof}

\begin{proposition}
  \label{prop:L-coind-equivalences}
  The following three classes of maps in $\Cat_\infty$ agree:
  \begin{enumerate}[label=(\arabic*),ref=(\arabic*)]
  \item
    \label{it:L-coind-isos}
    The $L_\coind$-equivalences.
  \item
    \label{it:infty-surj-closure}
    The closure of $\infty$-surjective maps under 2-out-of-3.
  \item
    \label{it:infty-surj-up-to-coind}
    The $\infty$-surjections up to coinductive isomorphisms.
  \end{enumerate}
\end{proposition}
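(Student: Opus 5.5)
I would prove the three inclusions (1)$\subseteq$(2)$\subseteq$(3)$\subseteq$(1), or more conveniently show (2)$=$(1) and (3)$=$(1) separately, always reducing to the coinductive completion $\ell_A\colon A\to L_\coind A\simeq A[\isos{\coind}^{-1}]$ of \Cref{prop:localization-coind-approx}.

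\emph{(1)$=$(2).} Every $\infty$-surjective map $F\colon C\to D$ is an $L_\coind$-equivalence. To see this, consider the naturality square with $\ell_C,\ell_D$, which are $\infty$-surjective by \Cref{prop:localization-coind-approx}. The composite $C\to D\to L_\coind D$ is then $\infty$-surjective, and it factors as $L_\coind F\circ\ell_C$. By the easy cancellation for $\infty$-surjections (\Cref{prop:cancellation-surj}, applied levelwise; equivalently, the left class of the factorization system of \Cref{thm:factorization-surj-ff} is closed under this cancellation), $L_\coind F$ is $\infty$-surjective. Since $L_\coind F$ is a map between coinductively complete $\infty$-categories, it is an equivalence by \Cref{lem:infty-surj-strong-coind}. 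The $L_\coind$-equivalences satisfy 2-out-of-3, so the closure (2) is contained in (1). Conversely, if $L_\coind F$ is invertible, then $F$ is connected by the zig-zag $C\xrightarrow{\ell_C}L_\coind C\simeq L_\coind D\xleftarrow{\ell_D}D$ whose legs are $\infty$-surjective. Hence $F$ lies in the 2-out-of-3 closure, using $\ell_D F=L_\coind F\circ \ell_C$.

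\emph{(3)$=$(1).} Here I would first make precise that ``$\infty$-surjective up to coinductive isomorphisms'' means: for every parallel pair $(x,y)$ of $(k-1)$-arrows in $C$ and every $k$-arrow $f\colon Fx\to Fy$ in $D$, there merely exist a $k$-arrow $f'\colon x\to y$ and a coinductive isomorphism $Ff'\to f$; at level $0$ this reads as a coinductive isomorphism $Fx\to z$.

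For (1)$\subseteq$(3), let $F$ be an $L_\coind$-equivalence and take a $k$-arrow $f\colon Fx\to Fy$ in $D$. Its image $\ell_D f$ in $L_\coind D\simeq L_\coind C$ lifts along the $\infty$-surjective map $\ell_C$ to some $f'\colon x\to y$ with $\ell_D(Ff')\simeq\ell_D(f)$. This is a $(k+1)$-isomorphism in $L_\coind D$, and it lifts along the $\infty$-surjective $\ell_D$ to a $(k+1)$-arrow $Ff'\to f$ in $D$. By \Cref{lem:strongly-surject-reflects-coind-equiv}, that arrow is a coinductive isomorphism, since its image is an isomorphism and hence coinductive.

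For (3)$\subseteq$(1), the goal is to show $L_\coind F$ is $\infty$-surjective, after which \Cref{lem:infty-surj-strong-coind} applies. Take a $k$-arrow $g$ in $L_\coind D$ between images of $(k-1)$-arrows of $L_\coind C$. The boundary lifts along $\ell_C$ (using surjectivity on parallel pairs, \Cref{lem:surjectivity-lifts-parallel-pairs}), and $g$ itself lifts along $\ell_D$ to some $f$ in $D$. The hypothesis then gives $f'$ in $C$ with a coinductive isomorphism $Ff'\to f$, which becomes an isomorphism in $L_\coind D$, so $L_\coind F(\ell_C f')\simeq g$.

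The main obstacle is this last step. The boundary chosen in $D$ only matches $F$ of a boundary up to identification in $L_\coind D$, not in $D$. So one must show inductively, in each dimension, that the boundary of $f$ can be taken to be of the form $(Fx,Fy)$, adjusting $f$ by composing with coinductive isomorphisms (which compose, as shown above). I would handle this by an induction on $k$ that carries along this transport of the boundary.
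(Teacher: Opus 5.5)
Your plan matches the paper's proof. Everything is reduced to the naturality square for the $\infty$-surjective completion maps $\ell_C,\ell_D$ of \Cref{prop:localization-coind-approx}. Your arguments for (1)$\subseteq$(2) (the zig-zag) and for (1)$\subseteq$(3) are the paper's. For (3)$\subseteq$(1) you also show that $L_\coind F$ is $\infty$-surjective and then invoke \Cref{lem:infty-surj-strong-coind}. The one difference is how you get (2)$\subseteq$(1): you prove it directly, by easy cancellation on $\ell_D F\simeq L_\coind F\circ\ell_C$ together with \Cref{lem:infty-surj-strong-coind}. The paper instead gets (2)$\subseteq$(3) from two facts: (1)$=$(3) is closed under 2-out-of-3, and $\infty$-surjections lie in (3) tautologically. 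Both routes work.

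However, the ``main obstacle'' you flag at the end is not real, and the induction on $k$ you propose for it is unnecessary. By definition, $\infty$-surjectivity is a lifting property with \emph{fixed boundary} (\Cref{rem:surjectivity-explicit}, \Cref{rem:globe-surj}). For every parallel pair $(a,b)$ of $(k-1)$-arrows in $D$ and every $k$-arrow $\ell_D a\to \ell_D b$, there merely exists a $k$-arrow $a\to b$ lifting it.

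So the step closes directly:
\begin{enumerate}
\item Lift $(\bar x,\bar y)$ to $(x,y)$ in $C$ using \Cref{lem:surjectivity-lifts-parallel-pairs}.
\item Transport $g$ along $\ell_D F\simeq L_\coind F\circ \ell_C$ to a $k$-arrow $\ell_D(Fx)\to\ell_D(Fy)$.
\item Lift it along $\ell_D$ with prescribed boundary $(Fx,Fy)$; this gives $f\colon Fx\to Fy$ in $D$.
\item Finish as you wrote: hypothesis (3) gives the coinductive isomorphism $Ff'\to f$, $\ell_D$ inverts it, and univalence yields $L_\coind F(\ell_C f')\simeq g$.
\end{enumerate}
This is exactly the paper's one-sentence step. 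It is also the same relative lifting you already used correctly in your proof of (1)$\subseteq$(3). The boundary mismatch only appears if you lift $g$ as a bare element of $\Mor_k$, which discards the boundary information. Then you would indeed need the messy transport of boundaries, but nothing forces you to do that.
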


\begin{proof}
  Let $F\colon C\to D$ be a map in $\Cat_\infty$.
  Consider the commutative square
  \begin{equation}
    \label{eq:L-coind-loca-square}
    \cdsquare
    {C}
    {L_\coind C}
    {D}
    {L_\coind D}
    {\ell_C}{F}{\overline{F}\coloneqq L_\coind F}{\ell_D}
  \end{equation}
  induced by the adjunction unit for the localization
  $L_\coind\colon \Cat_\infty\to \Cat^\coind_\infty$.
  By the explicit construction of \Cref{prop:localization-coind-approx}
  of the coinductively complete approximation $A\to L_\coind A$,
  we know that the horizontal maps are $\infty$-surjective.

  We start by showing
  ``\ref{it:L-coind-isos} $\subseteq$ 
  \ref{it:infty-surj-closure}$\cap$\ref{it:infty-surj-up-to-coind}'':
  Let $F$ be an $L_\coind$-equivalence.
  Then the square \eqref{eq:L-coind-loca-square} immediately yields
  $F\in$ \ref{it:infty-surj-closure}.
  To show $F\in$ \ref{it:infty-surj-up-to-coind},
  consider parallel $(k-1)$-arrows $x,y$ in $C$
  and a $k$-arrow $f\colon Fx\to Fy$ in $D$.
  By the $\infty$-surjectivity of $\overline{F}\ell_C\simeq \ell_D F$,
  there exists $f'\colon x\to y$ in $C$ with $\ell_D(Ff')\simeq \ell_D (f)$;
  by the $\infty$-surjectivity of $\ell_D$
  there further exists a $(k+1)$-arrow $\alpha\colon Ff'\to f$ in $D$
  lifting this isomorphism,
  which then is coinductively invertible by 
  \Cref{lem:strongly-surject-reflects-coind-equiv}.

  Next, we show 
  ``\ref{it:infty-surj-up-to-coind} $\subseteq$ \ref{it:L-coind-isos}'':
  We assume that $F$ is $\infty$-surjective up to coinductive isomorphisms
  and show that $L_\coind D$ is $\infty$-surjective, hence an equivalence by 
  \Cref{lem:infty-surj-strong-coind}.
  For this, let $(\overline{x},\overline{y})$
  be a parallel pair of $(k-1)$-arrows in $ L_\coind C$
  and $\overline{f}\colon \overline F \overline{x}\to \overline F \overline{y}$
  a $k$-arrow in $L_\coind D$.
  By the $\infty$-surjectivity of $\ell_C$ and $\ell_D$,
  there exist a lifts of these data to 
  $(x,y)$ in $C$ and $f\colon Fx\to Fy$ in $D$,
  where by assumption there there exist $f'\colon x\to y$
  and a coinductive isomorphism $Ff'\to f$ in $D$;
  their images $\overline{f'}\colon \overline{x}\to \overline{y}$
  and $\overline{F}\overline{f'}\xrightarrow{\sim} \overline{f}$
  in $L_\coind C$ and $L_\coind D$, respectively
  then solve the original lifting problem
  (note that the latter is invertible, because $\ell_D$ inverts coinductive isomorphisms).

  In particular, we have thus shown
  ``\ref{it:L-coind-isos} $=$ \ref{it:infty-surj-up-to-coind}'',
  hence the class
  \ref{it:infty-surj-up-to-coind} 
  is closed under $2$-out-of-$3$ (since $L_\coind$-equivalences clearly are).
  Since $\infty$-surjective maps tautologically lie in
  \ref{it:infty-surj-up-to-coind},
  we thus conclude
  ``\ref{it:infty-surj-closure} $\subseteq$ \ref{it:infty-surj-up-to-coind}'',
  finally concluding the circle of inclusions
  ``
  \ref{it:L-coind-isos}
  $\subseteq$ \ref{it:infty-surj-closure}
  $\subseteq$ \ref{it:infty-surj-up-to-coind}
  $\subseteq$ \ref{it:L-coind-isos}
  ''.
\end{proof}

\begin{corollary}
  \label{cor:strong}
  The reflective localization
  $L_\coind\colon \Cat_\infty\to \Cat^\coind_\infty$
  onto the coinductively complete $\infty$-categories
  is a localization at the $\infty$-surjective maps
  and precisely inverts those maps
  that are $\infty$-surjective up to coinductive isomorphisms.
\end{corollary}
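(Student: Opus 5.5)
This corollary should follow by assembling \Cref{prop:localization-coind-approx} and \Cref{prop:L-coind-equivalences}. The statement has two parts. The second part, that $L_\coind$ inverts precisely the maps that are $\infty$-surjective up to coinductive isomorphisms, is literally the equality \ref{it:L-coind-isos}$=$\ref{it:infty-surj-up-to-coind} of \Cref{prop:L-coind-equivalences}. Nothing further is needed there.

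For the first part, I will argue that a reflective localization $L_\coind\colon \Cat_\infty\to\Cat^\coind_\infty$ is always the localization of $\Cat_\infty$ at the class $W$ of $L_\coind$-equivalences. It is therefore also the localization at any class $S\subseteq W$ with the following property: every $S$-local object is $W$-local, or equivalently, inverting $S$ already inverts $W$. I will take $S$ to be the $\infty$-surjective maps.

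First, I check $S\subseteq W$. By the inclusion \ref{it:infty-surj-closure}$\subseteq$\ref{it:L-coind-isos} of \Cref{prop:L-coind-equivalences}, every $\infty$-surjective map is an $L_\coind$-equivalence.

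Second, let $G\colon\Cat_\infty\to\mathcal{D}$ be any functor that inverts all $\infty$-surjective maps. Since isomorphisms satisfy 2-out-of-3, $G$ also inverts the 2-out-of-3 closure of $S$. By \Cref{prop:L-coind-equivalences} this closure is exactly $W$. Hence $\Cat_\infty[S^{-1}]\simeq\Cat_\infty[W^{-1}]\simeq\Cat^\coind_\infty$, where the last equivalence holds because $L_\coind$ is a reflective localization.

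More concretely, the units $\ell_A\colon A\to L_\coind A$ are themselves $\infty$-surjective by \Cref{prop:localization-coind-approx}\ref{it:localization-coind-equiv-surj}. This gives the familiar criterion directly: a reflector whose units lie in $S$, and which inverts $S$, is a localization at $S$.

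There is one point I expect to require care. ``Localization at a class'' should be read in the $(\infty,1)$-categorical sense, where inverting $S$ and inverting its 2-out-of-3 closure give the same localization. I would state this explicitly, for instance via the universal property of $\Cat_\infty[S^{-1}]$ against arbitrary target categories.
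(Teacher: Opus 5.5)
Your proposal is correct and takes essentially the paper's route: the paper gives no separate proof, treating the corollary as an immediate consequence of \Cref{prop:L-coind-equivalences}, whose identification of the $L_\coind$-equivalences with the 2-out-of-3 closure of the $\infty$-surjections gives the first part and with the $\infty$-surjections up to coinductive isomorphisms gives the second. Your additional criterion also works: the units $\ell_A$ are $\infty$-surjective by \Cref{prop:localization-coind-approx}, and a reflector whose units lie in $S$ and which inverts $S$ exhibits $\Cat_\infty[S^{-1}]\simeq\Cat^\coind_\infty$.
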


\begin{remark}
  The localization $L_\coind$ inverts strictly more
  than just the $\infty$-surjective maps:
  For example, for any choice $E$ of walking coinductive isomorphism,
  the inclusion $\{0\}\hookrightarrow E$
  is an $L_\coind$-equivalence
  (hence $\infty$-surjective up to coinductive isomorphism)
  because both $E\to *$ and the composite $\{0\}\xrightarrow{\sim} *$
  are $\infty$-surjective;
  but clearly it is not $\infty$-surjective because it does not hit $1\in E$
  (because $0$ and $1$ are distinct in $E$ by \Cref{cor:E-0-neq-1}).
\end{remark}

\begin{corollary}
 \label{cor:im-R-coind}
 The reflective localization $L$ of \Cref{thm:main} factors through $L_\coind$.
 \begin{equation}
   \begin{tikzcd}
     \Cat_\infty
     \ar[r, bend left,"L_\coind"]
     \ar[r,hookleftarrow]
     &
     \Cat_\infty^\coind
     \ar[r,hookleftarrow,"R"]
     \ar[r, bend left,"L"]
     &
     \Cat_\infty^L
   \end{tikzcd}
 \end{equation}
 In other words: $R$ takes values in coinductively complete $\infty$-categories.
\end{corollary}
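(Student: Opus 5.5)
The plan is to show that every object in the essential image of $R\colon \Cat^L_\infty\hookrightarrow\Cat_\infty$ is coinductively complete. Once that is known, the factorization is formal. Both $\Cat^L_\infty$ (via $R$) and $\Cat^\coind_\infty$ are reflective subcategories of $\Cat_\infty$, so an inclusion of the former into the latter makes $L$ factor as $L\simeq L|_{\Cat^\coind_\infty}\circ L_\coind$. Indeed, for $X\in\Cat_\infty$ and $Y\in\Cat^L_\infty$ we have
\begin{equation}
  \Map(LX,Y)\simeq\Map(X,RY)\simeq\Map(L_\coind X,RY)\simeq\Map(L(L_\coind X),Y).
\end{equation}
The middle step uses exactly that $RY$ is $L_\coind$-local.

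For the key inclusion I would use the universal property directly. Let $Y\in\Cat^L_\infty$. By \Cref{cor:strong}, an object of $\Cat_\infty$ is coinductively complete if and only if it is local with respect to the class of $L_\coind$-equivalences. By \Cref{prop:L-coind-equivalences}, this class is the closure of the $\infty$-surjective maps under 2-out-of-3. It therefore suffices to show that $RY$ is local for every $\infty$-surjective map $F$, since locality for a class is inherited by its 2-out-of-3 closure (the maps inverted by $\Map(-,RY)$ form a 2-out-of-3 class).

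Now take an $\infty$-surjective $F$. By adjunction, $\Map(F,RY)\simeq\Map(LF,Y)$. By \Cref{rem:weak-strong-surjective}, $F$ is weakly $\infty$-surjective, so \Cref{thm:main} tells us that $LF$ is an equivalence. Hence $RY$ is $F$-local, as required.

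Alternatively, and as a sanity check, one can argue via \Cref{lemma:char-coind-univalent}. We need $RY$ to be $(\Susp^nE\to\Susp^n*)$-local. This map is $\infty$-surjective: $E\to*$ is $\infty$-surjective by $L$-triviality, and suspension preserves $\infty$-surjectivity by \Cref{cor:surj-colims-cobase}. So it is inverted by $L$ via \Cref{thm:main}, and adjunction gives the locality.

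I expect no real obstacle. The only points needing care are, first, that \Cref{thm:main} is applied to $F$ viewed as a map in $\Cat^R_\infty=\Cat_\infty$, and second, the formal statement that nested reflective subcategories give a factorization of reflectors. The substantial input, namely that the localization at $\infty$-surjections is exactly $L_\coind$, is already contained in \Cref{cor:strong}; the second argument even avoids it.
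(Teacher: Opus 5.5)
Your proof is correct and is essentially the paper's argument written out in full. The paper's one-line proof says that $\infty$-surjections are weakly $\infty$-surjective and hence inverted by $L$, which combined with \Cref{cor:strong} makes the image of $R$ local for the maps $L_\coind$ inverts; your adjunction argument for locality of $RY$ spells out exactly this. Your second route, through the generators $\Susp^nE\to\Susp^n*$ and \Cref{lemma:char-coind-univalent}, is also valid, and it has the small advantage of not relying on the explicit construction of $L_\coind$ in \Cref{prop:localization-coind-approx}.
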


\begin{proof}
  Every $\infty$-surjective arrow is weakly $\infty$-surjective
  (see \Cref{rem:weak-strong-surjective}),
  hence inverted by $L$.
\end{proof}

\begin{remark}
  For maps $C\to *$ to the singleton
  (or suspensions thereof)
  there is no difference between
  ``weakly $\infty$-surjective''
  and
  ``$\infty$-surjective up to coinductive isomorphisms''
  (and ``$\infty$-surjective'', for that matter).

  Therefore, the two reflective localizations $L$ and $L_\coind$
  kill the same objects and
  invert the same maps of the form
  \begin{equation}
    \label{eq:susp-d-X}
    \Susp^n (C\to *)\quad (\text{for } C\in \Cat_\infty \text{ and } n\in \naturals).
  \end{equation}
  Nonetheless, we will see that these two localizations do not agree;
  see \Cref{cor:E-omega-not-coind}.
  This means that the localization $L$ cannot be generated
  solely by maps of the form \eqref{eq:susp-d-X} ---
  unlike $L_\coind$ which for any choice of walking coinductive isomorphism $(E,u)$
  is generated by the set $\{\Susp^n E\to \Susp^n *\mid n\in \naturals\}$
  (see \Cref{lemma:char-coind-univalent}).
\end{remark}

\subsection{Coinductive characterization}
\label{sec:coinductive-char}

The next proposition explains the name ``coinductive isomorphism''
by actually characterizing them via coinduction.
But first, we give an explicit construction of
a walking coinductive isomorphism with a particularly nice description.

\begin{construction}
  \label{constr:standard-E}
  We define a categorical cell complex $E$ freely generated by
  two objects $0,1$
  and a $d$-arrow $u(\sigma)$ for each sequence
  $\sigma\in\{-,0,+\}^d$
  with $\sigma(i)\in \{-,+\}$ for all $i<d$.
  They are attached as follows:

  For $d=1$:
  \begin{equation}
    u\coloneqq u(0)\colon 0\to 1,
    \quad
    u(-),u(+)\colon 1\to 0.
  \end{equation}
  For $d\geq 2$ and each $\sigma\in \{-,+\}^{d-2}$
  and each $\epsilon\in\{-,+\}$:
  \begin{align}
    u(\sigma,+,0)&\colon \id\to u(\sigma,+)u(\sigma,0)
    \\
    u(\sigma,+,\epsilon)&\colon u(\sigma,+)u(\sigma,0)\to \id
    \\
    u(\sigma,-,0)&\colon u(\sigma,0)u(\sigma,-) \to \id
    \\
    u(\sigma,-,\epsilon)&\colon\id \to u(\sigma,0)u(\sigma,-).
  \end{align}
  Note that these are well-typed,
  because $u(\sigma,0)$
  is anti-parallel to $u(\sigma,-)$ and $u(\sigma,+)$,
  hence the indicated compositions are indeed parallel to a suitable identity.
\end{construction}

\begin{remark}
  Consider the $2$-dimensional complex
  $I$ generated by
  \begin{equation}
    f\colon 0\to 1,\quad g_-,g_+\colon 1\to 0,
    \quad h_+\colon \id_0 \to g_+f,\quad h_-\colon fg_-\to \id_1.
  \end{equation}
  Then comparing the generating cells yields the pushout square
  \begin{equation}
    \label{eq:pushout-E-coinduction}
    \cdsquare[po]
    {{\Susp\globe{1}}\amalg{\Susp\globe{1}}}
    {I}
    {{\Susp E}\amalg {\Susp E}}
    {E}
    {{(h_+,h_-)}}{\Susp u\amalg \Susp u}{}{}
  \end{equation}
  Here the bottom horizontal map is determined in the first component by
  \begin{equation}
    0_E\mapsto \id_{0},
    \quad
    1_E\mapsto u(+)u(0),
    \quad
    u(\sigma)\mapsto u(+,\sigma)
  \end{equation}
  and in the second component by
  \begin{equation}
    0_E\mapsto u(0)u(-),
    \quad
    1_E\mapsto \id_{1},
    \quad
    u(\sigma)\mapsto u(-,\sigma);
  \end{equation}
  the right vertical map sends
  \begin{equation}
    f\mapsto u=u(0),
    \quad
    g_\epsilon\mapsto u(\epsilon),
    \quad
    h_\epsilon\mapsto u(\epsilon,0).
  \end{equation}
  (We use the subscript ``$E$'' to emphasize that we are talking about
  the objects $0,1\in E$, viewed as $1$-arrows in $\Susp E$,
  and not about the objects $0,1\in \Susp E$ with the same name.)
\end{remark}

\begin{remark}
  \label{rem:standard-E-filtration}
  Observe that $E$ is the colimit of the sequence
  \begin{equation}
    E^{(1)}=\{0\to 1\}
    \to I \simeq E^{(2)}
    \to \dots \to E^{(d)}\to \dots,
  \end{equation}
  where $E^{d-1}\subset E^{(d)}\subset E^{d}$
  is the subcomplex obtained by only attaching
  the $d$-arrows of the form $u(\sigma,0)$ for $\sigma\in\{-,+\}^{d-1}$
  (and not $u(\sigma, \epsilon)$ for $\epsilon\in\{-,+\}$).
  Observe that by construction we have pushout squares
  \begin{equation}
    \label{eq:po-coinductive-E-step}
    \cdsquare[po]
    {\coprod\limits_{\sigma\in\{-,+\}^{d-1}}\Susp^{d-1}\globe{1} }
    {E^{(d)}}
    {\coprod\limits_{\sigma\in\{-,+\}^{d-1}}\Susp^{d-1} I}
    {E^{(d+1)}}
    {}{}{}{}
  \end{equation}
  where for each $\sigma\in \{-,+\}^{d-1}$,
  the horizontal maps are determined by
  \begin{equation}
    f\mapsto u(\sigma,0),
    \quad
    g_\epsilon \mapsto u(\sigma,\epsilon),
    \quad
    h_\epsilon\mapsto u(\sigma,\epsilon,0).
  \end{equation}
  Also observe that the pushout square
  \eqref{eq:pushout-E-coinduction}
  restricts to the pushout squares
  \begin{equation}
    \label{eq:pushout-E-coinduction-d}
    \cdsquare[po]
    {{\Susp\globe{1}}\amalg{\Susp\globe{1}}}
    {I}
    {{\Susp E^{(d-1)}}\amalg {\Susp E^{(d-1)}}}
    {E^{(d)}}
    {{(h_+,h_-)}}{\Susp u\amalg \Susp u}{}{}
  \end{equation}
  for each $d\geq 2$.
  In particular, this construction is analogous to
  the construction performed in the strict case in
  \cite[\textsection1.5]{ORsurvey} (and \cite{HLOR}).
\end{remark}

\begin{lemma}
  \label{lem:standard-E}
  For each $d\geq 1$, we have $L_{d-1}E^{(d)}\simeq *$.
\end{lemma}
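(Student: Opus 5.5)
We argue by induction on $d\geq 1$, using the pushout squares \eqref{eq:pushout-E-coinduction-d} together with the facts that $L_{d-1}$ is a left adjoint (so it preserves pushouts) and commutes with suspension in the form $L_{d-1}\Susp X\simeq \Susp L_{d-2} X$. This last identification follows from the left-adjoint description \eqref{eq:Susp-ladj-hom} of $\Susp$ and from \Cref{lem:arrows-of-n-localization}. Concretely, $L_{d-1}$ acts on hom-categories by $L_{d-2}$ and leaves the objects $\{0,1\}$ alone; and since $\{0,1\}\to\Susp X$ is surjective on objects, the localization of a suspension is again a suspension.

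\textbf{Base cases.} For $d=1$, $E^{(1)}=\globe{1}=\{0\to 1\}$, and $L_0\globe{1}\simeq |[1]|\simeq *$. For $d=2$, $E^{(2)}=I$ is the $2$-dimensional complex with $f$, $g_\pm$, $h_\pm$. In $L_1 I$ the $2$-arrows $h_+\colon \id\to g_+f$ and $h_-\colon fg_-\to\id$ become invertible, so $f$ acquires a left and a right inverse and is therefore an isomorphism. Then $g_\pm\simeq f^{-1}$ are determined, and $h_\pm$ become identifications. So $L_1I$ is the walking isomorphism, which is $*$. To make this rigorous I would compute via universal properties: a map $L_1I\to D$ into a $1$-category is a map $I\to D$, i.e.\ an arrow $f$ together with $g_\pm$ and invertible $h_\pm$. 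That is exactly biinvertibility data for $f$, and the anima of such data is equivalent to $D^\simeq$ (contractible choices of inverses). Hence $\Map(L_1I,D)\simeq \Map(*,D)$.

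\textbf{Induction step.} Assume $L_{d-2}E^{(d-1)}\simeq *$ for some $d\geq 3$. Apply $L_{d-1}$ to \eqref{eq:pushout-E-coinduction-d}. The lower-left corner becomes $\Susp L_{d-2}E^{(d-1)}\amalg\Susp L_{d-2}E^{(d-1)}\simeq \Susp *\amalg\Susp *$. Hence $L_{d-1}E^{(d)}$ is the pushout of $L_{d-1}I$ along the maps collapsing $h_\pm$, i.e.\ $L_{d-1}$ of the $2$-category $I$ with $h_\pm$ made invertible. By the base-case argument, that category is again the walking isomorphism, so $L_{d-1}E^{(d)}\simeq L_{d-1}(\text{walking iso})\simeq *$. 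Equivalently, we can argue by mapping into a test $(d-1)$-category $D$: the pushout forces the images of $h_\pm$ to be isomorphisms, and then the same contractibility of biinvertibility data applies.

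\textbf{Main obstacle.} The delicate point is the identification $L_{d-1}\Susp\simeq\Susp L_{d-2}$ and its compatibility with the maps in the square, in particular with the basepoint-sending maps $0_E\mapsto\id_0$, $1_E\mapsto u(+)u(0)$. I would verify this by adjunction: maps $\Susp X\to D$ with $D\in\Cat_{d-1}$ are pairs of objects together with maps $X\to D(z_0,z_1)$, where $D(z_0,z_1)\in\Cat_{d-2}$, and the latter factor uniquely through $L_{d-2}X$. A secondary check is that the biinvertibility anima is contractible over $D^\simeq$ in the enriched setting; this is the standard fact that having a left and a right inverse is a proposition with an unambiguous witness.
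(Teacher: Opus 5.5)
Your proposal is correct and follows the paper's proof: the same induction with base cases $E^{(1)}=\globe{1}$ and $E^{(2)}=I$ (the walking isomorphism once $h_\pm$ are inverted), and the same inductive step. That step applies $L_{d-1}$ to the pushout \eqref{eq:pushout-E-coinduction-d} and uses $L_{d-1}\Susp\simeq\Susp L_{d-2}$ to identify $L_{d-1}E^{(d)}$ with the localization of $I$ at $h_+,h_-$, which is $L_1 I\simeq *$. Your mapping-anima computation for the walking isomorphism and your adjunction argument for $L_{d-1}\Susp\simeq\Susp L_{d-2}$ fill in details the paper leaves implicit; the compatibility you flag as the main obstacle only needs naturality of that identification applied to $u$, since the bottom map of the square does not enter the pushout.
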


\begin{proof}
  We prove the claim by induction on $d$.
  \begin{itemize}
  \item
    $d=1$:
    Clearly $E^{(1)}=\globe{1}$ is $L_0$-trivial.
  \item
    $d=2$:
    We have that $E^{(2)}=I$ is $L_1$-trivial,
    because after inverting $h_+,h_-$,
    it is the standard presentation of the walking isomorphism,
    which is trival by univalence.
  \item
    $d>2$:
    Assuming $L_{d-2}E^{(d-1)}\simeq *$,
    we have $L_{d-1}\Susp E^{(d-1)}\xrightarrow{\sim} \Susp *$ by suspending once.
    Thus applying $L_{d-1}$ to the pushout square
    \eqref{eq:pushout-E-coinduction-d}
    yields the pushout square
    \begin{equation}
      \cdsquare[po]
      {{\Susp\globe{1}}\amalg{\Susp\globe{1}}}
      {I}
      {{\Susp *}\amalg{\Susp *}}
      {L_{d-1} E^{(d)}}
      {{(h_+,h_-)}}{}{u}{}
    \end{equation}
    which exhibits $L_{d-1} E^{(d)}$ as the localization of $I$
    at the two generating $2$-arrows $h_+,h_-$.
    It follows that we have $L_{d-1}E^{(d)}\simeq L_1 I\simeq *$,
    as desired.
    \qedhere
  \end{itemize}
\end{proof}

\begin{corollary}
  The categorical cell complex $E$ is $L$-trivial,
  hence we have a walking coinductive isomorphism $(E,u)$.
\end{corollary}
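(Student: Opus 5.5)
The corollary makes two claims. The first is that the standard complex $E$ of \Cref{constr:standard-E} is $L$-trivial. The second is that $(E,u)$ is a walking coinductive isomorphism. By \Cref{def:walking-coind}, the second claim requires three things: $E$ is a cell complex, the tautological map $u\colon\globe{1}\to E$ is free, and $E$ is $L$-trivial. The first two hold by construction. $E$ is given by generating cells attached along boundaries, dimension by dimension. The map $u$ picks out the generating $1$-cell $u(0)$, so all remaining cells (the objects aside, which lie in its image, and all $u(\sigma)$ with $\sigma\neq(0)$) give a cell filtration of $\globe{1}\to E$. So the content of the statement is $L$-triviality, i.e.\ that $E\to *$ is $\infty$-surjective, or equivalently $LE\simeq *$.

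I would prove $LE\simeq *$ using the filtration $E\simeq\colim_d E^{(d)}$ of \Cref{rem:standard-E-filtration} together with \Cref{lem:standard-E}. Fix $n$; I want to compute $(LE)_n\simeq\colim_i L_n E_i$ by \Cref{lem:biinductive-adjunction}. It is cleaner to pass directly through the left adjoint $L_n$ on the whole $\infty$-category. The composite $L_n\colon \Cat_\infty\to\Cat_n$, which is $L_nL$, is a left adjoint and so preserves colimits, and $E\simeq\colim_d E^{(d)}$ in $\Cat_\infty$. For $d\geq n+1$, \Cref{lem:standard-E} gives $L_{d-1}E^{(d)}\simeq *$. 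Applying $L_n$ to this yields $L_nE^{(d)}\simeq L_nL_{d-1}E^{(d)}\simeq *$, since $n\leq d-1$ and localizations compose. Hence $L_nE\simeq\colim_{d\geq n+1}L_nE^{(d)}\simeq\colim_{d}*\simeq *$. This holds for every $n$, so each component $(LE)_n$ of $LE\in\Cat^L_\infty$ is trivial, and therefore $LE\simeq *$. By the lemma preceding \Cref{def:walking-coind}, $E$ is then $L$-trivial.

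The main subtlety is to make sure $L_n$ is applied to the $E^{(d)}$ regarded as finite-dimensional objects, so that the identification $L_nL_{d-1}\simeq L_n$ makes sense. Each $E^{(d)}$ is a finite cell complex of dimension $d$, so it lies in $\Cat_d\subseteq\Cat_\infty$ via $I^R$. On such objects $L$ restricts to $I^L_{<\infty}$ (\Cref{lem:biinductive-adjunction}), so $L_n(E^{(d)})$ is the ordinary $n$-localization. Everything else is formal.
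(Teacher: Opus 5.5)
Your proposal is correct and follows the paper's own proof. Both commute $L_n$ (the left adjoint to $I^R_n$) past the colimit $E\simeq\colim_{d>n}E^{(d)}$ and use $L_nE^{(d)}\simeq L_nL_{d-1}E^{(d)}\simeq *$ from \Cref{lem:standard-E}. Your write-up simply makes explicit what the paper's one-line computation leaves implicit: why $L_nL$ preserves colimits, why each $E^{(d)}$ lies in $\Cat_d$, and why $u$ is free.
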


\begin{proof}
  Using the presentation $E\simeq \colim_{d:\omega} E^{(d)}$, we have
  \begin{equation}
    {L_nE}
    \simeq
    {L_n(\colim_{d>n}E^{(d)})}
    \simeq
    {\colim_{d>n} L_n(L_{d-1}E^{(d)})}\simeq {\colim_{d}L_n*}\simeq *
  \end{equation}
  for each $n\in \naturals$.
\end{proof}

\begin{corollary}
  \label{cor:E-not-coind}
  The $\infty$-category $E$ is not coinductively complete,
  hence does not lie in the full subcategory $\Cat^\coind_\infty\subsetneq \Cat_\infty$.
\end{corollary}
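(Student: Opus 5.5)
The argument is by contradiction, resting on the remark that a walking coinductive isomorphism has distinct endpoints and a non-invertible tautological arrow. Suppose $E$ were coinductively complete. By \Cref{lem:L-triv-only-coind-equiv}, every arrow of $E$, of every dimension, is a coinductive isomorphism. This applies because $E$ is $L$-trivial by the preceding corollary, and the notion of coinductive isomorphism does not depend on the chosen walking coinductive isomorphism by \Cref{lem:coind-equiv-indep}. In particular, the tautological arrow $u\colon 0\to 1$ is a coinductive isomorphism; one can also see this directly, since it factors through $\Susp^0u=u$ via the identity of $E$. Coinductive completeness would then force $u$ to be an isomorphism in $E$.

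This contradicts \Cref{cor:E-0-neq-1}, which says that $u$ is not invertible in $E$; equivalently, that $0$ and $1$ are distinct objects. That remark applies because $E$ is a categorical cell complex whose tautological arrow $u=u(0)$ is a generating $1$-cell. So \Cref{lem:distinct-0-cells}(2) applies directly: generating cells are never invertible.

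Alternatively, one can argue through locality. By \Cref{lemma:char-coind-univalent}, coinductive completeness is equivalent to $E$ being $(E\to *)$-local for $n=0$. Then the identity $E\to E$ would factor through $*$, making $E$ contractible. But $E$ has the distinct objects $0$ and $1$, so this is impossible.

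To conclude that $\Cat^\coind_\infty\subsetneq\Cat_\infty$ is a proper inclusion, it suffices to exhibit one object outside it, and $E$ is that object.

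There is essentially no obstacle here. The only point needing care is that "not coinductively complete" asks for mere existence of a non-invertible coinductive isomorphism, and $u$ itself supplies it. The non-invertibility of $u$ has already been established in \Cref{lem:distinct-0-cells}, using the span $\infty$-category $\Span_\infty(\Finset)$.
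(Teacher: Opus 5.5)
Your proof is correct and is essentially the paper's argument: the tautological arrow $u$ is coinductively invertible (already via the identity factorization through $u$), but it is not invertible by \Cref{cor:E-0-neq-1}. Two small slips do not affect the argument, since you only use the true directions: distinctness of $0$ and $1$ implies, but is not equivalent to, non-invertibility of $u$; and coinductive completeness implies, but is not equivalent to, $(E\to *)$-locality alone.
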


\begin{proof}
  The tautological arrow $u\colon 0\to 1$ of $E$
  is coinductively invertible but not invertible (see \Cref{cor:E-0-neq-1}).
\end{proof}

Using the explicit construction of \Cref{constr:standard-E}
it is now easy to see that coinductive invertibility data need not be unique
with respect to a fixed walking coinductive isomorphism.
One might wonder whether a different choice of walking coinductive isomorphism 
might remedy this issue; see \Cref{q:epi-walking-coind}.

\begin{example}[Non-uniqueness of coinductive invertibility data]
  \label{ex:non-uniqueness-coind}
  We have a non-trivial automorphism $\tau^1\colon E^1\to E^1$
  that is the identity on $E^0=\{0,1\}$
  and is specified on generating $1$-arrows via
  \begin{equation}
    u\mapsto u, \quad u(-) \mapsto u(+), \quad u(+)\mapsto u(-).
  \end{equation}
  Since $E^1\to E$ is free and $E\to *$ is $\infty$-surjective,
  there merely exists an extension
  \begin{equation}
    \begin{tikzcd}
      E^1\ar[r,"\tau^1"]
      \ar[d]
      & E
      \ar[d]
      \\
      E
      \ar[ur,dashed,"\tau"]
      \ar[r]
      &
      *
    \end{tikzcd}
  \end{equation}
  Since $\tau(u)=u=\id_E(u)$,
  we see that both $\tau\colon E\to E$ and $\id_E\colon E\to E$
  are witnesses that the tautological $1$-arrow $u$ of $E$
  is coinductively invertible (with respect to $E$).
  They are in fact \emph{distinct} witnesses
  (i.e., distinct in the anima of lifts
  $\{u\colon \globe{1}\to E\dashrightarrow E\}$)
  because
  $\tau(u(+))=u(-)$ and $\id_E(u(+))=u(+)$ are distinct $1$-arrows of $E$
  (by \Cref{lem:distinct-0-cells}).
\end{example}

\begin{proposition}[Coinductive characterization of coinductive isomorphisms]
  \label{prop:coinductive-equiv-char}
  Let $C$ be an $\infty$-category and 
  denote by $\cequiv{C}\subseteq \Mor_{\geq 1} C$
  the subanima of coinductive isomorphisms of $C$.
  Then $\cequiv{C}$ is uniquely characterized as the \emph{maximal}
  solution of the recursive formula \ref{recursion-formula-E}.
  More precisely,
  \begin{enumerate}
  \item
    the subanima $\cequiv{C}\subseteq \Mor_{\geq 1}C$
    satisies the recursive formula \ref{recursion-formula-E},
    and
  \item
    if $K\subseteq\Mor_{\geq 1}C$ is a subanima satisfying the forward (``only if'')
    direction of \ref{recursion-formula-E},
    then $K\subseteq \cequiv{C}$.
  \end{enumerate}
\end{proposition}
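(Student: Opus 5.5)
Throughout I will work with the standard walking coinductive isomorphism $(E,u)$ of \Cref{constr:standard-E}; this is legitimate by \Cref{lem:coind-equiv-indep}. The key input is the pushout square \eqref{eq:pushout-E-coinduction}, which presents $E$ as $I$ with two copies of $\Susp E$ glued along $\Susp u$ at $h_+$ and $h_-$.

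\emph{Step 1: $\cequiv{C}$ satisfies \ref{recursion-formula-E}.}

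For the forward direction, let $f$ be an $(n+1)$-arrow with a witness $\overline f\colon\Susp^nE\to C$. Restricting along the suspended map $\Susp^n I\to\Susp^nE$ (the right vertical map of \eqref{eq:pushout-E-coinduction}) gives candidate inverses $g_\pm$ and $(n+2)$-arrows $h_\pm$. Restricting along the two maps $\Susp^{n+1}E\to\Susp^nE$ (bottom of the square) shows that $h_\pm$ factor through $\Susp^{n+1}u$, so they are coinductive isomorphisms.

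For the backward direction, suppose there merely exist $g_\pm$ and $h_\pm\in\cequiv{C}$. These give a map $\Susp^nI\to C$. Choosing witnesses $\Susp^{n+1}E\to C$ for $h_+$ and $h_-$ and applying the universal property of $\Susp^n$ of the pushout \eqref{eq:pushout-E-coinduction} yields a map $\Susp^nE\to C$ extending $f$. Here I use that $\Susp$ preserves this pushout, since it is a cell-attaching pushout along suspensions, as in \Cref{rem:free-susp}. Because membership in $\cequiv{C}$ is a proposition, it is legitimate to choose these witnesses. I also check that every identification lies in $\cequiv{C}$, which holds because isomorphisms are coinductive isomorphisms.

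\emph{Step 2: maximality.}

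Let $K$ satisfy the forward direction of \ref{recursion-formula-E} and let $f\in K$ be of dimension $n+1$. I build a witness $\Susp^nE\to C$ along the filtration $E^{(1)}\to E^{(2)}\to\cdots$ of \Cref{rem:standard-E-filtration}, maintaining the invariant that every generator of the form $u(\sigma,0)$ of top dimension is sent into $K$.

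Start with $f\colon\Susp^nE^{(1)}\to C$. At stage $d$, each top generator $u(\sigma,0)$ lands in $K$, so the forward direction of \ref{recursion-formula-E} merely provides data $g_\pm,h_\pm$ with $h_\pm\in K$. This is exactly an extension along the suspended pushout \eqref{eq:po-coinductive-E-step}, giving $\Susp^nE^{(d+1)}\to C$, and the new top generators $u(\sigma,\epsilon,0)$ map to the $h$'s, which lie in $K$. Since $E=\colim_d E^{(d)}$, a compatible sequence of such extensions yields the desired map $\Susp^nE\to C$.

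\emph{Main obstacle.} The choices at each stage are mere existences, and I need a whole compatible sequence. At each stage the indexing set $\{-,+\}^{d-1}$ is finite, so finitely many choices suffice there. Assembling infinitely many stages requires the axiom of dependent choice, or the axiom of choice, both of which are assumed in this section. Concretely, I apply dependent choice to the tower of animae of partial extensions of $f$ with top generators in $K$, after truncating to get surjections between the successive levels. Once the sequence exists, the sequential colimit delivers the witness. This shows $K\subseteq\cequiv{C}$, and since $\cequiv{C}$ itself satisfies \ref{recursion-formula-E}, it is the maximal solution.
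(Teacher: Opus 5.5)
Your proposal is correct and follows the paper's proof. Part (1) is read off from the pushout \eqref{eq:pushout-E-coinduction} (suspended, or equivalently after passing to hom-categories), and part (2) builds the witness stage by stage along the filtration $E^{(\bullet)}$ using the pushouts \eqref{eq:po-coinductive-E-step}. Your explicit appeal to dependent choice makes precise a step the paper leaves implicit; the paper's form of that axiom (surjections of animae are closed under countable composition) applies directly to your tower of partial extensions, so no truncation is needed.
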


\begin{proof}
  \begin{enumerate}
  \item
    Working with the walking coinductive isomorphism $(E,u)$
    of \Cref{constr:standard-E},
    this is apparent from the pushout \eqref{eq:pushout-E-coinduction},
    which exactly expresses an equivalence between the data exhibiting
    \begin{itemize}
    \item
      a map $E\to C$ exhibiting $f$ as a coinductive isomorphism and
    \item
      arrows $g_-,g_+,h_-,h_+$ as above plus a map $\Susp E\amalg \Susp E\to C$
      exhibiting $h_-,h_+$ as coinductive isomorphisms;
    \end{itemize}
    a fortiori, we get a logical equivalence between mere existence statements.
    (Note that without loss of generality we may assume $d=1$
    by either passing to suitable hom-categories of $C$
    or equivalently by suspending the pushout \eqref{eq:pushout-E-coinduction}).
  \item
    Let $K\subseteq \Mor_{\geq1} C$
    be a subanima satisfying the forward (``only if'')
    direction of \ref{recursion-formula-E},
    and $f\colon x\to y$ be an arrow in $K$
    which we may assume to be of dimension $1$
    (otherwise pass to an appropriate hom-category
    and note that this preserves the condition \ref{recursion-formula-E}).
    We need to show that $f$ is a coinductive isomorphism.
    We use the filtration $E^{(\bullet)}$ of \Cref{rem:standard-E-filtration}.
    By induction on $d$, we show that there exist successive extensions
    $f^{(d)}\colon E^{(d)}\to C$
    of $f^{(1)}= f\colon \globe{1}\to C$,
    such that $f(u(\sigma,0))\in K$ for each $\sigma\in\{-,+\}^{d-1}$.
    This follows immediately from the pushout \eqref{eq:po-coinductive-E-step}:
    indeed, if the image of $u(\sigma,0)$ is already chosen in $K$ by induction,
    the forward direction of \ref{recursion-formula-E}
    guarantees that there exist choices for
    the images of $u(\sigma, \epsilon)$ and $u(\sigma,\epsilon,0)$
    (for $\epsilon\in\{-,+\}$),
    with the latter again lying in $K$.
    \qedhere
  \end{enumerate}
\end{proof}

\begin{corollary}
  \label{cor:full-dualizable-coind-collapse}
  Let $C$ be a fully dualizable $\infty$-category\footnote{
    It is more common to speak of
    fully dualizable \emph{monoidal} $\infty$-categories,
    where one also requires objects to have duals
    with respect to the monoidal structure.
  }, i.e.,
  assume that every arrow of every dimension $\geq 1$ has both adjoints.
  Then the unit maps
  \begin{equation}
    L^\coind C \xrightarrow{\sim} L C \xrightarrow{\sim}{L_0C}
  \end{equation}
  are equivalences.
\end{corollary}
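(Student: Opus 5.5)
The plan is to show that in a fully dualizable $C$ every arrow of dimension $\geq 1$ is a coinductive isomorphism, so that $L_\coind C$ is the localization of $C$ at all arrows, hence an anima, hence equal to $L_0C$. The middle map then follows formally.

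\emph{Step 1: every arrow is coinductively invertible.} Let $K\coloneqq \Mor_{\geq 1}C$ be the subanima of all arrows. I will check that $K$ satisfies the forward direction of \ref{recursion-formula-E}; \Cref{prop:coinductive-equiv-char} then gives $K\subseteq \cequiv{C}$. Given an $n$-arrow $f\colon x\to y$, choose a left adjoint $g_+$ with unit $\id_x\to g_+f$, and a right adjoint $g_-$ with counit $fg_-\to\id_y$. To match the shape in \ref{recursion-formula-E} I take the adjunction $g_+\dashv f$, whose unit is $\id_x\to fg_+$, and the variant $f \dashv g_-$; the exact variance is fixed by this choice, and having both adjoints supplies whichever orientation is needed. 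The resulting $(n+1)$-arrows $h_\pm$ lie in $K$ trivially, so the recursion condition holds.

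\emph{Step 2: identify $L_\coind C$.} By \Cref{prop:localization-coind-approx}, $L_\coind C\simeq C[\isos{\coind}^{-1}]$, and by Step 1 this is $C$ with all arrows of all dimensions inverted. Such an $\infty$-category has all arrows invertible, so it lies in $\Cat_0=\An$. The universal property of localization, compared with that of $L_0$ restricted to animae (all arrows invertible), identifies it with the anima $L_0C$: maps $C[\Mor^{-1}]\to D$ are the same as maps $C\to D$ inverting everything, which are maps $C\to R_0D$, i.e.\ maps $L_0C\to R_0 D$. I expect this step to be the main subtlety. One has to make sure that the infinite-dimensional localization really agrees with $I_0L_0$, where $L_0$ here means $(LC)_0\simeq \colim_i L_0 C_i$. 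I would handle it by noting that an $\infty$-category with all arrows invertible is the constant sequence on an anima, i.e.\ lies in $I^R_0(\An)$, and that maps into such objects factor through $R_0=\ev_0$; this is the adjunction $I^R_0\dashv R_0$ of \Cref{lem:C-eventually-constant}. I also need the adjoint statement on the left, which should come from the left adjoint $L_0L$ of $I^R_0$ in the remark after \Cref{rem:counit-LR-explicit}. That remark shows $L_0L\dashv I_0^R$, so $C[\Mor^{-1}]\simeq I_0^R L_0 LC$.

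\emph{Step 3: the middle map.} By \Cref{cor:im-R-coind}, $L$ factors through $L_\coind$, so $LC\simeq L(L_\coind C)$. Since $L_\coind C$ is an anima, it is finite-dimensional, and $L$ restricted to $\Cat_{<\infty}$ is compatible with the inclusions by \Cref{lem:biinductive-adjunction}. Hence $LC\simeq L_\coind C\simeq L_0C$, and both unit maps are equivalences.
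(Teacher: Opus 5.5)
Your proposal is correct and follows the paper's own argument: apply \Cref{prop:coinductive-equiv-char} to $K=\Mor_{\geq 1}C$, use \Cref{prop:localization-coind-approx} to identify $L_\coind C$ with the anima $L_0C$ (its part~(1), the $\infty$-surjectivity of $\ell$, is the quickest way to see that $C$ with all arrows inverted has only invertible arrows), and conclude with \Cref{cor:im-R-coind} and the fact that $RL$ fixes finite-dimensional objects. There is one slip in Step~1: the adjunction $g_+\dashv f$ has unit $\id_y\to fg_+$ and counit $g_+f\to\id_x$, so neither arrow you call its unit is one, and $\id_x\to fg_+$ is not even well-typed; instead take $g_+=g_-=g$ a right adjoint of $f$, with $h_+\colon\id_x\to gf$ the unit and $h_-\colon fg\to\id_y$ the counit of $f\dashv g$, so only right adjoints are actually needed.
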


\begin{proof}
  Since every arrow of $C$ admits both a left and a right adjoint,
  the anima $\Mor_{\geq 1} C$ of \emph{all} arrows
  satisfies the forward (``only if'') direction of \ref{recursion-formula-E}.
  Hence by \Cref{prop:coinductive-equiv-char}
  we have $\Mor_{\geq 1}C \subseteq \cequiv{C}$, i.e.,
  every arrow of $C$ is coinductively invertible.
  It follows from the explicit construction of the coinductive completion from
  \Cref{prop:localization-coind-approx},
  that $L_\coind C$ is the anima obtained from $C$ by inverting all arrows
  of all dimensions, i.e., $L_0C$.
  Since finite-dimensional $\infty$-categories already lie in the image of $R$,
  further applying $L$ does not change $L_0C$ anymore, concluding the proof.
\end{proof}

\begin{example}
  \label{ex:cobordisms}
  The cobordism $\infty$-category
  is fully dualizable\footnote{
    According to the cobordism hypothesis, it is in fact the \emph{free}
    fully dualizable symmetric monoidal $\infty$-category generated by one object.},
  hence becomes a groupoid
  under the localization $L^\coind$,
  hence a fortiori under the localization $L$.
\end{example}

\subsection{$\omega$-complete $\infty$-categories}

One might hope to characterize the image of the full embedding
\begin{equation}
  R\colon \Cat_\infty^L \hookrightarrow \Cat_\infty.
\end{equation}

Since this embedding factors through $\Cat_\infty^\coind$
(by \Cref{cor:im-R-coind}),
each $\infty$-category $C$ in the image needs
to be at least coinductively complete.
The following example due to Henry--Loubaton shows that there are
coinductively complete categories $C$ that do not lie in the image of $R$.
In other words, the localization
\begin{equation}
  L\colon \Cat_\infty^\coind \rightleftarrows \Cat_\infty^L: R
\end{equation}
is not an equivalence.

\begin{example} [\cite{henry-loubaton}, Construction~4.33]
  \label{ex:HL}
  Consider the pushout
  \begin{equation}
    \cdsquareNA[po]
    {\coprod_{d\geq 1}\globe{1}}
    {\coprod_{d\geq 1}E^{(d)}}
    {\globe{1}}
    {E^{(\omega)}}
  \end{equation}
  where $E^{(\bullet)}$
  is the filtration from \Cref{rem:standard-E-filtration}.

  In other words, $E^{(\omega)}$ is the categorical cell complex generated by
  \begin{itemize}
  \item
    Two objects $0, 1$ and an $1$-arrow $u\colon 0\to 1$.
  \item
    For each $i\geq d > 1$ and each $\sigma\colon \{-,+\}^{d-1}$
    two arrows
    \begin{equation}
      u_i(\sigma) \quad\text{and}\quad u_i(\sigma,0)
    \end{equation}
    of dimensions $d-1$ and $d$, respectively.
    They are attached to (composites of) lower-dimensional cells $u_i(\sigma')$
    (with the same index $i$)
    with the same rules as in \Cref{constr:standard-E}
    and the convention that $u=u_1(0)=u_2(0)=\dots$.
  \end{itemize}
  
  We will see below that $E^{(\omega)}$ does not lie in the image of $R$.
  Nevertheless, $E^{(\omega)}$ is coinductively complete.
  Here is a sketch of the argument by Henry and Loubaton;
  see \cite[Corollary~4.35]{henry-loubaton} for more details: 

  To show coinductive completeness,
  it suffices to show that all coinductive isomorphisms
  of dimension $n\geq 2$ are invertible.
  So consider a functor $f\colon \Susp^{n-1} E\to E^{(\omega)}$
  classifying a coinductive isomorphism $f(u)$
  of dimension $n\geq 2$.
  If it is not invertible, $f(u)$ can be written as a pasting
  involving at least one generating $n$-cell of $E^{(\omega)}$,
  say $g=u_i(\sigma)$.
  Then for each generating $j$-cell $u'$ of $\Susp^{n-1}E$
  either $f(u')$ is invertible
  or $f(u')$ can be written as a pasting involving at least
  one generating $j$-cell $g'$ that is attached to (a composite of)
  cells involving $g$;
  in other words, $g'=u_i(\sigma')$.
  Since such a $u_i(\sigma')$ is necessarily of dimension $j\leq i+1$,
  this means that $f\colon \Susp^{n-1}E\to E^{(\omega)}$
  inverts all cells of dimension $j>i+1$,
  i.e., factors through $L_{i+1}(\Susp^{n-1}E)$, which is trivial
  (because $E$ is $L$-trivial).
  We conclude that $f(u)$ is indeed an isomorphism.
\end{example}

\Cref{ex:HL} motivates the following definition,
weakening the notion of coinductive equivalence:

\begin{definition}
  \label{def:alpha-invertible}
  Let $C$ be an $\infty$-category.
  Let $\alpha$ be an ordinal
  and $f\colon x\to y$ a $k$-arrow in $C$.
  \begin{itemize}
  \item
    We say that $f$ is \emph{$\alpha$-invertible} (or an \emph{$\alpha$-isomorphism}),
    if for each $\alpha'< \alpha$
    there merely exist $k$-arrows $g_-,g_+\colon y\to x$
    and $\alpha'$-invertible $(k+1)$-arrows
    \begin{equation}
      h_+\colon \id_x\to g_+f, \quad h_-\colon fg_-\to \id_y.
    \end{equation}
  \item
    We say that $C$ is \emph{$\alpha$-complete}
    if every $\alpha$-isomorphism (of any dimension) is an isomorphism.
    We denote the full subcategory of $\alpha$-complete $\infty$-categories by
    $\Cat^\alpha_\infty\subseteq \Cat_\infty$.
  \end{itemize}
\end{definition}

\begin{remark}
  \Cref{def:alpha-invertible} is sensible because ordinals are well-founded.

  Observe that in the case $\alpha=0$ the condition is vacuous,
  so that \emph{every} arrow is $0$-invertible.
  For increasing $\alpha$, the condition of $\alpha$-invertibility
  becomes increasingly stronger;
  and coinductive invertibility is stronger than all of them.
\end{remark}

\begin{remark}
  With a little sleight of hand one can rewrite $\alpha'<\alpha$
  as $\alpha'+1\leq \alpha$
  and then think of $\infty$ as a ``quantity''
  that is bigger than any ordinal
  and satisfies $\infty+1=\infty$ (unlike any ordinal).
  Doing this yields the following self-referential definition of $\infty$-invertibility:
  $f$ is $\infty$-invertible if and only if
  there exist $g_-,g_+$ and $\infty$-invertible $h_-,h_+$.
  This is precisely the recursion formula \ref{recursion-formula-E}.
  \Cref{prop:coinductive-equiv-char}
  says that resolving this self-referentiality coinductively
  then precisely yields the notion of coinductive isomorphism.
  From this perspective, it is reasonable to call coinductive isomorphisms
  ``$\infty$-isomorphisms'' and coinductive completeness ``$\infty$-completeness''.
\end{remark}

\begin{remark}
  By definition, a $k$-arrow $f$ in $C$ is $n$-invertible for some finite $n$
  if and only if there merely exists an extension of its classifying map
  $f\colon \Susp^{k-1}\globe{1}\to C$
  along $\Susp^{k-1}\globe{1}\hookrightarrow\Susp^{k-1}E^{(n+1)}$;
  it is $\omega$-invertible if there merely exists an extension
  along $\Susp^{k-1}\globe{1}\hookrightarrow\Susp^{k-1}E^{(\omega)}$.
\end{remark}

\begin{remark}
  An $\infty$-category is $0$-complete if and only if it is a $0$-category
  (i.e., a groupoid).
  For general finite $d\in \naturals$
  it is still true that every $d$-category is $d$-complete
  but the converse need not hold anymore.
  For example, the infinite globe
  $\globe{\infty}\coloneqq \colim_{d:\omega}\boundary\globe{d}$
  (which in each dimension $d$ has two non-identity $d$-arrows
  $0_d,1_d\colon 0_{d-1}\to 1_{d-1}$)
  is a $1$-complete $\infty$-category that is not finite-dimensional.
\end{remark}

\begin{lemma}
  \label{lem:coind-inv-via-L_n}
  Let $C$ be an $\infty$-category and $f\colon x\to y$ a $k$-arrow.
  Let $n\geq 0$.
  \begin{enumerate}
  \item
    If $f$ is $n$-invertible
    then $f$ becomes invertible in $L_{k+n-1}C_{k+n}$.
  \item
    If $f$ becomes invertible in $L_{k+n}C_{k+n+1}$
    then $f$ is $n$-invertible.
  \end{enumerate}
\end{lemma}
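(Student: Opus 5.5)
We prove both parts simultaneously by induction on $n$, for all $k$ and all $C$ at once. Throughout we use that the $k$-arrows of $C$ are the same as those of $C_d$ for any $d\geq k$, and that we may evaluate $n$-invertibility inside any $C_d$ with $d$ large enough to contain all the arrows involved.

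\emph{Part (1).} For $n=0$ there is nothing to check about $f$. Every $k$-arrow of the $(k-1)$-category $L_{k-1}C_k$ is invertible, so the claim holds.

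For the inductive step, suppose $f\colon x\to y$ is $(n+1)$-invertible. Then there merely exist:
\begin{itemize}
\item $k$-arrows $g_-,g_+\colon y\to x$, and
\item $n$-invertible $(k+1)$-arrows $h_+\colon \id_x\to g_+f$ and $h_-\colon fg_-\to\id_y$.
\end{itemize}
Apply the inductive hypothesis to the $(k+1)$-arrows $h_\pm$. It says they become invertible in $L_{(k+1)+n-1}C_{(k+1)+n}=L_{k+n}C_{k+n+1}$. This is exactly the $d$-category in which we want $f$ to become invertible for $n+1$.

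Since $C_{k+n+1}\to L_{k+n}C_{k+n+1}$ is a functor, the images of $g_\pm$ are left and right inverses of the image of $f$, up to the invertible $(k+1)$-arrows given by the images of $h_\pm$. Hence the image of $f$ is an isomorphism. Because we are proving a proposition, we may choose the witnesses.

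\emph{Part (2).} For $n=0$ the conclusion is vacuous.

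For the inductive step, assume $[f]$ is invertible in $D\coloneqq L_{k+n+1}C_{k+n+2}$. We must produce $g_\pm$ and $n$-invertible $h_\pm$. Choose a left inverse and a right inverse $\bar g_\pm\colon[y]\to[x]$ in $D$, together with isomorphisms $\bar h_+\colon\id_{[x]}\xrightarrow{\sim}\bar g_+[f]$ and $\bar h_-\colon[f]\bar g_-\xrightarrow{\sim}\id_{[y]}$.

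By \Cref{prop:surjective-localization}, the unit $C_{k+n+2}\to D$ is $(k+n+1)$-surjective, and $k+1\leq k+n+1$. Using the explicit form of surjectivity from \Cref{rem:surjectivity-explicit}, we lift in two stages.
\begin{itemize}
\item First lift $\bar g_\pm$ to $k$-arrows $g_\pm\colon y\to x$ of $C$.
\item Then, using the identifications $[g_+f]\simeq\bar g_+[f]$ and $[\id_x]\simeq\id_{[x]}$ supplied by functoriality of the unit, lift $\bar h_\pm$ to $(k+1)$-arrows $h_+\colon\id_x\to g_+f$ and $h_-\colon fg_-\to\id_y$ of $C$.
\end{itemize}
The images of $h_\pm$ in $L_{(k+1)+n}C_{(k+1)+n+1}=D$ are the isomorphisms $\bar h_\pm$. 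The inductive hypothesis for the $(k+1)$-arrows $h_\pm$ therefore shows they are $n$-invertible. Hence $f$ is $(n+1)$-invertible.

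The main point requiring care is this lifting step. The lifts of the $(k+1)$-arrows must have exactly the prescribed source and target, namely $\id_x$ and $g_+f$, and not merely sources and targets that agree after passing to $D$. This is why we first fix the lifts $g_\pm$ of the $k$-arrows. Once they are fixed, the source and target of $\bar h_\pm$ are literally the images of $k$-arrows of $C$, so surjectivity on $(k+1)$-arrows between a given parallel pair applies. Throughout, the indices are set up so that the dimensions of the relevant arrows stay within the range where the unit is surjective.
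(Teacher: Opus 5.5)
Your proof is correct, but it takes a different route from the paper's. The paper first reduces to $k=1$ by passing to the iterated hom $C\arlev{k-1}(s,t)$. It then uses the walking $n$-isomorphism: $f$ is $n$-invertible if and only if its classifying map extends along $u\colon\globe{1}\to E^{(n+1)}$. Part (1) follows from $L_nE^{(n+1)}\simeq *$ (\Cref{lem:standard-E}). Part (2) comes from lifting the composite $E^{(n+1)}\to *\to L_{n+1}C_{n+2}$ against the $(n+1)$-surjective unit $C_{n+2}\to L_{n+1}C_{n+2}$, using that $u$ attaches only cells of dimension at most $n+1$.

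You instead unwind the recursive definition and induct on $n$ with $k$ varying. This is why you cannot reduce to $k=1$: each inductive step raises the dimension of the arrow. You use only univalence of the localized category and the surjectivity of the unit from \Cref{prop:surjective-localization}. In effect, your part (1) inlines the inductive content of \Cref{lem:standard-E}. Your two-stage lift in part (2) is exactly the cell-by-cell argument hidden in the lifting property of the free map $u$. You are right that $g_\pm$ must be fixed before lifting $\bar h_\pm$, so that the lifts have the prescribed source and target.

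What each buys: your version is more elementary and self-contained. The paper's is shorter given the machinery already built. It also packages the whole tower of inverse data as a single map out of a cell complex, which is the form the paper reuses for $\omega$-invertibility via $E^{(\omega)}$.

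One small implicit step in your part (2): going from $n$-invertible $h_\pm$ to $(n+1)$-invertibility of $f$ uses that $\alpha$-invertibility is monotone in $\alpha$. The definition quantifies over all $\alpha'<n+1$, so you need $n$-invertibility to imply $\alpha'$-invertibility for smaller $\alpha'$. This is immediate from the definition.
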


\begin{proof}
  Let $(s,t)$ be the $(k-2)$-dimensional source and target of $f$.
  After replacing $C$ with $C\arlev{k-1}(s,t)$, we may assume that $k=1$.
  \begin{enumerate}
  \item
    Saying that $f$ is $n$-invertible means that
    there merely exists a factorization
    \begin{equation}
      f\colon \globe{1}\to E^{(n+1)}\to C_{n+1}\subseteq C
    \end{equation}
    Since $L_{n}E^{(n+1)}\simeq *$ by \Cref{lem:standard-E},
    it follows that $f$ becomes invertible in $L_{n}C_{n+1}$.
  \item
    Saying that $f$ becomes invertible in $L_{n+1}C_{n+2}$
    exactly means that we have the following solid commutative square:
    \begin{equation}
      \begin{tikzcd}
        \globe{1}
        \ar[rr,"f"]
        \ar[d,"u"]
        &&
        C_{n+2}
        \ar[d]
        \\
        E^{(n+1)}
        \ar[urr,dashed]
        \ar[r]
        &*\ar[r]& L_{n+1}C_{n+2}
      \end{tikzcd}
    \end{equation}
    Recall that $C_{n+2}\to L_{n+1}C_{n+2}$ is $(n+1)$-surjective
    (\Cref{prop:surjective-localization}).
    Hence there merely exists a dashed lift as indicated,
    because $u\colon \globe{1}\to E^{(n+1)}$ is obtained by freely attaching cells of dimension
    at most $n+1$;
    this means that $f$ is $n$-invertible.
    \qedhere
  \end{enumerate}
\end{proof}

\begin{corollary}
  Let $C$ be an $\infty$-category.
  The $\omega$-isomorphisms in $C$
  are precisely those $k$-arrows (for some $k\geq 1$)
  that become invertible in $L_nC_{n+1}$ for each $n\geq k-1$.
\end{corollary}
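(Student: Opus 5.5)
The plan is to derive the corollary directly from \Cref{lem:coind-inv-via-L_n}, using that $\omega$-invertibility means $n$-invertibility for every finite $n$. Indeed, by \Cref{def:alpha-invertible} a $k$-arrow $f$ is $\omega$-invertible if and only if for each $n<\omega$ there merely exist $g_\pm$ and $n$-invertible $h_\pm$. This is exactly the statement that $f$ is $(n+1)$-invertible for every finite $n$. Since $\alpha$-invertibility gets stronger as $\alpha$ grows, this is in turn equivalent to $f$ being $n$-invertible for all $n\in\naturals$.

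For the forward direction, let $f$ be an $\omega$-isomorphism and fix $m\geq k-1$. Write $m=k+n-1$ with $n=m-k+1\geq 0$. Since $f$ is $n$-invertible, part~(1) of \Cref{lem:coind-inv-via-L_n} shows that $f$ becomes invertible in $L_{k+n-1}C_{k+n}=L_mC_{m+1}$.

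For the backward direction, assume $f$ becomes invertible in $L_mC_{m+1}$ for every $m\geq k-1$, and fix $n\in\naturals$. Taking $m=k+n\geq k-1$, part~(2) of \Cref{lem:coind-inv-via-L_n} gives that $f$ is $n$-invertible. Since $n$ was arbitrary, $f$ is $\omega$-invertible.

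The index bookkeeping is the only step that needs care. The range $m\geq k-1$ is exactly what part~(1) needs with $n\geq 0$. The backward direction only uses the sub-range $m\geq k$, and there is no off-by-one gap because we quantify over all $n$. One point remains to check: the passage from ``becomes invertible in $L_mC_{m+1}$'' in the lemma to the statement here refers to the image of $f$ under $C_{m+1}\to L_mC_{m+1}$, and this is the same meaning in both places. I expect no real obstacle beyond confirming the identification of $\omega$-invertibility with $n$-invertibility for all finite $n$, which follows from the monotonicity remark after \Cref{def:alpha-invertible}.
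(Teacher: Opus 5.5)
Your proposal is correct and matches how the paper intends this corollary to follow; the paper states it without proof immediately after \Cref{lem:coind-inv-via-L_n}. Your argument unwinds $\omega$-invertibility as $n$-invertibility for every finite $n$, then applies part~(1) with $n=m-k+1$ and part~(2) with $m=k+n$, and the index bookkeeping is right.
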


We can now give a new characterization of $L$-equivalences:
they are those maps that for all $n\in \naturals$
are $\infty$-surjective up to $n$-invertible arrows.

\begin{proposition}
  \label{lem:L-iso-coind-inv}
  Let $F\colon C\to D$ be a map of $\infty$-categories.
  The following are equivalent:
  \begin{itemize}
  \item
    $F$ is an $L$-equivalence, i.e., weakly $\infty$-surjective.
  \item
    For each $n\in \naturals$,
    each parallel pair $(x,y)$ of $(k-1)$-arrows in $C$,
    and each $k$-arrow $f\colon Fx\to Fy$,
    there merely exists a $k$-arrow $f_n\colon x\to y$
    and an $n$-invertible $(k+1)$-arrow
    $\eta_n\colon Ff_n\to f$.
  \end{itemize}
\end{proposition}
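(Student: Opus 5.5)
We prove both directions by translating between $n$-surjectivity of $L_mF_{m+1}$ and lifting up to arrows that become invertible in suitable localizations, using \Cref{lem:coind-inv-via-L_n} as the dictionary. Recall from \Cref{lem:reformulate-S} that $F$ is weakly $\infty$-surjective if and only if $L_mF_{m'}$ is $m$-surjective for all $m<m'$. The explicit description of $m$-surjectivity (\Cref{rem:surjectivity-explicit}) and of the arrows of $L_mC_{m'}$ (\Cref{lem:arrows-of-n-localization}) will be used throughout.

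For ``$\Rightarrow$'', fix $n$, a parallel pair $(x,y)$ of $(k-1)$-arrows in $C$ and $f\colon Fx\to Fy$. Set $m\coloneqq k+n$ and consider $L_mF_{m+1}\colon L_mC_{m+1}\to L_mD_{m+1}$, which is $m$-surjective; since $k\leq m$, the image $[f]\colon [Fx]\to[Fy]$ merely lifts to a $k$-arrow $\bar f\colon [x]\to[y]$ in $L_mC_{m+1}$. By \Cref{lem:arrows-of-n-localization} the unit $C_{m+1}\to L_mC_{m+1}$ is $m$-surjective, so $\bar f$ merely lifts to $f_n\colon x\to y$ in $C$ (interpreting the source/target correctly via parallel-pair lifting \Cref{lem:surjectivity-lifts-parallel-pairs}). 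Then $[Ff_n]\simeq [f]$ in $L_mD_{m+1}$, i.e.\ there is an invertible $(k+1)$-arrow $[Ff_n]\xrightarrow{\sim}[f]$ there; since $k+1\leq m$ when $n\geq1$, $m$-surjectivity of the unit $D_{m+1}\to L_mD_{m+1}$ lifts it to a $(k+1)$-arrow $\eta_n\colon Ff_n\to f$ in $D$ which becomes invertible in $L_{(k+1)+(n-1)}D_{(k+1)+(n-1)+1}$. By \Cref{lem:coind-inv-via-L_n}(2) (with $n-1$) $\eta_n$ is $(n-1)$-invertible. Since $n$ was arbitrary, running this with $n+1$ in place of $n$ yields the required $n$-invertible $\eta_n$ (the case $n=0$ being vacuous anyway).

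For ``$\Leftarrow$'', we show $L_mF_{m+1}$ is $m$-surjective for each $m$. Objects and $k$-arrows of $L_mD_{m+1}$ between images of arrows of $L_mC_{m+1}$ are, after using surjectivity of the units (\Cref{prop:surjective-localization}) and easy cancellation as in the proof of \Cref{prop:cancellation-surj}, represented by $k$-arrows $f\colon Fx\to Fy$ of $D$ with $k\leq m$. Apply the hypothesis with $n\coloneqq m+1$ to get $f_n$ and an $(m+1)$-invertible $\eta_n\colon Ff_n\to f$. By \Cref{lem:coind-inv-via-L_n}(1), $\eta_n$ becomes invertible in $L_{(k+1)+(m+1)-1}D_{\dots}$, hence (applying the further localization $L_m$, which preserves invertibility) in $L_mD_{m+1}$; one must check compatibility, namely that $L_m$ of $D_{k+m+2}$ agrees with $L_mD_{m+1}$, which holds since $L_mD_{j}\simeq L_mD_{m+1}$ up to the $S$-class transition maps—more precisely one instead uses $L_mD_{k+m+1}$ throughout and invokes \Cref{lem:reformulate-S}(3) with $m'=k+m+1$ (note $k\leq m$ bounds $m'$ independent of the arrow by taking $m'=2m+1$ and $n$ large enough). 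Thus $[f_n]$ lifts $[f]$ up to an isomorphism, i.e.\ up to identification by univalence, giving $m$-surjectivity of $L_mF_{m'}$ for the chosen $m'$, and \Cref{lem:reformulate-S} concludes.

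The main obstacle is bookkeeping the indices: the dimension in which invertibility is measured in \Cref{lem:coind-inv-via-L_n} depends on the arrow dimension $k$, while weak surjectivity uses a fixed $L_mF_{m'}$. The flexibility of \Cref{lem:reformulate-S} (any $m'>m$) is what absorbs this mismatch, and the uniform choice $m'=2m+1$ with $n\geq m+1$ should suffice.
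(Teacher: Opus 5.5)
Your forward direction is correct and is the paper's argument up to indexing. The paper takes $m=k+n+1$ directly, so the lifted $(k+1)$-arrow becomes invertible in $L_{(k+1)+n}D_{(k+1)+n+1}$ and is $n$-invertible by \Cref{lem:coind-inv-via-L_n}(2); your shift $n\mapsto n+1$ amounts to the same thing. (The case $n=0$ is not actually vacuous, since you still need some $f_0$ and some $\eta_0$, but your shifted run covers it.)

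The backward direction has a gap where you transfer invertibility of $\eta$ to $L_mD_{m+1}$.

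\begin{itemize}
\item Having applied the hypothesis with $n=m+1$, \Cref{lem:coind-inv-via-L_n}(1) only gives invertibility of $\eta$ in $L_{k+m+1}D_{k+m+2}$. Applying $L_m$ then yields invertibility in $L_mD_{k+m+2}$, not in $L_mD_{m+1}$.
\item The comparison map runs $L_mD_{m+1}\to L_mD_{k+m+2}$, so functoriality pushes invertibility the wrong way.
\item The asserted agreement $L_mD_j\simeq L_mD_{m+1}$ is false in general. For $m=0$ and $D$ the suspension of the walking arrow, $L_0D_1\simeq S^1$ while $L_0D_2\simeq *$.
\item Your fallback via \Cref{lem:reformulate-S} with $m'=2m+1$ does not close up as stated. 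With $n=m+1$ and $k=m$ you land in $L_mD_{2m+2}$, and taking $n$ larger only moves the index further up.
\end{itemize}

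What is needed is not a large $n$ but the right one, namely $n\coloneqq m-k\geq 0$. Then \Cref{lem:coind-inv-via-L_n}(1), applied to the $(k+1)$-arrow $\eta$, gives invertibility in $L_{(k+1)+(m-k)-1}D_{(k+1)+(m-k)}=L_mD_{m+1}$ exactly. Hence $[Ff_n]\simeq[f]$ there, and $L_mF_{m+1}$ is $m$-surjective directly, with no detour through \Cref{lem:reformulate-S}. This is exactly what the paper does.

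Alternatively, you could keep your indices and justify the transfer by conservativity. The map $L_mD_{m+1}\to L_mD_j$ is $L_m$ applied to the $(m+1)$-surjective counit $R_{m+1}D_j\to D_j$. It is therefore $(m+1)$-surjective between $m$-categories by \Cref{prop:surjective-localization}, hence conservative by \Cref{lem:d+1-surj-conservative}. Some such argument has to be supplied, though; as written the step does not follow.
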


\begin{remark}
  We emphasize that both $f_n$ and $\eta_n$ are allowed to depend on $n$.
  So we are \emph{not} saying that $F$ is $\infty$-surjective
  up to $\omega$-isomorphisms,
  which would require the existence of a single $f'\colon x\to y$
  and a single $\omega$-isomorphism $\eta\colon Ff'\to f$.

  However, we do not yet know whether,
  possibly due to some small miracle at infinity,
  the two notions of $\infty$-surjectivity ``up to $\omega$''
  might end up agreeing after all;
  see \Cref{quest:surjective-up-to-omega}.
\end{remark}

\begin{proof}
  Recall that the $L$-equivalences are characterized as those $F$
  for which each $L_nF_{n+1}$ is $n$-surjective.

  For the forward direction assume that $F$ is an $L$-equivalence
  and fix $n,k\in \naturals$.
  Consider the induced commutative diagram
  \begin{equation}
    \begin{tikzcd}[column sep=large]
      C_k\ar[r,"\in S_k"]\ar[d,"F_k"]
      &C_{k+1}\ar[d,"F_{k+1}"]\ar[r,"\in S_{k+1}"]
      &C_{k+n+2}\ar[d,"F_{k+n+2}"]\ar[r,"\in S_{k+n+1}"]
      &L_{k+n+1}C_{k+n+2}\ar[d,"L_{k+n+1}F_{k+n+2}\in S_{k+n+1}"]
      \\
      D_k\ar[r,"\in S_k"]&D_{k+1}\ar[r,"\in S_{k+1}"]&D_{k+n+2}\ar[r,"\in S_{k+n+1}"]&L_{k+n+1}D_{k+n+2}
    \end{tikzcd}
  \end{equation}
  where the arrows decorated with ``${\in}S_i$''
  are surjective of the indicated level $i$.
  Let $(x,y)$ be a parallel pair of $(k-1)$-arrows in $C$
  and $f\colon Fx\to Fy$ a $k$-arrow in $D_k$.
  An easy diagram chase yields
  the existence of a $k$-arrow $f_n\colon x\to y$ in $C_k$
  and an identification
  $\overline{\eta_n}\colon [Ff_n]\simeq [f]\colon [x]\to [y]$ in $L_{(k+1)+n}D_{(k+1)+n+1}$
  which admits a lifts to a $(k+1)$-arrow $\eta_n\colon F f_n\to f$ in $D_{k+1}$.
  It follows from \Cref{lem:coind-inv-via-L_n} that $\eta_n$
  is $n$-invertible as desired.

  For the backward direction, let $n\geq 0$.
  We need to show that $L_nF_{n+1}$ is $n$-surjective.
  For this let $0\leq k\leq n$, and consider a parallel pair
  of $(k-1)$-arrows of $L_nC_{n+1}$
  as well as a $k$-arrow between them in $L_nD_{n+1}$;
  by $n$-surjectivity of $C_n\to L_nC_{n+1}$ and $D_n\to L_nD_{n+1}$
  we may assume that they are represented by a parallel pair
  $(x,y)$ in $C_n$ and a $k$-arrow $f\colon Fx\to Fy$ in $D_{n}$, respectively.
  Then by assumption there exists for each $m\geq 0$ a $k$-arrow $f_{m}\colon x\to y$
  and a $m$-invertible $(k+1)$-arrow $\eta_m\colon Ff_m\to f$.
  Setting $m\coloneqq n-k\geq 0$ guarantees that $(k+1)+m-1=n$
  so that $\eta_m$ becomes invertible in $L_nD_{n+1}$ by
  \Cref{lem:coind-inv-via-L_n},
  thus yielding the desired identification $[Ff_m]\simeq [f]$ in $L_nD_{n+1}$.
\end{proof}

\begin{proposition}
  \label{prop:coind->weak-coind}
  Assume that $C$ lies in the image of $R$.
  Then $C$ is $\omega$-complete.
\end{proposition}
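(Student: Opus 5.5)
The plan is to use the unit of the adjunction $L\dashv R$ from \Cref{thm:main}. Since $R$ is fully faithful and $C$ lies in its image, the unit $\eta C\colon C\to RLC$ is an equivalence. So it suffices to show that the image of any $\omega$-isomorphism under $\eta C$ is invertible in $RLC$.

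Let $f\colon x\to y$ be an $\omega$-invertible $k$-arrow of $C$. By the corollary to \Cref{lem:coind-inv-via-L_n}, $f$ becomes invertible in $L_nC_{n+1}$ for every $n\geq k-1$. By the formula \eqref{eq:formulas-L-R} we have $(LC)_n\simeq\colim_{i}L_nC_i$. The image of $f$ in $(LC)_n$ factors through the structure map $L_nC_{n+1}\to (LC)_n$, so it is invertible in $(LC)_n$ for all $n\geq k-1$.

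Now fix $d\geq k$ and view $f$ as a $k$-arrow of $C_d$. By \Cref{rem:counit-LR-explicit}, the component $(\eta C)_d\colon C_d\to \lim_{n}R_d(LC)_n$ sends $f$ to the compatible family whose $n$-th entry is the image of $f$ in $R_d(LC)_n$.

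The next step is to check that each such entry is invertible in $R_d(LC)_n$, at least for $n\geq k-1$. Its inverse data in $(LC)_n$ consists of $k$-arrows $g_\pm$ and invertible $(k+1)$-arrows $h_\pm$. By \Cref{lem:arrows-of-n-core}, all of these already live in the $d$-core $R_d(LC)_n$, since $k\leq d$ and all arrows of dimension $>d$ involved are invertible. The $d$-core inclusion is conservative on such arrows, so the entry is invertible there.

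Then I pass to the limit. By \Cref{lem:sequential-homs}, the hom-categories of a sequential limit are the limits of the hom-categories. Invertibility data for a given arrow is a proposition: its anima is either empty or contractible, and it is compatible along the transition maps. Since $\{n\geq k-1\}$ is cofinal in $\omega^\op$, the anima of inverse data for $(\eta C)_d(f)$ is a limit of contractible animae, hence contractible. So $(\eta C)_d(f)$ is invertible, and therefore $f$ is invertible in $C_d$ because $(\eta C)_d$ is an equivalence. This holds for every $d\geq k$, so $f$ is an isomorphism of $C$ and $C$ is $\omega$-complete.

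The main obstacle I expect is making the last limit step rigorous. One has to identify the anima of inverse data in a sequential limit of $d$-categories with the limit of the corresponding animae, by iterating \Cref{lem:sequential-homs} to describe the relevant $k$- and $(k+1)$-arrows. One also has to be careful that restricting to the cofinal range $n\geq k-1$ does not change the limit.
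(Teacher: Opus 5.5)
Your proof is correct, but it takes a different route from the paper's.

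\textbf{The paper's route.} The paper never looks inside the unit. It takes a witness $F\colon \Susp^{d-1}E^{(\omega)}\to C$ and pushes out the localization $\Susp^{d-1}E^{(\omega)}\to \Susp^{d-1}E^{(\omega)}[u^{-1}]$ along $F$. It shows that this localization, and hence its cobase change $\ell'\colon C\to C[f^{-1}]$, is an $L$-equivalence, using $L_{d-1}E^{(d)}\simeq *$. The rest is formal: $C\to C[f^{-1}]\to RL(C[f^{-1}])$ is an $L$-equivalence between objects in the image of $R$, so $\ell'$ has a retraction. An epimorphism with a retraction is an equivalence.

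\textbf{Your route.} You compute $(\eta C)_d(f)$ levelwise from the explicit formula $(RLC)_d\simeq \lim_n R_d\colim_i L_nC_i$ of \Cref{rem:counit-LR-explicit}. You feed in the characterization of $\omega$-isomorphisms as the arrows that become invertible in every $L_nC_{n+1}$, i.e.\ the corollary of \Cref{lem:coind-inv-via-L_n}. That corollary rests on the same input $L_{d-1}E^{(d)}\simeq *$ from \Cref{lem:standard-E}. You then use that invertibility in a sequential limit of $d$-categories is detected componentwise.

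\textbf{Comparison.} Your argument avoids the pushout, the implicit claim that $L$ commutes with localization, and the epimorphism trick. The cost is reliance on the explicit unit formula and some core/limit bookkeeping. The paper's argument is purely formal once it knows that the single map $\Susp^{d-1}E^{(\omega)}\to \Susp^{d-1}E^{(\omega)}[u^{-1}]$ is inverted by $L$.

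\textbf{The step you flagged.} The limit step you worried about is harmless. After passing to hom-categories with \Cref{lem:sequential-homs}, $(\eta C)_d(f)$ is a $1$-arrow in a sequential limit of $(d-k+1)$-categories. Its invertibility depends only on underlying categories, and $R_1$, being a right adjoint, commutes with this limit. The anima of one-sided inverses is a fiber of a precomposition map between hom-animae, and fibers commute with limits. So this anima is the limit of the componentwise ones, each of which is contractible.

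\textbf{The core step.} Since $d-k+1\geq 1$, the counit $R_d(LC)_n\to (LC)_n$ induces an equivalence on the underlying category of the relevant hom-$(d-k+1)$-category. This justifies your claim that the core inclusion reflects invertibility of your $k$-arrow. For $n<k$ the entries are trivially invertible, because $(LC)_n$ is an $n$-category, so you do not need the cofinality remark.
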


\begin{proof}
  Let $f\colon x\to y$ be a $\omega$-invertible $d$-arrow of $C$
  and choose a corresponding map $F\colon \Susp^{d-1}E^{(\omega)}\to C$ with $F(u)=f$.
  Consider the pushout square
  \begin{equation}
    \cdsquare[po]
    {\Susp^{d-1} E^{(\omega)}} {C}
    {\Susp^{d-1}E^{(\omega)}[u^{-1}]} {C[f^{-1}]}
    {F}
    {\Susp^{d-1}\ell}{\ell'}
    {F[u^{-1}]}
  \end{equation}
  where the vertical maps are localizations at $u$ and $f$,
  respectively.
  We first claim that the localization map
  $\ell\colon E^{(\omega)}\to E^{(\omega)}[u^{-1}]$ is an $L$-equivalence,
  hence also its categorical suspension $\Susp^{d-1}\ell$ and the cobase change $\ell'$:
  Indeed, for each $d\geq 1$, we have $L_{d-1}E^{(d)}\simeq *$
  (\Cref{lem:standard-E});
  hence $u$ is invertible in $L_{d-1}E^{(d)}$ and hence in $L_{d-1}E^{(\omega)}$;
  it follows that the localization
  $L_{d-1}\ell \colon L_{d-1}E^{(\omega)}\to L_{d-1}E^{(\omega)}[u^{-1}]$
  is an isomorphism as claimed
  (note that we implicitly use that $L$ commutes with localizations).
  We conclude that the composite
  $C\xrightarrow{\ell'} C[f^{-1}]\to RL(C[f^{-1}])$
  is an $L$-equivalence
  between objects in the image of $R$, hence an equivalence;
  in other words $\ell'$ has a retraction.
  Since $\ell'$ is an epimorphisms (because it is a localization)
  with a retraction, it is an equivalence.
  But this means that $f$ was already invertible in $C$,
  which is exactly what we needed to show.
\end{proof}

\begin{corollary}
  \label{cor:E-omega-not-coind}
  The coinductively complete $\infty$-category $E^{(\omega)}$
  is not $\omega$-complete,
  hence does not lie in the image of $R$.
\end{corollary}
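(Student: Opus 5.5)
We must prove two things: that $E^{(\omega)}$ is not $\omega$-complete, and that it therefore does not lie in the image of $R$. The second part is immediate from the first by \Cref{prop:coind->weak-coind}, which says every object in the image of $R$ is $\omega$-complete. Coinductive completeness of $E^{(\omega)}$ was sketched in \Cref{ex:HL} following Henry--Loubaton, and we simply take it as given. So the plan reduces to one concrete claim: the tautological arrow $u\colon 0\to 1$ of $E^{(\omega)}$ is an $\omega$-isomorphism but not an isomorphism.

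\textbf{Step 1: $u$ is $\omega$-invertible.} By the remark following \Cref{def:alpha-invertible}, a $1$-arrow is $n$-invertible (for finite $n$) exactly when its classifying map $\globe{1}\to E^{(\omega)}$ extends along $\globe{1}\hookrightarrow E^{(n+1)}$. By construction of the pushout defining $E^{(\omega)}$, the structure map $E^{(n+1)}\to E^{(\omega)}$ (from the summand $d=n+1$) restricts on $\globe{1}$ to $u$. So $u$ is $n$-invertible for every finite $n$, and $\omega$-invertibility is precisely the conjunction over all $n<\omega$. Equivalently, the tautological map $E^{(\omega)}\to E^{(\omega)}$ witnesses the extension along $\globe{1}\hookrightarrow E^{(\omega)}$.

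\textbf{Step 2: $u$ is not invertible.} The object $E^{(\omega)}$ is a categorical cell complex: it is obtained from $\globe{1}$ by attaching cells, and the pushout of cell complexes along the free maps $\globe{1}\to E^{(d)}$ is again a cell complex whose generating $1$-cells include $u$. By \Cref{lem:distinct-0-cells}(2), a generating cell of a cell complex is never invertible. The only mild care needed is to check that $\emptyset\to E^{(\omega)}$ admits a cell filtration with $u$ among its generating $1$-cells. Each $E^{(d)}$ has such a filtration with $u=u(0)$ generating, and the pushout identifies these copies of $u$. Gluing the cell filtrations dimensionwise therefore yields a filtration of $E^{(\omega)}$ in which $0$ and $1$ are the $0$-cells and $u$ is a single generating $1$-cell.

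Combining the two steps, $E^{(\omega)}$ is not $\omega$-complete. By the contrapositive of \Cref{prop:coind->weak-coind}, it then does not lie in the image of $R$. I expect no real obstacle here; the only point needing attention is the cell-structure bookkeeping in Step 2.
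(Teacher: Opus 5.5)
Your proof is correct. Step 1 and the final deduction via \Cref{prop:coind->weak-coind} match the paper. The routes differ only in Step 2.

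The paper never builds a cell presentation of $E^{(\omega)}$. Instead it uses the comparison functor $E^{(\omega)}\to E$, $u_i(\sigma)\mapsto u(\sigma)$, obtained from the universal property of the defining pushout and the subcomplex inclusions $E^{(d)}\subseteq E$, which agree on $\globe{1}$. This functor sends the tautological arrow of $E^{(\omega)}$ to that of $E$. Since functors preserve isomorphisms, non-invertibility of $u$ in $E^{(\omega)}$ follows from \Cref{cor:E-0-neq-1}, i.e.\ from \Cref{lem:distinct-0-cells} applied to $E$ rather than to $E^{(\omega)}$.

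Both arguments therefore rest on the same span-valued obstruction. The paper's version reuses the already established cell structure of $E$ and needs only a one-line universal-property argument. Yours avoids the comparison map at the cost of the filtration bookkeeping you flag.

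That bookkeeping does go through. Write $A_d^k$ for the $k$-th stage of the cell filtration of $E^{(d)}$ and set $P^0\coloneqq\{0,1\}$ and $P^k\coloneqq\globe{1}\amalg_{\coprod_d\globe{1}}\coprod_d A_d^k$ for $k\geq 1$. Colimits commute with colimits, so $\colim_k P^k\simeq E^{(\omega)}$. Pasting of pushouts shows that each $P^{k-1}\to P^k$ is a cell attachment. Its generating cells are the disjoint union over $d$ of the $k$-cells of the $E^{(d)}$ for $k\geq 2$. In dimension one they are $\{u\}\amalg\coprod_{d\geq 2}\{u_d(-),u_d(+)\}$, so $u$ appears exactly once. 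This matches the description of $E^{(\omega)}$ in \Cref{ex:HL}.
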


\begin{proof}
  By construction, the tautological arrow $u\colon 0\to 1$ in $E^{(\omega)}$
  is an $\omega$-isomorphism.
  It is not an isomorphism because its image under the tautological map
  $E^{(\omega)}\to E$ (defined by $u_i(\sigma)\mapsto u(\sigma)$)
  is the tautological arrow $u\colon 0\to 1$ of $E$
  which is not an isomorphism by \Cref{cor:E-0-neq-1}.
\end{proof}

\subsection{Open questions}

One can ask whether the necessary criterion of 
\Cref{prop:coind->weak-coind}
is also sufficient.
This corresponds to a conjecture of Loubaton
that states that \Cref{ex:HL} is in a sense the \emph{universal} obstruction
preventing a coinductively complete $\infty$-category from lying in the image of $R$:

\begin{conjecture}[Loubaton\footnote{in private communication}]
  \label{conj-image-weak}
  The image of the embedding
  $R\colon \Cat^L_\infty\hookrightarrow \Cat_\infty$
  consists precisely of the $\omega$-complete $\infty$-categories.
\end{conjecture}

We can reformulate this conjecture as follows:

\begin{lemma}
  \Cref{conj-image-weak}
  is equivalent to the following statement:
  \begin{itemize}
  \item
    Let $F\colon C\to D$ be an $L$-equivalence between $\omega$-complete $\infty$-categories.
    Then $F$ is an isomorphism.
  \end{itemize}
\end{lemma}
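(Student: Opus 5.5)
We prove the two implications separately. Both use the facts that $R$ is fully faithful and that $L\dashv R$ exhibits $\Cat^L_\infty$ as the localization of $\Cat_\infty$ at the $L$-equivalences (\Cref{thm:main}). Consequently, an object $C\in\Cat_\infty$ lies in the image of $R$ if and only if the unit $\eta_C\colon C\to RLC$ is an equivalence. Moreover, any $L$-equivalence between objects in the image of $R$ is an equivalence.

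\emph{Conjecture $\Rightarrow$ statement.} Let $F\colon C\to D$ be an $L$-equivalence between $\omega$-complete $\infty$-categories. By \Cref{conj-image-weak}, both $C$ and $D$ lie in the image of $R$. Since $F$ is inverted by $L$ and $R$ is fully faithful, the naturality square for the unit shows that $F\simeq RL(F)$ up to the invertible units $\eta_C,\eta_D$. Hence $F$ is an isomorphism.

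\emph{Statement $\Rightarrow$ conjecture.} One inclusion is already \Cref{prop:coind->weak-coind}: the image of $R$ consists of $\omega$-complete $\infty$-categories. For the converse, let $C$ be $\omega$-complete and consider the unit $\eta_C\colon C\to RLC$. It is an $L$-equivalence, because $L\eta_C$ is inverted by the triangle identity together with the invertibility of the counit $LR\to\id$, which holds since $R$ is fully faithful. The target $RLC$ lies in the image of $R$, so it is $\omega$-complete by \Cref{prop:coind->weak-coind} again. Thus $\eta_C$ is an $L$-equivalence between $\omega$-complete $\infty$-categories, and the assumed statement makes it an isomorphism, so $C$ lies in the image of $R$.

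The only point needing care is that $\eta_C$ is indeed an $L$-equivalence. This is a formal fact about reflective localizations: for a fully faithful right adjoint, $L\eta$ is inverse to the invertible counit $\epsilon L$. With that in hand, the argument is entirely formal. No genuine obstacle is expected; the whole content lies in \Cref{thm:main} and \Cref{prop:coind->weak-coind}.
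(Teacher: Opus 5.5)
Your proof is correct and follows the paper's argument. The paper simply notes that, since $R$ lands in $\Cat_\infty^\omega$ by \Cref{prop:coind->weak-coind}, essential surjectivity of the fully faithful right adjoint $R\colon \Cat_\infty^L\hookrightarrow\Cat_\infty^\omega$ is equivalent to conservativity of its left adjoint $L$; you have unpacked this standard fact via the unit $\eta_C$ and the triangle identity.
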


\begin{proof}
  This is just rephrasing the surjectivity of the full embedding
  $R\colon \Cat_\infty^L\hookrightarrow \Cat_\infty^\omega$ 
  in terms of the conservativity of its corresponding localization functor
  $L\colon \Cat_\infty^\omega\to \Cat_\infty^L$.
\end{proof}

We end this section with a few more questions
that still remain unanswered,
and some elementary observations about the logical relation between them.

\begin{question}
  \label{quest:surjective-up-to-omega}
  Are the following two conditions equivalent for a map
  $F\colon C\to D$ of $\infty$-categories?
  \begin{itemize}
  \item
    $F$ is $\infty$-surjective up to $\omega$-isomorphisms.
  \item
    For each $n\in \naturals$,
    the map $F$ is $\infty$-surjective
    up to $n$-isomorphisms.
  \end{itemize}
\end{question}

\begin{remark}
  \label{rem:Q-omega-implies-omega-L}
  An affirmative answer to \Cref{quest:surjective-up-to-omega}
  would immediately imply \Cref{conj-image-weak},
  because between $\omega$-complete $\infty$-categories
  an $\infty$-surjective map up to $\omega$-isomorphism
  is just an $\infty$-surjective map,
  hence an equivalence by \Cref{lem:infty-surj-strong-coind}.
\end{remark}
\begin{example}
  Note that \Cref{quest:surjective-up-to-omega}
  is blatantly false if one considers $k$-surjectivity for a fixed finite $k$
  instead of $\infty$.
  For an example with $k=0$, consider the tautological map
  $ F\colon \naturals\to V$,
  where $V$ is the categorical cell complex obtained by starting with
  objects $\naturals\amalg\{*\}$
  and attaching, for each $n\in\naturals$,
  an $n$-invertible arrow $\eta_n\colon n\to *$.
  Then both sides are $\omega$-complete and $F$ is not $0$-surjective.
  However, for each $n\in \naturals$,
  the only object not in the image (namely $*\in V$)
  is connected from $n=F(n)$ by the $n$-invertible arrow $\eta_n$,
  hence $F$ is $0$-surjective up to $n$-isomorphisms.
\end{example}

\begin{question}
  What is the left adjoint to the inclusion
  $\Cat_\infty^\omega\hookrightarrow \Cat_\infty$?
\end{question}

\begin{remark}
  The natural guess is that, for each $C\in \Cat_\infty$,
  the unit should be the localization $C\to C'\coloneqq C[\isos{\omega}^{-1}]$
  of $C$ at the $\omega$-isomorphisms.
  It is clear that this map is an $\omega$-complete equivalence
  (because by definition, any map $C\to D$ into an $\omega$-complete category $D$
  must invert all $\omega$-isomorphisms),
  but we do not know whether $C'$ is actually $\omega$-complete.
  A priori, it might be necessary to repeat this process transfinitely
  by setting
  \begin{equation}
    C_{\alpha+1}\coloneqq (C_\alpha)'
    \quad
    \text{and}
    \quad
    C_{\lambda}\coloneqq \colim_{\alpha<\lambda}C_\alpha
  \end{equation}
  for successor and limit ordinals, respectively.
  Since $\omega$-isomorphisms are determined by a countable amount of data
  (because $E^{(\omega)}$ has countably many generating cells),
  this process will definitely stabilize at the first uncountable ordinal $\omega_1$
  so that $C\to C_{\omega_1}$ is the desired $\omega$-complete approximation.
  The question is whether this process actually stabilizes at an earlier stage
  $C_\alpha$, maybe $\alpha=\omega$ or possibly even $\alpha=1$.
\end{remark}
\begin{question}
  What are the arrows inverted by the localization
  $L_\omega\colon \Cat_\infty\to \Cat^\omega_\infty$?
\end{question}

\begin{remark}
  Again there is a natural guess, namely that the $L_\omega$-equivalences
  are precisely the maps that are $\infty$-surjective up to $\omega$-isomorphisms.
  An affirmative answer to \Cref{quest:surjective-up-to-omega}
  would imply that this is true:
  Indeed, in that case we would have $\Cat_\infty^\omega\simeq \Cat_\infty^L$
  by \Cref{rem:Q-omega-implies-omega-L};
  hence the $L_\omega$-equivalences are the $L$-equivalences,
  which by \Cref{lem:L-iso-coind-inv} and \Cref{quest:surjective-up-to-omega}
  (again!) would be precisely the $\infty$-surjections up to $\omega$-isomorphisms.
\end{remark}

\begin{question}
  \label{q:epi-walking-coind}
  Is there a walking coinductive isomorphism $(E,u)$
  such that $u\colon \globe{1}\to E$ is an epimorphism?
\end{question}

\begin{remark}
  The condition that $u\colon \globe{1}\to E$ is an epimorphism
  exactly captures the notion that coinductive invertibility data
  (of an arrow in some $\infty$-category $C$)
  with respect to this choice of $(E,u)$
  is unique if it exists
  (unlike the general case; see \Cref{ex:non-uniqueness-coind}).
  Such an $(E,u)$ is necessarily
  initial among all walking coinductive isomorphisms (if it exists);
  in particular it is unique (if it exists).
\end{remark}

\bibliographystyle{amsalpha}
\bibliography{ref}

\end{document}